%
% 建议用 pdflatex 编译
%
\documentclass{article}

%%%%%===== 页面设置 =====================================================
\usepackage[a4paper,top=2.54cm,bottom=2.54cm,left=3.17cm,right=3.17cm,%
            includehead,includefoot]{geometry}

%%%%%===== 常用宏包 =====================================================
\usepackage{amsmath,amssymb,amsfonts,amsthm}
\usepackage{graphicx}
\usepackage{float}
\usepackage{pifont}
\usepackage{xcolor}
\usepackage[numbers,square,sort&compress]{natbib}
\usepackage{hyperref}
  \hypersetup{colorlinks,citecolor=blue,linkcolor=blue,breaklinks=true}
\usepackage{epstopdf}
\usepackage{yhmath} % \wideparen 弧 \adots
\usepackage{mathrsfs}
%%%%%===== 行间距 ======================================================

%%%%%===== 数学公式 =====================================================
\allowdisplaybreaks
\numberwithin{equation}{section}
%%%%%===== 定义定理 =====================================================
\newtheorem{theorem}{Theorem}[section]
\newtheorem{assumption}[theorem]{Assumption}
\newtheorem{lemma}[theorem]{Lemma}

\newtheorem{proposition}[theorem]{Proposition}
\newtheorem{definition}[theorem]{Definition}

\newtheorem{remark}[theorem]{Remark}

%%%%%===== 自定义命令 ====================================================

\newcommand{\bbm}{\begin{bmatrix}}
\newcommand{\ebm}{\end{bmatrix}}

\begin{document}

\title{\bf\Large Weak type estimates for Bochner--Riesz means on Hardy-type spaces associated with ball quasi-Banach function spaces
\footnotetext{\hspace{-0.35cm} 2010 {\it Mathematics Subject Classification}. Primary 42B30;
Secondary 42B25, 42B20.\endgraf
 \textbf{Keywords}\quad Ball quasi-Banach function space, Bochner--Riesz means, maximal Bochner--Riesz means, weak Hardy space, Lorentz space, Morrey space.}}
 %\endgraf
 %This project is supported by the National Natural Science Foundation of China (Grant No. 11901309), Natural Science Foundation of Jiangsu Province of China (Grant No. BK20180734) and Natural Science Foundation of Nanjing University of Posts and Telecommunications (Grant No. NY222168).}}

\author{
Jian Tan\footnote{Corresponding author} 
  \and
Linjing Zhang
}

\date{}

\maketitle

\noindent \textbf{Abstract}\quad Let $X\left(\mathbb{R}^{n}\right)$ be a ball quasi-Banach function space on $\mathbb{R}^{n}$, $WX\left(\mathbb{R}^{n}\right)$ be the weak ball quasi-Banach function space on $\mathbb{R}^{n}$, $H_{X}\left(\mathbb{R}^{n}\right)$ be the Hardy space associated with $X\left(\mathbb{R}^{n}\right)$ and $WH_{X}\left(\mathbb{R}^{n}\right)$ be the weak Hardy space associated with $X\left(\mathbb{R}^{n}\right)$. In this paper, we obtain the boundedness of the Bochner--Riesz means and the maximal Bochner--Riesz means from $H_{X}\left(\mathbb{R}^{n}\right)$ to $WH_{X}\left(\mathbb{R}^{n}\right)$ or $WX\left(\mathbb{R}^{n}\right)$, which includes the critical case. Moreover, we apply these results to several examples of ball quasi-Banach function spaces, namely, weighted Lebesgue spaces, Herz spaces, Lorentz spaces, variable Lebesgue spaces and Morrey spaces. This shows that all the results obtained in this article are of wide applications, and more applications of these results are predictable.

\section{Introduction and statement of main results }
In this paper, we will study the boundedness of Bochner--Riesz means and the maximal Bochner--Riesz means on weak Hardy-type spaces associated with Ball quasi-Banach function spaces.
The Bochner--Riesz operator of order $\delta\in\left(0,\infty\right)$ is defined for Schwartz function $f$ on $\mathbb{R}^{n}$ by setting, for any $x\in\mathbb{R}^{n}$,
\begin{align*}
	B_{1/\varepsilon}^{\delta}\left(f\right)\left(x\right)=\left(f\ast\phi_{\varepsilon}\right)\left(x\right),
\end{align*}
where, for any $x\in\mathbb{R}^{n}$ and $\varepsilon\in\left(0,\infty\right)$, $\phi\left(x\right):=\left[ \left(1-\left|\cdot\right|^{2} \right)_{+}^{\delta} \right]^{\wedge}\left(x\right)$ and $\phi_{\varepsilon}:=\varepsilon^{-n}\phi\left(\frac{x}{\varepsilon}\right)$. $B_{1/\varepsilon}^{\delta}\left(f\right)\left(x\right)$ is usually called the Bochner--Riesz means of $f$ with $\delta$ order on $\mathbb{R}^{n}$, and $\phi\left(x\right)$ is called the kernel of this convolution operator. Moreover, denote by
\begin{align*}
	B_{\ast}^{\delta}\left(f\right)\left(x\right)=\sup\limits_{\varepsilon>0}\left|B_{1/\varepsilon}^{\delta}\left(f\right)\left(x\right)\right|
\end{align*}
the maximal Bochner--Riesz means. By the definition of the radial maximal function, it is easy to see that $B_{\ast}^{\delta}\left(f\right)\left(x\right)=\phi_{+}^{\ast}\left(f\right)\left(x\right)$.
The Bochner--Riesz means were first introduced by Bochner \cite{Bochner} in connection with summation of multiple Fourier series and later played a vital role in harmonic analysis. The problem concerning the convergence of multiple Fourier series have led to the study of their $L^{p}$ boundedness. Next, we will present some historical results in regard to the boundedness of Bochner-Riesz means and the maximal Bochner--Riesz means. The following $H^{p}\left(\mathbb{R}^{n}\right)$ boundedness of the Bochner--Riesz means was proved by Sj{\"o}lin \cite{Sj} and Stein, Taibleson and Weiss \cite{SteinTaiblesonWeiss}.

\begin{theorem}
	\textup{Suppose that $0<p\le 1$ and $\delta>n/p-\left(n+1\right)/2$. Then there exists a constant $C>0$ independent of $f$ and $\varepsilon$ such that}
	\begin{align*}
		\left\|B_{1/\varepsilon}^{\delta}\left(f\right)\right\|_{H^{p}\left(\mathbb{R}^{n}\right)}\le C\left\|f\right\|_{H^{p}\left(\mathbb{R}^{n}\right)}.
	\end{align*}
\end{theorem}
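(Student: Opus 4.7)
The plan is to establish the theorem via the atomic decomposition of $H^p(\mathbb{R}^n)$. Given $f\in H^p(\mathbb{R}^n)$ with $0<p\le 1$, decompose $f=\sum_j\lambda_j a_j$, where each $a_j$ is a $(p,\infty)$-atom supported in a ball $B_j=B(x_j,r_j)$, satisfying $\|a_j\|_{L^\infty}\le |B_j|^{-1/p}$ and vanishing moments up to order $N=\lfloor n(1/p-1)\rfloor$, with $\sum_j|\lambda_j|^p\lesssim\|f\|_{H^p}^p$. Since $\|\cdot\|_{H^p}^p$ is subadditive when $0<p\le 1$, it suffices to prove a uniform bound $\|B_{1/\varepsilon}^{\delta}(a)\|_{H^p(\mathbb{R}^n)}\le C$ for every such atom $a$ and every $\varepsilon\in(0,\infty)$.

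Exploiting the translation and dilation covariance of the operator, one reduces to the model case where $a$ is supported in the closed unit ball centered at the origin. The $H^p$ norm is then computed through the grand maximal function characterization $\|g\|_{H^p}\simeq\|\mathcal{M}_N g\|_{L^p}$, and $\mathbb{R}^n$ is split into a near region $\{|x|\le 2\}$ and a far region $\{|x|>2\}$. On the near region, the $L^2$--boundedness of the Bochner--Riesz operator (valid for all $\delta>0$) together with H\"older's inequality and the size bound $\|a\|_{L^2}\lesssim |B|^{1/2-1/p}$ controls $\|\mathcal{M}_N B_{1/\varepsilon}^\delta(a)\|_{L^p(\{|x|\le 2\})}$.

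The decisive step is the far-field estimate. The kernel $\phi$, being the Fourier transform of $(1-|\cdot|^2)_+^\delta$, is a Bessel-type function obeying the sharp pointwise bound $|\phi(x)|\lesssim (1+|x|)^{-(n+1)/2-\delta}$, with matching bounds on its derivatives. Using the vanishing moments of $a$ and Taylor expanding $\phi_\varepsilon(x-\cdot)$ around the center of $a$ to order $N$ yields pointwise estimates of the form
\begin{align*}
\sup_{\varepsilon>0}\bigl|B_{1/\varepsilon}^\delta(a)(x)\bigr|\lesssim (1+|x|)^{-(n+1)/2-\delta},\qquad |x|>2,
\end{align*}
together with analogous bounds for $\mathcal{M}_N B_{1/\varepsilon}^\delta(a)$. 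Raising to the $p$-th power and integrating over the far region then requires $\bigl[\tfrac{n+1}{2}+\delta\bigr]p>n$, equivalently $\delta>n/p-(n+1)/2$, which is precisely the hypothesis and guarantees convergence of the tail integral.

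The principal obstacle is calibrating the cancellation provided by the $N=\lfloor n(1/p-1)\rfloor$ vanishing moments against the precise Bessel-type decay of $\phi$, so that the critical exponent $\delta>n/p-(n+1)/2$ emerges and not a strictly larger one. This requires the sharp asymptotic expansion of $\phi$ (rather than a crude $L^\infty$ bound) together with a careful $L^p$ summation of Taylor remainders; it is the classical technical core of the Sj\"olin and Stein--Taibleson--Weiss arguments. Once the far-field bounds are secured, combining them with the near-field estimate gives $\|\mathcal{M}_N B_{1/\varepsilon}^\delta(a)\|_{L^p}\le C$ uniformly in $a$ and $\varepsilon$, and the atomic decomposition of $H^p$ then yields the claimed inequality.
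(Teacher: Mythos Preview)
The paper does not actually prove this statement: Theorem~1.1 is quoted in the introduction as a classical result of Sj\"olin and of Stein--Taibleson--Weiss, and no proof is given. So there is no ``paper's own proof'' to compare against.

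That said, your sketch follows precisely the classical route of those references: atomic decomposition, reduction to a single atom, near/far split, $L^2$ boundedness on the near part, and Taylor expansion of the Bessel-type kernel against the vanishing moments on the far part. The sketch is correct in outline. One small point worth making explicit: your ``dilation covariance'' reduction to a unit-ball atom works only because you are seeking a bound uniform in $\varepsilon$; the operator $B_{1/\varepsilon}^\delta$ is not dilation-invariant, but dilating the atom merely shifts $\varepsilon$ to another value, which is harmless under a supremum over $\varepsilon$. It would also strengthen the write-up to state the far-field estimate for the grand maximal function $\mathcal{M}_N B_{1/\varepsilon}^\delta(a)$ directly (not just for $B_{1/\varepsilon}^\delta(a)$ itself), since that is what you need for the $H^p$ norm; this requires the observation that $\phi\ast\Phi_t$ satisfies the same decay bounds as $\phi$ uniformly in $t$, which is the content of results like Lemma~2.26 in the paper.
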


Stein, Taibleson and Weiss \cite{SteinTaiblesonWeiss} also obtained the following weak type estimate for the maximal Bochner--Riesz means on $H^{p}\left(\mathbb{R}^{n}\right)$.

\begin{theorem}
	\textup{Suppose that $0<p<1$ and $\delta=n/p-\left(n+1\right)/2$. Then there exists a constant $C>0$ independent of $f$ such that}
	\begin{align*}
		\sup\limits_{\lambda>0}\lambda^{p}\left|\left\{x\in\mathbb{R}^{n}:B_{\ast}^{\delta}\left(f\right)\left(x\right)>\lambda\right\}\right|\le C\left\|f\right\|_{H^{p}\left(\mathbb{R}^{n}\right)}^{p}.
	\end{align*}
\end{theorem}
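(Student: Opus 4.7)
The plan is to reduce the weak-type inequality for general $f\in H^{p}(\mathbb{R}^{n})$ to a uniform weak-type estimate on atoms. Using the atomic decomposition of $H^{p}$, write $f=\sum_{j}\lambda_{j}a_{j}$, where each $a_{j}$ is a $(p,\infty,s)$-atom with $s\ge\lfloor n(1/p-1)\rfloor$ supported in a ball $B_{j}=B(x_{j},r_{j})$, normalized by $\|a_{j}\|_{L^{\infty}}\le|B_{j}|^{-1/p}$ and subject to the usual vanishing-moment conditions of order $s$, with $\sum_{j}|\lambda_{j}|^{p}\lesssim\|f\|_{H^{p}}^{p}$. The central step is to prove a uniform atomic estimate $\lambda^{p}\,|\{x:B_{\ast}^{\delta}(a)(x)>\lambda\}|\le C$ for every atom $a$, with $C$ independent of the atom's center, radius, and the level $\lambda>0$.

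To establish this uniform estimate, fix an atom $a$ supported in $B=B(x_{0},r)$ and split $\mathbb{R}^{n}=2B\cup(2B)^{c}$. On $2B$, combine Chebyshev's inequality with the $L^{2}$-boundedness of $B_{\ast}^{\delta}$ for $\delta>0$ (a Carleson--Sj\"{o}lin type result) and the atomic $L^{2}$-size bound $\|a\|_{L^{2}}\lesssim|B|^{1/2-1/p}$ to obtain $|\{x\in 2B:B_{\ast}^{\delta}(a)(x)>\lambda/2\}|\lesssim\lambda^{-2}|B|^{1-2/p}$. On $(2B)^{c}$, exploit the sharp pointwise decay $|\phi(x)|\lesssim(1+|x|)^{-(n+1)/2-\delta}$ of the Bochner--Riesz kernel together with the vanishing moments of $a$, by expanding $\phi_{\varepsilon}(x-\cdot)$ in a Taylor polynomial around $x_{0}$; this yields a bound on $B_{\ast}^{\delta}(a)(x)$ uniform in $\varepsilon$ and thereby $|\{x\notin 2B:B_{\ast}^{\delta}(a)(x)>\lambda/2\}|\lesssim\lambda^{-p}$. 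Optimizing the two pieces in $|B|$ equalizes the contributions for every admissible radius $r$, and the precise choice $\delta=n/p-(n+1)/2$ is exactly what makes the local $L^{2}$ bound and the far-field decay balance.

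The principal obstacle is the assembly from atoms to general $f$: because strong $H^{p}\to L^{p}$ boundedness fails at the critical index, one cannot sum the atomic estimates directly. The remedy is a Calder\'{o}n--Zygmund-type splitting at level $\lambda$: decompose $f=f_{1}+f_{2}$ where $f_{1}$ collects those $\lambda_{j}a_{j}$ whose coefficients $|\lambda_{j}|$ exceed a suitable $\lambda$-dependent threshold and $f_{2}$ is the remainder. Control the distribution of $B_{\ast}^{\delta}(f_{2})$ via the $L^{2}$-theory (since $f_{2}$ enjoys a good pointwise/$L^{2}$ bound induced by the sub-threshold constraint) and that of $B_{\ast}^{\delta}(f_{1})$ via the uniform atomic estimate and $\sum_{j}|\lambda_{j}|^{p}\lesssim\|f\|_{H^{p}}^{p}$. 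The delicate balancing of the threshold with the exponent $\delta=n/p-(n+1)/2$ is where the endpoint subtlety resides, and is the only step where genuine care is required; once the threshold is tuned, the two contributions combine to give the claimed weak-$L^{p}$ bound.
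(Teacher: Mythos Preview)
The paper does not prove this statement; it is quoted from Stein--Taibleson--Weiss \cite{SteinTaiblesonWeiss}. The closest thing in the paper is the proof of Theorem~\ref{th:MB-R Hx-WX}, which for $X=L^{p}$ recovers the statement and whose structure is rather different from yours: there one splits $\sum_{j}\lambda_{j}B_{\ast}^{\delta}(a_{j})$ into the near part $\sum_{j}\lambda_{j}B_{\ast}^{\delta}(a_{j})\chi_{4Q_{j}}$ and the far part $\sum_{j}\lambda_{j}B_{\ast}^{\delta}(a_{j})\chi_{(4Q_{j})^{c}}$, bounds the near part in the \emph{strong} norm via Lemma~\ref{le:atom2}, and bounds the far part via the pointwise estimate $B_{\ast}^{\delta}(a_{j})(x)\chi_{(4Q_{j})^{c}}\lesssim [M(\chi_{Q_{j}})(x)]^{1/p}\|\chi_{Q_{j}}\|_{X}^{-1}$ together with the weak-type vector-valued Fefferman--Stein inequality~(\ref{eq:weak-type Fefferman-Stein}).

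Your atomic estimate is essentially right, though the phrase ``optimizing in $|B|$'' is confused (you cannot optimize over the radius of a fixed atom). The clean near-piece argument is: since $\delta>(n-1)/2$ gives $\phi\in L^{1}$, one has $\|B_{\ast}^{\delta}(a)\|_{L^{\infty}}\lesssim\|a\|_{L^{\infty}}\le|B|^{-1/p}$, so the super-level set is empty for $\lambda\gtrsim|B|^{-1/p}$, while for $\lambda\lesssim|B|^{-1/p}$ the trivial bound $|\{x\in 2B:\dots\}|\le|2B|\lesssim\lambda^{-p}$ already works. Your $L^{2}$/Chebyshev bound $\lambda^{-2}|B|^{1-2/p}$ is only $\lesssim\lambda^{-p}$ in the opposite regime $|B|\gtrsim\lambda^{-p}$.

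The genuine gap is the assembly step. Splitting $f=f_{1}+f_{2}$ by a threshold on the \emph{coefficients} $|\lambda_{j}|$ does not give what you claim: the atoms in $f_{2}$ can have arbitrarily large $L^{2}$ norm (since $\|a_{j}\|_{L^{2}}\sim|B_{j}|^{1/2-1/p}\to\infty$ as $|B_{j}|\to 0$), so $f_{2}$ has no $L^{2}$ control from the sub-threshold condition alone; and for $f_{1}$, summing uniform weak-$L^{p}$ bounds over many terms costs a constant growing with the number of terms, because weak $L^{p}$ with $p<1$ has no useful triangle inequality. What actually closes the argument is either (i) the Stein--Taibleson--Weiss summation lemma, which asserts that if each $g_{j}$ satisfies $\sup_{\lambda}\lambda^{p}|\{|g_{j}|>\lambda\}|\le 1$ and $\sum_{j}|c_{j}|^{p}\le 1$, then $\sup_{\lambda}\lambda^{p}|\{\sum_{j}|c_{j}||g_{j}|>\lambda\}|\le \tfrac{2-p}{1-p}$, applied with $g_{j}=B_{\ast}^{\delta}(a_{j})$; or (ii) the near/far splitting described above, which avoids the summation issue by controlling the near part in a strong norm. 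Your proposal needs one of these two devices in place of the coefficient-threshold split.
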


In 1995, Sato \cite{Sato} considered the following weighted weak type estimate of the maximal Bochner-Riesz means at critical index $\delta=n/p-\left(n+1\right)/2$.

\begin{theorem}
	\textup{Let $0<p<1$, $\omega\in A_{1}$ and $\delta=n/p-\left(n+1\right)/2$. Then, for all $f\in H_{\omega}^{p}\left(\mathbb{R}^{n}\right)$,}
	\begin{align*}
		\sup\limits_{\lambda>0}\lambda^{p}\omega\left(\left\{x\in\mathbb{R}^{n}:B_{\ast}^{\delta}\left(f\right)\left(x\right)>\lambda\right\}\right)\le C_{\left(\omega,n,p\right)}\left\|f\right\|_{H_{\omega}^{p}\left(\mathbb{R}^{n}\right)}^{p},
	\end{align*}
    \textup{where $\omega$ is a Muckenhoupt weight and $H_{\omega}^{p}\left(\mathbb{R}^{n}\right)$ is the weighted Hardy spaces.}
\end{theorem}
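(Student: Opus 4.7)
The plan is to combine the weighted atomic decomposition of $H_{\omega}^{p}(\mathbb{R}^{n})$ with a level-$\lambda$ splitting argument, following the Stein--Taibleson--Weiss strategy adapted to the Muckenhoupt $A_{1}$ setting. First I would invoke the atomic decomposition, writing any $f \in H_{\omega}^{p}(\mathbb{R}^{n})$ as $f = \sum_{j} \lambda_{j} a_{j}$ with $\sum_{j} |\lambda_{j}|^{p} \lesssim \|f\|_{H_{\omega}^{p}(\mathbb{R}^{n})}^{p}$, where each $a_{j}$ is an $\omega$-$(p,\infty,s)$-atom supported in a ball $B_{j} = B(x_{j}, r_{j})$ satisfying $\|a_{j}\|_{L^{\infty}} \le \omega(B_{j})^{-1/p}$ and $\int a_{j}(x)\, x^{\alpha}\, dx = 0$ for $|\alpha| \le s$, with $s$ chosen large enough in terms of $p$ and the $A_{1}$ characteristic of $\omega$ (under $A_{1}$ the choice $s = \lfloor n/p - n \rfloor$ suffices).

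Next, for a single atom $a$ supported in $B = B(x_{0}, r)$, I would derive two complementary pointwise estimates for $B_{\ast}^{\delta}(a)$. Inside the dilate $2B$, I would control it via the $L_{\omega}^{2}$-boundedness of $B_{\ast}^{\delta}$ (valid for $\delta > (n-1)/2$, which passes to $L_{\omega}^{2}$ by standard weighted theory) together with H\"older and Kolmogorov-type inequalities. Off $2B$, I would exploit the sharp asymptotic $\phi(x) = O(|x|^{-(n+1)/2 - \delta})$, a Taylor expansion of $\phi_{\varepsilon}(x - \cdot)$ about $x_{0}$, and the vanishing moments of $a$ to obtain, uniformly in $\varepsilon$, a pointwise tail bound of the form $\|a\|_{L^{\infty}}\, r^{n+s+1}\, |x - x_{0}|^{-n - s - 1 - \delta + (n-1)/2}$. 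At the critical index $\delta = n/p - (n+1)/2$, combining these with the $A_{1}$ growth $\omega(tB) \lesssim t^{n} \omega(B)$ for $t \ge 1$ yields the uniform single-atom level-set inequality $\omega(\{x \in \mathbb{R}^{n} : B_{\ast}^{\delta}(a)(x) > \lambda\}) \lesssim \lambda^{-p}$.

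To pass from a single atom to the sum, I would fix $\lambda > 0$ and split $J = J_{1} \sqcup J_{2}$ according to whether $|\lambda_{j}|\, \omega(B_{j})^{-1/p}$ is below or above a constant multiple of $\lambda$, writing $f = g + b$ accordingly. The bad part $b$ is handled by discarding the exceptional set $E := \bigcup_{j \in J_{2}} 2B_{j}$, whose $\omega$-measure is bounded by $\lambda^{-p} \sum_{j} |\lambda_{j}|^{p}$, and applying the off-support tail estimate outside $E$. The good part $g$ is treated via the $L_{\omega}^{q}$-boundedness of $B_{\ast}^{\delta}$ for some $q > 1$ (available from $A_{1} \subset A_{q}$ together with the supercritical-$\delta$ theory on $L_{\omega}^{q}$), followed by Chebyshev's inequality and a subadditivity step converting $\sum_{j \in J_{1}} |\lambda_{j}|^{q}\, \omega(B_{j})^{1 - q/p}$ back into $\sum_{j} |\lambda_{j}|^{p}$.

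The main obstacle I expect is handling the supremum over $\varepsilon$ in $B_{\ast}^{\delta}$ uniformly at the critical index: a naive Taylor-remainder bound on $\phi_{\varepsilon}$ produces a factor $\varepsilon^{(1-n)/2 + \delta - s - 1}$ that is not harmless in the regime where $s$ must equal $\lfloor n/p - n \rfloor$, so the uniform-in-$\varepsilon$ off-support decay is only borderline summable. Overcoming this typically requires decomposing $\phi$ into a short-range piece supported in a compact set and a long-range piece with slowly decaying oscillatory asymptotic, estimating each contribution separately, and exploiting $A_{1}$-doubling to control the resulting dyadic annular sums; this is precisely the point where the critical endpoint assumption $\delta = n/p - (n+1)/2$ and the weight class $A_{1}$ together become indispensable.
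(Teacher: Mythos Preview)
The paper does not prove this theorem. It is stated in the introduction as a known result due to Sato \cite{Sato} (1995), and is then \emph{used} as a black box in the proof of Theorem~\ref{th:MB-R Hx-WX}, where the $(H_{\omega}^{s}, WL_{\omega}^{s})$ boundedness of $B_{\ast}^{\delta}$ is invoked by citation. There is therefore no proof in the paper to compare your proposal against.

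That said, your outline is a faithful sketch of the standard argument for weak-type critical-index estimates of this kind, and is very much in the spirit of the Stein--Taibleson--Weiss method carried over to the $A_{1}$-weighted setting, which is what Sato's paper does. The ingredients you list --- atomic decomposition, the pointwise tail bound for $B_{\ast}^{\delta}(a)$ via the kernel asymptotic $|\phi(x)| \lesssim (1+|x|)^{-n/p}$ together with Taylor expansion and vanishing moments, the level-$\lambda$ good/bad splitting, and $L_{\omega}^{q}$ control of the good part via Chebyshev --- are exactly the right ones. Your diagnosis of the main obstacle (making the off-support bound uniform in $\varepsilon$ at the critical index, where the decay is only borderline) is also accurate, and the remedy you describe (splitting $\phi$ and using $A_{1}$-doubling on dyadic annuli) is how it is handled. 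One small correction: in the off-support estimate you wrote the decay exponent as $n+s+1+\delta-(n-1)/2$, but since $\delta+(n+1)/2=n/p$ the correct pointwise tail for $B_{\ast}^{\delta}(a)(x)$ outside a dilate of $B$ is of order $\|a\|_{L^{\infty}}\, r^{n/p}\, |x-x_{0}|^{-n/p}$ (cf.\ Lemma~\ref{le:phi(B-R kernel)} in the paper), and it is precisely this $n/p$ decay that makes the $\omega$-level-set integral diverge logarithmically and forces the weak-type rather than strong-type conclusion.
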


In 2006, Lee \cite{Lee} studied the boundedness of the maximal Bochner--Riesz means from $H_{\omega}^{p}\left(\mathbb{R}^{n}\right)$ to $L_{\omega}^{p}\left(\mathbb{R}^{n}\right)$ and the $H_{\omega}^{p}\left(\mathbb{R}^{n}\right)$ boundedness of the Bochner-Riesz means, respectively, as follows.

\begin{theorem}\label{th:Lee01}
	\textup{Let $\omega\in A_{1}$. Suppose that $0<p\le 1$ and $\delta>n/p-\left(n+1\right)/2$. Then there exists a constant $C>0$ independent of $f$ such that}
	\begin{align*}
		\left\|B_{\ast}^{\delta}\left(f\right)\right\|_{L_{\omega}^{p}\left(\mathbb{R}^{n}\right)}\le C\left\|f\right\|_{H_{\omega}^{p}\left(\mathbb{R}^{n}\right)}.
	\end{align*}
\end{theorem}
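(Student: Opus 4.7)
The plan is to reduce the boundedness to a uniform estimate on weighted atoms and then to dominate $B_*^\delta(a)$ by exploiting, on the one hand, the $L^q_\omega$ boundedness of $B_*^\delta$ on a ball slightly larger than the support of $a$, and, on the other hand, the sharp decay of the kernel $\phi$ combined with the vanishing moments of $a$ away from its support. More precisely, I would first invoke the atomic decomposition of $H^p_\omega(\mathbb R^n)$: any $f\in H^p_\omega(\mathbb R^n)$ can be written as $f=\sum_j\lambda_j a_j$ with convergence in $H^p_\omega(\mathbb R^n)$, where each $a_j$ is a weighted $(p,q,s)_\omega$-atom supported in a ball $B_j$, $q$ is chosen large (e.g.\ $q=2$) and $s\ge\lfloor n(q_\omega/p-1)\rfloor$ for the critical reflexive index $q_\omega$ of $\omega\in A_1$. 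Since $B_*^\delta$ is sublinear and $L^p_\omega$ is $p$-convex (for $0<p\le1$), it suffices to prove
\begin{align*}
	\|B_*^\delta(a)\|_{L^p_\omega(\mathbb R^n)}\le C
\end{align*}
uniformly in all such atoms $a$.

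Fix an atom $a$ supported in $B:=B(x_0,r)$ and split $\mathbb R^n=(2B)\cup(2B)^c$. On $2B$, I would use the weighted $L^q$ boundedness of the maximal Bochner--Riesz means (valid for $q>1$ and $\omega\in A_1\subset A_q$), combined with H\"older's inequality in the $A_1$ weight and the atomic size estimate $\|a\|_{L^q_\omega}\le\omega(B)^{1/q-1/p}$, to conclude
\begin{align*}
	\int_{2B}\bigl[B_*^\delta(a)(x)\bigr]^p\omega(x)\,dx\le C.
\end{align*}
This is the easier half and only uses the standard $L^q_\omega$ theory of $B_*^\delta$.

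The main obstacle is the off-diagonal estimate on $(2B)^c$. Here I would use the well-known pointwise bound
\begin{align*}
	|\phi_\varepsilon(x)|+\varepsilon|\nabla\phi_\varepsilon(x)|+\cdots\le C\frac{\varepsilon^{\delta+1/2}}{(\varepsilon+|x|)^{n+\delta+1/2}},
\end{align*}
and, more generally, analogous decay for derivatives of order $\le s+1$. For $x\in(2B)^c$ and $y\in B$ I would subtract the Taylor polynomial $P_{s}^{x_0}\phi_\varepsilon(x-\cdot)$ of order $s$ at $x_0$ from $\phi_\varepsilon(x-\cdot)$, use the vanishing moments of $a$ up to order $s$, and bound the remainder by $C r^{s+1}$ times the derivative estimate evaluated at some intermediate point. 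Taking the supremum over $\varepsilon>0$ leads to a bound of the form
\begin{align*}
	B_*^\delta(a)(x)\le C\,\|a\|_{L^1}\,\frac{r^{s+1}}{|x-x_0|^{n+s+1}}\wedge\frac{r^{s+1}}{|x-x_0|^{n+\delta+1/2}}\,\text{-type mixtures},
\end{align*}
which, after optimizing in $\varepsilon$, yields the crucial tail decay $|x-x_0|^{-(n+\delta+1/2)}$ provided $s+1\ge\delta+1/2$. Inserting this into $\int_{(2B)^c}(\cdot)^p\omega$, using the $A_1$ control $\omega(B(x_0,R))\lesssim(R/r)^{nq_\omega}\omega(B)$ and the atomic normalization, the geometric series in dyadic annuli $2^kB\setminus 2^{k-1}B$ converges precisely when $p(n+\delta+1/2)>nq_\omega$, i.e.\ when $\delta>nq_\omega/p-(n+1)/2$, which is the stated hypothesis in the unweighted limit $q_\omega=1$ for $A_1$ weights.

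The hardest step is verifying the uniform kernel estimates and Taylor remainder bounds for $\phi_\varepsilon$ across all $\varepsilon>0$ in a way that survives the supremum defining $B_*^\delta$; once that is in hand, summing contributions and recombining the atomic decomposition via the $p$-triangle inequality gives the desired $H^p_\omega\to L^p_\omega$ bound. The proof of the $H^p_\omega$ boundedness of $B_{1/\varepsilon}^\delta$ itself then follows from a parallel argument in which the pointwise estimate $B_*^\delta(a)(x)$ is replaced by a pointwise bound on the grand maximal function of $B_{1/\varepsilon}^\delta(a)$.
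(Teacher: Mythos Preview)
The paper does not prove Theorem~\ref{th:Lee01}; it is quoted in the introduction as a known result of Lee~\cite{Lee}. Your outline is precisely the classical argument Lee uses: atomic decomposition of $H^p_\omega$, reduction via the $p$-subadditivity of $\|\cdot\|_{L^p_\omega}^p$ to a uniform bound $\|B_*^\delta(a)\|_{L^p_\omega}\le C$, then the local/tail split with the local piece controlled by the $L^q_\omega$ boundedness of $B_*^\delta$ (valid since $\delta>(n-1)/2$ and $\omega\in A_1\subset A_q$) and the tail by subtracting a Taylor polynomial of the kernel, invoking the vanishing moments of $a$ and the decay of $D^\alpha\phi$ recorded in Lemma~\ref{le:phi(B-R kernel)}. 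The $A_1$ growth bound $\omega(2^kB)\lesssim 2^{kn}\omega(B)$ makes the dyadic tail sum converge exactly when $\delta>n/p-(n+1)/2$, so the threshold you identify is the right one. (Your displayed kernel estimate has the exponents shifted by $n/2$ relative to the correct $|\phi_\varepsilon(x)|\lesssim \varepsilon^{\delta-(n-1)/2}(\varepsilon+|x|)^{-(n+1)/2-\delta}$, but the homogeneity is preserved and the argument is unaffected once you carry the Taylor remainder correctly.)

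If you want a point of comparison with what the present paper actually proves, look at the proof of Theorem~\ref{th:MB-R Hx-WX}. There the same local/tail split appears, but the tail is organized as a pointwise domination $B_*^\delta(a_j)(x)\chi_{(4Q_j)^c}(x)\lesssim \|\chi_{Q_j}\|_X^{-1}[M(\chi_{Q_j})(x)]^{(n+1+2\delta)/(2n)}$ and then summed through a weak-type Fefferman--Stein vector-valued inequality rather than by a direct dyadic-annulus computation in a fixed weight. That route buys generality in $X$ at the cost of landing in $WX$ rather than $X$; for a single $A_1$ weight your direct approach is both simpler and yields the stronger conclusion.
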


\begin{theorem}\label{th:Lee02}
	\textup{Let $\omega\in A_{1}$ with critical index $r_{\omega}$ for the reverse H{\"o}lder condition, $0<p\le 1$, $\delta>\max{\left\{n/p-\left(n+1\right)/2,\left[n/p\right]r_{\omega}/\left(r_{\omega}-1\right)-\left(n+1\right)/2\right\}}$. Then there exists a constant $C>0$ independent of $f$ and $\varepsilon$ such that}
	\begin{align*}
		\left\|B_{1/\varepsilon}^{\delta}\left(f\right)\right\|_{H_{\omega}^{p}\left(\mathbb{R}^{n}\right)}\le C\left\|f\right\|_{H_{\omega}^{p}\left(\mathbb{R}^{n}\right)}.
	\end{align*}
\end{theorem}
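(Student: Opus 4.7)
The plan is to proceed via atomic decomposition. Any $f \in H_\omega^p(\mathbb{R}^n)$ admits a decomposition $f = \sum_j \lambda_j a_j$ with $\sum_j |\lambda_j|^p \lesssim \|f\|_{H_\omega^p}^p$, where each $a_j$ is a weighted $(p,q,s)$-atom supported in some ball $B_j$, for $q$ and $s$ chosen sufficiently large. By the $p$-quasi-triangle inequality for the Hardy quasi-norm, it suffices to establish the uniform bound $\|B_{1/\varepsilon}^{\delta}(a)\|_{H_\omega^p} \le C$ for every such atom $a$, with $C$ independent of $a$ and of $\varepsilon$. I would prove this by verifying that $B_{1/\varepsilon}^{\delta}(a)$ is, up to a uniform multiplicative constant, a weighted $(p,q,s)$-molecule centered at the center of $B$; the standard molecular characterization of $H_\omega^p$ then yields the conclusion.

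The verification of the molecular conditions rests on kernel estimates of the form $|\phi(x)| \lesssim (1+|x|)^{-(n+1)/2-\delta}$ together with analogous bounds on its derivatives. Decomposing $\mathbb{R}^n = 2B \cup \bigcup_{k\ge 1}(2^{k+1}B \setminus 2^k B)$, I would treat $2B$ by the $L_\omega^q$-boundedness of $B_{1/\varepsilon}^{\delta}$ (which holds since $\delta > (n-1)/2$ under our hypothesis and $\omega \in A_1 \subset A_q$) combined with weighted H\"older and the atomic size condition. On each dyadic annulus $S_k = 2^{k+1}B \setminus 2^k B$ with $k \ge 1$, I would Taylor-expand $y \mapsto \phi_\varepsilon(x-y)$ around the center $x_0$ of $B$ and exploit the vanishing moments of $a$ up to order $s$; this produces a saving of roughly $(r/|x-x_0|)^{s+1}$, where $r$ is the radius of $B$. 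Integrating in Lebesgue measure over $S_k$ and then passing to the weighted measure via the reverse H\"older inequality with exponent $r_\omega$ introduces the factor $r_\omega/(r_\omega-1)$ and produces the required geometric decay in $k$.

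The main obstacle will be arranging uniformity in $\varepsilon$ while simultaneously balancing the three parameters $q$, $s$, and the reverse-H\"older exponent. Because the scale of $\phi_\varepsilon$ varies against the radius $r$ of $B$, separate bookkeeping is needed in the regimes $\varepsilon \lesssim r$ and $\varepsilon \gg r$, and in the latter one must be careful not to lose powers of $\varepsilon/r$ when summing over annuli. The precise hypothesis $\delta > \max\{n/p-(n+1)/2,\,[n/p]\,r_\omega/(r_\omega-1)-(n+1)/2\}$ is exactly what makes both the unweighted tail and the reverse-H\"older adjusted weighted sum converge: the first threshold reproduces the classical Sj\"olin--Stein--Taibleson--Weiss condition on the unweighted $H^p$, while the second accounts for the loss incurred in converting Lebesgue integrals into $\omega$-integrals over the enlarged balls. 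Once the molecular estimate has been established with the correct dependence on $k$, the desired $H_\omega^p$ bound follows at once.
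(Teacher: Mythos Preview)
The paper does not actually prove this statement: Theorem~\ref{th:Lee02} is quoted in the Introduction as a known result of Lee~\cite{Lee}, not re-derived here. So there is no proof in the paper to compare against.

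That said, your outline matches the standard argument used for this result. Lee's proof proceeds exactly by atomic decomposition and a molecular estimate: one shows that $B_{1/\varepsilon}^{\delta}(a)$ is, up to a uniform constant, a $(p,q,s)$-molecule for the weighted Hardy space, using the kernel decay $|\phi(x)|\lesssim(1+|x|)^{-(n+1)/2-\delta}$, the Taylor expansion against the vanishing moments of $a$, and the reverse H\"older inequality to pass from Lebesgue to $\omega$-measure. The two thresholds in the hypothesis on $\delta$ arise for the reasons you identify. One technical caveat worth flagging: the reduction ``it suffices to bound $\|B_{1/\varepsilon}^{\delta}(a)\|_{H_\omega^p}$ uniformly over atoms'' is not automatic for arbitrary sublinear operators on Hardy-type spaces (cf.\ Bownik's counterexample), so in a full write-up you would need either a finite-atomic density argument or to invoke that $B_{1/\varepsilon}^{\delta}$ is a bounded convolution operator on $L^2$ so that the atomic sum is handled termwise in an appropriate sense.
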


The above Theorems \ref{th:Lee01} and \ref{th:Lee02}  for the general weighted Hardy spaces $H_{\omega}^{p}\left(\mathbb{R}^{n}\right)$ were extended by Tan \cite{Tan}. Furthermore, Tan \cite{Tan} also obtained the boundnedness of $B_{1/\varepsilon}^{\delta}$ and $B_{\ast}^{\delta}$ on the variable Hardy spaces $H^{p(\cdot)}\left(\mathbb{R}^{n}\right)$.
Wang \cite{Wang} proved the boundedness of $B_{1/\varepsilon}^{\delta}$ and $B_{\ast}^{\delta}$ for the weighted weak Hardy spaces $WH_{\omega}^{p}\left(\mathbb{R}^{n}\right)$ as follows.

\begin{theorem}
	\textup{Let $0<p\le 1$, $\delta>n/p-\left(n+1\right)/2$ and $\omega\in A_{1}$. Then there exists a constant $C>0$ independent of $f$ such that}
	\begin{align*}
		\left\|B_{\ast}^{\delta}\left(f\right)\right\|_{WL_{\omega}^{p}\left(\mathbb{R}^{n}\right)}\le C\left\|f\right\|_{WH_{\omega}^{p}\left(\mathbb{R}^{n}\right)}.
	\end{align*}
\end{theorem}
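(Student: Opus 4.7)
\medskip

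\noindent\textbf{Proof strategy.} The plan is to combine the atomic characterization of the weighted weak Hardy space $WH_\omega^p(\mathbb{R}^n)$ with pointwise kernel estimates for the maximal Bochner--Riesz operator, following the classical Fefferman--Soria weak-type decomposition in the weighted setting. First I would invoke the $(p,q,s)$-atomic decomposition: given $f\in WH_\omega^p(\mathbb{R}^n)$, there exist scalars $\{\lambda_{k,j}\}_{k\in\mathbb{Z},j}$, balls $\{B_{k,j}\}$, and weighted $(p,q,s)$-atoms $\{a_{k,j}\}$ with $\mathrm{supp}\,a_{k,j}\subset B_{k,j}$, $\|a_{k,j}\|_{L^q_\omega}\lesssim \omega(B_{k,j})^{1/q-1/p}$, and $\int x^\gamma a_{k,j}\,dx=0$ for $|\gamma|\le s$, such that $f=\sum_k\sum_j \lambda_{k,j}a_{k,j}$, $|\lambda_{k,j}|\lesssim 2^k\,\omega(B_{k,j})^{1/p}$, and $\sum_j \chi_{B_{k,j}}$ has uniformly bounded overlap in $j$ for each $k$. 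I would fix $q>1$ and choose the moment order $s$ large enough that the condition $\delta>n/p-(n+1)/2$ matches the Sj\"olin--Stein--Taibleson--Weiss molecule condition after accounting for the doubling exponent of $\omega\in A_1$.

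Next, for each $\lambda>0$ I would pick $k_0\in\mathbb{Z}$ with $2^{k_0}\le\lambda<2^{k_0+1}$ and split $f=F_1+F_2$, where $F_1:=\sum_{k\le k_0}\sum_j \lambda_{k,j}a_{k,j}$ collects the low-level atoms and $F_2$ the high-level ones. For $F_2$, the plan is to apply the strong $(H^{p_1}_\omega,L^{p_1}_\omega)$-boundedness of $B_*^\delta$ from Theorem~\ref{th:Lee01}, choosing $p_1\in(p,1]$ so that $\delta>n/p_1-(n+1)/2$. Chebyshev's inequality, together with the atomic bound $\sum_j \omega(B_{k,j})\lesssim 2^{-kp}\|f\|_{WH_\omega^p}^p$ and the summation of the geometric series $\sum_{k>k_0}2^{k(p_1-p)}\lesssim\lambda^{p_1-p}$, then yields $\omega(\{B_*^\delta F_2>\lambda/2\})\lesssim \lambda^{-p}\|f\|_{WH_\omega^p}^p$.

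For $F_1$, set $\mathcal{U}:=\bigcup_{k\le k_0}\bigcup_j 2B_{k,j}$. The bounded-overlap property and the level-set estimate above give $\omega(\mathcal{U})\lesssim\sum_{k\le k_0}2^{-kp}\|f\|_{WH_\omega^p}^p\lesssim\lambda^{-p}\|f\|_{WH_\omega^p}^p$, so it suffices to prove the pointwise estimate $B_*^\delta F_1(x)\lesssim\lambda$ for $x\notin\mathcal{U}$. Off $\mathcal{U}$, I would use the standard polynomial-decay bound for the Bochner--Riesz kernel acting on an atom with vanishing moments to produce, uniformly in $\varepsilon$, an estimate of the form $|\phi_\varepsilon\ast a_{k,j}(x)|\lesssim \omega(B_{k,j})^{-1/p}(r_{k,j}/|x-x_{k,j}|)^{n+s+1}\wedge(\cdots)$, the two regimes corresponding to $\varepsilon$ larger or smaller than $r_{k,j}$. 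Multiplying by $|\lambda_{k,j}|\sim 2^k\omega(B_{k,j})^{1/p}$, invoking $A_1$ to control the resulting sums via the maximal function, and summing over $j$ (bounded overlap) and $k\le k_0$ (geometric in $2^k$) produces $B_*^\delta F_1(x)\lesssim 2^{k_0}\lesssim\lambda$.

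The main obstacle is the pointwise kernel estimate for $B_*^\delta$ on a single atom away from its enlarged support, uniformly in the dilation parameter $\varepsilon$. This requires the sharp decay $|\phi(x)|\lesssim (1+|x|)^{-(n+1)/2-\delta}$, the cancellation from the vanishing moments, and a careful balance against the reverse H\"older exponent $r_\omega$ attached to $\omega\in A_1$; the strict inequality $\delta>n/p-(n+1)/2$ provides precisely the room needed to absorb the weighted measure ratios and to fix $(q,s)$ in the atomic decomposition. Once this kernel estimate is in hand, assembling the good and bad parts gives the claimed weak-type inequality.
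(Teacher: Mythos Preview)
The paper does not give its own proof of this statement. The theorem appears in the introduction solely as a quoted result attributed to Wang \cite{Wang}, listed among the historical theorems (Theorems~1.1--1.7) that motivate the paper's new results in the ball quasi-Banach framework (Theorems~\ref{th:B-R Hx-WHx}--\ref{th:MB-R Hx-WX}). There is therefore nothing in this paper to compare your argument against.

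That said, your outline is the standard Fefferman--Soria level-set scheme for weak Hardy spaces and is, as far as one can tell, essentially the route taken in Wang's original paper: atomic decomposition of $WH_\omega^p$, a split of $f$ at the scale $2^{k_0}\sim\lambda$, strong-type control of the high-amplitude part $F_2$ via Lee's $H_\omega^{p_1}\to L_\omega^{p_1}$ bound (Theorem~\ref{th:Lee01} here) for some $p_1\in(p,1]$, and a pointwise bound $B_*^\delta F_1(x)\lesssim\lambda$ off the union of dilated balls using the kernel decay $|\phi(x)|\lesssim(1+|x|)^{-(n+1)/2-\delta}$ together with the vanishing moments of the atoms. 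The strict inequality $\delta>n/p-(n+1)/2$ is exactly what makes the off-support sum over $j$ and $k\le k_0$ converge after the $A_1$ comparison $\omega(B)/\omega(\lambda B)\gtrsim\lambda^{-n}$. Your sketch is correct in substance; just be aware that the paper itself treats this theorem as background, not as something it proves.
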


\begin{theorem}
	\textup{Let $0<p\le 1$, $\delta>n/p-\left(n+1\right)/2$ and $\omega\in A_{1}$. Suppose that $\delta-\left(n-1\right)/2$ is not a positive integer, then there exists a constant $C>0$ independent of $f$ and $\varepsilon$ such that}
	\begin{align*}
		\left\|B_{1/\varepsilon}^{\delta}\left(f\right)\right\|_{WH_{\omega}^{p}\left(\mathbb{R}^{n}\right)}\le C\left\|f\right\|_{WH_{\omega}^{p}\left(\mathbb{R}^{n}\right)}.
	\end{align*}
\end{theorem}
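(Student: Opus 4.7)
The plan is to combine a weighted weak atomic (or molecular) decomposition of $WH_\omega^p(\mathbb{R}^n)$ with sharp pointwise estimates for the Bochner--Riesz kernel $\phi_\varepsilon$. Throughout I use that the $WH_\omega^p$-norm is comparable to the weak-$L_\omega^p$ norm of the grand (or radial) maximal function $N$, so the goal reduces to showing
\[
\sup_{\lambda>0} \lambda^p\,\omega\bigl(\{x\in\mathbb{R}^n : N(B_{1/\varepsilon}^\delta f)(x) > \lambda\}\bigr) \le C\|f\|_{WH_\omega^p}^p
\]
uniformly in $\varepsilon>0$.

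First I would decompose $f = \sum_{k \in \mathbb{Z}}\sum_i \lambda_{k,i} a_{k,i}$ into weighted $(p,\infty,s)_\omega$-atoms supported on balls $B_{k,i}$ with $|\lambda_{k,i}| \lesssim 2^k \omega(B_{k,i})^{1/p}$, bounded overlap of $\{B_{k,i}\}_i$ at each fixed $k$, and $\omega(\bigcup_i B_{k,i}) \lesssim 2^{-kp}\|f\|_{WH_\omega^p}^p$, choosing $s$ large in terms of $\delta$, $p$, $n$, and the reverse-H\"older index $r_\omega$ of $\omega$. For a fixed level $\lambda > 0$ I split $f = F_\lambda + G_\lambda$ according as $2^k \le \lambda$ or $2^k > \lambda$; it then suffices to control the low-level and high-level contributions separately.

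For the low-level piece $F_\lambda$, I choose $q > 1$ so that $\delta > n(1/p - 1/q)$; the $L_\omega^q$-boundedness of $N \circ B_{1/\varepsilon}^\delta$ (a consequence of Lee's weighted theory, Theorem \ref{th:Lee01}, together with extrapolation from the $A_1$ hypothesis), combined with the $(p,q)_\omega$-size estimate on atoms and Chebyshev's inequality, yields a bound summable in $k$ over $2^k \le \lambda$ to $C\lambda^{-p}\|f\|_{WH_\omega^p}^p$. For the high-level piece $G_\lambda$, I treat each atom $a_{k,i}$ individually: on a fixed dilate $B_{k,i}^\ast$ of its support, I again use the $L_\omega^q$-boundedness together with $\omega(\bigcup_i B_{k,i}^\ast) \lesssim 2^{-kp}\|f\|_{WH_\omega^p}^p$; off $B_{k,i}^\ast$, I exploit the Bessel-type asymptotic expansion
\[
\phi(x) = \mathrm{Re}\Bigl(\sum_{j=0}^{N} c_j\, e^{2\pi i |x|}\, |x|^{-(n+1)/2-\delta-j}\Bigr) + O\bigl(|x|^{-(n+1)/2-\delta-N-1}\bigr),
\]
subtract the Taylor polynomial of $\phi_\varepsilon(x-\cdot)$ of order $s$ at the centre of $B_{k,i}$, and invoke the vanishing moments of $a_{k,i}$. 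This produces pointwise decay of $B_{1/\varepsilon}^\delta(a_{k,i})(x)$ of order $r_{k,i}^{s+1}|x-c_{k,i}|^{-n-s-1}$ uniformly in $\varepsilon$, which lifts to a similar bound for $N(B_{1/\varepsilon}^\delta a_{k,i})$ by a routine maximal-function argument. Summing over $k,i$ and applying the $A_1$ condition closes the high-level estimate.

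The principal obstacle is precisely this last uniform-in-$\varepsilon$ off-support bound, and it is where the hypothesis that $\delta - (n-1)/2$ is not a positive integer enters: it guarantees that the Bessel asymptotics of $\phi$ do not degenerate into logarithmic or resonant terms, so that the coefficients $c_j$ stay well-behaved and the geometric series in $j$ arising after integration against an atom converges uniformly at every scale $\varepsilon$. Without this non-integer hypothesis, one would have to perform a delicate endpoint analysis of the Bessel function asymptotics, which is unnecessary here.
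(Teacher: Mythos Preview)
This theorem appears in the paper only as a background result attributed to Wang \cite{Wang}; the paper provides no proof of its own, so there is nothing to compare your argument against directly. The paper's own contributions (Theorems~\ref{th:B-R Hx-WHx}--\ref{th:MB-R Hx-WX}) start from the \emph{strong} Hardy space $H_X$ rather than the weak one, and hence use a simpler atomic decomposition without the level-splitting mechanism that is essential here.

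Your outline is the standard route for this kind of $WH_\omega^p\to WH_\omega^p$ estimate and is essentially how Wang's original proof proceeds: weak-Hardy atomic decomposition with the dyadic-level structure, split at height $\lambda$, $L_\omega^q$-control on the low-frequency part via Chebyshev, and pointwise decay off the support for the high-frequency part using vanishing moments and the Bessel asymptotics of $\phi$. Your explanation of why the non-integer hypothesis on $\delta-(n-1)/2$ is needed is correct: it prevents logarithmic terms in the asymptotic expansion of the Bochner--Riesz kernel, so the coefficients $c_j$ remain finite and the off-support pointwise bound is uniform in $\varepsilon$. One small correction: the $L_\omega^q$-boundedness of $N\circ B_{1/\varepsilon}^\delta$ that you invoke for the low-level piece does not come from Theorem~\ref{th:Lee01} (which is $H_\omega^p\to L_\omega^p$); rather, since $\delta>n/p-(n+1)/2\ge (n-1)/2$, the kernel $\phi$ is integrable, $B_{1/\varepsilon}^\delta$ is bounded on $L_\omega^q$ for any $\omega\in A_q\supset A_1$, and composing with the maximal function (also $L_\omega^q$-bounded) gives what you need directly.
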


In 2019, Wang et al. \cite{WangLiuWang} obtained the boundedness of $B_{1/\varepsilon}^{\delta}$ for the weak Musielak-Orlicz Hardy spaces $WH^{\varphi}\left(\mathbb{R}^{n}\right)$.  The boundedness of $B_{\ast}^{\delta}$ for weak Musielak-Orlicz Hardy spaces was established in \cite{WangQiuWang}. Moreover, Ho studied the boundedness of $B_{\ast}^{\delta}$ on the weighted Hardy spaces with variable exponents $H_{\omega}^{p\left(\cdot\right)}\left(\mathbb{R}^{n}\right)$, the Hardy local Morrey spaces with variable exponents $HL\mathcal{M}_{u}^{p\left(\cdot\right)}\left(\mathbb{R}^{n}\right)$, the Orlicz-slice Hardy spaces $\left(HE_{\Phi}^{q}\right)_{t}\left(\mathbb{R}^{n}\right)$ and the Hardy--Morrey spaces with variable exponents $\mathcal{M}\mathcal{H}_{p\left(\cdot\right),u}\left(\mathbb{R}^{n}\right)$ respectively in \cite{Ho2019}, \cite{Ho2021}, \cite{Ho2021-2} and \cite{Hotoappear}.
It is well known that the frame of quasi-Banach function spaces fails to include some function spaces such as the Morrey spaces. Therefore, together with the motivation of extending quasi-Banach function spaces, Sawano et al. \cite{SawanoHoYang} introduced ball quasi-Banach function spaces. Let $X(\mathbb{R}^{n})$ be a ball quasi-Banach function space. Moreover, Sawano et al. \cite{SawanoHoYang} also introduced the Hardy space $H_{X}\left(\mathbb{R}^{n}\right)$ (see Definition \ref{de:the Hardy type space} below) by the Peetre-type maximal function. Furthermore, Zhang et al. \cite{ZhangYangYuan} introduced the weak Hardy-type space $WH_{X}\left(\mathbb{R}^{n}\right)$ (see Definition \ref{de:the weak Hardy-type space} below) via the radial grand maximal function, and established some real-variable characterizations of $WH_{X}\left(\mathbb{R}^{n}\right)$.
We refer the reader to \cite{CWYZ,SYY,WYYZ} for more studies on (weak) Hardy spaces associated with $X(\mathbb{R}^{n})$.

In this article, we obtain the following weak type estimates for the Bochner--Riesz means and the maximal Bochner--Riesz means on the Hardy space $H_{X}\left(\mathbb{R}^{n}\right)$ associated with $X\left(\mathbb{R}^{n}\right)$.

\begin{theorem}\label{th:B-R Hx-WHx}
	\textup{Let $0<\theta<s \le 1$, $q\in\left(1,\infty\right)$ and $\delta>\frac{n-1}{2}$. Assume that $X$ is a ball quasi-Banach function space satisfying Assumptions \ref{as:Fefferman-Stein}, \ref{as:(X^{1/s})'} and \ref{as:M is bounded on (WX)^{1/r}}. Assume that $X^{1/s}$ is a ball Banach function space. Let $B_{1/\varepsilon}^{\delta}$ be a Bochner-Riesz means with $\delta$ order on $\mathbb{R}^{n}$. If $\theta\in\left[\frac{2n}{n+1+2\delta},1\right)$ and there exists a positive constant $C_{0}$ such that for any $\alpha\in\left(0,\infty\right)$ and any sequence $\left\{f_{j}\right\}_{j\in\mathbb{N}}\subset \mathscr{M}\left(\mathbb{R}^{n}\right)$,}
	\begin{align}\label{eq:weak-type Fefferman-Stein}
		\alpha\left\|\chi_{\left\{x\in\mathbb{R}^{n}:\left\{\sum_{j\in\mathbb{N}}^{}\left[M\left(f_{j}\right)\left(x\right) \right]^{\frac{n+1+2\delta}{2n}} \right\}^{\frac{2n}{n+1+2\delta}}>\alpha \right\}} \right\|_{X^{\frac{n+1+2\delta}{2n}}} \le C_{0}\left\|\left(\sum_{j\in\mathbb{N}}^{}\left|f_{j} \right|^{\frac{n+1+2\delta}{2n}} \right)^{\frac{2n}{n+1+2\delta}} \right\|_{X^{\frac{n+1+2\delta}{2n}}}, 
	\end{align}
	\textup{then $B_{1/\varepsilon}^{\delta}$ has a unique extension on $H_{X}\left(\mathbb{R}^{n}\right)$. Moreover, there exists a positive constant $C$ such that for any $f\in H_{X}\left(\mathbb{R}^{n}\right)$,}
	\begin{align}\label{eq:B-R Hx-WHx}
		\left\|B_{1/\varepsilon}^{\delta}\left(f\right)\right\|_{WH_{X}\left(\mathbb{R}^{n}\right)} \le C\left\|f\right\|_{H_{X}\left(\mathbb{R}^{n}\right)}.
	\end{align}
\end{theorem}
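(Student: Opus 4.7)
The plan is to establish the weak-type bound via the atomic decomposition of $H_X(\mathbb{R}^n)$, a uniform pointwise estimate for $B_{1/\varepsilon}^{\delta}$ on atoms, and the weak-type Fefferman--Stein inequality \eqref{eq:weak-type Fefferman-Stein}. Denote $s_{\ast} := (n+1+2\delta)/(2n)$; the hypothesis $\delta > (n-1)/2$ makes $s_{\ast} > 1$, and $1/s_{\ast} = 2n/(n+1+2\delta)$ is precisely the lower endpoint of $\theta$ in the statement. Following \cite{SawanoHoYang}, any $f \in H_X(\mathbb{R}^n)$ admits a decomposition $f = \sum_j \lambda_j a_j$, where each $a_j$ is an $(X,q,d)$-atom supported in a ball $B_j = B(x_j, r_j)$ with $d$ chosen large enough to carry all needed vanishing moments, and the atomic quasi-norm $\|(\sum_j (|\lambda_j|/\|\chi_{B_j}\|_X)^{s} \chi_{B_j})^{1/s}\|_X$ is comparable to $\|f\|_{H_X(\mathbb{R}^n)}$.

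The central analytic step is a pointwise estimate, uniform in $\varepsilon > 0$,
\begin{align*}
	|B_{1/\varepsilon}^{\delta}(a_j)(x)| \lesssim \|\chi_{B_j}\|_X^{-1}\,[M(\chi_{B_j})(x)]^{s_{\ast}}, \qquad x \in \mathbb{R}^n.
\end{align*}
On $2B_j$ this follows from the $L^q$-boundedness of $B_{1/\varepsilon}^{\delta}$ (valid for every $q \in (1,\infty)$ since $\delta > (n-1)/2$) combined with the atomic size estimate and $M(\chi_{B_j}) \simeq 1$ there. On the complement $(2B_j)^c$ I would exploit the kernel decay $|\phi(z)| \lesssim (1+|z|)^{-(n+1)/2-\delta}$ together with the vanishing moments of $a_j$ via a Taylor expansion of $\phi_{\varepsilon}(x-\cdot)$ at $x_j$, yielding $|B_{1/\varepsilon}^{\delta}(a_j)(x)| \lesssim \|\chi_{B_j}\|_X^{-1} (r_j/|x-x_j|)^{(n+1)/2+\delta}$; since $M(\chi_{B_j})(x) \simeq (r_j/|x-x_j|)^n$ off $2B_j$ and $n s_{\ast} = (n+1)/2+\delta$, the claimed inequality follows.

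Setting $\mu_j := |\lambda_j|/\|\chi_{B_j}\|_X$ and $g_j := \mu_j^{1/s_{\ast}}\chi_{B_j}$, so that $[M(g_j)]^{s_{\ast}} = \mu_j [M(\chi_{B_j})]^{s_{\ast}}$, summing the above bound produces $|B_{1/\varepsilon}^{\delta}(f)(x)| \lesssim \sum_j [M(g_j)(x)]^{s_{\ast}}$. The inclusion $\{|B_{1/\varepsilon}^{\delta} f| > \alpha\} \subset \{(\sum_j [M(g_j)]^{s_{\ast}})^{1/s_{\ast}} > (\alpha/C)^{1/s_{\ast}}\}$, together with the identity $\|\chi_E\|_{X^{s_{\ast}}} = \|\chi_E\|_X^{1/s_{\ast}}$, turns hypothesis \eqref{eq:weak-type Fefferman-Stein} (after raising to the $s_{\ast}$-th power) into
\begin{align*}
	\alpha \left\|\chi_{\{|B_{1/\varepsilon}^{\delta} f| > \alpha\}}\right\|_X \lesssim \left\|\sum_j \mu_j \chi_{B_j}\right\|_X \le \left\|\left(\sum_j \mu_j^{s}\chi_{B_j}\right)^{1/s}\right\|_X \sim \|f\|_{H_X(\mathbb{R}^n)},
\end{align*}
where the middle inequality uses the pointwise bound $\sum_j c_j \le (\sum_j c_j^s)^{1/s}$ valid for $s \le 1$. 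Passing from this $WX$-estimate to the $WH_X$-bound amounts to repeating the pointwise analysis with the radial grand maximal operator placed in front of $B_{1/\varepsilon}^{\delta}$ (the outer Peetre-type composition against a Schwartz bump preserves the decay profile of $\phi$), and then invoking the radial grand maximal characterization of $WH_X(\mathbb{R}^n)$ from \cite{ZhangYangYuan}.

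The chief obstacle is the critical endpoint $\theta = 2n/(n+1+2\delta)$: the strong vector-valued Fefferman--Stein inequality (Assumption \ref{as:Fefferman-Stein}) fails there in general, so the entire argument must funnel through its weak substitute \eqref{eq:weak-type Fefferman-Stein}; this is precisely why \eqref{eq:weak-type Fefferman-Stein} is imposed in the hypotheses. A secondary point, the uniqueness of the extension of $B_{1/\varepsilon}^{\delta}$ from finite atomic combinations to all of $H_X(\mathbb{R}^n)$, is handled by a routine density argument once \eqref{eq:B-R Hx-WHx} is verified on the dense set.
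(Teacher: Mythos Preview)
Your overall strategy---atomic decomposition, pointwise control of $B_{1/\varepsilon}^{\delta}$ on atoms via kernel decay and vanishing moments, then the weak-type vector inequality \eqref{eq:weak-type Fefferman-Stein}---is the same as the paper's. However, there is a genuine gap in your on-ball estimate. You claim that $|B_{1/\varepsilon}^{\delta}(a_j)(x)| \lesssim \|\chi_{B_j}\|_X^{-1}$ for $x\in 2B_j$ ``follows from the $L^q$-boundedness of $B_{1/\varepsilon}^{\delta}$''. It does not: an $L^q$ bound with $q<\infty$ gives no pointwise control, and an $(X,q,d)$-atom need not lie in $L^\infty$. The paper circumvents this in two ways. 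First, it uses $(X,\infty,d)$-atoms (so $\|a_j\|_{L^\infty}\le\|\chi_{Q_j}\|_X^{-1}$). Second, and more importantly, it does \emph{not} attempt a pointwise bound on the near part at all: it splits $\sum_j\lambda_j M(B_{1/\varepsilon}^{\delta}a_j,\Phi)$ into the piece on $4Q_j$ and the piece on $(4Q_j)^c$, bounds the former in the \emph{strong} $X$-norm via Lemma~\ref{le:atom2} (which needs only $\|M(B_{1/\varepsilon}^{\delta}a_j,\Phi)\chi_{4Q_j}\|_{L^q}\lesssim|Q_j|^{1/q}/\|\chi_{Q_j}\|_X$), and reserves the weak-type inequality \eqref{eq:weak-type Fefferman-Stein} for the far piece only. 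Your unified pointwise approach would be salvaged by switching to $(X,\infty,d)$-atoms and invoking $L^\infty$-boundedness of $B_{1/\varepsilon}^{\delta}$ (the kernel is in $L^1$ when $\delta>(n-1)/2$), but as written the argument fails.

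Two smaller points. (i) The interchange $B_{1/\varepsilon}^{\delta}(f)=\sum_j\lambda_j B_{1/\varepsilon}^{\delta}(a_j)$ is not automatic; the paper justifies it by embedding $X\hookrightarrow L^s_\omega(\mathbb{R}^n)$ (Lemma~\ref{le:X-subset-the weighted Lebesgue space}) and using the known $(H^s_\omega,WH^s_\omega)$ boundedness of $B_{1/\varepsilon}^{\delta}$ (Lemma~\ref{le:B-R H_w^s-WH_w^s}). (ii) Your passage from the $WX$-estimate to the $WH_X$-estimate (``the outer composition against a Schwartz bump preserves the decay profile'') is exactly the content of Lemma~\ref{le:phi^t} in the paper, which shows that $\phi^{(t)}:=\phi\ast\Phi_t$ satisfies the same size and Taylor-remainder bounds as $\phi$, uniformly in $t$; the paper applies this lemma directly to $M(B_{1/\varepsilon}^{\delta}a_j,\Phi)$ rather than arguing in two stages.
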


\begin{theorem}\label{th:B-R Hx-WX}
	\textup{Let $0<\theta<s \le 1$, $q\in\left(1,\infty\right)$ and $\delta>\frac{n-1}{2}$. Assume that $X$ is a ball quasi-Banach function space satisfying Assumptions \ref{as:Fefferman-Stein}, \ref{as:(X^{1/s})'} and \ref{as:M is bounded on (WX)^{1/r}}. Assume that $X^{1/s}$ is a ball Banach function space. Let $B_{1/\varepsilon}^{\delta}$ be a Bochner--Riesz means with $\delta$ order on $\mathbb{R}^{n}$. If $\theta\in\left[\frac{2n}{n+1+2\delta},1\right)$ and there exists a positive constant $C_{0}$ such that for any $\alpha\in\left(0,\infty\right)$ and any sequence $\left\{f_{j}\right\}_{j\in\mathbb{N}}\subset \mathscr{M}\left(\mathbb{R}^{n}\right)$, (\ref{eq:weak-type Fefferman-Stein}) holds true, then $B_{1/\varepsilon}^{\delta}$ has a unique extension on $H_{X}\left(\mathbb{R}^{n}\right)$. Moreover, there exists a positive constant $C$ such that for any $f\in H_{X}\left(\mathbb{R}^{n}\right)$,}
	\begin{align}\label{eq:B-R Hx-WX}
		\left\|B_{1/\varepsilon}^{\delta}\left(f\right)\right\|_{WX(\mathbb{R}^{n})} \le C\left\|f\right\|_{H_{X}\left(\mathbb{R}^{n}\right)}.
	\end{align}
\end{theorem}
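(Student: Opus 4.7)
The plan is to deduce Theorem \ref{th:B-R Hx-WX} directly from Theorem \ref{th:B-R Hx-WHx} by exploiting the fact that the weak Hardy space $WH_X(\mathbb{R}^n)$, being defined via the radial grand maximal function, is continuously embedded into $WX(\mathbb{R}^n)$ on the subspace of sufficiently regular functions.

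First I would observe that for $f \in H_X(\mathbb{R}^n) \subset \mathcal{S}'(\mathbb{R}^n)$ and $\phi_\varepsilon \in \mathcal{S}(\mathbb{R}^n)$, the Bochner--Riesz mean $B_{1/\varepsilon}^{\delta} f = f \ast \phi_\varepsilon$ is a genuine smooth function, not merely a distribution. Next, for any continuous function $g$, the Lebesgue differentiation theorem applied to the Schwartz test function $\psi$ used to define the grand maximal function (normalized so that $\int \psi = 1$) yields the pointwise bound $|g(x)| \le M_{\psi}(g)(x)$ for almost every $x \in \mathbb{R}^n$. Consequently, $\|g\|_{WX(\mathbb{R}^n)} \le \|g\|_{WH_X(\mathbb{R}^n)}$ for such $g$. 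Applying this to $g = B_{1/\varepsilon}^{\delta} f$ and invoking Theorem \ref{th:B-R Hx-WHx} gives
$$\bigl\|B_{1/\varepsilon}^{\delta} f\bigr\|_{WX(\mathbb{R}^n)} \le \bigl\|B_{1/\varepsilon}^{\delta} f\bigr\|_{WH_X(\mathbb{R}^n)} \le C\|f\|_{H_X(\mathbb{R}^n)},$$
which is exactly the desired inequality (\ref{eq:B-R Hx-WX}). The uniqueness of the extension to $H_X(\mathbb{R}^n)$ is inherited from Theorem \ref{th:B-R Hx-WHx}, since both extensions agree on the dense subspace of Schwartz functions with sufficient vanishing moments.

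Should this embedding argument be unavailable in the precise technical setting adopted by the paper, the fallback is to run the atomic-decomposition argument behind Theorem \ref{th:B-R Hx-WHx}, but estimating the target in $WX$ directly. One writes $f = \sum_j \lambda_j a_j$ as a sum of $(X,q,d)$-atoms, splits the sum at a given level $\alpha$ into a ``small-coefficient'' part $F_1^{\alpha}$ and a ``large-coefficient'' part $F_2^{\alpha}$, decomposes the kernel as $\phi = \sum_{k \ge 0} \phi^{k}$ with $\phi^k$ localized in the dyadic annulus $\{|x| \sim 2^k\}$, and invokes the weak-type vector-valued Fefferman--Stein inequality (\ref{eq:weak-type Fefferman-Stein}). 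The choice $\theta = 2n/(n+1+2\delta)$ is designed precisely so that $\delta = n/\theta - (n+1)/2$ is the critical Bochner--Riesz index, at which point the moment conditions of the atoms interact cleanly with the decay of $\phi^k$. This path is strictly easier than Theorem \ref{th:B-R Hx-WHx}, since no additional maximal function has to be controlled on the left-hand side.

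The main obstacle in either route is the critical-index nature of the hypothesis: at $\delta = n/\theta - (n+1)/2$, standard strong-type atomic estimates fail by a logarithm, and one must carefully track the cancellation between the vanishing moments of $(X,q,d)$-atoms and the kernel decay of the pieces $\phi^k$. The reduction to Theorem \ref{th:B-R Hx-WHx} is therefore attractive because it bypasses this critical-index analysis by reusing what was already established, leaving only the routine embedding step to verify.
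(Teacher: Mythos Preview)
Your primary route---deducing Theorem~\ref{th:B-R Hx-WX} from Theorem~\ref{th:B-R Hx-WHx} via the pointwise bound $|g|\lesssim M_N^0(g)$---is a legitimate shortcut that the paper does not take, but your justification contains an error: the Bochner--Riesz kernel $\phi_\varepsilon$ is \emph{not} in $\mathcal{S}(\mathbb{R}^n)$. Its Fourier transform $(1-|\varepsilon\xi|^2)_+^\delta$ is compactly supported but only $C^{\lfloor\delta\rfloor}$ at the boundary $|\xi|=1/\varepsilon$, so $\phi_\varepsilon$ is smooth with decay only of order $(1+|x|)^{-(n+1)/2-\delta}$ (cf.\ Lemma~\ref{le:phi(B-R kernel)}), not rapid decay. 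Consequently $f*\phi_\varepsilon$ is not automatically defined as a smooth function for arbitrary $f\in\mathcal{S}'$, and your claim that $B_{1/\varepsilon}^{\delta}f$ is smooth for all $f\in H_X$ is unjustified. The fix is straightforward: restrict first to the dense subspace $H_X\cap L^2$ (Remark~\ref{re:density}), where $B_{1/\varepsilon}^{\delta}f\in L^2\subset L^1_{\mathrm{loc}}$ by the $L^2$-boundedness of $B_{1/\varepsilon}^{\delta}$, so Lebesgue differentiation gives $|B_{1/\varepsilon}^{\delta}f|\lesssim M_N^0(B_{1/\varepsilon}^{\delta}f)$ a.e., and then pass to the closure.

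By contrast, the paper does not reduce to Theorem~\ref{th:B-R Hx-WHx}; it reruns the atomic argument directly in $WX$. After decomposing $f=\sum_j\lambda_j a_j$ into $(X,2,d)$-atoms, the paper splits $|B_{1/\varepsilon}^{\delta}a_j|$ spatially into the local piece on $4Q_j$ (handled by $L^2$-boundedness of $B_{1/\varepsilon}^{\delta}$ and Lemma~\ref{le:atom2}) and the nonlocal piece on $(4Q_j)^c$ (handled by a Taylor expansion of $\phi$ at order $N=[n(1/p_0-1)]$ together with the vanishing moments of $a_j$ and the weak-type inequality~(\ref{eq:weak-type Fefferman-Stein})). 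There is no dyadic decomposition of the kernel and no small/large-coefficient splitting as in your fallback Route~2. Your reduction buys brevity by recycling Theorem~\ref{th:B-R Hx-WHx}; the paper's direct argument is more self-contained and makes the $WX$ estimate independent of the $WH_X$ machinery.
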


\begin{theorem}\label{th:MB-R Hx-WX}
	\textup{Let $0<\theta<s \le 1$, $q\in\left(1,\infty\right)$ and $\delta>\frac{n-1}{2}$. Assume that $X$ is a ball quasi-Banach function space satisfying Assumptions \ref{as:Fefferman-Stein}, \ref{as:(X^{1/s})'} and \ref{as:M is bounded on (WX)^{1/r}}. Assume that $X^{1/s}$ is a ball Banach function space. Let $B_{\ast}^{\delta}$ be the maximal Bochner--Riesz means with $\delta$ order on $\mathbb{R}^{n}$. If $\theta\in\left[\frac{2n}{n+1+2\delta},1\right)$ and there exists a positive constant $C_{0}$ such that for any $\alpha\in\left(0,\infty\right)$ and any sequence $\left\{f_{j}\right\}_{j\in\mathbb{N}}\subset \mathscr{M}\left(\mathbb{R}^{n}\right)$, (\ref{eq:weak-type Fefferman-Stein}) holds true, then $B_{\ast}^{\delta}$ has a unique extension on $H_{X}\left(\mathbb{R}^{n}\right)$. Moreover, there exists a positive constant $C$ such that for any $f\in H_{X}\left(\mathbb{R}^{n}\right)$,}
	\begin{align}\label{eq:MB-R Hx-WX}
		\left\|B_{\ast}^{\delta}\left(f\right)\right\|_{WX(\mathbb{R}^{n})} \le C\left\|f\right\|_{H_{X}\left(\mathbb{R}^{n}\right)}.
	\end{align}
\end{theorem}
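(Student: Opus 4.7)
The plan is to prove Theorem \ref{th:MB-R Hx-WX} by coupling an atomic decomposition of $H_X(\mathbb{R}^n)$ with the standard kernel estimate for $B_\ast^\delta$ and the hypothesized weak-type vector-valued Fefferman--Stein inequality (\ref{eq:weak-type Fefferman-Stein}). The starting point is the $(X,q,d)$-atomic characterization of $H_X(\mathbb{R}^n)$ from \cite{SawanoHoYang}: any $f\in H_X(\mathbb{R}^n)$ admits a decomposition $f=\sum_{j\in\mathbb{N}}\lambda_j a_j$ with each $a_j$ supported in a ball $B_j$, having appropriate vanishing moments, and satisfying the size condition $\|a_j\|_{L^q}\lesssim |B_j|^{1/q}\|\chi_{B_j}\|_X^{-1}$, together with $\bigl\|\sum_j(\lambda_j/\|\chi_{B_j}\|_X)\chi_{B_j}\bigr\|_X\lesssim\|f\|_{H_X(\mathbb{R}^n)}$. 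By sublinearity of $B_\ast^\delta$ it suffices to bound $\|B_\ast^\delta(f)\|_{WX(\mathbb{R}^n)}$ in terms of this atomic quantity.

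The first substantive step is to establish a pointwise estimate on each atom. Writing $B_\ast^\delta(a_j)=\phi_+^\ast(a_j)$ and exploiting the kernel decay $|\phi_\varepsilon(x)|\lesssim \varepsilon^{-n}(1+\varepsilon^{-1}|x|)^{-(n+1)/2-\delta}$ together with the vanishing moments of $a_j$, one should obtain the off-diagonal bound
\begin{align*}
|B_\ast^\delta(a_j)(x)|\lesssim \|\chi_{B_j}\|_X^{-1}\bigl[M(\chi_{B_j})(x)\bigr]^{(n+1+2\delta)/(2n)}\qquad(x\notin 2B_j),
\end{align*}
while on $2B_j$ the $L^q$-boundedness of $B_\ast^\delta$ yields $\|B_\ast^\delta(a_j)\chi_{2B_j}\|_{L^q}\lesssim |B_j|^{1/q}\|\chi_{B_j}\|_X^{-1}$, which combined with doubling of $M$ gives the same pointwise shape uniformly in $\varepsilon>0$. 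The restriction $\theta\in[2n/(n+1+2\delta),1)$ enters precisely to guarantee that the atomic moment order $d$ provided by Assumption \ref{as:(X^{1/s})'} is compatible with the critical exponent $(n+1+2\delta)/(2n)$.

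Setting $c_j:=|\lambda_j|/\|\chi_{B_j}\|_X$ and summing over $j$, the pointwise bound becomes
\begin{align*}
|B_\ast^\delta(f)(x)|\lesssim \sum_{j\in\mathbb{N}}c_j\bigl[M(\chi_{B_j})(x)\bigr]^{(n+1+2\delta)/(2n)}=:G(x)^{(n+1+2\delta)/(2n)},
\end{align*}
where $G:=\bigl(\sum_j M(f_j)^{(n+1+2\delta)/(2n)}\bigr)^{(2n)/(n+1+2\delta)}$ with $f_j:=c_j^{(2n)/(n+1+2\delta)}\chi_{B_j}$ is exactly the sublinear object on the left-hand side of (\ref{eq:weak-type Fefferman-Stein}). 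The level-set inclusion $\{B_\ast^\delta(f)>\alpha\}\subset\{G>(\alpha/C)^{(2n)/(n+1+2\delta)}\}$, combined with the convexification identity $\|\chi_A\|_X=\|\chi_A\|_{X^{(n+1+2\delta)/(2n)}}^{(n+1+2\delta)/(2n)}$, reduces the desired estimate to the hypothesis (\ref{eq:weak-type Fefferman-Stein}), whose right-hand side collapses (because $\chi_{B_j}^{(n+1+2\delta)/(2n)}=\chi_{B_j}$) to $\|\sum_j c_j\chi_{B_j}\|_X$; one thereby obtains
\begin{align*}
\alpha\,\bigl\|\chi_{\{B_\ast^\delta(f)>\alpha\}}\bigr\|_X\lesssim \Bigl\|\sum_jc_j\chi_{B_j}\Bigr\|_X\lesssim\|f\|_{H_X(\mathbb{R}^n)},
\end{align*}
and taking the supremum over $\alpha>0$ gives (\ref{eq:MB-R Hx-WX}).

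The main obstacle is the critical nature of the exponent $(n+1+2\delta)/(2n)$: at this endpoint the off-diagonal estimate on atoms saturates the kernel decay and produces only a weak-$L^{(n+1+2\delta)/(2n)}$ type contribution, so a strong Fefferman--Stein inequality cannot close the bound and only the weak-type hypothesis (\ref{eq:weak-type Fefferman-Stein}) suffices; this is the structural reason the conclusion lives in $WX(\mathbb{R}^n)$ rather than $X(\mathbb{R}^n)$. A secondary technical issue, shared with Theorems \ref{th:B-R Hx-WHx} and \ref{th:B-R Hx-WX}, is the density/extension argument: $B_\ast^\delta$ is only sublinear and initially defined on Schwartz functions, so one works first with finite atomic sums and then extends continuously to all of $H_X(\mathbb{R}^n)$ in the weak-$WX$ topology, using the uniform bound above together with convergence of the atomic decomposition in $H_X(\mathbb{R}^n)$.
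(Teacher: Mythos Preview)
Your overall strategy---atomic decomposition, off-diagonal pointwise decay of $B_\ast^\delta(a_j)$ via the kernel estimate (Lemma~\ref{le:phi(B-R kernel)}) and moment cancellation, then closure through the weak-type vector-valued inequality (\ref{eq:weak-type Fefferman-Stein})---matches the paper's, and the treatment of the far part is essentially correct.

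The gap is in the local part. From the $L^q$ estimate $\|B_\ast^\delta(a_j)\chi_{2B_j}\|_{L^q}\lesssim |B_j|^{1/q}\|\chi_{B_j}\|_X^{-1}$ you cannot deduce a pointwise bound $|B_\ast^\delta(a_j)(x)|\lesssim \|\chi_{B_j}\|_X^{-1}[M(\chi_{B_j})(x)]^{(n+1+2\delta)/(2n)}$ on $2B_j$; the phrase ``combined with doubling of $M$ gives the same pointwise shape'' is not a valid step. Hence the global pointwise inequality $|B_\ast^\delta(f)(x)|\lesssim\sum_j c_j[M(\chi_{B_j})(x)]^{(n+1+2\delta)/(2n)}$ is unjustified, and you cannot feed the whole sum into (\ref{eq:weak-type Fefferman-Stein}) in one stroke. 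The paper avoids this by splitting $\alpha\|\chi_{\{B_\ast^\delta(f)>\alpha\}}\|_X\lesssim I_1+I_2$: the near term $I_1=\bigl\|\sum_j\lambda_j B_\ast^\delta(a_j)\chi_{4Q_j}\bigr\|_X$ is bounded in the \emph{strong} $X$-norm via Lemma~\ref{le:atom2}, which needs only the $L^q$ size condition $\|B_\ast^\delta(a_j)\chi_{4Q_j}\|_{L^q}\lesssim|Q_j|^{1/q}\|\chi_{Q_j}\|_X^{-1}$ together with Assumption~\ref{as:(X^{1/s})'}; only the far term $I_2$ uses the pointwise off-diagonal estimate and (\ref{eq:weak-type Fefferman-Stein}). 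An alternative repair of your argument is to take $(X,\infty,d)$-atoms: since $\delta>\frac{n-1}{2}$ gives $B_\ast^\delta(a_j)=\phi_+^\ast(a_j)\lesssim M(a_j)\le\|a_j\|_{L^\infty}\le\|\chi_{B_j}\|_X^{-1}$ and $M(\chi_{B_j})\sim 1$ on $2B_j$, the global pointwise bound then does hold and your single-piece route goes through.
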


The remainder of this paper is organized as follows. In Section 2, we first recall some notions and known results concerning the (weak) ball quasi-Banach function spaces. Then we show some assumptions on the Hardy--Littlewood maximal operator and recall the notions of Hardy type spaces and weak Hardy type spaces respectively. Finally, we recall some lemmas concerning Bochner--Riesz means. In Section 3, we present the proofs of main results (Theorems \ref{th:B-R Hx-WHx}, \ref{th:B-R Hx-WX} and \ref{th:MB-R Hx-WX}) in this article. In Section 4, we apply all the results to several examples of ball quasi-Banach function spaces, namely, weighted Lebesgue spaces, Herz spaces, Lorentz spaces, variable Lebesgue spaces and Morrey spaces, respectively, in Subsections 4.1-4.5.

Finally, we make some conventions on notation. In this paper, we always denote by $C$ a positive constant which is independent of the main parameters, but it may vary from line to line. We also use $C_{\left(\alpha,\beta,...\right)}$ to denote a positive constant depending on the indicated parameters $\alpha,\beta,...$. The symbol $A\lesssim B$ means that $A\le CB$. If $A\lesssim B$ and $B\lesssim A$, then we write $A\sim B$. The symbol $\lfloor s \rfloor$ for $s\in\mathbb{R}^{n}$ denotes the largest integer not greater than $s$, and $\lceil s \rceil$ denotes the smallest integer not less than $s$. For any subset $E$ of $\mathbb{R}^{n}$, we denote by $E^{c}$ the set $\mathbb{R}^{n}\setminus E$ and by $\chi_{E}$ its characteristic function. Let $\mathbb{N}:= \left\{1,2,...\right\}$, $\mathbb{Z}_{+}:=\mathbb{N}\cup \left\{0\right\}$. For any $q\in \left[1,\infty\right]$, we denote by $q^{\prime}$ its conjugate exponent with $1/q+1/{q^{\prime}}=1$.

\section{Preliminaries}

In this section, we present some notions and known results concerning the ball quasi-Banach function spaces and the weak ball quasi-Banach function spaces. Then we show some assumptions on the Hardy--Littlewood maximal operator. In Definitions \ref{de:the Hardy type space} and \ref{de:the weak Hardy-type space}, we recall the notions of Hardy type spaces $H_{X}\left(\mathbb{R}^{n}\right)$ and weak Hardy type spaces $WH_{X}\left(\mathbb{R}^{n}\right)$ respectively. Finally, we recall some lemmas concerning Bochner-Riesz means.

\subsection{Weak ball quasi-Banach function spaces.}

Denote by $\mathscr{M}(\mathbb{R}^{n})$ the set of all measurable functions on $\mathbb{R}^{n}$. Before presenting the notion of weak ball quasi-Banach function spaces, we first recall the concepts of Banach function spaces and ball quasi-Banach function spaces. 
For the sake of simplicity, $Y(\mathbb{R}^{n})=:Y$, where $Y$ is any (quasi-)Banach function spaces or ball (quasi-)Banach function spaces on $\mathbb R^n$.

\begin{definition}\label{de:Banach function space}
	\textup{\citep[Chapter 1, Definition 1.1 and 1.3]{BennettSharpley} A Banach function space $Y\subset \mathscr{M}(\mathbb{R}^{n})$ is called a Banach function space if it satisfies}
	\begin{itemize}
		\item[$\left( {\rm \romannumeral1} \right)$]
		\textup{$\left\|f\right\|_{Y}=0$ if and only if $f=0$ almost everywhere;}
		\item[$\left( {\rm \romannumeral2} \right)$]
		\textup{$\left|g\right| \le \left|f\right|$ almost everywhere implies that $\left\|g\right\|_{Y} \le \left\|f\right\|_{Y}$;}
		\item[$\left( {\rm \romannumeral3} \right)$]
		\textup{$0 \le f_{m}\uparrow f$ almost everywhere implies that $\left\|f_{m}\right\|_{Y} \uparrow \left\|f\right\|_{Y}$;}
		\item[$\left( {\rm \romannumeral4} \right)$]
		\textup{$\chi_{E} \in Y$ for any measurable set $E \subset \mathbb{R}^{n}$ with finite measure;}
		\item[$\left( {\rm \romannumeral5} \right)$]
		\textup{for any measurable set $E \subset \mathbb{R}^{n}$ with finite measure, there exists a positive constant $C_{\left(E\right)}$, depending on $E$, such that, for all $f\in Y$,}
		\begin{align}\label{eq:definition Banach function space}
			\int_{E} \left|f\left(x\right)\right|\,dx \le C_{\left(E\right)} \left\|f\right\|_{Y},
		\end{align}
	\end{itemize}
	\textup{where $f$ and $g$ are measurable functions. In fact, the above conditions ensure that the norm $\left\| \cdot \right\|_{Y}$ is a Banach function norm.}
\end{definition}

%\begin{remark}\label{re:certain problems in quasi-Banach spaces}
%\textup{It was pointed out in \cite{SawanoHoYang} that we encounter certain problems in quasi-Banach spaces. These problems are mainly reflected in two aspects. The first is that, as we all know, we use function spaces to describe the quantity and the quality of functions. Banach function spaces are used to describe the quantity of functions. However, some spaces that are important in describing the quality of functions (not Banach function spaces in general, such as the Morrey space $\mathcal{M}_{q}^{p}\left(\mathbb{R}^{n}\right)$ with $1\le q<p<\infty$) violate $\left(\ref{eq:definition Banach function space}\right)$. The second is that we need to handle more and more delicate quasi-Banach function spaces to prove the boundedness of operators.}
%\textup{The two problems above are the motivations for Sawano et al. \cite{SawanoHoYang} to introduce ball quasi-Banach function spaces, which extend quasi-Banach function spaces further so that Morrey spaces and some other related spaces are included in this generalized framework.}\end{remark}

For $x\in \mathbb{R}^{n}$ and $r\in \left(0,\infty\right)$, let $B\left(x,r\right) := \left\{y\in \mathbb{R}^{n}: \left|x-y\right|<r \right\}$, and
\begin{align}\label{eq:B(x,r)}
	\mathbb{B}:=\left\{B\left(x,r\right):x\in\mathbb{R}^{n}\ {\rm and}\ r\in\left(0,\infty\right)\right\}.
\end{align}
We now present the notion of ball quasi-Banach function spaces as follows \cite{SawanoHoYang}.

\begin{definition}\label{de:ball quasi-Banach function space}
	\textup{A quasi-Banach space $X\subset \mathscr{M}(\mathbb{R}^{n})$ is called a ball quasi-Banach function space if it satisfies}
	\begin{itemize}
		\item[$\left( {\rm \romannumeral1} \right)$]
		\textup{$\left\|f\right\|_{X}=0$ implies that $f=0$ almost everywhere;}
		\item[$\left( {\rm \romannumeral2} \right)$]
		\textup{$\left|g\right| \le \left|f\right|$ almost everywhere implies that $\left\|g\right\|_{X} \le \left\|f\right\|_{X}$;}
		\item[$\left( {\rm \romannumeral3} \right)$]
		\textup{$0 \le f_{m}\uparrow f$ almost everywhere implies that $\left\|f_{m}\right\|_{X} \uparrow \left\|f\right\|_{X}$;}
		\item[$\left( {\rm \romannumeral4} \right)$]
		\textup{$B\in \mathbb{B}$ implies that $\chi_{B}\in X$, where $\mathbb{B}$ is as in (\ref{eq:B(x,r)}).}
	\end{itemize}
\end{definition}

For any ball Banach function space $X$, the associate space (K{\"o}the dual) $X'$ is defined by setting
\begin{align}\label{eq:X'}
	X':=\left\{f\in\mathscr{M}\left(\mathbb{R}^{n}\right):\left\|f\right\|_{X'}:=\sup \left\{\left\|fg\right\|_{L^{1}\left(\mathbb{R}^{n}\right)}:g\in X,\left\|g\right\|_{X}=1 \right\}<\infty \right\},
\end{align}
where $\left\|\cdot \right\|_{X'}$ is called the associate norm of $\left\|\cdot \right\|_{X}$.

\begin{remark}\label{re:X'}
	\textup{By \citep[Proposition 2.3]{SawanoHoYang}, we know that, if $X$ is a ball Banach function space, then its associate space $X'$ is also a ball Banach function space.}
\end{remark}

Then we recall the notions of the convexity and the concavity of ball quasi-Banach function spaces.

\begin{definition}\label{de:convexity-concavity}
	\textup{\citep[Definition 1.d.3]{LindenstraussTzafriri} Let $X$ be a ball quasi-Banach function space and $p\in\left(0,\infty\right)$.}
	\begin{itemize}
		\item[$\left( {\rm \romannumeral1} \right)$]
		\textup{The $p$-convexification $X^{p}$ of $X$ is defined by setting}
		\begin{align*}
			X^{p}:=\left\{f\in\mathscr{M}\left(\mathbb{R}^{n}\right): \left|f\right|^{p}\in X \right\}
		\end{align*}
	    \textup{equipped with the quasi-norm $\left\|f\right\|_{X^{p}}:=\left\|\left|f\right|^{p} \right\|_{X}^{1/p}$.}
		\item[$\left( {\rm \romannumeral2} \right)$]
		\textup{The space $X$ is said to be $p$-concave if there exists a positive constant $C$ such that for any sequence $\left\{f_{j}\right\}_{j\in\mathbb{N}}$ of $X^{1/p}$,}
		\begin{align*}
			\sum_{j\in\mathbb{N}} \left\|f_{j}\right\|_{X^{1/p}} \le C\left\|\sum_{j\in\mathbb{N}} \left|f_{j}\right| \right\|_{X^{1/p}}.
		\end{align*}
	    \textup{Particularly, $X$ is said to be strictly $p$-concave when $C=1$.}
	\end{itemize}
\end{definition}

Next we present the notion of weak ball quasi-Banach function spaces as follows \citep[Definition 2.8 and Remark 2.9]{ZhangYangYuan}.

\begin{definition}\label{de:WX}
	\textup{Let $X$ be a ball quasi-Banach function space. The weak ball quasi-Banach function space $WX$ is defined to be the set of all measurable functions $f$ satisfying}
	\begin{align}\label{eq:WX}
		\left\|f\right\|_{WX}:=\sup\limits_{\alpha\in\left(0,\infty\right)} \left[\alpha\left\|\chi_{\left\{x\in\mathbb{R}^{n}:\left|f\left(x\right)\right|>\alpha \right\}}\right\|_{X} \right]<\infty.
	\end{align}
\end{definition}

\begin{remark}\label{re:X-subset-WX}
	\begin{itemize}
		\item[$\left( {\rm \romannumeral1} \right)$]
		\textup{Let $X$ be a ball quasi-Banach function space. For any $f\in X$ and $\alpha\in\left(0,\infty\right)$, we have $\chi_{\left\{x\in\mathbb{R}^{n}:\left|f\left(x\right)\right|>\alpha\right\}}\left(x\right)\le \left|f\left(x\right)\right|/\alpha$ for any $x\in\mathbb{R}^{n}$, which, together with Definition \ref{de:ball quasi-Banach function space} $\left({\rm \romannumeral2} \right)$, further implies that}
		\begin{align*}
			\sup\limits_{\alpha\in\left(0,\infty\right)} \left[\alpha\left\|\chi_{\left\{x\in\mathbb{R}^{n}:\left|f\left(x\right)\right|>\alpha \right\}}\right\|_{X} \right] \le \left\|f\right\|_{X}.
		\end{align*}
		\textup{This shows that $X\subset WX$.}
        \item[$\left( {\rm \romannumeral2} \right)$]
		\textup{Let $f,g\in WX$ with $\left|f\right|\le\left|g\right|$. By Definition \ref{de:ball quasi-Banach function space} $\left({\rm \romannumeral2} \right)$, we conclude that $\left\|f\right\|_{WX}\le\left\|g\right\|_{WX}$.}
	\end{itemize}
\end{remark}

\begin{lemma}\label{le:quasi-norm on WX}
	\textup{\citep[Lemma 2.10]{ZhangYangYuan} Let $X$ be a ball quasi-Banach function space. Then $\left\|\cdot\right\|_{WX}$ is a quasi-norm on $WX$, namely,}
	\begin{itemize}
		\item[$\left( {\rm \romannumeral1} \right)$]
		\textup{$\left\|f\right\|_{WX}=0$ if and only if $f=0$ almost everywhere.}
		\item[$\left( {\rm \romannumeral2} \right)$]
		\textup{For any $\lambda\in\mathbb{C}$ and $f\in WX$,}
		\begin{align*}
			\left\|\lambda f\right\|_{WX}=\left|\lambda\right|\left\|f\right\|_{WX}.
		\end{align*}
		\item[$\left( {\rm \romannumeral3} \right)$]
		\textup{For any $f,g\in WX$, there exists a positive constant $C$ such that}
		\begin{align*}
			\left\|f+g\right\|_{WX} \le C\left[\left\|f\right\|_{WX}+\left\|g\right\|_{WX} \right].
		\end{align*}
	    \textup{Moreover, if $p\in\left(0,\infty\right)$ and $X^{1/p}$ is a ball Banach function space, then}
	    \begin{align*}
	    	\left\|f+g\right\|_{WX}^{1/p} \le 2^{\max{\left\{1/p,1\right\}}} \left[\left\|f\right\|_{WX}^{1/p}+\left\|g\right\|_{WX}^{1/p} \right].
	    \end{align*}
	\end{itemize}
\end{lemma}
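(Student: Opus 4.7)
All three parts of the lemma follow in a routine way from the definition $\|f\|_{WX}=\sup_{\alpha>0}\alpha\|\chi_{\{|f|>\alpha\}}\|_X$ together with Definition \ref{de:ball quasi-Banach function space}. For (i), if $f=0$ almost everywhere then $\chi_{\{|f|>\alpha\}}=0$ a.e.\ for every $\alpha>0$, so property (i) of Definition \ref{de:ball quasi-Banach function space} yields $\|\chi_{\{|f|>\alpha\}}\|_X=0$ and hence $\|f\|_{WX}=0$; conversely, if $\|f\|_{WX}=0$ then $\|\chi_{\{|f|>1/k\}}\|_X=0$ for every $k\in\mathbb{N}$, which forces each set $\{|f|>1/k\}$ to be null and therefore $f=0$ a.e. after taking a countable union. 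For (ii), the identity $\{|\lambda f|>\alpha\}=\{|f|>\alpha/|\lambda|\}$ together with the substitution $\beta=\alpha/|\lambda|$ in the supremum produces the homogeneity when $\lambda\neq0$, and the case $\lambda=0$ is trivial.

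For the first inequality of (iii), the key input is the pointwise estimate
\[
\chi_{\{|f+g|>\alpha\}}\le\chi_{\{|f|>\alpha/2\}}+\chi_{\{|g|>\alpha/2\}},
\]
which follows from the set inclusion $\{|f+g|>\alpha\}\subset\{|f|>\alpha/2\}\cup\{|g|>\alpha/2\}$. Applying property (ii) of Definition \ref{de:ball quasi-Banach function space} and the quasi-norm triangle inequality of $\|\cdot\|_X$ (with some constant $K$ built into the definition of ball quasi-Banach function space), I obtain $\|\chi_{\{|f+g|>\alpha\}}\|_X\le K[\|\chi_{\{|f|>\alpha/2\}}\|_X+\|\chi_{\{|g|>\alpha/2\}}\|_X]$. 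Multiplying by $\alpha=2\cdot(\alpha/2)$, taking the supremum over $\alpha$, and using the definition of $\|\cdot\|_{WX}$ on each term on the right then produces $\|f+g\|_{WX}\le 2K[\|f\|_{WX}+\|g\|_{WX}]$, which gives the claimed constant $C=2K$.

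For the second inequality, the hypothesis that $X^{1/p}$ is a ball Banach function space replaces the quasi-triangle constant $K$ by the exact triangle inequality of $\|\cdot\|_{X^{1/p}}$. Since $\chi_E^{1/p}=\chi_E$, the identity $\|\chi_E\|_{X^{1/p}}=\|\chi_E\|_X^{p}$ holds, so applying the triangle inequality of $X^{1/p}$ to the same pointwise estimate above yields
\[
\|\chi_{\{|f+g|>\alpha\}}\|_X^{p}\le\|\chi_{\{|f|>\alpha/2\}}\|_X^{p}+\|\chi_{\{|g|>\alpha/2\}}\|_X^{p}.
\]
Multiplying through by $\alpha^{p}=2^{p}(\alpha/2)^{p}$ and taking the supremum gives the clean intermediate estimate $\|f+g\|_{WX}^{p}\le 2^{p}[\|f\|_{WX}^{p}+\|g\|_{WX}^{p}]$, which is the heart of the matter.

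The last step is to convert this intermediate estimate into the stated form $\|f+g\|_{WX}^{1/p}\le 2^{\max\{1/p,1\}}[\|f\|_{WX}^{1/p}+\|g\|_{WX}^{1/p}]$ by raising appropriately and applying the elementary scalar inequalities $(a+b)^{r}\le a^{r}+b^{r}$ for $0<r\le 1$ and $(a+b)^{r}\le 2^{r-1}(a^{r}+b^{r})$ for $r\ge 1$. When $p\ge 1$ (so $1/p^{2}\le 1$), the subadditivity version gives the constant $2^{1/p}\le 2=2^{\max\{1/p,1\}}$ directly. The case $p<1$ is the main obstacle, since there $1/p^{2}>1$ and a naive application of the second inequality inflates the constant; obtaining the sharp bound $2^{1/p}$ requires carefully exploiting the concavity of $t\mapsto t^{p}$ at the appropriate stage so that the $1/p$-th power can be pushed inside the supremum before one pays a convexity cost, an arrangement I expect to be the main technical point of the verification.
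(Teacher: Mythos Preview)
The paper does not prove this lemma; it is simply quoted from \cite[Lemma 2.10]{ZhangYangYuan} without argument. So there is no ``paper's own proof'' to compare your plan against.

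That said, let me comment on your plan. Parts (i), (ii), and the first half of (iii) are handled correctly and in the standard way. For the second half of (iii), your derivation of the intermediate estimate
\[
\|f+g\|_{WX}^{p}\le 2^{p}\bigl[\|f\|_{WX}^{p}+\|g\|_{WX}^{p}\bigr]
\]
via the triangle inequality in $X^{1/p}$ on characteristic functions is clean and correct. However, you then explicitly leave the case $p<1$ open, describing it as ``the main technical point of the verification.'' This is a genuine gap, and in fact it cannot be closed by post-processing your intermediate estimate: for $p<1$ the stated target inequality is \emph{strictly stronger} than your intermediate one. For instance, take $p=1/2$ and $\|f\|_{WX}=\|g\|_{WX}=1$; your intermediate bound gives $\|f+g\|_{WX}\le 8$, while the target $\|f+g\|_{WX}^{2}\le 4(1+1)$ demands $\|f+g\|_{WX}\le 2\sqrt{2}$. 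No amount of scalar manipulation of the form $(a+b)^{r}\lesssim a^{r}+b^{r}$ will bridge that. If the stated inequality with exponent $1/p$ is really what is intended (rather than, say, a typo for exponent $p$, which your argument \emph{would} prove with the better constant $2^{p}$), then a different argument is required for $p<1$, and you should consult \cite{ZhangYangYuan} directly.
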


\begin{lemma}\label{le:X-WX}
	\textup{\citep[Lemma 2.13]{ZhangYangYuan} Let $X$ be a ball quasi-Banach function space. Then the space $WX$ is also a ball quasi-Banach function space.}
\end{lemma}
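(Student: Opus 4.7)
The plan is to verify the four defining properties of a ball quasi-Banach function space (Definition \ref{de:ball quasi-Banach function space}) for $WX$ one by one, leveraging the corresponding properties already known for $X$ together with the tools collected in Lemma \ref{le:quasi-norm on WX} and Remark \ref{re:X-subset-WX}.

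First, property (i) of Definition \ref{de:ball quasi-Banach function space}, namely that $\|f\|_{WX}=0$ forces $f=0$ almost everywhere, is exactly Lemma \ref{le:quasi-norm on WX}(i). Property (ii), the monotonicity $|g|\le|f|$ a.e. $\Rightarrow \|g\|_{WX}\le\|f\|_{WX}$, is recorded in Remark \ref{re:X-subset-WX}(ii). Property (iv) follows immediately from the chain $\chi_{B}\in X\subset WX$, where the inclusion $X\subset WX$ is Remark \ref{re:X-subset-WX}(i) and $\chi_{B}\in X$ is Definition \ref{de:ball quasi-Banach function space}(iv) applied to $X$. So the only property requiring work is (iii), the Fatou-type continuity along increasing sequences.

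For (iii), suppose $0\le f_{m}\uparrow f$ a.e. The key observation is that, for each fixed $\alpha\in(0,\infty)$, the super-level sets satisfy $\{x:f_{m}(x)>\alpha\}\uparrow\{x:f(x)>\alpha\}$, and hence $\chi_{\{f_{m}>\alpha\}}\uparrow\chi_{\{f>\alpha\}}$ a.e. Applying property (iii) of $X$ itself to this monotone sequence of characteristic functions gives
\[
\alpha\bigl\|\chi_{\{f_{m}>\alpha\}}\bigr\|_{X}\uparrow\alpha\bigl\|\chi_{\{f>\alpha\}}\bigr\|_{X}\qquad\text{as }m\to\infty.
\]
From this one side of (iii) is immediate: for every $\alpha>0$,
\[
\alpha\bigl\|\chi_{\{f>\alpha\}}\bigr\|_{X}=\lim_{m\to\infty}\alpha\bigl\|\chi_{\{f_{m}>\alpha\}}\bigr\|_{X}\le \lim_{m\to\infty}\|f_{m}\|_{WX},
\]
and taking the supremum over $\alpha$ yields $\|f\|_{WX}\le\lim_{m\to\infty}\|f_{m}\|_{WX}$. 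The reverse inequality $\|f_{m}\|_{WX}\le\|f\|_{WX}$ for every $m$ is the already-established property (ii) applied to $f_{m}\le f$. Since $\{\|f_{m}\|_{WX}\}_{m}$ is monotone nondecreasing (again by (ii)) and bounded above by $\|f\|_{WX}$, the two estimates together give $\|f_{m}\|_{WX}\uparrow\|f\|_{WX}$, which is exactly property (iii) for $WX$.

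The only delicate point is the interchange of $\sup_{\alpha}$ and $\lim_{m}$ in the argument for (iii), and this is handled painlessly because both operations are monotone in the same direction, so no subtlety in integration or dominated convergence is involved. The remainder of the proof is a mechanical bookkeeping of what has been collected in the earlier lemmas, so I do not expect any real obstacle beyond writing down the monotone-convergence step cleanly.
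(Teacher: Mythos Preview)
Your argument is correct. The paper does not give its own proof of this lemma; it is simply quoted from \cite[Lemma~2.13]{ZhangYangYuan}, so there is no in-paper proof to compare against. Your verification of properties (i), (ii), and (iv) via Lemma~\ref{le:quasi-norm on WX} and Remark~\ref{re:X-subset-WX} is clean, and the monotone-convergence step for (iii) is handled correctly: the set inclusion $\{f_{m}>\alpha\}\uparrow\{f>\alpha\}$ together with property (iii) of $X$ gives the pointwise-in-$\alpha$ limit, and the double monotonicity lets you swap $\sup_{\alpha}$ and $\lim_{m}$ without issue.

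One small point worth recording for completeness: Definition~\ref{de:ball quasi-Banach function space} presupposes that the underlying space is a \emph{quasi-Banach} space, i.e.\ complete with respect to the quasi-norm. You have the quasi-norm from Lemma~\ref{le:quasi-norm on WX}, but completeness of $WX$ is not explicitly addressed. This is not a genuine gap, since completeness follows from the Fatou property (iii) by a standard Riesz--Fischer type argument (pass to a rapidly Cauchy subsequence, use Aoki--Rolewicz to get an equivalent $p$-subadditive quasi-norm, and apply (iii) to the telescoping partial sums). It would do no harm to insert one sentence acknowledging this.
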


Now, we recall the notions of Muckenhoupt weights $A_{p}\left(\mathbb{R}^{n}\right)$ in \cite{Grafakos}.

\begin{definition}\label{de:Muckenhoupt weights}
	\textup{An $A_{p}\left(\mathbb{R}^{n}\right)$-weight $\omega$, with $p\in\left[1,\infty\right)$, is a locally integrable and non-negative function on $\mathbb{R}^{n}$ satisfying that, when $p\in\left(1,\infty\right)$,}
	\begin{align*}
		\sup\limits_{B\in\mathbb{B}} \left[\frac{1}{\left|B\right|} \int_{B} \omega\left(x\right)\,dx \right] \left[\frac{1}{\left|B\right|} \int_{B} \left\{\omega\left(x\right)\right\}^{\frac{1}{1-p}}\,dx \right]^{p-1} <\infty
	\end{align*}
    \textup{and, when $p=1$,}
    \begin{align*}
    	\sup\limits_{B\in\mathbb{B}} \frac{1}{\left|B\right|} \int_{B} \omega\left(x\right)\,dx \left[\left\|\omega^{-1}\right\|_{L^{\infty}\left(B\right)} \right]<\infty,
    \end{align*}
    \textup{where $\mathbb{B}$ is as in (\ref{eq:B(x,r)}). Define $A_{\infty}\left(\mathbb{R}^{n}\right):=\bigcup_{p\in\left[1,\infty\right)} A_{p}\left(\mathbb{R}^{n}\right)$.}
\end{definition}

The following lemma is a powerful consequence of the reverse H{\"o}lder property of $A_{p}$ weights, which is a characterization of all $A_{1}$ weights.

\begin{lemma}\label{le:characterization of all A_1 weights}
	\textup{\citep[Theorem 7.2.7]{Grafakos} Let $\omega$ be an $A_{1}$ weight. Then there exist $0<\varepsilon<1$, a non-negative function $k$ such that $k,k^{-1}\in L^{\infty}$, and a non-negative locally integrable function $f$ that satisfies $M\left(f\right)<\infty$ a.e. such that}
	\begin{align}\label{eq:characterization of all A_1 weights}
		\omega\left(x\right)=k\left(x\right)M\left(f\right)\left(x\right)^{\varepsilon}.
	\end{align}
    \textup{Conversely, given a non-negative function $k$ such that $k,k^{-1}\in L^{\infty}$ and given a non-negative locally integrable function $f$ that satisfies $M\left(f\right)<\infty$ a.e., define $\omega$ via (\ref{eq:characterization of all A_1 weights}). Then $\omega$ is an $A_{1}$ weight that satisfies}
    \begin{align*}
    	\left[\omega\right]_{A_{1}}\le \frac{C_{n}}{1-\varepsilon} \left\|k\right\|_{L^{\infty}} \left\|k^{-1}\right\|_{L^{\infty}},
    \end{align*}
    \textup{where $C_{n}$ is a universal dimensional constant.}
\end{lemma}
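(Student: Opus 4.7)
The statement is the classical Coifman–Rochberg characterization of $A_1$ weights, so my plan would split naturally into two halves: the ``converse'' half, which builds $A_1$ weights from maximal functions, and the ``forward'' half, which extracts such a representation from a given $A_1$ weight using a reverse H\"older inequality.

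For the converse, I would prove Coifman–Rochberg directly: if $f \ge 0$ is locally integrable with $Mf < \infty$ a.e. and $0 < \varepsilon < 1$, then $(Mf)^{\varepsilon}$ lies in $A_{1}$ with constant at most $C_{n}/(1-\varepsilon)$. Fix a ball $B$ and split $f = f_{1}+f_{2}$ with $f_{1}:=f\chi_{2B}$, $f_{2}:=f\chi_{(2B)^{c}}$. For $f_{2}$, a standard geometric argument shows that $Mf_{2}(x) \sim Mf_{2}(y)$ uniformly for $x,y \in B$, so $(Mf_{2})^{\varepsilon}$ is essentially constant on $B$ and its average is dominated by its infimum. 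For $f_{1}$, I would invoke Kolmogorov's inequality: using the weak-type $(1,1)$ bound for $M$, one obtains
\begin{align*}
\frac{1}{|B|}\int_{B} (Mf_{1})^{\varepsilon}\,dx \;\le\; \frac{C_{n}}{1-\varepsilon}\left(\frac{1}{|B|}\int_{2B} f\,dx\right)^{\varepsilon} \;\le\; \frac{C_{n}'}{1-\varepsilon}\,\bigl(\inf_{x\in B}Mf(x)\bigr)^{\varepsilon}.
\end{align*}
Combining the two pieces via the subadditivity inequality $(a+b)^{\varepsilon}\le a^{\varepsilon}+b^{\varepsilon}$ gives the $A_{1}$ bound for $(Mf)^{\varepsilon}$. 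Finally, since $k$ and $k^{-1}$ are in $L^{\infty}$, multiplying by $k$ distorts the $A_{1}$ constant by at most $\|k\|_{L^{\infty}}\|k^{-1}\|_{L^{\infty}}$, producing the stated bound on $[\omega]_{A_{1}}$.

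For the forward direction, I would exploit the reverse H\"older property of $A_{1}$ weights: there exist $s>1$ and a constant $C$ (both depending only on $[\omega]_{A_{1}}$ and $n$) such that for every ball $B$,
\begin{align*}
\left(\frac{1}{|B|}\int_{B}\omega^{s}\,dx\right)^{1/s} \le C\,\frac{1}{|B|}\int_{B}\omega\,dx \le C\,[\omega]_{A_{1}}\,\omega(x)\quad\text{for a.e.\ }x\in B.
\end{align*}
Now set $f := \omega^{s}$ and $\varepsilon := 1/s \in (0,1)$. The displayed estimate, raised to the power $1/s$ and supremized over balls containing $x$, yields $M(f)(x)^{\varepsilon} \le C'\,\omega(x)$ a.e. On the other hand, the Lebesgue differentiation theorem gives $M(f)(x) \ge f(x) = \omega(x)^{s}$, hence $M(f)(x)^{\varepsilon} \ge \omega(x)$ a.e. Consequently the function $k(x) := \omega(x)/M(f)(x)^{\varepsilon}$ satisfies $(C')^{-1} \le k(x) \le 1$ almost everywhere, so $k,k^{-1}\in L^{\infty}$, while local integrability of $\omega^{s}$ (itself a consequence of reverse H\"older) together with $\omega < \infty$ a.e.\ forces $M(f)<\infty$ a.e.; this produces exactly the claimed factorization $\omega = k\cdot M(f)^{\varepsilon}$.

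The main obstacle is really the quantitative control in the converse direction: one must be careful that the Kolmogorov step produces the factor $1/(1-\varepsilon)$ rather than something worse, and that the ``essentially constant on $B$'' step for $Mf_{2}$ is handled uniformly in $B$. For the forward direction the principal non-trivial input is the reverse H\"older inequality for $A_{1}$ weights; once that is granted, the construction of $k$ and $f$ is routine, and the matching upper and lower bounds on $k$ follow immediately from the reverse H\"older estimate and Lebesgue differentiation respectively.
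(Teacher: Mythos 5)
The paper does not prove this lemma at all: it is quoted verbatim from Grafakos [Theorem 7.2.7] and used as a black box, so there is no in-paper argument to compare against. Your proposal is the standard (and correct) Coifman--Rochberg argument --- Kolmogorov plus the splitting $f=f\chi_{2B}+f\chi_{(2B)^{c}}$ for the converse, and reverse H\"older with $f:=\omega^{s}$, $\varepsilon:=1/s$ for the forward direction --- which is essentially the proof in the cited source; the only cosmetic remark is that $M(f)<\infty$ a.e.\ follows directly from your displayed bound $M(f)^{\varepsilon}\le C'\omega$ a.e.\ rather than from local integrability of $\omega^{s}$ alone.
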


\begin{definition}\label{de:the weighted Lebesgue space}
	\textup{Let $p\in\left(0,\infty\right)$ and $\omega\in A_{\infty}\left(\mathbb{R}^{n}\right)$. The weighted Lebesgue space $L_{\omega}^{p}\left(\mathbb{R}^{n}\right)$ is defined to be the set of all measurable functions $f$ such that}
	\begin{align*}
		\left\|f\right\|_{L_{\omega}^{p}\left(\mathbb{R}^{n}\right)}:=\left[\int_{\mathbb{R}^{n}} \left|f\left(x\right)\right|^{p}\omega\left(x\right)\,dx \right]^{\frac{1}{p}}<\infty.
	\end{align*}
\end{definition}

The following lemma plays a vital role in the proof of Theorems \ref{th:B-R Hx-WHx}, \ref{th:B-R Hx-WX} and \ref{th:MB-R Hx-WX} below.

\begin{lemma}\label{le:X-subset-the weighted Lebesgue space}
	\textup{\citep[Lemma 2.17]{ZhangYangYuan} Let $X$ be a ball quasi-Banach function space. Assume that there exists an $s\in\left(0,\infty\right)$ such that $X^{1/s}$ is a ball Banach function space and $M$ is bounded on $\left(X^{1/s}\right)'$. Then there exists an $\epsilon\in\left(0,1\right)$ such that $X$ continuously embeds into $L_{\omega}^{s}\left(\mathbb{R}^{n}\right)$ with $\omega:=\left[M\left(\chi_{B\left({\vec 0}_{n},1\right)}\right)\right]^{\epsilon}$, namely, there exists a positive constant $C$ such that for any $f\in X$,}
	\begin{align*}
		\left\|f\right\|_{L_{\omega}^{s}\left(\mathbb{R}^{n}\right)} \le C\left\|f\right\|_{X}.
	\end{align*}
\end{lemma}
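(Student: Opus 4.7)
The plan is to reduce the embedding, via a generalized Hölder inequality and duality between $X^{1/s}$ and its associate space, to showing that $\omega=[M(\chi_{B({\vec 0}_{n},1)})]^{\epsilon}$ lies in $(X^{1/s})'$ for some suitable $\epsilon\in(0,1)$. For any $f\in X$, the identity $\||f|^{s}\|_{X^{1/s}}=\|f\|_{X}^{s}$ together with the generalized Hölder inequality for ball Banach function spaces (which applies because $X^{1/s}$ is a ball Banach function space by assumption) yields
\begin{align*}
\|f\|_{L^{s}_{\omega}(\R^{n})}^{s}=\int_{\R^{n}}|f(x)|^{s}\omega(x)\,dx\le\||f|^{s}\|_{X^{1/s}}\,\|\omega\|_{(X^{1/s})'}=\|f\|_{X}^{s}\,\|\omega\|_{(X^{1/s})'}.
\end{align*}
Thus it suffices to produce some $\epsilon\in(0,1)$ with $\|\omega\|_{Y}<\infty$, where $Y:=(X^{1/s})'$; by Remark~\ref{re:X'}, $Y$ is itself a ball Banach function space, and by hypothesis $M$ is bounded on $Y$ with some norm $C_{M}$.

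For the membership $\omega\in Y$, I would exploit the explicit behaviour $M(\chi_{B_{0}})(x)\sim(1+|x|)^{-n}$ with $B_{0}:=B({\vec 0}_{n},1)$. An annular decomposition into $B({\vec 0}_{n},2^{k+1})\setminus B({\vec 0}_{n},2^{k})$ yields
\begin{align*}
[M(\chi_{B_{0}})(x)]^{\epsilon}\lesssim \chi_{B_{0}}(x)+\sum_{k=0}^{\infty}2^{-kn\epsilon}\,\chi_{B({\vec 0}_{n},2^{k+1})}(x),
\end{align*}
so by the triangle inequality in the Banach space $Y$ it remains to control $\|\chi_{B({\vec 0}_{n},2^{k})}\|_{Y}$ by $C\cdot 2^{kn\beta}\,\|\chi_{B_{0}}\|_{Y}$ for some $\beta<1$ depending only on $C_{M}$; any choice $\epsilon\in(\beta,1)$ then renders the series convergent. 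To manufacture such a sub-critical dilation bound I would use a Rubio de Francia-type iteration: the auxiliary weight $\omega^{\ast}:=\sum_{k\ge 0}(2C_{M})^{-k}M^{k}(\chi_{B_{0}})$ belongs to $Y\cap A_{1}(\R^{n})$, majorises $\chi_{B_{0}}$, and has $[\omega^{\ast}]_{A_{1}}\le 2C_{M}$. The Coifman--Rochberg representation of $\omega^{\ast}$ supplied by Lemma~\ref{le:characterization of all A_1 weights}, together with the reverse Hölder self-improvement for $A_{1}$ weights, then forces the required sub-critical growth of $\|\chi_{B({\vec 0}_{n},2^{k})}\|_{Y}$.

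The main obstacle is precisely this last step: extracting $\beta<1$, or equivalently a pointwise comparison of the form $[M(\chi_{B_{0}})]^{\epsilon}\lesssim \omega^{\ast}$, from the mere boundedness of $M$ on $Y$. The naive inequality $\chi_{B({\vec 0}_{n},R)}\le CR^{n}M(\chi_{B_{0}})$ yields only $\beta=1$, which is not summable against the factor $2^{-kn\epsilon}$ for any $\epsilon<1$. The gain of a factor $2^{-kn(1-\beta)}$ has to come from the open-ended reverse Hölder integrability enjoyed by the $A_{1}$-majorant $\omega^{\ast}$; because $[\omega^{\ast}]_{A_{1}}$ is controlled solely in terms of $C_{M}$, this self-improvement transfers back to a quantitative bound on the $Y$-norm of $\chi_{B({\vec 0}_{n},R)}$ with a scale exponent strictly less than $1$. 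Making this interplay between the abstract hypothesis $M\colon Y\to Y$ and the quantitative $A_{1}$ machinery rigorous is where the bulk of the work in the proof sits.
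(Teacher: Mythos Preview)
The paper does not prove this lemma; it is quoted verbatim from \cite[Lemma~2.17]{ZhangYangYuan} and used as a black box. So there is no ``paper's own proof'' to compare against, and your proposal must be judged on its own merits.

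Your reduction is correct: by the definition of the associate norm in \eqref{eq:X'}, the generalized H\"older inequality $\int_{\mathbb{R}^{n}}|f|^{s}\omega\le\||f|^{s}\|_{X^{1/s}}\|\omega\|_{(X^{1/s})'}$ holds, and the problem becomes showing $\omega=[M(\chi_{B_{0}})]^{\epsilon}\in Y:=(X^{1/s})'$ for some $\epsilon\in(0,1)$. Your Rubio de Francia construction $\omega^{\ast}=\sum_{k\ge0}(2C_{M})^{-k}M^{k}(\chi_{B_{0}})\in Y\cap A_{1}$ with $[\omega^{\ast}]_{A_{1}}\le 2C_{M}$ is also correct and is the right tool.

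The step you flag as the obstacle can be made completely precise, and it is simpler than your outline suggests: rather than going through the annular decomposition and norm bounds on $\|\chi_{B(\vec{0}_{n},2^{k})}\|_{Y}$, one obtains directly a \emph{pointwise} comparison $[M(\chi_{B_{0}})]^{\epsilon}\lesssim\omega^{\ast}$. Since $\omega^{\ast}\in A_{1}$ with constant depending only on $C_{M}$, reverse H\"older gives $\delta=\delta(n,C_{M})>0$ such that for every ball $B$,
\[
\Bigl(\frac{1}{|B|}\int_{B}(\omega^{\ast})^{1+\delta}\Bigr)^{1/(1+\delta)}\lesssim \frac{1}{|B|}\int_{B}\omega^{\ast}\le[\omega^{\ast}]_{A_{1}}\,\operatorname*{ess\,inf}_{B}\omega^{\ast}.
\]
For $|x|\ge1$ take $B=B(\vec{0}_{n},2|x|)$: since $\omega^{\ast}\ge\chi_{B_{0}}$, the left side is at least $(|B_{0}|/|B|)^{1/(1+\delta)}\sim|x|^{-n/(1+\delta)}$, while the right side is $\lesssim\omega^{\ast}(x)$. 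Hence $\omega^{\ast}(x)\gtrsim(1+|x|)^{-n/(1+\delta)}\sim[M(\chi_{B_{0}})(x)]^{1/(1+\delta)}$, and with $\epsilon:=1/(1+\delta)\in(0,1)$ one gets $[M(\chi_{B_{0}})]^{\epsilon}\lesssim\omega^{\ast}\in Y$. This closes your argument without needing Lemma~\ref{le:characterization of all A_1 weights} at all; the Coifman--Rochberg representation is a detour here.
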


%$Q\left({\vec 0}_{n},1\right)$

\subsection{Assumptions on the Hardy--Littlewood maximal operator.}

Denote by $L_{{\rm loc}}^{1}\left(\mathbb{R}^{n} \right)$ the set of all locally integrable functions on $\mathbb{R}^{n}$. Recall that the Hardy-Littlewood maximal operator $M$ is defined by setting, for all $f\in L_{{\rm loc}}^{1}\left(\mathbb{R}^{n}\right)$ and $x\in \mathbb{R}^{n}$,
\begin{align}\label{eq:the Hardy-Littlewood maximal operator}
	Mf\left(x\right) := \sup\limits_{r\in\left(0,\infty\right)} \frac{1}{\left|B\left(x,r\right)\right|} \int_{B\left(x,r\right)} \left|f\left(y\right)\right|\,dy.
\end{align}

For any $\theta\in \left(0,\infty\right)$, the powered Hardy--Littlewood maximal operator $M^{\left(\theta\right)}$ is defined by setting, for all $f\in L_{{\rm loc}}^{1}\left(\mathbb{R}^{n}\right)$ and $x\in \mathbb{R}^{n}$,
\begin{align}\label{eq:the powered Hardy--Littlewood maximal operator}
	M^{\left(\theta\right)}\left(f\right)\left(x\right) := \left\{M\left(\left|f\right|^{\theta}\right)\left(x\right) \right\}^{1/\theta}.
\end{align}

In order to prove several theorems in this paper, we need the following assumptions. 

\begin{assumption}\label{as:the boundedness of M on X^1/p}
	\textup{Let $X$ be a ball quasi-Banach function space and there exists a $p_{-}\in\left(0,\infty\right)$ such that for any given $p\in\left(0,p_{-}\right)$ and $s\in\left(1,\infty\right)$, there exists a positive constant $C$ such that for any $\left\{f_{j}\right\}_{j=1}^{\infty}\subset \mathscr{M}\left(\mathbb{R}^{n}\right)$,}
	\begin{align}\label{eq:the boundedness of M on X^1/p}
		\left\|\left\{\sum_{j\in\mathbb{N}} \left[M\left(f_{j}\right)\right]^{s} \right\}^{1/s}\right\|_{X^{1/p}} \le C\left\|\left\{\sum_{j\in\mathbb{N}}\left|f_{j}\right|^{s} \right\}^{1/s}\right\|_{X^{1/p}}.
	\end{align}
\end{assumption}

\begin{remark}\label{re:the boundedness of M on X^1/p}
	\textup{If $X:=L^{\widetilde{p}}\left(\mathbb{R}^{n}\right)$ with any given $\widetilde{p}\in\left(0,\infty\right)$, then $p_{-}=\widetilde{p}$ and (\ref{eq:the boundedness of M on X^1/p}) is just the well-known Fefferman--Stein vector-valued maximal inequality, which was originally established by Fefferman and Stein \citep[Theorem 1(a)]{FeffermanStein1971}.}
\end{remark}

\begin{assumption}\label{as:Fefferman-Stein}
	\textup{Let $X$ be ball quasi-Banach function space.
		For some $\theta,s\in\left(0,1\right]$ and $\theta<s$, there exists a positive constant $C$ such that, for any  $\left\{f_{j}\right\}_{j=1}^{\infty} \subset \mathscr{M}\left(\mathbb{R}^{n}\right)$,}
	\begin{align}\label{eq:Fefferman-Stein}
		\left\|\left\{\sum_{j=1}^{\infty}\left[M^{(\theta)}\left(f_{j}\right)\right]^{s}\right\}^{1 / s}\right\|_{X} \le C\left\|\left\{\sum_{j=1}^{\infty}\left|f_{j}\right|^{s}\right\}^{1 / s}\right\|_{X}.
	\end{align}
\end{assumption}

\begin{remark}\label{re:Fefferman-Stein}
	\textup{The inequality (\ref{eq:Fefferman-Stein}) is called the Fefferman--Stein vector-valued maximal inequality, and its version with $X:=L^{p}\left(\mathbb{R}^{n}\right)$ for $p\in \left(1,\infty\right)$, $\theta=1$ and $s\in\left(1,\infty\right]$ was originally established by Fefferman and Stein \citep[Theorem 1]{FeffermanStein1971}. Observe that, by \citep[Theorem 1]{FeffermanStein1971}, we know that (\ref{eq:Fefferman-Stein}) also holds true when $\theta$, $s\in \left(0,1\right]$, $\theta<s$, $X:=L^{p}\left(\mathbb{R}^{n}\right)$ and $p\in\left(\theta,\infty\right)$.}
\end{remark}

\begin{assumption}\label{as:(X^{1/s})'}
	\textup{Let $X$ be a ball quasi-Banach function space satisfying (\ref{eq:Fefferman-Stein}) for some $\theta,s\in\left(0,1\right]$. Let $d\ge \left\lfloor n(1/\theta-1)\right\rfloor$ be a fixed integer and $q\in\left(1,\infty\right]$. Assume that for any $f\in\mathscr{M}\left(\mathbb{R}^{n}\right)$,}
	\begin{align}\label{eq:(X^{1/s})'}
		\left\|M^{\left((q/s)'\right)}(f)\right\|_{\left(X^{1/s}\right)'} \le C\|f\|_{\left(X^{1/s}\right)'},
	\end{align}
    \textup{where the implicit positive constant is independent of $f$.}
\end{assumption}

\begin{assumption}\label{as:M is bounded on (WX)^{1/r}}
	\textup{Let $X$ be a ball quasi-Banach function space. Assume that there exists an $r\in\left(0,\infty\right)$ such that $M$ in (\ref{eq:the Hardy-Littlewood maximal operator}) is bounded on $\left(WX\right)^{1/r}$.}
\end{assumption}

\subsection{Weak Hardy type spaces.}

For any given ball quasi-Banach function space $X$, the Hardy type space $H_{X}\left(\mathbb{R}^{n}\right)$ \cite{SawanoHoYang} was introduced via the Peetre-type maximal function \cite{FeffermanStein1972}. The weak Hardy type space $WH_{X}\left(\mathbb{R}^{n}\right)$ \cite{ZhangYangYuan} was introduced by the radial grand maximal function $M_{N}^{0}\left(f\right)$ of $f$. We first recall the concept of the former.

\begin{definition}\label{de:the Hardy type space}
	\textup{Let $X$ be a ball quasi-Banach function space. Let $\Phi\in \mathcal{S}\left(\mathbb{R}^{n}\right)$ satisfy $\int_{\mathbb{R}^{n}}\Phi\left(x\right)\,dx \ne 0$ and $b\in \left(0,\infty\right)$ sufficiently large. Then the Hardy space $H_{X}\left(\mathbb{R}^{n}\right)$ associated with $X$ is defined as}
	\begin{align*}
		H_{X}\left(\mathbb{R}^{n}\right) := \left\{f\in \mathcal{S}^{\prime}\left(\mathbb{R}^{n}\right):\left\|f\right\|_{H_{X}\left(\mathbb{R}^{n}\right)} := \left\|M_{b}^{\ast\ast}\left(f,\Phi\right)\right\|_{X} < \infty \right\},
	\end{align*}
	\textup{where the maximal function $M_{b}^{\ast\ast}\left(f,\Phi\right)$ of Peetre type is defined by setting, for all $x\in \mathbb{R}^{n}$,}
	\begin{align}\label{eq:the Peetre-type maximal function}
		M_{b}^{\ast\ast}\left(f,\Phi\right)\left(x\right) := \sup\limits_{\left(y,t\right)\in \mathbb{R}_{+}^{n+1}} \frac{\left|\left(\Phi_{t}\ast f\right) \left(x-y\right) \right|}{\left(1+t^{-1}\left|y\right| \right)^{b}},
	\end{align}
	\textup{where $\mathbb{R}_{+}^{n+1} := \mathbb{R}^{n} \times \left(0,\infty\right)$.}
\end{definition}

\begin{definition}\cite[Definition 3.2]{WangYangYang} 
\textup{Let $X$ be a ball quasi-Banach function space. A function $f\in X$ is said to have an absolutely continuous quasi-norm in $X$ if
$\|f\chi_{E_j}\|_X\downarrow 0$
whenever $\{E_j\}_{j=1}^{\infty}$ is a sequence of measurable sets that satisfy
$E_{j+1}\subset E_j$ for any $j\in\mathbb N$ and $\cap_{j=1}^\infty E_j=\emptyset$.
Moreover, $X$ is said to have an absolutely
continuous quas-norm if, for any $f\in X$, $f$ has an absolutely continuous
quasi-norm in $X$.}
\end{definition}

Moreover, the atomic decomposition theory is very useful
when we consider the boundedness of operators on Hardy spaces.
For instance, see \cite{Tan19,Tan21,YZ}.
Now we recall the atomic characterization for the Hardy space $H_{X}\left(\mathbb{R}^{n}\right)$ established in \cite{SawanoHoYang}.

\begin{lemma}\label{le:H_X-H_atom equivalent quasi-norm}
	\textup{\citep[Lemma 6.2]{ZhangYangYuan} Let $\theta,s\in\left(0,1\right]$, $q\in\left(1,\infty\right]$ and $d\ge\left\lfloor n\left(1/\theta-1\right)\right\rfloor$ be a fixed integer. Assume that $X$ is a ball quasi-Banach function space satisfying (\ref{eq:Fefferman-Stein}), (\ref{eq:(X^{1/s})'}) and that $X^{1/s}$ is a ball Banach function space. Then $H_{X}\left(\mathbb{R}^{n}\right)=H_{atom}^{X,q,d}\left(\mathbb{R}^{n}\right)$ with equivalent quasi-norm.}
\end{lemma}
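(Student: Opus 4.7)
The plan is to establish $H_X(\mathbb{R}^n) = H_{atom}^{X,q,d}(\mathbb{R}^n)$ by proving both inclusions with comparable quasi-norms, following the grand-maximal-function atomic decomposition scheme adapted to the ball quasi-Banach setting. Recall that an $(X,q,d)$-atom is a function $a$ supported in some ball $B$ with $\|a\|_{L^q(\mathbb{R}^n)} \le |B|^{1/q}/\|\chi_B\|_X$ and $\int_{\mathbb{R}^n} x^\alpha a(x)\,dx = 0$ for every $|\alpha| \le d$; the atomic quasi-norm of $f = \sum_j \lambda_j a_j$ is essentially $\|\{\sum_j (\lambda_j \chi_{B_j}/\|\chi_{B_j}\|_X)^s\}^{1/s}\|_X$.

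For the easier inclusion $H_{atom}^{X,q,d} \hookrightarrow H_X$, I would first derive the pointwise majorization
\begin{align*}
M_b^{\ast\ast}(a,\Phi)(x) \lesssim \|\chi_B\|_X^{-1}\,[M(\chi_B)(x)]^{(n+d+1)/n}
\end{align*}
for any $(X,q,d)$-atom $a$ supported in $B$. The standard argument splits the estimate into $x \in cB$, where the $L^q$-boundedness of the Peetre-type maximal operator plus Hölder and the atom size bound give the claim, and $x \notin cB$, where a Taylor expansion of $\Phi_t(x-\cdot-y)$ combined with the vanishing moments of order $d \ge \lfloor n(1/\theta - 1)\rfloor$ yields the requisite decay $(|x-x_B|/r_B)^{-n-d-1}$. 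Since $(n+d+1)\theta/n > 1$, the right-hand side is dominated by a $\theta$-powered maximal function, and applying Assumption \ref{as:Fefferman-Stein} (inequality (\ref{eq:Fefferman-Stein})) with exponents $\theta, s$ to sum over $j$ gives $\|M_b^{\ast\ast}(f,\Phi)\|_X \lesssim \|f\|_{H_{atom}^{X,q,d}}$.

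The hard direction $H_X \hookrightarrow H_{atom}^{X,q,d}$ will be the main obstacle, requiring a Calderón--Zygmund type decomposition of $f$ at each dyadic level of the radial grand maximal function. For $k \in \mathbb{Z}$, set $\Omega_k := \{x : M_N(f)(x) > 2^k\}$, apply a Whitney covering to obtain balls $\{B_{k,j}\}_j$ with finite overlap, and build smooth cutoffs $\eta_{k,j}$ forming a partition of unity on $\Omega_k$. Define $b_{k,j} := (f - P_{k,j})\eta_{k,j}$, where $P_{k,j}$ is the projection of $f$ onto polynomials of degree at most $d$ with respect to the weighted inner product induced by $\eta_{k,j}$. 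Using that $M_N(f) \le 2^k$ on $\Omega_k^c$, one obtains the pointwise bound $|b_{k,j}| \lesssim 2^k$ and vanishing moments up to order $d$. Telescoping across levels $k$ yields $f = \sum_{k,j} \lambda_{k,j}\, a_{k,j}$ in $\mathcal{S}'(\mathbb{R}^n)$ with $\lambda_{k,j} := C\, 2^k \|\chi_{B_{k,j}}\|_X$ and $a_{k,j}$ renormalized to be an $(X,q,d)$-atom; for finite $q$, the required $L^q$ size is upgraded from the $L^\infty$ bound by invoking Assumption \ref{as:(X^{1/s})'}, specifically the boundedness of $M^{((q/s)')}$ on $(X^{1/s})'$, combined with duality in the ball Banach space $X^{1/s}$.

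To close the estimate, I would exploit finite overlap $\sum_j \chi_{B_{k,j}} \lesssim \chi_{\Omega_k}$ to obtain the pointwise comparison
\begin{align*}
\biggl\{\sum_{k,j}\Bigl(\tfrac{\lambda_{k,j}}{\|\chi_{B_{k,j}}\|_X}\Bigr)^s \chi_{B_{k,j}}\biggr\}^{1/s} \lesssim \biggl\{\sum_k 2^{ks}\chi_{\Omega_k}\biggr\}^{1/s} \sim M_N(f),
\end{align*}
and then apply $\|\cdot\|_X$ together with the equivalence $\|M_N(f)\|_X \sim \|f\|_{H_X}$ coming from the Peetre-type definition. The two most delicate points I anticipate are (i) promoting the atomic $L^\infty$ bound to the sharp $L^q$ size under Assumption \ref{as:(X^{1/s})'}, which genuinely uses the boundedness of the powered maximal function on $(X^{1/s})'$, and (ii) verifying convergence of the atomic series to $f$ in $\mathcal{S}'(\mathbb{R}^n)$; for the latter I would use Lemma \ref{le:X-subset-the weighted Lebesgue space} to embed $X$ into a weighted $L^s$ space and thereby obtain the tail control necessary to test the series against Schwartz functions.
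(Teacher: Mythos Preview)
The paper does not supply its own proof of this lemma; it is quoted from \cite[Lemma~6.2]{ZhangYangYuan} and used as a black box, so there is no in-paper argument to compare against. Your outline is essentially the standard route taken in \cite{SawanoHoYang} and \cite{ZhangYangYuan}: a pointwise maximal estimate on single atoms plus the Fefferman--Stein inequality (\ref{eq:Fefferman-Stein}) for the inclusion $H_{atom}^{X,q,d}\hookrightarrow H_X$, and a Calder\'on--Zygmund/Whitney decomposition of the level sets of the grand maximal function for the reverse inclusion.

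One misplacement worth flagging: Assumption~\ref{as:(X^{1/s})'} is not used, as you suggest, to ``upgrade'' the $L^\infty$ bound on the atoms produced by the decomposition to an $L^q$ bound---the Calder\'on--Zygmund construction already yields $(X,\infty,d)$-atoms, which are automatically $(X,q,d)$-atoms for every finite $q$ by H\"older. Rather, (\ref{eq:(X^{1/s})'}) enters in the \emph{reconstruction} direction $H_{atom}^{X,q,d}\hookrightarrow H_X$: the on-ball contribution $\bigl\|\sum_j \lambda_j M_b^{\ast\ast}(a_j,\Phi)\chi_{cB_j}\bigr\|_X$ cannot be handled by a pointwise bound alone and is controlled via the duality argument in the ball Banach space $X^{1/s}$ (this is exactly the content of Lemma~\ref{le:atom2}). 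Your pointwise bound for the off-ball part and the use of (\ref{eq:Fefferman-Stein}) there are correct; just relocate the role of (\ref{eq:(X^{1/s})'}) accordingly.
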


\begin{lemma}\label{le:the atomic characterization for the Hardy type space}
	\textup{\citep[Theorem 3.7]{SawanoHoYang} Let $X$ be a ball quasi-Banach function space satisfying Assumption \ref{as:Fefferman-Stein} with $0<\theta<s\le 1$. Assume further that $X$ has an absolutely continuous quasi-norm,}
	\begin{align}\label{eq:d_{X}}
		d_{X}:=\lceil n(1/\theta-1)\rceil
	\end{align}
	\textup{and $d\in \left[d_{X},\infty \right) \cap\mathbb{Z}_{+}$ is a fixed integer. Then, for any $f\in H_{X}\left(\mathbb{R}^{n}\right)$, there exist a sequence $\left\{a_{j}\right\}_{j=1}^{\infty}$ of $\left(X,\infty,d\right)$-atoms supported, respectively, in a sequence $\left\{Q_{j}\right\}_{j=1}^{\infty}$ of cubes, a sequence $\left\{\lambda_{j}\right\}_{j=1}^{\infty}$ of non-negative numbers and a positive constant $C_{(s)}$, independent of $f$ but depending on $s$, such that}
	\begin{align}\label{eq:atomic decomposition}
		f=\sum_{j=1}^{\infty} \lambda_{j} a_{j}\quad {\rm in} \quad \mathcal{S}^{\prime}\left(\mathbb{R}^{n}\right) 
	\end{align}
	\textup{and}
	\begin{align*}
		\left\|\left\{\sum_{j=1}^{\infty}\left(\frac{\lambda_{j}}{\left\|\chi_{Q_{j}}\right\|_{X}}\right)^{s} \chi_{Q_{j}}\right\}^{1/s}\right\|_{X} \le C_{(s)}\|f\|_{H_{X}\left(\mathbb{R}^{n}\right)}.
	\end{align*}
\end{lemma}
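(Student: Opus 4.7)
The plan is to run the classical Calder\'on--Zygmund decomposition machinery of Fefferman--Stein, but adapted to the quasi-norm $\|\cdot\|_{H_X}$. Set $F:=M_b^{\ast\ast}(f,\Phi)$, which is finite almost everywhere since $\|F\|_X=\|f\|_{H_X}<\infty$. For each $k\in\mathbb{Z}$, define the open level set $\Omega_k:=\{x\in\mathbb{R}^n:F(x)>2^k\}$. These sets are nested, with $\Omega_{k+1}\subset\Omega_k$, and the absolute continuity of the quasi-norm of $X$ forces $\|\chi_{\Omega_k}\|_X\to 0$ as $k\to\infty$. I would perform a Whitney decomposition of each $\Omega_k$ into a countable family $\{Q_{k,j}\}_{j}$ of dyadic cubes whose side length is comparable to the distance from $Q_{k,j}$ to $\Omega_k^c$, with bounded overlap of suitable dilates $Q_{k,j}^{\ast}$ and $\sum_j\chi_{Q_{k,j}^{\ast}}\lesssim \chi_{\Omega_k}$; then fix a smooth partition of unity $\{\zeta_{k,j}\}$ subordinate to $\{Q_{k,j}^{\ast}\}$.

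Next I would construct the atoms. With $d\ge d_X=\lceil n(1/\theta-1)\rceil$, let $P_{k,j}$ be the unique polynomial of degree at most $d$ such that $b_{k,j}:=(f-P_{k,j})\zeta_{k,j}$ has vanishing moments up to order $d$, and set $g_k:=f-\sum_j b_{k,j}$. Standard Peetre-maximal-function estimates, combined with the Whitney property that $Q_{k,j}^{\ast}$ touches $\Omega_k^c$ (where $F\le 2^k$), would give $\|b_{k,j}\|_{L^\infty}\lesssim 2^k$ on $Q_{k,j}^{\ast}$. Defining
\begin{align*}
\lambda_{k,j}:=C\,2^k\|\chi_{Q_{k,j}^{\ast}}\|_X,\qquad a_{k,j}:=\lambda_{k,j}^{-1}b_{k,j}
\end{align*}
then produces $(X,\infty,d)$-atoms. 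The telescoping identity $f=\sum_{k\in\mathbb{Z}}(g_{k+1}-g_k)=\sum_{k,j}\lambda_{k,j}a_{k,j}$ in $\mathcal{S}'(\mathbb{R}^n)$ follows once one shows $g_k\to f$ as $k\to\infty$ and $g_k\to 0$ as $k\to-\infty$; this is where the absolute continuity of the quasi-norm enters, since it controls the error $\|f-g_k\|$ through $\|\chi_{\Omega_k}\|_X\to 0$.

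For the claimed norm inequality I would estimate
\begin{align*}
\sum_{k,j}\Bigl(\frac{\lambda_{k,j}}{\|\chi_{Q_{k,j}^{\ast}}\|_X}\Bigr)^{s}\chi_{Q_{k,j}^{\ast}}\lesssim \sum_{k}2^{ks}\sum_{j}\chi_{Q_{k,j}^{\ast}}\lesssim \sum_{k}2^{ks}\chi_{\Omega_k},
\end{align*}
and then, for any $x\in\mathbb{R}^n$ with $k_0:=\lfloor\log_2 F(x)\rfloor$, observe that $x\in\Omega_k$ iff $k\le k_0$, so a geometric-sum bound gives $\sum_k 2^{ks}\chi_{\Omega_k}(x)\lesssim F(x)^s$. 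Taking the $s$-th root and the $X$-quasi-norm, and using that $X^{1/s}$ is a ball Banach function space so that $\|\cdot\|_{X^{1/s}}$ satisfies the triangle inequality needed to pass the outer sum inside, yields
\begin{align*}
\Bigl\|\Bigl\{\sum_{k,j}\Bigl(\tfrac{\lambda_{k,j}}{\|\chi_{Q_{k,j}^{\ast}}\|_X}\Bigr)^{s}\chi_{Q_{k,j}^{\ast}}\Bigr\}^{1/s}\Bigr\|_X\lesssim \|F\|_X=\|f\|_{H_X(\mathbb{R}^n)}.
\end{align*}

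The main obstacle will be the pointwise atom-size bound $\|b_{k,j}\|_{L^\infty}\lesssim 2^k$. Because $f$ is only a tempered distribution, this cannot be argued on $f$ pointwise; one must exploit a convolution representation of $b_{k,j}$ against a suitable kernel and the fact that the dilate $Q_{k,j}^{\ast}$ meets $\Omega_k^c$ (by the Whitney condition), where $F$ controls $f$ from above by $2^k$. A parallel $L^\infty$-bound for $P_{k,j}$, based on the moment constraint $d\ge\lceil n(1/\theta-1)\rceil$, is required so that subtracting the polynomial does not destroy the size estimate. The Fefferman--Stein inequality in Assumption \ref{as:Fefferman-Stein} is the quantitative device that eventually converts the vector-valued sum $\sum_{k,j}\chi_{Q_{k,j}^{\ast}}$ into a scalar bound controlled by $F$ inside the quasi-norm of $X$.
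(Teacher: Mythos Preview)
The paper does not prove this lemma; it is quoted from \citep[Theorem 3.7]{SawanoHoYang} and used as a black box. Your outline is the standard Calder\'on--Zygmund route that reference follows, and both the level-set/Whitney setup and the closing pointwise estimate $\sum_k 2^{ks}\chi_{\Omega_k}\lesssim F^s$ are correct.

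The gap is in the identification of the atoms. You set $b_{k,j}=(f-P_{k,j})\zeta_{k,j}$, $g_k=f-\sum_j b_{k,j}$, put $a_{k,j}=\lambda_{k,j}^{-1}b_{k,j}$, and then write $f=\sum_k(g_{k+1}-g_k)=\sum_{k,j}\lambda_{k,j}a_{k,j}$. But $g_{k+1}-g_k=\sum_j b_{k,j}-\sum_i b_{k+1,i}$, so the telescoping sum over $k$ is \emph{not} $\sum_{k,j}b_{k,j}$; indeed $\sum_j b_{k,j}$ is essentially $f$ restricted to $\Omega_k$, every point lies in infinitely many $\Omega_k$, and $\sum_{k,j}b_{k,j}$ diverges. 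The actual construction re-decomposes each difference $g_{k+1}-g_k$ as $\sum_j h_{k,j}$, where $h_{k,j}$ combines $b_{k,j}$ with those $b_{k+1,i}$ whose cubes $Q_{k+1,i}^{\ast}$ meet $Q_{k,j}^{\ast}$, together with additional polynomial corrections so that the vanishing moments up to order $d$ are restored. It is these $h_{k,j}$, not the raw $b_{k,j}$, that are supported in $Q_{k,j}^{\ast}$, satisfy $\|h_{k,j}\|_{L^\infty}\lesssim 2^k$, and serve as the $(X,\infty,d)$-atoms. This two-level interaction is the technical core of the Fefferman--Stein argument and cannot be bypassed. A smaller point: your last norm inequality follows directly from the pointwise bound and the lattice property (ii) of Definition~\ref{de:ball quasi-Banach function space}; neither a triangle inequality in $X^{1/s}$ nor Assumption~\ref{as:Fefferman-Stein} is needed for this direction of the atomic characterization.
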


\begin{lemma}\label{le:atom2}
	\textup{\citep[Lemma 2.21]{WangYangYang} Assume that $X$ is a ball quasi-Banach function space satisfying Assumption \ref{as:Fefferman-Stein} with some $s\in\left(0,1\right]$ and Assumption \ref{as:(X^{1/s})'} for some $q\in\left(1,\infty \right]$ and the same $s\in\left(0,1\right]$ as in (\ref{eq:Fefferman-Stein}). Let $\left\{a_{j}\right\}_{j=1}^{\infty} \subset L^{q}\left(\mathbb{R}^{n}\right)$ be supported, respectively, in cubes $\left\{Q_{j}\right\}_{j=1}^{\infty}$, and a sequence $\left\{\lambda_{j}\right\}_{j=1}^{\infty} \subset[0, \infty)$ such that, for any $j\in \mathbb{N}$,}
	\begin{align*}
		\left\|a_{j}\right\|_{L^{q}\left(\mathbb{R}^{n}\right)} \le \frac{\left|Q_{j}\right|^{1 / q}}{\left\|\chi_{Q_{j}}\right\|_{X}}
	\end{align*}
	\textup{and}
	\begin{align*}
		\left\|\left\{\sum_{j=1}^{\infty}\left(\frac{\lambda_{j}}{\left\|\chi_{Q_{j}}\right\|_{X}}\right)^{s} \chi_{Q_{j}}\right\}^{1/s}\right\|_{X}<\infty. 
	\end{align*}
	\textup{Then $f=\sum_{j=1}^{\infty} \lambda_{j} a_{j}$ converges in $S^{\prime}\left(\mathbb{R}^{n}\right)$ and there exists a positive constant $C$, independent of $f$, such that}
	\begin{align*}
		\|f\|_{X} \le C\left\|\left\{\sum_{j=1}^{\infty}\left(\frac{\lambda_{j}}{\left\|\chi_{Q_{j}}\right\|_{X}}\right)^{s} \chi_{Q_{j}}\right\}^{1/s}\right\|_{X}.
	\end{align*}
\end{lemma}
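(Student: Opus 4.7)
The plan is to argue by duality. Since $X^{1/s}$ is assumed to be a ball Banach function space, its Köthe dual $(X^{1/s})'$ (cf.\ \eqref{eq:X'} and Remark \ref{re:X'}) is well defined, and we have the identity $\|f\|_X^s=\||f|^s\|_{X^{1/s}}$ together with the duality
\begin{align*}
\||f|^s\|_{X^{1/s}}=\sup\Bigl\{\int_{\mathbb R^n}|f(x)|^s g(x)\,dx:\ g\ge 0,\ \|g\|_{(X^{1/s})'}\le 1\Bigr\}.
\end{align*}
Fix such a test function $g$. Using $s\in(0,1]$ and the subadditivity of $t\mapsto t^s$, we dominate $|f|^s\le\sum_{j}\lambda_j^s|a_j|^s$.

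Next, for each cube $Q_j$, apply Hölder's inequality with exponents $q/s$ and $(q/s)'$:
\begin{align*}
\int_{Q_j}|a_j(x)|^s g(x)\,dx\le \|a_j\|_{L^q(\mathbb R^n)}^s\,\Bigl(\int_{Q_j}g(x)^{(q/s)'}dx\Bigr)^{1/(q/s)'}.
\end{align*}
The size bound $\|a_j\|_{L^q}\le |Q_j|^{1/q}/\|\chi_{Q_j}\|_X$ gives a factor $|Q_j|^{s/q}/\|\chi_{Q_j}\|_X^s$, while the $L^{(q/s)'}$-average on $Q_j$ is pointwise dominated by $M^{((q/s)')}(g)(y)$ for every $y\in Q_j$. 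Since $s/q+1/(q/s)'=1$, averaging over $y\in Q_j$ yields
\begin{align*}
\int_{Q_j}|a_j(x)|^s g(x)\,dx\le \|\chi_{Q_j}\|_X^{-s}\int_{\mathbb R^n}\chi_{Q_j}(y)\,M^{((q/s)')}(g)(y)\,dy.
\end{align*}

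Summing in $j$ and applying Hölder's inequality in the pairing between $X^{1/s}$ and $(X^{1/s})'$,
\begin{align*}
\int_{\mathbb R^n}|f(x)|^sg(x)\,dx\le \Bigl\|\sum_{j=1}^\infty\Bigl(\frac{\lambda_j}{\|\chi_{Q_j}\|_X}\Bigr)^s\chi_{Q_j}\Bigr\|_{X^{1/s}}\,\bigl\|M^{((q/s)')}(g)\bigr\|_{(X^{1/s})'}.
\end{align*}
Assumption \ref{as:(X^{1/s})'} bounds the last factor by a constant multiple of $\|g\|_{(X^{1/s})'}\le 1$, and the first factor equals $\|\{\sum_j(\lambda_j/\|\chi_{Q_j}\|_X)^s\chi_{Q_j}\}^{1/s}\|_X^s$. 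Taking the supremum over admissible $g$ and then extracting an $s$-th root yields the claimed quasi-norm inequality.

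Finally, the $\mathcal S'$-convergence of $\sum_j\lambda_ja_j$ will be obtained by applying the above inequality to the tail $\sum_{j\ge N}\lambda_ja_j$, which tends to $0$ in $X$ by the hypothesis that the square-function-type quantity is finite (so its tails vanish); since Lemma \ref{le:X-subset-the weighted Lebesgue space} embeds $X$ continuously into a weighted Lebesgue space $L^s_\omega(\mathbb R^n)$, and the latter embeds continuously into $\mathcal S'(\mathbb R^n)$, convergence in $X$ propagates to $\mathcal S'$. The main technical obstacle is the sharp bookkeeping of the exponents $s/q+1/(q/s)'=1$ in the Hölder step together with the invocation of the right maximal-function assumption; once this is lined up, the rest of the argument is duality combined with tail estimates.
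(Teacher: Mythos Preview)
Your duality argument for the quasi-norm inequality is correct and is precisely the standard proof; the paper itself does not reprove this lemma but merely cites it from \cite{WangYangYang}. The exponent bookkeeping $s/q+1/(q/s)'=1$ and the passage from the $L^{(q/s)'}$-average of $g$ on $Q_j$ to $M^{((q/s)')}(g)$, followed by the H\"older pairing between $X^{1/s}$ and $(X^{1/s})'$ together with Assumption~\ref{as:(X^{1/s})'}, are exactly the right ingredients.

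There is, however, a gap in your $\mathcal S'$-convergence argument. You claim that
\[
\Bigl\|\Bigl\{\sum_{j\ge N}\bigl(\lambda_j/\|\chi_{Q_j}\|_X\bigr)^s\chi_{Q_j}\Bigr\}^{1/s}\Bigr\|_X\longrightarrow 0
\]
simply because the full sum has finite $X$-norm. This inference would require absolute continuity of the quasi-norm on $X^{1/s}$, which is \emph{not} among the hypotheses and in fact fails for key examples such as Morrey spaces (Subsection~4.5). The repair is to reverse the order of your last two steps: first embed via Lemma~\ref{le:X-subset-the weighted Lebesgue space} into $L^s_\omega(\mathbb R^n)$, where the finiteness hypothesis becomes the convergent numerical series $\sum_{j}(\lambda_j/\|\chi_{Q_j}\|_X)^s\,\omega(Q_j)<\infty$, so its tails genuinely vanish; then your estimate (or a direct computation using $s\le 1$) shows the partial sums of $\sum_j\lambda_j a_j$ are Cauchy in $L^s_\omega(\mathbb R^n)$, hence convergent there and therefore in $\mathcal S'(\mathbb R^n)$.
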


\begin{remark}\label{re:atom2}
	\textup{We point out that Lemma \ref{le:the atomic characterization for the Hardy type space} need the additional assumption that there exists $s\in \left(0,1\right]$ such that $X^{1/s}$ is a ball Banach function space. Indeed, this assumption ensures that $\left(X^{1/s}\right)^{\prime}$ is also a ball Banach function space, which implies that, for any $f\in \left(X^{1/s}\right)^{\prime}$, $f\in L_{{\rm loc}}^{1}\left(\mathbb{R}^{n}\right)$ and hence the Hardy-Littlewood maximal operator can be defined on $\left(X^{1/s}\right)^{\prime}$ in \cite{WangYangYang}.}
\end{remark}

\begin{lemma}\label{le:equivalent-X}
	\textup{\citep[Theorem 3.1]{SawanoHoYang} Let $X$ be a ball quasi-Banach function space, $r\in\left(0,\infty\right)$, $b\in\left(n/r,\infty\right)$ and $M$ be bounded on $X^{1/r}$. Assume that $\Phi\in \mathcal{S}\left(\mathbb{R}^{n}\right)$ satisfies $\int_{\mathbb{R}^{n}} \Phi(x)\,dx\ne 0$. Then, for any $f\in \mathcal{S}^{\prime}\left(\mathbb{R}^{n}\right)$,}
	\begin{align*}
		\left\|M_{b}^{\ast\ast}\left(f,\Phi\right)\right\|_{X} \sim \left\|M\left(f,\Phi\right)\right\|_{X},
	\end{align*}
	\textup{where $M_{b}^{\ast\ast}\left(f,\Phi\right)$ is the Peetre-type maximal function, $M\left(f,\Phi\right) :=\sup\limits_{t\in\left(0,\infty\right)} \left|\Phi_{t} \ast f\right|$ and the positive equivalence constants are independent of $f$.}
\end{lemma}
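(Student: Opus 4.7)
The plan is to prove the two inequalities separately. The direction $\|M(f,\Phi)\|_X \lesssim \|M_b^{**}(f,\Phi)\|_X$ is immediate: taking $y=\mathbf{0}$ in the definition \eqref{eq:the Peetre-type maximal function} gives the pointwise inequality $M(f,\Phi)(x) \le M_b^{**}(f,\Phi)(x)$ for every $x\in\mathbb R^n$, and this transfers to the $X$-quasi-norm by the monotonicity axiom in Definition \ref{de:ball quasi-Banach function space}(ii).

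For the reverse direction I will follow the classical Fefferman--Stein strategy. The central step is to establish the pointwise inequality
\[
M_b^{**}(f,\Phi)(x) \le C \bigl[M\bigl(|M(f,\Phi)|^r\bigr)(x)\bigr]^{1/r}, \qquad x\in\mathbb R^n,
\]
valid precisely under the hypothesis $b>n/r$. To prove it, I fix $(y,t)\in\mathbb R^{n+1}_+$ and start from the trivial bound $|(\Phi_t\ast f)(x-y)| \le M(f,\Phi)(x-y)$. Using that $\Phi\in\mathcal S(\mathbb R^n)$, the function $\Phi_t\ast f$ is smooth with gradient controlled, up to a factor of $t^{-1}$, by a Peetre-type maximal function of slightly larger order, so $\Phi_t\ast f$ oscillates only mildly on balls of radius $\sim t$. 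This lets me upgrade the pointwise bound at $x-y$ to an $L^r$-averaged bound of $|M(f,\Phi)|^r$ over a ball of radius $\sim t+|y|$ centred at $x$. Dividing by $(1+|y|/t)^b$ and raising to the $r$th power absorbs the growth factor $(t+|y|)^n/t^n$ coming from the ratio of ball volumes, which is precisely where the condition $b>n/r$ is exploited. Taking the supremum over $(y,t)$ then yields the displayed pointwise estimate.

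Once this pointwise estimate is in hand, the norm bound follows from the assumed boundedness of $M$ on $X^{1/r}$. Indeed, using Definition \ref{de:convexity-concavity}(i) and the monotonicity axiom,
\[
\|M_b^{**}(f,\Phi)\|_X^{r} = \bigl\||M_b^{**}(f,\Phi)|^{r}\bigr\|_{X^{1/r}} \lesssim \bigl\|M\bigl(|M(f,\Phi)|^{r}\bigr)\bigr\|_{X^{1/r}} \lesssim \bigl\||M(f,\Phi)|^{r}\bigr\|_{X^{1/r}} = \|M(f,\Phi)\|_X^{r},
\]
where the last inequality is the hypothesised boundedness of $M$ on $X^{1/r}$.

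The main obstacle is the pointwise Fefferman--Stein inequality itself: the Schwartz decay of $\Phi$ must be exploited carefully to control the oscillation of $\Phi_t\ast f$ at scale $t$ while simultaneously respecting the weight $(1+|y|/t)^b$, and the bookkeeping of the exponents $r$ and $b$ against the ball-volume factor must be arranged so that the threshold $b>n/r$ is exactly what makes the argument close. Since $\Phi_t\ast f$ is automatically $C^\infty$ for any $f\in\mathcal S'(\mathbb R^n)$, all the pointwise manipulations are rigorous, so no additional regularisation of $f$ is needed.
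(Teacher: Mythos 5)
The paper does not prove this lemma; it is quoted verbatim from \citep[Theorem 3.1]{SawanoHoYang}, and the argument there is the classical Fefferman--Stein/Str\"omberg--Torchinsky scheme that you outline: the easy pointwise domination $M(f,\Phi)\le M_b^{**}(f,\Phi)$ in one direction, and in the other direction a pointwise bound of $M_b^{**}(f,\Phi)$ by $[M([M(f,\Phi)]^r)]^{1/r}$ for $b>n/r$, followed by the assumed boundedness of $M$ on $X^{1/r}$. So your overall architecture matches the source. However, your sketch of the central pointwise estimate has two genuine gaps.

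First, the oscillation control. You assert that $\nabla(\Phi_t\ast f)$ is bounded by $t^{-1}$ times ``a Peetre-type maximal function of slightly larger order,'' but that quantity is not \emph{a priori} controlled by $M(f,\Phi)$ or even by $M_b^{**}(f,\Phi)$: reducing $\sup_t t|\nabla(\Phi_t\ast f)|$ to expressions built from $\Phi_s\ast f$ alone requires a Calder\'on-type reproducing formula, and this is precisely where the hypothesis $\int_{\mathbb{R}^n}\Phi\ne 0$ enters. Your proposal never uses that hypothesis, yet without it the statement is false (one can have $M(f,\Phi)\equiv 0$ with $f\ne 0$ when $\widehat\Phi$ vanishes on the frequency support of $f$). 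Second, and more seriously, the estimate you obtain by ``upgrading the pointwise bound at $x-y$ to an $L^r$-average'' is self-referential: it has the form $M_b^{**}(f,\Phi)(x)\le C[M([M(f,\Phi)]^r)(x)]^{1/r}+\varepsilon\,M_b^{**}(f,\Phi)(x)$, and absorbing the last term is only legitimate when $M_b^{**}(f,\Phi)(x)<\infty$. For a general $f\in\mathcal{S}'(\mathbb{R}^n)$ this can fail everywhere (if $b$ is small relative to the order of the distribution, $M_b^{**}(f,\Phi)\equiv\infty$). The standard remedy --- and the one used in \cite{SawanoHoYang} --- is to first work with a damped maximal function carrying extra factors such as $(t/(L+t))^{N}(1+L|x-y|)^{-N}$, which is finite for $N$ large, prove the inequality with constants independent of the damping, and pass to the limit by monotone convergence. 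Your closing claim that ``no additional regularisation of $f$ is needed'' because $\Phi_t\ast f$ is smooth misidentifies the difficulty: smoothness of $\Phi_t\ast f$ was never in doubt; finiteness of the supremum is, and without the truncation the absorption step is vacuous.
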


\begin{lemma}\label{le:density}
	\textup{\citep[Corollary 3.11]{SawanoHoYang} Let $s\in \left(0,1\right]$, $q\in \left(1,\infty \right]$ . Assume that $X$ is a strictly $s$-convex ball quasi-Banach function space satisfying Assumption \ref{as:Fefferman-Stein} for some $\theta\in \left(0,1\right]$. Assume further that $X$ has an absolutely continuous quasi-norm. Then }
	\begin{itemize}
		\item[$\left( {\rm \romannumeral1} \right)$]
		\textup{$H_{X}\left(\mathbb{R}^{n}\right) \cap L^{\infty}\left(\mathbb{R}^{n}\right)$ is dense in $H_{X}\left(\mathbb{R}^{n}\right)$.}
		\item[$\left( {\rm \romannumeral2} \right)$]
		\textup{The convergence of (\ref{eq:atomic decomposition}) holds true in $H_{X}\left(\mathbb{R}^{n}\right)$.}
	\end{itemize}
\end{lemma}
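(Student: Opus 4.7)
The plan is to deduce (i) from (ii), so the main work lies in proving that the atomic decomposition converges in the $H_X(\mathbb{R}^n)$-quasi-norm. I would apply Lemma \ref{le:the atomic characterization for the Hardy type space} to produce $f = \sum_{j=1}^\infty \lambda_j a_j$ in $\mathcal{S}'(\mathbb{R}^n)$, and for each $N\in\mathbb{N}$ set
\[
g_N := \sum_{j=N+1}^\infty \lambda_j a_j, \qquad F_N := \left\{\sum_{j=N+1}^\infty \left(\frac{\lambda_j}{\|\chi_{Q_j}\|_X}\right)^{s} \chi_{Q_j}\right\}^{1/s}.
\]
The target estimates are then $\|g_N\|_{H_X(\mathbb{R}^n)} \lesssim \|F_N\|_X$ uniformly in $N$, together with $\|F_N\|_X \to 0$ as $N \to \infty$.

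The first inequality is a Hardy-space analogue of Lemma \ref{le:atom2}, and I would obtain it by replaying the sufficiency side of \citep[Theorem 3.7]{SawanoHoYang}: estimate the Peetre-type maximal function $M_b^{**}(\lambda_j a_j, \Phi)$ near and far from $Q_j$ using the support and vanishing-moment conditions of each $(X,\infty,d)$-atom together with $d \ge d_X$ from (\ref{eq:d_{X}}) to secure enough pointwise decay, then assemble the tail via the Fefferman--Stein inequality (Assumption \ref{as:Fefferman-Stein}) and the strict $s$-convexity of $X$ (which supplies the triangle inequality in $X^{1/s}$). The second estimate uses the absolutely continuous quasi-norm: $F_N \downarrow 0$ pointwise almost everywhere and $F_N \le F_0 \in X$ by Lemma \ref{le:the atomic characterization for the Hardy type space}, so the standard dominated-convergence statement for ball Banach function spaces with absolutely continuous norm (applied to $F_N^s$ in $X^{1/s}$, which inherits absolute continuity from $X$ via $\|h\chi_E\|_{X^{1/s}} = \|h^{1/s}\chi_E\|_X^{1/s}$ for nonnegative $h$) forces $\|F_N^s\|_{X^{1/s}} \to 0$ and hence $\|F_N\|_X \to 0$.

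Once (ii) is established, (i) is immediate: the partial sum $f_N := \sum_{j=1}^N \lambda_j a_j$ is a finite linear combination of $(X,\infty,d)$-atoms, hence bounded with compact support, so $f_N \in H_X(\mathbb{R}^n) \cap L^\infty(\mathbb{R}^n)$; by (ii), $\|f - f_N\|_{H_X(\mathbb{R}^n)} = \|g_N\|_{H_X(\mathbb{R}^n)} \to 0$, which yields density. The chief obstacle is the uniform tail bound $\|g_N\|_{H_X} \lesssim \|F_N\|_X$: one cannot simply invoke Lemma \ref{le:the atomic characterization for the Hardy type space} as a black box, because the constant there controls $\|f\|_{H_X}$ by the full square function, not a truncated one. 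The remedy is to rerun the sufficiency direction of the atomic characterization with the summation index restricted to $j > N$, taking care that the constants generated by the Fefferman--Stein step and by the triangle inequality coming from strict $s$-convexity are independent of $N$.
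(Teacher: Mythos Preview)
The paper does not supply its own proof of this lemma; it is quoted verbatim as \citep[Corollary 3.11]{SawanoHoYang} and used as a black box. Your outline---establish the uniform tail bound $\|g_N\|_{H_X}\lesssim\|F_N\|_X$ by rerunning the sufficiency direction of the atomic characterization with constants independent of $N$, then force $\|F_N\|_X\to 0$ via absolute continuity of the quasi-norm (passed to $X^{1/s}$ through strict $s$-convexity), and finally read off (i) from (ii) using that finite sums of $(X,\infty,d)$-atoms are bounded with compact support---is exactly the standard route and is what the cited source does.
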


\begin{remark}\label{re:density}
	\textup{If $X$ is as in Lemma \ref{le:density}, and let $f\in H_{X}\left(\mathbb{R}^{n}\right)$, $\left\{\lambda_{j}\right\}_{j=1}^{\infty}$ and $\left\{a_{j}\right\}_{j=1}^{\infty}$ be as in Lemma \ref{le:the atomic characterization for the Hardy type space}. Then, $H_{X}\left(\mathbb{R}^{n}\right) \cap L^{2}\left(\mathbb{R}^{n}\right)$ is dense in $H_{X}\left(\mathbb{R}^{n}\right)$.}
\end{remark}

For any $N\in\mathbb{N}$, let
\begin{align*}
	\mathcal{F}_{N}\left(\mathbb{R}^{n}\right):=\left\{\varphi\in\mathcal{S}\left(\mathbb{R}^{n}\right):\sum_{\beta\in\mathbb{Z}_{+}^{n},\left|\beta\right|\le N} \sup\limits_{x\in\mathbb{R}^{n}} \left[\left(1+\left|x\right|\right)^{N+n} \left|\partial_{x}^{\beta}\varphi\left(x\right)\right| \right] \le 1 \right\};
\end{align*}
here and thereafter, for any $\beta:=\left(\beta_{1},...,\beta_{n}\right)\in\mathbb{Z}_{+}^{n}$ and $x\in\mathbb{R}^{n}$, $\left|\beta\right|:=\beta_{1}+\ldots+\beta_{n}$ and
\begin{align*}
	\partial_{x}^{\beta}:=\left(\frac{\partial}{\partial x_{1}}\right)^{\beta_{1}}\ldots \left(\frac{\partial}{\partial x_{n}}\right)^{\beta_{n}}.
\end{align*}

\begin{definition}\label{de:the radial grand maximal function}
	\textup{For any given $f\in\mathcal{S}'\left(\mathbb{R}^{n}\right)$, the radial grand maximal function $M_{N}^{0}\left(f\right)$ of $f$ is defined by setting, for any $x\in\mathbb{R}^{n}$,}
	\begin{align}\label{eq:the radial grand maximal function}
		M_{N}^{0}\left(f\right)\left(x\right):=\sup \left\{\left|f\ast\varphi_{t}\left(x\right)\right|:t\in\left(0,\infty\right)\ and\ \varphi\in\mathcal{F}_{N}\left(\mathbb{R}^{n}\right) \right\},
	\end{align}
    \textup{where for any $t\in\left(0,\infty\right)$ and $\xi\in\mathbb{R}^{n}$, $\varphi_{t}\left(\xi\right):=t^{-n}\varphi\left(\xi/t\right)$.}
\end{definition}

\begin{definition}\label{de:the weak Hardy-type space}
	\textup{\citep[Definition 2.21]{ZhangYangYuan} Let $X$ be a ball quasi-Banach function space. Then the weak Hardy-type space $WH_{X}\left(\mathbb{R}^{n}\right)$ associated with $X$ is defined by setting}
	\begin{align*}
		WH_{X}\left(\mathbb{R}^{n}\right):=\left\{f\in\mathcal{S}'\left(\mathbb{R}^{n}\right):\left\|f\right\|_{WH_{X}\left(\mathbb{R}^{n}\right)}:=\left\|M_{N}^{0}\left(f\right)\right\|_{WX}<\infty \right\},
	\end{align*}
    \textup{where $M_{N}^{0}\left(f\right)$ is as in (\ref{eq:the radial grand maximal function}) with $N\in\mathbb{N}$ sufficiently large.}
\end{definition}

\begin{remark}\label{re:the weak Hardy-type space}
	\textup{When $X:=L^{p}\left(\mathbb{R}^{n}\right)$ with $p\in\left(0,1\right]$, the Hardy-type space $WH_{X}\left(\mathbb{R}^{n}\right)$ coincides with the classical weak Hardy space $WH^{p}\left(\mathbb{R}^{n}\right)$ \cite{Liu}.}
\end{remark}

\begin{lemma}\label{le:equivalent-WX}
	\textup{\citep[Theorem 3.2]{ZhangYangYuan} Let $X$ be a ball quasi-Banach function space, $r\in\left(0,\infty\right)$ and $b\in\left(n/r,\infty\right)$. Assume that the Hardy-Littlewood maximal operator $M$ in (\ref{eq:the Hardy--Littlewood maximal operator}) is bounded on $\left(WX\right)^{1/r}$. Let $\Phi\in \mathcal{S}\left(\mathbb{R}^{n}\right)$ satisfy $\int_{\mathbb{R}^{n}} \Phi(x)\,dx\ne 0$. Then, for any $f\in \mathcal{S}'\left(\mathbb{R}^{n}\right)$,}
	\begin{align*}
		\left\|M_{N}^{0}\left(f\right)\right\|_{WX} \sim \left\|M\left(f,\Phi\right)\right\|_{WX},
	\end{align*}
	\textup{where $M_{N}^{0}\left(f\right)$ is the Peetre-type maximal function as in (\ref{eq:the radial grand maximal function}), $M\left(f,\Phi\right) :=\sup\limits_{t\in\left(0,\infty\right)} \left|\Phi_{t} \ast f\right|$ and the positive equivalence constants are independent of $f$.}
\end{lemma}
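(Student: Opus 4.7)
\textbf{Proof proposal for Lemma \ref{le:equivalent-WX}.} The plan is to sandwich the two quantities by a pointwise chain of maximal-function estimates, then exploit the boundedness of the Hardy--Littlewood maximal operator on $\left(WX\right)^{1/r}$ together with the scaling properties of the weak ball quasi-Banach function norm. The argument runs in strict parallel with the non-weak version (Lemma \ref{le:equivalent-X}), replacing $X$ with $WX$; the only substantive new input is the behaviour of the powered operator $M^{(\theta)}$ on $WX$ derived from Assumption \ref{as:M is bounded on (WX)^{1/r}}.

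\emph{Easy direction.} First I would establish $\|M(f,\Phi)\|_{WX} \lesssim \|M_{N}^{0}(f)\|_{WX}$. Since $\Phi \in \mathcal{S}(\mathbb{R}^{n})$, there is a constant $c>0$ such that $c\Phi \in \mathcal{F}_{N}(\mathbb{R}^{n})$, hence $|\Phi_{t} \ast f(x)| \le c^{-1} M_{N}^{0}(f)(x)$ for every $t\in(0,\infty)$ and every $x\in\mathbb{R}^{n}$. Taking the supremum in $t$ yields the pointwise bound $M(f,\Phi) \le c^{-1} M_{N}^{0}(f)$, and the inequality of weak norms then follows from Remark \ref{re:X-subset-WX}$({\rm \romannumeral2})$ together with the homogeneity part of Lemma \ref{le:quasi-norm on WX}.

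\emph{Hard direction.} For the converse $\|M_{N}^{0}(f)\|_{WX} \lesssim \|M(f,\Phi)\|_{WX}$, the strategy is to interpose the Peetre-type maximal function $M_{b}^{\ast\ast}(f,\Phi)$ and to produce the pointwise chain
\begin{align*}
  M_{N}^{0}(f)(x) \;\lesssim\; M_{b}^{\ast\ast}(f,\Phi)(x) \;\lesssim\; \bigl\{M\bigl([M(f,\Phi)]^{r}\bigr)(x)\bigr\}^{1/r} \;=\; M^{(r)}\bigl(M(f,\Phi)\bigr)(x),
\end{align*}
valid pointwise for $x\in\mathbb{R}^{n}$ once $N$ is sufficiently large and $b>n/r$. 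The first inequality is a grand-versus-tangential estimate obtained by writing every $\varphi \in \mathcal{F}_{N}(\mathbb{R}^{n})$ as a weighted average of translates of $\Phi$ (a subordination/Calder\'on reproducing argument), and the second is the classical Str\"omberg--Fefferman--Stein estimate that controls a tangential supremum by the Hardy--Littlewood maximal function of the radial maximal function raised to the $r$-th power, which requires exactly $b>n/r$ to make the convolution tails summable.

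\emph{Passing to the $WX$ norm.} Given the pointwise bound, I would then invoke the hypothesis that $M$ is bounded on $(WX)^{1/r}$. Using the identity $\|h\|_{(WX)^{1/r}}=\||h|^{1/r}\|_{WX}^{r}$ from Definition \ref{de:convexity-concavity}$({\rm \romannumeral1})$ (applied to $WX$, which is itself a ball quasi-Banach function space by Lemma \ref{le:X-WX}), the boundedness of $M$ on $(WX)^{1/r}$ translates into $\|M^{(r)}(g)\|_{WX} \lesssim \|g\|_{WX}$ for any $g\in\mathscr{M}(\mathbb{R}^{n})$. Applying this with $g=M(f,\Phi)$ and combining with the pointwise chain together with Remark \ref{re:X-subset-WX}$({\rm \romannumeral2})$ yields $\|M_{N}^{0}(f)\|_{WX} \lesssim \|M(f,\Phi)\|_{WX}$, completing the proof.

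\emph{Main obstacle.} The principal difficulty is not the $WX$-passage, which is routine given Lemma \ref{le:X-WX} and Assumption \ref{as:M is bounded on (WX)^{1/r}}, but rather the second pointwise inequality $M_{b}^{\ast\ast}(f,\Phi) \lesssim M^{(r)}(M(f,\Phi))$. Its proof requires a careful $\Phi$-reproducing decomposition of $\Phi_{t}\ast f$ at scales $s\in (0,t]$, control of the resulting error term using the fast decay $(1+t^{-1}|y|)^{-b}$ built into the Peetre maximal function, and a level-set argument to extract a Hardy--Littlewood average; the constraint $b>n/r$ is precisely what makes the geometric series in the scale decomposition converge. Once this pointwise inequality is in hand, the rest of the argument is essentially formal.
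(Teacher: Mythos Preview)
The paper does not provide a proof of this lemma; it is quoted directly as \cite[Theorem 3.2]{ZhangYangYuan} and used as a black box. Your proposal is the standard route to such equivalences (pointwise domination $M_{N}^{0}(f)\lesssim M_{b}^{\ast\ast}(f,\Phi)\lesssim M^{(r)}(M(f,\Phi))$ followed by the $(WX)^{1/r}$-boundedness of $M$), and it is essentially the argument carried out in \cite{ZhangYangYuan}; there is nothing to compare on the paper's side.
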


\subsection{Bochner--Riesz means.}

The following properties of Bochner--Riesz means play a key role in the proof of the main results.

\begin{lemma}\label{le:phi(B-R kernel)} 
	\textup{\cite{Sato} If $\delta=\frac{n}{p}-\frac{n+1}{2}$ and $0<p<1$, then $\phi$ satisfies the inequality}
	\begin{align}\label{eq:B-R-phi}
		\sup\limits_{x\in \mathbb{R}^{n}} \left(1+\left|x\right|\right)^{n/p} \left|D^{\alpha}\phi\left(x\right)\right| \le C_{\left(\alpha,n,p\right)}
	\end{align}
	\textup{for all multi-indices $\alpha$.}
\end{lemma}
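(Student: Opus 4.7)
The plan is to write $\phi$ explicitly in terms of a Bessel function and then exploit the classical asymptotics. Since $(1-|\xi|^2)_+^{\delta}$ is radial and compactly supported in the closed unit ball, a standard Hankel-transform computation (together with Sonine's integral $\int_0^1(1-r^2)^\delta J_{\nu}(ar)r^{\nu+1}\,dr = 2^\delta \Gamma(\delta+1) a^{-\delta-1} J_{\nu+\delta+1}(a)$) yields the identity
\begin{align*}
\phi(x) = \pi^{-\delta}\,\Gamma(\delta+1)\,|x|^{-(n/2+\delta)}\,J_{n/2+\delta}(2\pi|x|),
\end{align*}
where $J_\nu$ is the Bessel function of the first kind of order $\nu$. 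This reduces the desired estimate to the well-known large-argument behavior of Bessel functions.

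On the region $|x|\le 1$, note that $D^{\alpha}\phi$ is, by differentiation under the Fourier integral, the Fourier transform of the compactly supported $L^1$ function $(2\pi i\xi)^{\alpha}(1-|\xi|^2)_+^{\delta}$. Hence $D^{\alpha}\phi$ is continuous and bounded on $\mathbb{R}^n$, so (\ref{eq:B-R-phi}) is trivial there since $(1+|x|)^{n/p}\le 2^{n/p}$. For the tail $|x|\ge 1$, I would invoke the classical asymptotic
\begin{align*}
J_\nu(r) = \sqrt{\tfrac{2}{\pi r}}\cos\!\Bigl(r-\tfrac{\nu\pi}{2}-\tfrac{\pi}{4}\Bigr) + O\!\bigl(r^{-3/2}\bigr), \qquad r\to\infty,
\end{align*}
which gives $|J_{n/2+\delta}(2\pi|x|)|\lesssim |x|^{-1/2}$ and therefore $|\phi(x)|\lesssim |x|^{-(n+1)/2-\delta}$. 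Substituting $\delta=n/p-(n+1)/2$ turns the exponent into exactly $n/p$, establishing (\ref{eq:B-R-phi}) for $\alpha=0$.

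For higher-order multi-indices, I would propagate the same analysis through the radial Bessel representation by means of the differentiation recurrences
\begin{align*}
\frac{d}{dr}\!\bigl(r^{-\nu}J_\nu(r)\bigr) = -r^{-\nu}J_{\nu+1}(r), \qquad \frac{d}{dr}\!\bigl(r^{\nu}J_\nu(r)\bigr) = r^{\nu}J_{\nu-1}(r).
\end{align*}
Writing $\phi(x)=F(|x|)$ with $F(r)=\pi^{-\delta}\Gamma(\delta+1)r^{-(n/2+\delta)}J_{n/2+\delta}(2\pi r)$ and applying the chain rule, every radial derivative shifts the Bessel index by $\pm 1$ (producing a finite sum of terms of the same form) while the Cartesian conversion factors $\partial_{x_i}|x|$ contribute only bounded quantities on $|x|\ge 1$. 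Since the asymptotic $|J_{\nu\pm k}(r)|\lesssim r^{-1/2}$ is insensitive to the order, one obtains $|D^{\alpha}\phi(x)|\lesssim |x|^{-(n+1)/2-\delta} = |x|^{-n/p}$ on $|x|\ge 1$, with a constant depending on $\alpha$, $n$, $p$.

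The main obstacle is the bookkeeping for arbitrary $\alpha$: a direct differentiation under the Fourier integral is delicate because $(1-|\xi|^2)_+^{\delta}$ has a H\"older singularity at $|\xi|=1$ whenever $\delta\notin\mathbb{Z}_+$, so naive integration by parts against $e^{2\pi ix\cdot\xi}$ picks up boundary issues that are hard to control uniformly in $\alpha$. The elegance of the Bessel representation is that Sonine's integral has effectively performed these integrations by parts once and for all, absorbing the boundary behavior into a shift of the Bessel index, so all subsequent differentiation takes place on smooth Bessel functions and the proof reduces to the uniform tail estimate described above.
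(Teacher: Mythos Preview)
The paper does not supply its own proof of this lemma; it is quoted directly from \cite{Sato} as a known estimate on the Bochner--Riesz kernel, so there is no in-paper argument to compare against.

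Your approach is the standard one and is correct. The explicit Bessel representation $\phi(x)=\pi^{-\delta}\Gamma(\delta+1)\,|x|^{-(n/2+\delta)}J_{n/2+\delta}(2\pi|x|)$ is exactly what underlies the cited estimate, and combining it with $|J_\nu(r)|\lesssim r^{-1/2}$ gives the decay $|x|^{-(n+1)/2-\delta}=|x|^{-n/p}$ at infinity for $\alpha=0$. For general $\alpha$ your scheme also works: writing $D^\alpha\phi$ via the Fa\`a di Bruno / chain-rule expansion of a radial function produces a finite sum of terms of the form $P(x/|x|)\,|x|^{-k}F^{(m)}(|x|)$ with $k\ge 0$ and $P$ a polynomial, and the recurrence $\frac{d}{ds}\bigl(s^{-\nu}J_\nu(s)\bigr)=-s^{-\nu}J_{\nu+1}(s)$ iterates to show that every $F^{(m)}(r)$ is a finite combination of terms $r^{-(n/2+\delta)-j}J_{n/2+\delta+\ell}(2\pi r)$ with $j\ge 0$, each of which is $O(r^{-n/p})$ for $r\ge 1$. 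The only point worth tightening is to state explicitly that the angular factors $P(x/|x|)$ and the extra powers $|x|^{-k}$ only improve the bound on $|x|\ge 1$, so the worst term still decays like $|x|^{-n/p}$; you have this in words but a one-line induction would make it airtight.
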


\begin{lemma}\label{le:phi^t}
	\textup{\cite{ZhangTan} Let $0<p_{0}<1$, $\delta=\frac{n}{p_{0}}-\frac{n+1}{2}$, and $\phi$ be a distribution that satisfies}
	\begin{itemize}
		\item[$\left( {\rm \romannumeral1} \right)$] $\left\|B_{1/\varepsilon}^{\delta}(f)\right\|_{L^{2}\left(\mathbb{R}^{n}\right)} \lesssim \left\|f\right\|_{L^{2}\left(\mathbb{R}^{n}\right)}$;
		\item[$\left( {\rm \romannumeral2} \right)$] $\left|\phi(x)\right| \lesssim \left(1+|x|\right)^{-n/{p_{0}}}$;
		\item[$\left( {\rm \romannumeral3} \right)$]
		\textup{Let $\operatorname{supp}{\left(\phi\right)} \subset \left\{\left|x\right|<1\right\}$, $N=\left[n\left(1/{p_{0}}-1\right)\right]$, and $P_{\phi}\left(\cdot\frac{x}{\varepsilon}\right)$ denote the Taylor polynomial of $N$-th order of $\phi\left(t\right)$ at $t=\frac{x}{\varepsilon}$. For $\left|x\right|\ge 4\left|y\right|$ and $\varepsilon>\frac{3}{4}\left|x\right|$,}
		\begin{align*}
			\varepsilon^{-n} \left|\phi\left(\frac{x-y}{\varepsilon}\right)-P_{\phi}\left(-\frac{y}{\varepsilon}\right) \right| \lesssim \frac{\left|y\right|^{N+1}}{\left|x\right|^{N+n+1}},
		\end{align*}
	\end{itemize}
	\textup{and define $\phi^{(t)}$ by $\phi^{(t)}=\phi \ast \Phi_{t}$. Then $\phi^{(t)}$ satisfies the same hypotheses, with bounds that may be chosen independent of $t$. In particular}
	\begin{align*}
		\left|\phi^{(t)}\left(x\right)\right| \lesssim \left(1+\left|x\right|\right)^{-n/{p_{0}}}
	\end{align*}
	\textup{and}
	\begin{align*}
		\varepsilon^{-n} \left|\phi^{\left(t\right)}\left(\frac{x-y}{\varepsilon}\right)-P_{\phi^{\left(t\right)}}\left(-\frac{y}{\varepsilon}\right) \right| \lesssim \frac{\left|y\right|^{N+1}}{\left|x\right|^{N+n+1}},
	\end{align*}
	\textup{whenever $|x| \ge 4|y|$ and $\varepsilon>\frac{3}{4}\left|x\right|$.}	
\end{lemma}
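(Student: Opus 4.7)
The plan is to verify, for $\phi^{(t)} := \phi \ast \Phi_t$, each of hypotheses (i), (ii) and (iii) separately, always aiming for bounds independent of $t$. Hypothesis (i) I would dispatch first: a direct rescaling shows that $\phi^{(t)}_{\varepsilon}=\phi_{\varepsilon}\ast \Phi_{\varepsilon t}$, so the operator $f\mapsto f\ast \phi^{(t)}_{\varepsilon}$ factors as the Bochner--Riesz convolution followed by convolution against $\Phi_{\varepsilon t}$, and Young's inequality delivers
\[
\|f\ast \phi^{(t)}_{\varepsilon}\|_{L^{2}}\le \|\Phi_{\varepsilon t}\|_{L^{1}}\|B^{\delta}_{1/\varepsilon}(f)\|_{L^{2}}=\|\Phi\|_{L^{1}}\|B^{\delta}_{1/\varepsilon}(f)\|_{L^{2}}\lesssim \|f\|_{L^{2}},
\]
uniformly in $\varepsilon$ and $t$.

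For hypothesis (ii), I would write $\phi^{(t)}(x)=\int \phi(x-y)\Phi_{t}(y)\,dy$ and split the integration domain according to $|y|\le |x|/2$ versus $|y|>|x|/2$. On the first region $|x-y|\ge |x|/2$, so the size bound on $\phi$ yields $|\phi(x-y)|\lesssim (1+|x|)^{-n/p_{0}}$, and $\|\Phi_{t}\|_{L^{1}}=\|\Phi\|_{L^{1}}$ gives the desired estimate. On the complementary region, the Schwartz decay of $\Phi$ (so $|\Phi_{t}(y)|\lesssim t^{M-n}(t+|y|)^{-M}$ for any $M$) together with a change of variables and $\|\phi\|_{L^{\infty}}<\infty$ produces a tail contribution controlled by $(1+|x|)^{-n/p_{0}}$ once $M$ is taken large enough.

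For hypothesis (iii) the key observation is that convolution commutes with differentiation, so $D^{\alpha}\phi^{(t)}=(D^{\alpha}\phi)\ast \Phi_{t}$, and therefore the $N$th-order Taylor polynomial of $\phi^{(t)}$ at $x/\varepsilon$ is the $\Phi_{t}$-average of the corresponding Taylor polynomials of $\phi$ at the shifted centres $(x-\varepsilon u)/\varepsilon$. This yields the representation
\[
\phi^{(t)}\!\Bigl(\tfrac{x-y}{\varepsilon}\Bigr)-P_{\phi^{(t)}}\!\Bigl(-\tfrac{y}{\varepsilon}\Bigr)=\int \!\Bigl[\phi\!\Bigl(\tfrac{x-\varepsilon u-y}{\varepsilon}\Bigr)-P_{\phi,(x-\varepsilon u)/\varepsilon}\!\Bigl(-\tfrac{y}{\varepsilon}\Bigr)\Bigr]\Phi_{t}(u)\,du.
\]
I would then split this $u$-integral into a near region $\{|\varepsilon u|\le |x|/8\}$, where $|x-\varepsilon u|\sim |x|$ so that the geometric conditions $|x-\varepsilon u|\ge 4|y|$ and $\varepsilon>\tfrac{3}{4}|x-\varepsilon u|$ persist and hypothesis (iii) for $\phi$ applies to give the bracket $\lesssim \varepsilon^{n}|y|^{N+1}/|x|^{N+n+1}$, and a far region $\{|\varepsilon u|>|x|/8\}$ on which the two pieces of the bracket are estimated separately from the size bounds on $\phi$ and its derivatives and then absorbed by sufficient Schwartz decay of $\Phi_{t}$.

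The main obstacle will be the bookkeeping in (iii): one must carefully track the shift of the Taylor centre from $x/\varepsilon$ to $(x-\varepsilon u)/\varepsilon$, verify the geometric hypotheses on the near region, and choose enough Schwartz moments of $\Phi$ that the far region swallows the polynomial factors $(-y/\varepsilon)^{\alpha}$ coming from the low-order Taylor terms. The uniformity in $t$ ultimately rests on the fact that the relevant $L^{1}$ and moment integrals of $\Phi_{t}$ either rescale cleanly or combine with the available powers of $\varepsilon$ to reproduce the advertised bounds.
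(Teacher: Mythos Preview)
The paper does not prove this lemma; it is simply quoted from \cite{ZhangTan}. Your argument for (i) is correct, and your convolution representation of the Taylor remainder in (iii) is valid. The genuine gap is in the ``far region'' estimates for both (ii) and (iii): you plan to absorb those pieces using the Schwartz decay of $\Phi_t$, but that decay operates at scale $t$, whereas your far regions are cut at $t$-independent scales, namely $|y|>|x|/2$ in (ii) and $|u|>|x|/(8\varepsilon)$ in (iii) (and since $\varepsilon>(3/4)|x|$, the latter threshold is below $1/6$). For large $t$ these regions carry essentially all of $\|\Phi_t\|_{L^1}$, so no additional decay is available. Concretely, your far-region bound in (ii) is of the form $t^{M-n}|x|^{n-M}$, which blows up as $t\to\infty$; indeed, taking $\phi$ smooth with $\int\phi\neq0$ and $\Phi$ a Gaussian, one checks directly that $|\phi^{(|x|)}(x)|\sim|x|^{-n}\not\lesssim|x|^{-n/p_0}$, so the uniform-in-$t$ version of (ii) is actually false as stated.

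The estimate the paper actually uses downstream (see (\ref{eq:th01-09})) is only the Taylor-remainder bound (iii), and this \emph{is} uniform in $t$, but by a shorter route than your near/far split. Apply Taylor's theorem of order $N$ directly to $\phi^{(t)}$: the remainder is controlled by $\max_{|\alpha|=N+1}\|D^\alpha\phi^{(t)}\|_{L^\infty}\,(|y|/\varepsilon)^{N+1}$, and since $D^\alpha\phi^{(t)}=(D^\alpha\phi)\ast\Phi_t$ one has $\|D^\alpha\phi^{(t)}\|_{L^\infty}\le\|D^\alpha\phi\|_{L^\infty}\|\Phi\|_{L^1}$, which by Lemma~\ref{le:phi(B-R kernel)} is bounded independently of $t$. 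The hypothesis $\varepsilon>(3/4)|x|$ then converts $\varepsilon^{-(n+N+1)}$ into $|x|^{-(n+N+1)}$, giving the claimed bound with no splitting. In your framework the same observation fixes the far region as well (the bracket there is still an $(N{+}1)$st-order Taylor remainder of $\phi$, so bound it by $\|D^{N+1}\phi\|_{L^\infty}(|y|/\varepsilon)^{N+1}$ instead of estimating the two pieces separately and hoping $\Phi_t$ supplies the rest), after which the near/far distinction becomes superfluous.
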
 

\begin{lemma}\label{le:B-R H_w^s-WH_w^s}
	\textup{Let $s\in\left(0,\infty\right)$, $\omega\in A_{\infty}\left(\mathbb{R}^{n}\right)$ and $\delta>\frac{n-1}{2}$. Then there exists a positive constant $C$ such that, for any $f\in H_{\omega}^{s}\left(\mathbb{R}^{n}\right)$,}
	\begin{align}\label{eq:B-R H_w^s-WH_w^s}
		\left\|B_{1/\varepsilon}^{\delta}\left(f\right) \right\|_{WH_{\omega}^{s}\left(\mathbb{R}^{n}\right)} \le C\left\|f\right\|_{H_{\omega}^{s}\left(\mathbb{R}^{n}\right)},
	\end{align}
	\textup{where $H_{\omega}^{s}\left(\mathbb{R}^{n}\right)$ denotes the weighted Hardy space as in Definition \ref{de:the Hardy type space} with $X$ replaced by $L_{\omega}^{s}\left(\mathbb{R}^{n}\right)$, and $WH_{\omega}^{s}\left(\mathbb{R}^{n}\right)$ denotes the weighted weak Hardy space as in Definition \ref{de:the weak Hardy-type space} with $X$ replaced by $L_{\omega}^{s}\left(\mathbb{R}^{n}\right)$.}
\end{lemma}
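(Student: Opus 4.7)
The plan is to reduce the claim to a weighted weak-type estimate on level sets of the Bochner--Riesz maximal convolution, and then to split the atomic decomposition of $f$ at each level into a ``small'' collection handled by a strong-type bound and a ``large'' collection handled by kernel decay. Throughout, set $p_0:=2n/(n+1+2\delta)$, so that $\delta=n/p_0-(n+1)/2$ with $p_0\in(0,1)$ (because $\delta>(n-1)/2$); Lemmas \ref{le:phi(B-R kernel)} and \ref{le:phi^t} then apply to $\phi$ and to $\phi^{(t)}:=\phi\ast\Phi_t$ with Taylor order $N:=\lfloor n(1/p_0-1)\rfloor$, uniformly in $t$, and the kernel decay $n/p_0>n$ puts $\phi\in L^{1}(\mathbb R^{n})$, whence $B_{\ast}^{\delta}f\lesssim Mf$ pointwise. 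Pick $q_\omega\in[1,\infty)$ with $\omega\in A_{q}$ for every $q>q_\omega$. If $s>q_\omega$ the claim is immediate: $M$ is strongly bounded on $L_{\omega}^{s}$, so $B_{1/\varepsilon}^{\delta}$ is as well, and because $\omega\in A_{s}$ one has $H_{\omega}^{s}=L_{\omega}^{s}\hookrightarrow WL_{\omega}^{s}\cong WH_{\omega}^{s}$ with equivalent norms. I now restrict to the remaining range $0<s\le q_\omega$.

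By Lemma \ref{le:equivalent-WX} applied with $X=L_{\omega}^{s}$, $\|B_{1/\varepsilon}^{\delta}f\|_{WH_{\omega}^{s}}\sim\|\sup_{t>0}|\phi^{(t)}_{\varepsilon}\ast f|\|_{WL_{\omega}^{s}}$, so it suffices to establish
\[
\sup_{\alpha>0}\alpha^{s}\,\omega\bigl(\bigl\{x\in\mathbb R^{n}:\sup_{t>0}|\phi^{(t)}_{\varepsilon}\ast f(x)|>\alpha\bigr\}\bigr)\lesssim \|f\|_{H_{\omega}^{s}}^{s}
\]
uniformly in $\varepsilon>0$, for $f$ ranging over the dense class from Lemma \ref{le:density}. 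Fix $\alpha>0$ and, using Lemma \ref{le:the atomic characterization for the Hardy type space} with $X=L_{\omega}^{s}$ and the Fefferman--Stein exponent $\theta$ small enough that $d\ge N$, decompose $f=\sum_{j}\lambda_{j}a_{j}$ with $(L_{\omega}^{s},\infty,d)$-atoms $a_{j}$ supported in cubes $Q_{j}$; split $\mathbb N=I_{1}(\alpha)\cup I_{2}(\alpha)$ according as $\lambda_{j}\|\chi_{Q_{j}}\|_{L_{\omega}^{s}}^{-1}\le\alpha$ or not, and set $f_{i}^{\alpha}:=\sum_{j\in I_{i}(\alpha)}\lambda_{j}a_{j}$. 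For the small part, the pointwise bound $\sup_{\varepsilon,t}|\phi^{(t)}_{\varepsilon}\ast f_{1}^{\alpha}|\lesssim Mf_{1}^{\alpha}$ (from Lemma \ref{le:phi^t}(ii) and $n/p_{0}>n$), the $L_{\omega}^{\tilde s}$-boundedness of $M$ for any $\tilde s>\max\{s,q_\omega\}$, and Chebyshev's inequality yield
\[
\omega\bigl(\bigl\{\sup_{\varepsilon,t}|\phi^{(t)}_{\varepsilon}\ast f_{1}^{\alpha}|>\alpha/2\bigr\}\bigr)\lesssim \alpha^{-\tilde s}\|f_{1}^{\alpha}\|_{L_{\omega}^{\tilde s}}^{\tilde s}\lesssim \alpha^{-s}\|f\|_{H_{\omega}^{s}}^{s},
\]
where the last step uses the size threshold of the atoms in $I_{1}(\alpha)$ together with the weighted Fefferman--Stein inequality for $L_{\omega}^{s}$.

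For the large part, let $E^{\alpha}:=\bigcup_{j\in I_{2}(\alpha)}cQ_{j}$ with a fixed dilation $c>1$; the definition of $I_{2}(\alpha)$ and the atomic quasi-norm bound give $\sum_{j\in I_{2}(\alpha)}\omega(Q_{j})\lesssim \alpha^{-s}\|f\|_{H_{\omega}^{s}}^{s}$, and $\omega(E^{\alpha})$ is controlled accordingly by $A_{\infty}$-doubling. Outside $E^{\alpha}$ I would apply Lemma \ref{le:phi^t}(iii) together with the vanishing moments of $a_{j}$ up to order $d\ge N$ in the regime $\varepsilon\gtrsim|x-x_{j}|$, obtaining a pointwise tail of order $|Q_{j}|^{(N+1)/n+1}\|\chi_{Q_{j}}\|_{L_{\omega}^{s}}^{-1}|x-x_{j}|^{-N-n-1}$; in the complementary regime $\varepsilon\lesssim|x-x_{j}|$ the size estimate of Lemma \ref{le:phi^t}(ii) gives the corresponding bound. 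Summing over $j\in I_{2}(\alpha)$, converting the resulting Lebesgue-measure quantities into $\omega$-measure by an $A_{\infty}$ reverse H\"older inequality, and combining with the small-part bound via the quasi-triangle inequality of Lemma \ref{le:quasi-norm on WX} finishes the proof. The main obstacle is this large-part estimate: one must balance the dilation $c$, the Fefferman--Stein exponent $\theta$ (hence $d$), and the reverse H\"older exponent of $\omega$ so that the tail sum beats the decay required by the weighted weak-type inequality, relying crucially on the $\varepsilon,t$-independent kernel bounds of Lemma \ref{le:phi^t}.
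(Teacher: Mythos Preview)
Your approach is correct in outline but takes a far longer route than the paper. The paper's proof is a two-line argument: it cites the \emph{strong}-type bound $\|B_{1/\varepsilon}^{\delta}(f)\|_{H_{\omega}^{s}}\le C\|f\|_{H_{\omega}^{s}}$ from \cite[Corollary~3.5(ii)]{ZhangTan}, and then simply observes that $H_{X}\hookrightarrow WH_{X}$ for any ball quasi-Banach function space $X$, which follows from Remark~\ref{re:X-subset-WX}(i) applied through the maximal-function characterizations of Lemmas~\ref{le:equivalent-X} and~\ref{le:equivalent-WX}. No atomic decomposition, no level-dependent splitting, and no kernel estimate is performed here; all of that machinery was already deployed in \cite{ZhangTan} to obtain the strong bound, so the present lemma is just ``strong implies weak.''

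Your direct argument via $\alpha$-dependent splitting of the atomic decomposition is the standard technique for weak-type endpoint estimates when a strong-type result is \emph{not} available, and it would yield the lemma independently of \cite{ZhangTan}. The small-part estimate is fine (for $X=L_{\omega}^{s}$ the atomic quasi-norm reduces to $\sum_{j}\lambda_{j}^{s}\lesssim\|f\|_{H_{\omega}^{s}}^{s}$, which is exactly what you need). The large-part tail estimate you sketch---balancing the dilation $c$, the moment order $d$, and the reverse H\"older exponent of $\omega$---is where the genuine work lies and is not fully carried out in your proposal; it is doable along the lines of \cite{Sato} or \cite{Wang}, but nontrivial. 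What your approach buys is self-containedness; what the paper's approach buys is brevity, at the price of importing the main estimate from a companion paper.
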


\begin{proof}
	By \citep[Corollary 3.5({\rm \romannumeral2})]{ZhangTan}, we know that $\left\|B_{1/\varepsilon}^{\delta}\left(f\right) \right\|_{H_{\omega}^{s}\left(\mathbb{R}^{n}\right)} \le C\left\|f\right\|_{H_{\omega}^{s}\left(\mathbb{R}^{n}\right)}$ with $s\in\left(0,\infty\right)$. Therefore, to prove (\ref{eq:B-R H_w^s-WH_w^s}), it is only necessary to prove $\left\|B_{1/\varepsilon}^{\delta}\left(f\right) \right\|_{WH_{\omega}^{s}\left(\mathbb{R}^{n}\right)} \le C\left\|B_{1/\varepsilon}^{\delta}\left(f\right) \right\|_{H_{\omega}^{s}\left(\mathbb{R}^{n}\right)}$, which can be derived from $H_{\omega}^{s}\left(\mathbb{R}^{n}\right) \subset WH_{\omega}^{s}\left(\mathbb{R}^{n}\right)$. Then we will prove $H_{\omega}^{s}\left(\mathbb{R}^{n}\right) \subset WH_{\omega}^{s}\left(\mathbb{R}^{n}\right)$ as follows.
	
	Let $X$ be a ball quasi-Banach function space. We know that $L_{\omega}^{s}\left(\mathbb{R}^{n}\right)$ is a ball quasi-Banach function space. In order to take advantage of the properties of ball quasi-Banach function space in the following proof, we can directly prove that $H_{X}\subset WH_{X}$.
	
	From Definitions \ref{de:the Hardy type space}, \ref{de:the weak Hardy-type space} and Lemmas \ref{le:equivalent-X}, \ref{le:equivalent-WX}, we notice that
	\begin{align*}
		\left\|f\right\|_{H_{X}\left(\mathbb{R}^{n}\right)}=\left\|M_{b}^{\ast\ast}\left(f,\Phi\right)\right\|_{X}\sim \left\|M\left(f,\Phi\right)\right\|_{X}
	\end{align*}
	and
	\begin{align*}
		\left\|f\right\|_{WH_{X}\left(\mathbb{R}^{n}\right)}=\left\|M_{N}^{0}\left(f\right)\right\|_{WX}\sim \left\|M\left(f,\Phi\right)\right\|_{WX},
	\end{align*}
	which combined with $X\subset WX$ as in Remark \ref{re:X-subset-WX}, we conclude that
	\begin{align*}
		\left\|M\left(f,\Phi\right)\right\|_{WX} \le C\left\|M\left(f,\Phi\right)\right\|_{X}.
	\end{align*}
	Therefore, we have $\left\|f\right\|_{WH_{X}\left(\mathbb{R}^{n}\right)}\le C\left\|f\right\|_{H_{X}\left(\mathbb{R}^{n}\right)}$, which shows that $H_{X}\subset WH_{X}$. This finishes the proof of Lemma \ref{le:B-R H_w^s-WH_w^s}.
\end{proof}

\begin{lemma}\label{le:B-R H_w^s-WL_w^s}
	\textup{Let $s\in\left(0,\infty\right)$, $\omega\in A_{\infty}\left(\mathbb{R}^{n}\right)$ and $\delta>\frac{n-1}{2}$. Then there exists a positive constant $C$ such that, for any $f\in H_{\omega}^{s}\left(\mathbb{R}^{n}\right)$,}
	\begin{align}\label{eq:B-R H_w^s-WL_w^s}
		\left\|B_{1/\varepsilon}^{\delta}\left(f\right) \right\|_{WL_{\omega}^{s}\left(\mathbb{R}^{n}\right)} \le C\left\|f\right\|_{H_{\omega}^{s}\left(\mathbb{R}^{n}\right)},
	\end{align}
	\textup{where $WL_{\omega}^{s}\left(\mathbb{R}^{n}\right)$ is as in Definition \ref{de:WX} with $X$ replaced by $L_{\omega}^{s}\left(\mathbb{R}^{n}\right)$.}
\end{lemma}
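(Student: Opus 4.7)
The plan is to reduce the desired estimate to Lemma \ref{le:B-R H_w^s-WH_w^s} by establishing the general embedding $WH_X(\mathbb{R}^n) \hookrightarrow WX(\mathbb{R}^n)$ for any ball quasi-Banach function space $X$, and then specializing to $X = L_\omega^s(\mathbb{R}^n)$. Once that embedding is known, the argument collapses to
$$\left\|B_{1/\varepsilon}^\delta(f)\right\|_{WL_\omega^s(\mathbb{R}^n)} \le C\left\|B_{1/\varepsilon}^\delta(f)\right\|_{WH_\omega^s(\mathbb{R}^n)} \le C\left\|f\right\|_{H_\omega^s(\mathbb{R}^n)},$$
the second inequality being exactly Lemma \ref{le:B-R H_w^s-WH_w^s}.

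First I would verify that $B_{1/\varepsilon}^\delta(f)$ is pointwise defined (not merely a tempered distribution). Since $\delta > (n-1)/2$, the kernel $\phi = [(1-|\cdot|^2)_+^\delta]^\wedge$ is smooth with decay of order $|x|^{-(n+1)/2-\delta}$; in particular $\phi_\varepsilon\in L^1(\mathbb{R}^n)\cap C^\infty(\mathbb{R}^n)$, so $B_{1/\varepsilon}^\delta(f) = f*\phi_\varepsilon$ is a continuous function for every $f \in \mathcal{S}'(\mathbb{R}^n)$ and the pointwise comparison in the next step is legitimate.

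Next I would prove the majorization $|g(x)| \le C\,M_N^0(g)(x)$ a.e.\ for any continuous $g \in WH_X$. Fix $\Phi \in \mathcal{S}(\mathbb{R}^n)$ with $\int_{\mathbb{R}^n}\Phi \ne 0$ and pick $c > 0$ small enough that $c\Phi \in \mathcal{F}_N(\mathbb{R}^n)$; then Definition \ref{de:the radial grand maximal function} directly gives
$$c\,|\Phi_t * g(x)| \le M_N^0(g)(x) \quad \text{for every } t>0 \text{ and } x \in \mathbb{R}^n.$$
Letting $t\to 0^+$, the standard approximation-to-the-identity convergence $\Phi_t*g(x) \to \left(\int\Phi\right)g(x)$ at continuity points of $g$ yields $|g(x)| \le C\,M_N^0(g)(x)$ a.e. Taking $\|\cdot\|_{WX}$ on both sides and using the monotonicity of the weak quasi-norm recorded in Remark \ref{re:X-subset-WX}$({\rm \romannumeral2})$ produces
$$\|g\|_{WX} \le C\|M_N^0(g)\|_{WX} = C\|g\|_{WH_X(\mathbb{R}^n)},$$
which is the required embedding. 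Applying it to $g = B_{1/\varepsilon}^\delta(f)$ with $X = L_\omega^s(\mathbb{R}^n)$ and chaining with Lemma \ref{le:B-R H_w^s-WH_w^s} closes the proof.

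The main technical delicacy is ensuring $B_{1/\varepsilon}^\delta(f)$ is a bona fide continuous function so that the pointwise comparison $|g(x)| \le C\,M_N^0(g)(x)$ really holds at a.e.\ $x$; the hypothesis $\delta > (n-1)/2$ is precisely what guarantees the integrability and smoothness of the kernel, so no extra regularization is needed. The remaining steps (the normalization constant producing $c\Phi \in \mathcal{F}_N$ and invoking Remark \ref{re:X-subset-WX}$({\rm \romannumeral2})$) are routine.
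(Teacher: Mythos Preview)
Your proof is correct but takes a different route from the paper's. The paper invokes the \emph{strong-type} estimate $\left\|B_{1/\varepsilon}^\delta(f)\right\|_{L_\omega^s(\mathbb{R}^n)} \le C\|f\|_{H_\omega^s(\mathbb{R}^n)}$ from \cite[Corollary~3.5(ii)]{ZhangTan} and then applies only the trivial embedding $L_\omega^s(\mathbb{R}^n)\subset WL_\omega^s(\mathbb{R}^n)$ recorded in Remark~\ref{re:X-subset-WX}(i). You instead pass through Lemma~\ref{le:B-R H_w^s-WH_w^s} and establish the embedding $WH_X(\mathbb{R}^n)\hookrightarrow WX(\mathbb{R}^n)$ via the pointwise majorization $|g|\le C\,M_N^0(g)$ (using that $B_{1/\varepsilon}^\delta(f)$ is a genuine smooth function because its Fourier transform is compactly supported). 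Both arguments ultimately lean on the results of \cite{ZhangTan}, but the paper's path is shorter: it stays in the strong-type world until the last trivial step and avoids any pointwise analysis. Your detour through $WH_X$ does yield the embedding $WH_X\hookrightarrow WX$ as a byproduct, which is of independent interest, but it is more than what is needed here.
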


\begin{proof}
	By \citep[Corollary 3.5({\rm \romannumeral2})]{ZhangTan}, we know that $\left\|B_{1/\varepsilon}^{\delta}\left(f\right) \right\|_{L_{\omega}^{s}\left(\mathbb{R}^{n}\right)} \le C\left\|f\right\|_{H_{\omega}^{s}\left(\mathbb{R}^{n}\right)}$ with $s\in\left(0,\infty\right)$. Therefore, to prove (\ref{eq:B-R H_w^s-WL_w^s}), it is only necessary to prove $\left\|B_{1/\varepsilon}^{\delta}\left(f\right) \right\|_{WL_{\omega}^{s}\left(\mathbb{R}^{n}\right)} \le C\left\|B_{1/\varepsilon}^{\delta}\left(f\right) \right\|_{L_{\omega}^{s}\left(\mathbb{R}^{n}\right)}$, which can be derived from $L_{\omega}^{s}\left(\mathbb{R}^{n}\right) \subset WL_{\omega}^{s}\left(\mathbb{R}^{n}\right)$. 
	
	Let $X$ be a ball quasi-Banach function space. From Remark \ref{re:X-subset-WX} ({\rm \romannumeral1}), we know that $X\subset WX$. Since $L_{\omega}^{s}\left(\mathbb{R}^{n}\right)$ is a ball quasi-Banach function space, it follows that $L_{\omega}^{s}\left(\mathbb{R}^{n}\right) \subset WL_{\omega}^{s}\left(\mathbb{R}^{n}\right)$. This finishes the proof of Lemma \ref{le:B-R H_w^s-WL_w^s}.
\end{proof}

\section{Proofs of main results}

{
\noindent \textit{\bf Proof of Theorem \ref{th:B-R Hx-WHx}}. Let $\theta$, $s$ and $d$ be as in Lemma \ref{le:H_X-H_atom equivalent quasi-norm} and $f\in H_{X}\left(\mathbb{R}^{n}\right)$. Then, by Lemma \ref{le:the atomic characterization for the Hardy type space}, we find that there exist a sequence $\left\{a_{j}\right\}_{j=1}^{\infty}$ of $\left(X,\infty,d\right)$-atoms supported, respectively, in a sequence $\left\{Q_{j}\right\}_{j=1}^{\infty}$ of cubes, and a sequence $\left\{\lambda_{j}\right\}_{j=1}^{\infty}$ of non-negative numbers, independent of $f$ but depending on $s$, such that
} 

\begin{align}\label{eq:th01-01}
	f=\sum_{j=1}^{\infty} \lambda_{j} a_{j}\quad {\rm in} \quad \mathcal{S}'\left(\mathbb{R}^{n}\right)
\end{align}
and
\begin{align}\label{eq:th01-02}
	\left\|\left\{\sum_{j=1}^{\infty}\left(\frac{\lambda_{j}}{\left\|\chi_{Q_j}\right\|_{X}}\right)^{s} \chi_{Q_j}\right\}^{1/s}\right\|_{X} \lesssim\|f\|_{H_{X}\left(\mathbb{R}^{n}\right)}.
\end{align}
From Lemma \ref{le:X-subset-the weighted Lebesgue space}, we deduce that there exists an $\epsilon\in\left(0,1\right)$ such that $X$ is continuously embedded into $L_{\omega}^{s}\left(\mathbb{R}^{n}\right)$ with $\omega:=\left[M\left(\chi_{B\left({\vec 0}_{n},1\right)}\right)\right]^{\epsilon}$, which, combined with (\ref{eq:th01-02}), implies that
\begin{align}\label{eq:th01-03}
	&\left\|\left\{\sum_{j=1}^{\infty} \left[\lambda_{j}\frac{\left\|\chi_{Q_{j}}\right\|_{L_{\omega}^{s}\left(\mathbb{R}^{n}\right)}}{\left\|\chi_{Q_{j}}\right\|_{X}} \frac{1}{\left\|\chi_{Q_{j}}\right\|_{L_{\omega}^{s}\left(\mathbb{R}^{n}\right)}} \right]^{s}\chi_{Q_{j}} \right\}^{1/s} \right\|_{L_{\omega}^{s}\left(\mathbb{R}^{n}\right)}\nonumber
	\\= &\left\|\left\{\sum_{j=1}^{\infty} \left(\frac{\lambda_{j}}{\left\|\chi_{Q_{j}}\right\|_{X}}\right)^{s}\chi_{Q_{j}} \right\}^{1/s}\right\|_{L_{\omega}^{s}\left(\mathbb{R}^{n}\right)}\nonumber\lesssim \left\|\left\{\sum_{j=1}^{\infty} \left(\frac{\lambda_{j}}{\left\|\chi_{Q_{j}}\right\|_{X}}\right)^{s}\chi_{Q_{j}} \right\}^{1/s}\right\|_{X}\nonumber
	\\\lesssim &\left\|f\right\|_{H_{X}\left(\mathbb{R}^{n}\right)}
\end{align}
Moreover, since for any $j\in\mathbb{N}$, $a_{j}$ is an $\left(X,\infty,d\right)$-atom, it follows that for any $j\in\mathbb{N}$, $\frac{\left\|\chi_{Q_{j}}\right\|_{X}}{\left\|\chi_{Q_{j}}\right\|_{L_{\omega}^{s}\left(\mathbb{R}^{n}\right)}} a_{j}$ is an $\left(L_{\omega}^{s}\left(\mathbb{R}^{n}\right),\infty,d\right)$-atom.
By Lemma \ref{le:characterization of all A_1 weights}, we know that $\omega\in A_{1}\left(\mathbb{R}^{n}\right)$, which, combined with \citep[Remarks 2.4(b) and 2.7(b)]{WangYangYang}, implies that $L_{\omega}^{s}\left(\mathbb{R}^{n}\right)$ satisfies all the assumptions of Lemmas \ref{le:the atomic characterization for the Hardy type space} and \ref{le:density}. Using Lemmas \ref{le:the atomic characterization for the Hardy type space}, \ref{le:density} and (\ref{eq:th01-03}), we conclude that
\begin{align}\label{eq:th01-04}
	\sum_{j=1}^{\infty} \left[\lambda_{j}\frac{\left\|\chi_{Q_{j}}\right\|_{L_{\omega}^{s}\left(\mathbb{R}^{n}\right)}}{\left\|\chi_{Q_{j}}\right\|_{X}} \right] \left[\frac{\left\|\chi_{Q_{j}}\right\|_{X}}{\left\|\chi_{Q_{j}}\right\|_{L_{\omega}^{s}\left(\mathbb{R}^{n}\right)}} a_{j} \right] = \sum_{j=1}^{\infty} \lambda_{j}a_{j}=f \quad {\rm in} \quad \mathcal{S}'\left(\mathbb{R}^{n}\right) \quad and \quad H_{\omega}^{s}\left(\mathbb{R}^{n}\right).
\end{align}
Furthermore, from Lemma \ref{le:B-R H_w^s-WH_w^s}, we know that $B_{1/\varepsilon}^{\delta}$ is bounded from $H_{\omega}^{s}\left(\mathbb{R}^{n}\right)$ to $WH_{\omega}^{s}\left(\mathbb{R}^{n}\right)$, and hence
\begin{align}\label{eq:th01-05}
	B_{1/\varepsilon}^{\delta}\left(f\right)=\phi_{\varepsilon}\ast f=\sum_{j=1}^{\infty} \lambda_{j} B_{1/\varepsilon}^{\delta}\left(a_{j}\right)\quad {\rm in}\quad WH_{\omega}^{s}\left(\mathbb{R}^{n}\right)\quad {\rm and}\quad \mathcal{S}'\left(\mathbb{R}^{n}\right),
\end{align}
where $\phi_{\varepsilon}$ denotes the kernel of $B_{1/\varepsilon}^{\delta}$. Suppose that $\phi\in C^{\infty}$, $\operatorname{supp}(\phi) \subset\{|x|<1\}$, and $\phi_{\varepsilon}(t)=\varepsilon^{-n} \phi\left(\frac{t}{\varepsilon}\right)$. Let $\Phi\in\mathcal{S}\left(\mathbb{R}^{n}\right)$ satisfy $\int_{\mathbb{R}^{n}} \Phi\left(x\right)\,dx\ne 0$. Then, to prove this theorem, by Assumption \ref{as:M is bounded on (WX)^{1/r}} and Lemma \ref{le:equivalent-WX}, we only need to show that for any $f\in H_{X}\left(\mathbb{R}^{n}\right)$,
\begin{align}\label{eq:th01-06}
	\left\|M\left(B_{1/\varepsilon}^{\delta}f,\Phi\right)\right\|_{WX} \lesssim \left\|f\right\|_{H_{X}\left(\mathbb{R}^{n}\right)}.
\end{align}
For any $\alpha\in\left(0,\infty\right)$, by (\ref{eq:th01-05}), Lemma \ref{le:quasi-norm on WX} ({\rm \romannumeral3}) and Remark \ref{re:X-subset-WX} ({\rm \romannumeral1}), we have
\begin{align}\label{eq:th01-07}
	&\alpha\left\|\chi_{\left\{x\in\mathbb{R}^{n}:M\left(B_{1/\varepsilon}^{\delta}f,\Phi\right)\left(x\right)>\alpha\right\}}\right\|_{X} \le \alpha\left\|\chi_{\left\{x\in\mathbb{R}^{n}:\sum_{j=1}^{\infty}\lambda_{j}M\left(B_{1/\varepsilon}^{\delta}a_{j},\Phi\right)\left(x\right)>\alpha\right\}}\right\|_{X}\nonumber
	\\\lesssim &\alpha\left\|\chi_{\left\{x\in\mathbb{R}^{n}:\sum_{j=1}^{\infty}\lambda_{j}M\left(B_{1/\varepsilon}^{\delta}a_{j},\Phi\right)\left(x\right)\chi_{4Q_{j}}\left(x\right)>\frac{\alpha}{2}\right\}}\right\|_{X} + \alpha\left\|\chi_{\left\{x\in\mathbb{R}^{n}:\sum_{j=1}^{\infty}\lambda_{j}M\left(B_{1/\varepsilon}^{\delta}a_{j},\Phi\right)\left(x\right)\chi_{\left(4Q_{j}\right)^{c}}\left(x\right)>\frac{\alpha}{2}\right\}}\right\|_{X}\nonumber
	\\\lesssim &\left\|\sum_{j=1}^{\infty} \lambda_{j}M\left(B_{1/\varepsilon}^{\delta}a_{j},\Phi\right)\chi_{4Q_{j}}\right\|_{X} + \alpha\left\|\chi_{\left\{x\in\mathbb{R}^{n}:\sum_{j=1}^{\infty}\lambda_{j}M\left(B_{1/\varepsilon}^{\delta}a_{j},\Phi\right)\left(x\right)\chi_{\left(4Q_{j}\right)^{c}}\left(x\right)>\frac{\alpha}{2}\right\}}\right\|_{X}\nonumber
	\\=:&I_{1}+I_{2}.
\end{align}
From this, to prove (\ref{eq:th01-06}), it is only necessary to prove $I_{1}\lesssim\left\|f\right\|_{H_{X}\left(\mathbb{R}^{n}\right)}$ and $I_{2}\lesssim\left\|f\right\|_{H_{X}\left(\mathbb{R}^{n}\right)}$, respectively.

We first estimate $I_{1}$. By \cite{CoifmanRochbergWeiss}, we know that if $\delta>\frac{n-1}{2}$, then the Bochner-Riesz means $B_{1/\varepsilon}^{\delta}$ is of type $\left(L^{p},L^{p}\right)$ with $1\le p\le\infty$. Notice that, for any $j\in\mathbb{N}$, $M\left(B_{1/\varepsilon}^{\delta}a_{j},\Phi\right)\lesssim M\left(B_{1/\varepsilon}^{\delta}a_{j}\right)$ and $a_{j}\in L^{q}\left(\mathbb{R}^{n}\right)$ with $q\in\left(1,\infty\right)$, by the fact that $M$ and $B_{1/\varepsilon}^{\delta}$ are both bounded on $L^{q}\left(\mathbb{R}^{n}\right)$ with $q\in\left(1,\infty\right)$ and the size condition of $a_{j}$, it follows that
\begin{align*}
	\left\|M\left(B_{1/\varepsilon}^{\delta}a_{j},\Phi\right)\chi_{4Q_{j}}\right\|_{L^{q}\left(\mathbb{R}^{n}\right)}&\le\left\|M\left(B_{1/\varepsilon}^{\delta}a_{j},\Phi\right)\right\|_{L^{q}\left(\mathbb{R}^{n}\right)}\lesssim \left\|M\left(B_{1/\varepsilon}^{\delta}a_{j}\right)\right\|_{L^{q}\left(\mathbb{R}^{n}\right)}
	\\&\lesssim \left\|B_{1/\varepsilon}^{\delta}a_{j}\right\|_{L^{q}\left(\mathbb{R}^{n}\right)}\lesssim \left\|a_{j}\right\|_{L^{q}\left(\mathbb{R}^{n}\right)}\lesssim \frac{\left|Q_{j}\right|^{1/q}}{\left\|\chi_{Q_{j}}\right\|_{X}},
\end{align*}
which, combined with Lemma \ref{le:atom2} and (\ref{eq:th01-02}), implies that
\begin{align}\label{eq:th01-08}
	I_{1}\lesssim\left\|\left\{\sum_{j=1}^{\infty} \left(\frac{\lambda_{j}}{\left\|\chi_{Q_{j}}\right\|_{X}}\right)^{s}\chi_{Q_{j}} \right\}^{1/s}\right\|_{X}\lesssim\left\|f\right\|_{H_{X}\left(\mathbb{R}^{n}\right)},
\end{align}
where the atom $a_{j}$ in Lemma \ref{le:atom2} is replaced by $M\left(B_{1/\varepsilon}^{\delta}a_{j},\Phi\right)\chi_{4Q_{j}}$.

To deal with the term $I_{2}$, for any $t\in\left(0,\infty\right)$, let $\phi^{\left(t\right)}=\phi\ast\Phi_{t}$ with $\Phi_{t}\left(\cdot\right)=t^{-n}\Phi\left(\cdot/t\right)$. Take $p_{0}\in\left(0,1\right)$ such that $\delta=\frac{n}{p_{0}}-\frac{n+1}{2}$, which satisfies $\delta>\frac{n-1}{2}$.

Without loss of generality, let $x_{j}$ denote the center of $Q_{j}$ and $r_{j}$ its side length. For any $x\in\left(4Q_{j}\right)^{c}$ and $t\in Q_{j}$, we have $\left|x-t\right|\ge \frac{3}{4}\left|x-x_{j}\right|$. If $\varepsilon\le \frac{3}{4}\left|x-x_{j}\right|$, since $\left|\frac{x-t}{\varepsilon}\right|\ge \frac{\left|x-t\right|}{\frac{3}{4}\left|x-x_{j}\right|}\ge 1$ and $\operatorname{supp}\left(\phi\right)\subset \left\{\left|x\right|<1\right\}$, then $\phi_{\varepsilon}\left(x-t\right)=\varepsilon^{-n}\phi\left(\frac{x-t}{\varepsilon}\right)=0$. Select $x\in\left(4Q_{j}\right)^{c}$ and $\varepsilon>\frac{3}{4}\left|x-x_{j}\right|$. Let $P_{\phi}\left(\cdot\frac{x}{\varepsilon}\right)$ denote the Taylor polynomial of $N$-th order of $\phi\left(t\right)$ at $t=\frac{x}{\varepsilon}$, where $N=\left[n\left(1/p_{0}-1\right)\right]$. If $t=\frac{x-y}{\varepsilon}$, then $\phi\left(\frac{x-y}{\varepsilon}\right)=P_{\phi}\left(-\frac{y}{\varepsilon}\right)+\frac{D^{N+1}\phi\left(\xi\right)}{\left(N+1\right)!}\left(-\frac{y}{\varepsilon}\right)^{N+1}$.

By Lemma \ref{le:phi^t}, for any $\left|x\right|\ge 4\left|y\right|$ and $\varepsilon>\frac{3}{4}\left|x\right|$, we have
\begin{align}\label{eq:th01-09}
	\varepsilon^{-n}\left|\phi^{\left(t\right)}\left(\frac{x-y}{\varepsilon}\right)-P_{\phi^{\left(t\right)}}\left(-\frac{y}{\varepsilon}\right)\right| \lesssim \frac{\left|y\right|^{N+1}}{\left|x\right|^{N+n+1}},
\end{align}
which combined with the vanishing moment condition of $a_{j}$, the H{\"o}lder inequality, and the size condition of $a_{j}$, we deduce that, for any $x\in\left(4Q_{j}\right)^{c}$ and $\varepsilon>\frac{3}{4}\left|x\right|$,
\begin{align*}
	M\left(B_{1/\varepsilon}^{\delta}a_{j},\Phi\right)\left(x\right)&=\sup\limits_{t\in\left(0,\infty\right)} \varepsilon^{-n}\left|\int_{\mathbb{R}^{n}} \left[\phi^{\left(t\right)}\left(\frac{x-y}{\varepsilon}\right)-P_{\phi^{\left(t\right)}}\left(-\frac{y}{\varepsilon}\right)\right]a_{j}\left(y\right)\,dy\right|
	\\&\lesssim \int_{Q_{j}} \frac{\left|y\right|^{N+1}}{\left|x-x_{j}\right|^{N+n+1}}\left|a_{j}\left(y\right)\right|\,dy \lesssim \frac{r_{j}^{N+1}}{\left|x-x_{j}\right|^{N+n+1}}\int_{Q_{j}} \left|a_{j}\left(y\right)\right|\,dy
	\\&\le \frac{r_{j}^{N+1}}{\left|x-x_{j}\right|^{N+n+1}}\left\|a_{j}\right\|_{L^{q}\left(\mathbb{R}^{n}\right)} \left|Q_{j}\right|^{1/q'} \lesssim \frac{r_{j}^{N+n+1}}{\left|x-x_{j}\right|^{N+n+1}} \frac{1}{\left\|\chi_{Q_{j}}\right\|_{X}},
\end{align*}
where $\frac{1}{q}+\frac{1}{q'}=1$. From this and the fact that $N+n+1>n/p_{0}=\delta+\frac{n+1}{2}$ and $r_{j}^{\delta+\frac{n+1}{2}}\left|x-x_{j}\right|^{-\left(\delta+\frac{n+1}{2}\right)} \sim \left[M\left(\chi_{Q_{j}}\right)\left(x\right)\right]^{\frac{n+1+2\delta}{2n}}$, we deduce that
\begin{align*}
	M\left(B_{1/\varepsilon}^{\delta}a_{j},\Phi\right)\left(x\right)&\lesssim \frac{r_{j}^{n/p_{0}}}{\left|x-x_{j}\right|^{n/p_{0}}}\frac{1}{\left\|\chi_{Q_{j}}\right\|_{X}}=\frac{r_{j}^{\delta+\frac{n+1}{2}}}{\left|x-x_{j}\right|^{\delta+\frac{n+1}{2}}}\frac{1}{\left\|\chi_{Q_{j}}\right\|_{X}}
	\\&\sim \left[M\left(\chi_{Q_{j}}\right)\left(x\right)\right]^{\frac{n+1+2\delta}{2n}}\frac{1}{\left\|\chi_{Q_{j}}\right\|_{X}}.
\end{align*}
This shows that for any $x\in\left(4Q_{j}\right)^{c}$,
\begin{align}\label{eq:th01-10}
	M\left(B_{1/\varepsilon}^{\delta}a_{j},\Phi\right)\left(x\right)\chi_{\left(4Q_{j}\right)^{c}}\left(x\right)\lesssim \left[M\left(\chi_{Q_{j}}\right)\left(x\right)\right]^{\frac{n+1+2\delta}{2n}}\frac{1}{\left\|\chi_{Q_{j}}\right\|_{X}}.
\end{align}
Therefore, by this, Definition \ref{de:convexity-concavity} ({\rm \romannumeral1}), (\ref{eq:weak-type Fefferman-Stein}), $\theta\in\left[\frac{2n}{n+1+2\delta},1\right)$, $0<\theta<s\le 1$ and (\ref{eq:th01-02}), we find that
\begin{align}\label{eq:th01-11}
	I_{2}&\lesssim \alpha\left\|\chi_{\left\{x\in\mathbb{R}^{n}:\sum_{j=1}^{\infty} \frac{\lambda_{j}}{\left\|\chi_{Q_{j}}\right\|_{X}} \left[M\left(\chi_{Q_{j}}\right)\left(x\right)\right]^{\frac{n+1+2\delta}{2n}}>\frac{\alpha}{2} \right\}} \right\|_{X}\nonumber
	\\&\lesssim \frac{\alpha}{2} \left\|\chi_{\left\{x\in\mathbb{R}^{n}:\left\{\sum_{j=1}^{\infty} \frac{\lambda_{j}}{\left\|\chi_{Q_{j}}\right\|_{X}} \left[M\left(\chi_{Q_{j}}\right)\left(x\right)\right]^{\frac{n+1+2\delta}{2n}}\right\}^{\frac{2n}{n+1+2\delta}}>\left(\frac{\alpha}{2}\right)^{\frac{2n}{n+1+2\delta}} \right\}} \right\|_{X^{\frac{n+1+2\delta}{2n}}}^{\frac{n+1+2\delta}{2n}}\nonumber
	\\&\lesssim \left\|\left(\sum_{j=1}^{\infty}\frac{\lambda_{j}}{\left\|\chi_{Q_{j}}\right\|_{X}}\chi_{Q_{j}}\right)^{\frac{2n}{n+1+2\delta}}\right\|_{X^{\frac{n+1+2\delta}{2n}}}^{\frac{n+1+2\delta}{2n}}\le \left\|\left\{\sum_{j=1}^{\infty}\left(\frac{\lambda_{j}\chi_{Q_{j}}}{\left\|\chi_{Q_{j}}\right\|_{X}}\right)^{\frac{2n}{n+1+2\delta}} \right\}^{\frac{n+1+2\delta}{2n}}\right\|_{X}\nonumber
	\\&\lesssim \left\|\left\{\sum_{j=1}^{\infty}\left(\frac{\lambda_{j}\chi_{Q_{j}}}{\left\|\chi_{Q_{j}}\right\|_{X}}\right)^{s} \right\}^{1/s}\right\|_{X}\lesssim \left\|f\right\|_{H_{X}\left(\mathbb{R}^{n}\right)}.
\end{align}
Finally, combining (\ref{eq:th01-08}) and (\ref{eq:th01-11}), we conclude that for any $\alpha\in\left(0,\infty\right)$,
\begin{align*}
	\alpha\left\|\chi_{\left\{x\in\mathbb{R}^{n}:M\left(B_{1/\varepsilon}^{\delta}f,\Phi\right)\left(x\right)>\alpha \right\}}\right\|_{X} \lesssim \left\|f\right\|_{H_{X}\left(\mathbb{R}^{n}\right)},
\end{align*}
namely, (\ref{eq:th01-06}) holds true. This finishes the proof of Theorem \ref{th:B-R Hx-WHx}.
$\hfill\square$\\

Next we will prove Theorem \ref{th:B-R Hx-WX}.\\
{
	\noindent \textit{\bf Proof of Theorem \ref{th:B-R Hx-WX}}.\quad Let $\theta$, $s$ and $d$ be as in Lemma \ref{le:H_X-H_atom equivalent quasi-norm} and $f\in H_{X}\left(\mathbb{R}^{n}\right)$. Then, by Lemma \ref{le:the atomic characterization for the Hardy type space}, we find that there exist a sequence $\left\{a_{j}\right\}_{j=1}^{\infty}$ of $\left(X,2,d\right)$-atoms supported, respectively, in a sequence $\left\{Q_{j}\right\}_{j=1}^{\infty}$ of cubes, and a sequence $\left\{\lambda_{j}\right\}_{j=1}^{\infty}$ of non-negative numbers, independent of $f$ but depending on $s$, such that (\ref{eq:th01-01}) and (\ref{eq:th01-02}) hold true.
}

By Lemma \ref{le:B-R H_w^s-WL_w^s}, we know that $B_{1/\varepsilon}^{\delta}$ is bounded from $H_{\omega}^{s}\left(\mathbb{R}^{n}\right)$ to $WL_{\omega}^{s}\left(\mathbb{R}^{n}\right)$. Therefore, from (\ref{eq:th01-03}) and (\ref{eq:th01-04}), combined with the $\left(H_{\omega}^{s},WL_{\omega}^{s}\right)$ boundedness of $B_{1/\varepsilon}^{\delta}$, we know that
\begin{align}\label{eq:th02-01}
	B_{1/\varepsilon}^{\delta}\left(f\right)=\phi_{\varepsilon}\ast f=\sum_{j=1}^{\infty} \lambda_{j} B_{1/\varepsilon}^{\delta}\left(a_{j}\right)\quad {\rm in}\quad WL_{\omega}^{s}\left(\mathbb{R}^{n}\right)\quad {\rm and}\quad \mathcal{S}'\left(\mathbb{R}^{n}\right),
\end{align}
where $\phi_{\varepsilon}$ denotes the kernel of $B_{1/\varepsilon}^{\delta}$. Suppose that $\phi\in C^{\infty}$, $\operatorname{supp}(\phi) \subset\{|x|<1\}$, and $\phi_{\varepsilon}(t)=\varepsilon^{-n} \phi\left(\frac{t}{\varepsilon}\right)$. Let $\Phi\in\mathcal{S}\left(\mathbb{R}^{n}\right)$ satisfy $\int_{\mathbb{R}^{n}} \Phi\left(x\right)\,dx\ne 0$. Then, to prove this theorem, by Definition \ref{de:WX}, we only need to show that for any $f\in H_{X}\left(\mathbb{R}^{n}\right)$ and any $\alpha\in\left(0,\infty\right)$,
\begin{align}\label{eq:th02-02}
	\alpha\left\|\chi_{\left\{x\in\mathbb{R}^{n}:\left|B_{1/\varepsilon}^{\delta}\left(f\right)\left(x\right)\right|>\alpha \right\}}\right\|_{X} \lesssim \left\|f\right\|_{H_{X}\left(\mathbb{R}^{n}\right)}.
\end{align}
For any $\alpha\in\left(0,\infty\right)$, by (\ref{eq:th02-01}), Lemma \ref{le:quasi-norm on WX} ({\rm \romannumeral3}) and Remark \ref{re:X-subset-WX} ({\rm \romannumeral1}), we have
\begin{align}\label{eq:th02-03}
	&\alpha\left\|\chi_{\left\{x\in\mathbb{R}^{n}:\left|B_{1/\varepsilon}^{\delta}\left(f\right)\left(x\right)\right|>\alpha\right\}}\right\|_{X} = \alpha\left\|\chi_{\left\{x\in\mathbb{R}^{n}:\left|\sum_{j=1}^{\infty}\lambda_{j}B_{1/\varepsilon}^{\delta}\left(a_{j}\right)\left(x\right)\right|>\alpha\right\}}\right\|_{X}\nonumber
	\\\lesssim &\alpha\left\|\chi_{\left\{x\in\mathbb{R}^{n}:\left|\sum_{j=1}^{\infty}\lambda_{j}B_{1/\varepsilon}^{\delta}\left(a_{j}\right)\left(x\right)\chi_{4Q_{j}}\left(x\right)\right|>\frac{\alpha}{2}\right\}}\right\|_{X} + \alpha\left\|\chi_{\left\{x\in\mathbb{R}^{n}:\left|\sum_{j=1}^{\infty}\lambda_{j}B_{1/\varepsilon}^{\delta}\left(a_{j}\right)\left(x\right)\chi_{\left(4Q_{j}\right)^{c}}\left(x\right)\right|>\frac{\alpha}{2}\right\}}\right\|_{X}\nonumber
	\\\lesssim &\left\|\sum_{j=1}^{\infty} \lambda_{j}B_{1/\varepsilon}^{\delta}\left(a_{j}\right)\chi_{4Q_{j}}\right\|_{X} + \alpha\left\|\chi_{\left\{x\in\mathbb{R}^{n}:\left|\sum_{j=1}^{\infty}\lambda_{j}B_{1/\varepsilon}^{\delta}\left(a_{j}\right)\left(x\right)\chi_{\left(4Q_{j}\right)^{c}}\left(x\right)\right|>\frac{\alpha}{2}\right\}}\right\|_{X}\nonumber
	\\=:&I_{1}+I_{2}.
\end{align}
From this, to prove (\ref{eq:th02-02}), it is only necessary to prove $I_{1}\lesssim\left\|f\right\|_{H_{X}\left(\mathbb{R}^{n}\right)}$ and $I_{2}\lesssim\left\|f\right\|_{H_{X}\left(\mathbb{R}^{n}\right)}$, respectively.

Notice that, for any $j\in\mathbb{N}$, $a_{j}\in L^{2}\left(\mathbb{R}^{n}\right)$. Since $B_{1/\varepsilon}^{\delta}$ is bounded on $L^{2}\left(\mathbb{R}^{n}\right)$ for $\delta>\frac{n-1}{2}$, it follows that
\begin{align*}
	\left\|B_{1/\varepsilon}^{\delta}\left(a_{j}\right)\chi_{4Q_{j}}\right\|_{L^{2}\left(\mathbb{R}^{n}\right)}\le \left\|B_{1/\varepsilon}^{\delta}\left(a_{j}\right)\right\|_{L^{2}\left(\mathbb{R}^{n}\right)}\lesssim \left\|a_{j}\right\|_{L^{2}\left(\mathbb{R}^{n}\right)}\lesssim \frac{\left|Q_{j}\right|^{1/2}}{\left\|\chi_{Q_{j}}\right\|_{X}},
\end{align*}
which, combined with Lemma \ref{le:atom2} and (\ref{eq:th01-02}), implies that
\begin{align}\label{eq:th02-04}
	I_{1}\lesssim\left\|\left\{\sum_{j=1}^{\infty}\left(\frac{\lambda_{j}}{\left\|\chi_{Q_{j}}\right\|_{X}}\right)^{s}\chi_{Q_{j}} \right\}^{1/s}\right\|_{X}\lesssim\left\|f\right\|_{H_{X}\left(\mathbb{R}^{n}\right)},
\end{align}
where the atom $a_{j}$ in Lemma \ref{le:atom2} is replaced by $B_{1/\varepsilon}^{\delta}\left(a_{j}\right)\chi_{4Q_{j}}$.

By Lemma \ref{le:phi(B-R kernel)}, take a $p_{0}\in\left(0,1\right)$ such that $\delta=\frac{n}{p_{0}}-\frac{n+1}{2}$, which satisfies $\delta>\frac{n-1}{2}$. Therefore, for all multi-indices $\beta$, we have
\begin{align}\label{eq:th02-05}
	\sup\limits_{x\in\mathbb{R}^{n}}\left(1+\left|x\right|\right)^{n/p_{0}} \left|D^{\beta}\phi\left(x\right)\right| \le C_{\left(\beta,n,p_{0}\right)}.
\end{align}
Select $x\in\left(4Q_{j}\right)^{c}$ and $\varepsilon>\frac{3}{4}\left|x-x_{j}\right|$. Let $P_{\phi}\left(\cdot\frac{x}{\varepsilon}\right)$ denote the Taylor polynomial of $N$-th order of $\phi\left(t\right)$ at $t=\frac{x}{\varepsilon}$, where $N=\left[n\left(1/p_{0}-1\right)\right]$. By the vanishing moment condition of $a_{j}$, the H{\"o}lder inequality and the size condition of $a_{j}$, we find that
\begin{align*}
	\left|B_{1/\varepsilon}^{\delta}\left(a_{j}\right)\left(x\right)\right|&=\varepsilon^{-n}\left|\int_{\mathbb{R}^{n}}\left[\phi\left(\frac{x-y}{\varepsilon}\right)-P_{\phi}\left(-\frac{y}{\varepsilon}\right)\right] a_{j}\left(y\right)\,dy \right|
	\\&\lesssim \varepsilon^{-n}\int_{\mathbb{R}^{n}}\frac{\left|D^{N+1}\phi\left(\xi\right)\right|}{\left(N+1\right)!} \left|\frac{y}{\varepsilon}\right|^{N+1}\left|a_{j}\left(y\right)\right|\,dy \lesssim \frac{r_{j}^{N+1}}{\left|x-x_{j}\right|^{N+n+1}}\int_{Q_{j}}\left|a_{j}\left(y\right)\right|\,dy\nonumber
	\\&\le\frac{r_{j}^{N+1}}{\left|x-x_{j}\right|^{N+n+1}}\left\|a_{j}\right\|_{L^{2}\left(\mathbb{R}^{n}\right)} \left|Q_{j}\right|^{1/2}\le \frac{r_{j}^{N+n+1}}{\left|x-x_{j}\right|^{N+n+1}}\frac{1}{\left\|\chi_{Q_{j}}\right\|_{X}},
\end{align*}
where $\left|D^{N+1}\phi\left(\xi\right)\right|\lesssim \left(1+\left|\xi\right|\right)^{-n/p_{0}}\lesssim\left|\xi\right|^{-n/p_{0}}\sim \left(\frac{\left|x-x_{j}\right|}{\varepsilon}\right)^{-n/p_{0}}$ and $0<N+n+1-\frac{n}{p_{0}}\le 1$. From the fact that $N+n+1>n/p_{0}=\delta+\frac{n+1}{2}$ and $r_{j}^{\delta+\frac{n+1}{2}}\left|x-x_{j}\right|^{-\left(\delta+\frac{n+1}{2}\right)}\sim \left[M\left(\chi_{Q_{j}}\right)\left(x\right)\right]^{\frac{n+1+2\delta}{2n}}$, we deduce that
\begin{align*}
	\left|B_{1/\varepsilon}^{\delta}\left(a_{j}\right)\left(x\right)\right|&\lesssim \frac{r_{j}^{n/p_{0}}}{\left|x-x_{j}\right|^{n/p_{0}}}\frac{1}{\left\|\chi_{Q_{j}}\right\|_{X}}=\frac{r_{j}^{\delta+\frac{n+1}{2}}}{\left|x-x_{j}\right|^{\delta+\frac{n+1}{2}}}\frac{1}{\left\|\chi_{Q_{j}}\right\|_{X}}
	\\&\sim\left[M\left(\chi_{Q_{j}}\right)\left(x\right)\right]^{\frac{n+1+2\delta}{2n}}\frac{1}{\left\|\chi_{Q_{j}}\right\|_{X}}.
\end{align*}
This shows that for any $x\in\left(4Q_{j}\right)^{c}$,
\begin{align}\label{eq:th02-06}
	\left|B_{1/\varepsilon}^{\delta}\left(a_{j}\right)\left(x\right)\right|\chi_{\left(4Q_{j}\right)^{c}}\left(x\right)\lesssim \left[M\left(\chi_{Q_{j}}\right)\left(x\right)\right]^{\frac{n+1+2\delta}{2n}}\frac{1}{\left\|\chi_{Q_{j}}\right\|_{X}}.
\end{align}
Therefore,
\begin{align*}
	I_{2}&\le \alpha\left\|\chi_{\left\{x\in\mathbb{R}^{n}:\sum_{j=1}^{\infty}\lambda_{j}\left|B_{1/\varepsilon}^{\delta}\left(a_{j}\right)\left(x\right)\right|\chi_{\left(4Q_{j}\right)^{c}}\left(x\right)>\frac{\alpha}{2} \right\}}\right\|_{X}
	\\&\lesssim\alpha\left\|\chi_{\left\{x\in\mathbb{R}^{n}:\sum_{j=1}^{\infty}\frac{\lambda_{j}}{\left\|\chi_{Q_{j}}\right\|_{X}}\left[M\left(\chi_{Q_{j}}\right)\left(x\right)\right]^{\frac{n+1+2\delta}{2n}}>\frac{\alpha}{2} \right\}}\right\|_{X}.
\end{align*}
As with (\ref{eq:th01-11}), by Definition \ref{de:convexity-concavity} ({\rm \romannumeral1}), (\ref{eq:weak-type Fefferman-Stein}), $\theta\in\left[\frac{2n}{n+1+2\delta},1\right)$, $0<\theta<s\le 1$ and (\ref{eq:th01-02}), we find that
\begin{align}\label{eq:th02-07}
	I_{2}\lesssim\left\|f\right\|_{H_{X}\left(\mathbb{R}^{n}\right)}.
\end{align}
Finally, combining (\ref{eq:th02-04}) and (\ref{eq:th02-07}), we conclude that (\ref{eq:th02-02}) holds true. This finishes the proof of Theorem \ref{th:B-R Hx-WX}.
$\hfill\square$\\

{
	\noindent \textit{
\bf Proof of Theorem \ref{th:MB-R Hx-WX}}.\quad Let $\theta$, $s$ and $d$ be as in Lemma \ref{le:H_X-H_atom equivalent quasi-norm} and $f\in H_{X}\left(\mathbb{R}^{n}\right)$. Then, by Lemma \ref{le:the atomic characterization for the Hardy type space}, we find that there exist a sequence $\left\{a_{j}\right\}_{j=1}^{\infty}$ of $\left(X,2,d\right)$-atoms supported, respectively, in a sequence $\left\{Q_{j}\right\}_{j=1}^{\infty}$ of cubes, and a sequence $\left\{\lambda_{j}\right\}_{j=1}^{\infty}$ of non-negative numbers, independent of $f$ but depending on $s$, such that (\ref{eq:th01-01}) and (\ref{eq:th01-02}) hold true. 
}

From (\ref{eq:th01-03}) and (\ref{eq:th01-04}), combined with the $\left(H_{\omega}^{s},WL_{\omega}^{s}\right)$ boundedness of $B_{\ast}^{\delta}$ \cite{Sato}, we know that
\begin{align}\label{eq:th03-01}
	B_{\ast}^{\delta}\left(f\right)=\sup\limits_{\varepsilon>0}\left|f\ast\phi_{\varepsilon} \right|\le\sum_{j=1}^{\infty} \lambda_{j} \phi_{+}^{\ast}\left(a_{j}\right)=\sum_{j=1}^{\infty}\lambda_{j}B_{\ast}^{\delta}\left(a_{j}\right)\quad {\rm in}\quad WL_{\omega}^{s}\left(\mathbb{R}^{n}\right)\quad {\rm and}\quad \mathcal{S}'\left(\mathbb{R}^{n}\right),
\end{align}
where $\phi_{\varepsilon}$ denotes the kernel of $B_{\ast}^{\delta}$. Suppose that $\phi\in C^{\infty}$, $\operatorname{supp}(\phi) \subset\{|x|<1\}$, and $\phi_{\varepsilon}(t)=\varepsilon^{-n} \phi\left(\frac{t}{\varepsilon}\right)$. Let $\Phi\in\mathcal{S}\left(\mathbb{R}^{n}\right)$ satisfy $\int_{\mathbb{R}^{n}} \Phi\left(x\right)\,dx\ne 0$. Then, to prove this theorem, by Definition \ref{de:WX}, we only need to show that for any $f\in H_{X}\left(\mathbb{R}^{n}\right)$ and any $\alpha\in\left(0,\infty\right)$,
\begin{align}\label{eq:th03-02}
	\alpha\left\|\chi_{\left\{x\in\mathbb{R}^{n}:B_{\ast}^{\delta}\left(f\right)\left(x\right)>\alpha \right\}}\right\|_{X} \lesssim \left\|f\right\|_{H_{X}\left(\mathbb{R}^{n}\right)}.
\end{align}
For any $\alpha\in\left(0,\infty\right)$, by (\ref{eq:th03-01}), Lemma \ref{le:quasi-norm on WX} ({\rm \romannumeral3}) and Remark \ref{re:X-subset-WX} ({\rm \romannumeral1}), we have
\begin{align}\label{eq:th03-03}
	&\alpha\left\|\chi_{\left\{x\in\mathbb{R}^{n}:B_{\ast}^{\delta}\left(f\right)\left(x\right)>\alpha\right\}}\right\|_{X} \le \alpha\left\|\chi_{\left\{x\in\mathbb{R}^{n}:\sum_{j=1}^{\infty}\lambda_{j}B_{\ast}^{\delta}\left(a_{j}\right)\left(x\right)>\alpha\right\}}\right\|_{X}\nonumber
	\\\lesssim &\alpha\left\|\chi_{\left\{x\in\mathbb{R}^{n}:\sum_{j=1}^{\infty}\lambda_{j}B_{\ast}^{\delta}\left(a_{j}\right)\left(x\right)\chi_{4Q_{j}}\left(x\right)>\frac{\alpha}{2}\right\}}\right\|_{X} + \alpha\left\|\chi_{\left\{x\in\mathbb{R}^{n}:\sum_{j=1}^{\infty}\lambda_{j}B_{\ast}^{\delta}\left(a_{j}\right)\left(x\right)\chi_{\left(4Q_{j}\right)^{c}}\left(x\right)>\frac{\alpha}{2}\right\}}\right\|_{X}\nonumber
	\\\lesssim &\left\|\sum_{j=1}^{\infty} \lambda_{j}B_{\ast}^{\delta}\left(a_{j}\right)\chi_{4Q_{j}}\right\|_{X} + \alpha\left\|\chi_{\left\{x\in\mathbb{R}^{n}:\sum_{j=1}^{\infty}\lambda_{j}B_{\ast}^{\delta}\left(a_{j}\right)\left(x\right)\chi_{\left(4Q_{j}\right)^{c}}\left(x\right)>\frac{\alpha}{2}\right\}}\right\|_{X}\nonumber
	\\=:&I_{1}+I_{2}.
\end{align}
From this, to prove (\ref{eq:th03-02}), it is only necessary to prove $I_{1}\lesssim\left\|f\right\|_{H_{X}\left(\mathbb{R}^{n}\right)}$ and $I_{2}\lesssim\left\|f\right\|_{H_{X}\left(\mathbb{R}^{n}\right)}$, respectively.

Notice that, for any $j\in\mathbb{N}$, $a_{j}\in L^{2}\left(\mathbb{R}^{n}\right)$ and $\phi_{+}^{\ast}\left(a_{j}\right)\lesssim M\left(a_{j}\right)$. Since $M$ is bounded on $L^{2}\left(\mathbb{R}^{n}\right)$, it follows that
\begin{align*}
	\left\|B_{\ast}^{\delta}\left(a_{j}\right)\chi_{4Q_{j}}\right\|_{L^{2}\left(\mathbb{R}^{n}\right)}&=\left\|\phi_{+}^{\ast}\left(a_{j}\right)\chi_{4Q_{j}}\right\|_{L^{2}\left(\mathbb{R}^{n}\right)} \lesssim \left\|M\left(a_{j}\right)\right\|_{L^{2}\left(\mathbb{R}^{n}\right)}
	\\&\lesssim \left\|a_{j}\right\|_{L^{2}\left(\mathbb{R}^{n}\right)}\lesssim \frac{\left|Q_{j}\right|^{1/2}}{\left\|\chi_{Q_{j}}\right\|_{X}},
\end{align*}
which, combined with Lemma \ref{le:atom2} and (\ref{eq:th01-02}), implies that
\begin{align}\label{eq:th03-04}
	I_{1}\lesssim\left\|\left\{\sum_{j=1}^{\infty}\left(\frac{\lambda_{j}}{\left\|\chi_{Q_{j}}\right\|_{X}}\right)^{s}\chi_{Q_{j}} \right\}^{1/s}\right\|_{X}\lesssim\left\|f\right\|_{H_{X}\left(\mathbb{R}^{n}\right)},
\end{align}
where the atom $a_{j}$ in Lemma \ref{le:atom2} is replaced by $B_{\ast}^{\delta}\left(a_{j}\right)\chi_{4Q_{j}}$.

By Lemma \ref{le:phi(B-R kernel)}, take a $p_{0}\in\left(0,1\right)$ such that $\delta=\frac{n}{p_{0}}-\frac{n+1}{2}$, which satisfies $\delta>\frac{n-1}{2}$. Therefore, for all multi-indices $\beta$, we obtain (\ref{eq:th02-05}).

Select $x\in\left(4Q_{j}\right)^{c}$ and $\varepsilon>\frac{3}{4}\left|x-x_{j}\right|$. Let $P_{\phi}\left(\cdot\frac{x}{\varepsilon}\right)$ denote the Taylor polynomial of $N$-th order of $\phi\left(t\right)$ at $t=\frac{x}{\varepsilon}$, where $N=\left[n\left(1/p_{0}-1\right)\right]$. By the vanishing moment condition of $a_{j}$, the H{\"o}lder inequality and the size condition of $a_{j}$, we find that
\begin{align*}
	B_{\ast}^{\delta}\left(a_{j}\right)\left(x\right)&=\sup\limits_{\varepsilon>0}\varepsilon^{-n}\left|\int_{\mathbb{R}^{n}}\left[\phi\left(\frac{x-y}{\varepsilon}\right)-P_{\phi}\left(-\frac{y}{\varepsilon}\right)\right] a_{j}\left(y\right)\,dy \right|
	\\&\lesssim \sup\limits_{\varepsilon>0}\varepsilon^{-n}\int_{\mathbb{R}^{n}}\frac{\left|D^{N+1}\phi\left(\xi\right)\right|}{\left(N+1\right)!} \left|\frac{y}{\varepsilon}\right|^{N+1}\left|a_{j}\left(y\right)\right|\,dy \lesssim \frac{r_{j}^{N+1}}{\left|x-x_{j}\right|^{N+n+1}}\int_{Q_{j}}\left|a_{j}\left(y\right)\right|\,dy\nonumber
	\\&\le\frac{r_{j}^{N+1}}{\left|x-x_{j}\right|^{N+n+1}}\left\|a_{j}\right\|_{L^{2}\left(\mathbb{R}^{n}\right)} \left|Q_{j}\right|^{1/2}\le \frac{r_{j}^{N+n+1}}{\left|x-x_{j}\right|^{N+n+1}}\frac{1}{\left\|\chi_{Q_{j}}\right\|_{X}}.
\end{align*}
From the fact that $N+n+1>n/p_{0}=\delta+\frac{n+1}{2}$ and $r_{j}^{\delta+\frac{n+1}{2}}\left|x-x_{j}\right|^{-\left(\delta+\frac{n+1}{2}\right)}\sim \left[M\left(\chi_{Q_{j}}\right)\left(x\right)\right]^{\frac{n+1+2\delta}{2n}}$, we deduce that
\begin{align*}
	B_{\ast}^{\delta}\left(a_{j}\right)\left(x\right)&\lesssim \frac{r_{j}^{n/p_{0}}}{\left|x-x_{j}\right|^{n/p_{0}}}\frac{1}{\left\|\chi_{Q_{j}}\right\|_{X}}=\frac{r_{j}^{\delta+\frac{n+1}{2}}}{\left|x-x_{j}\right|^{\delta+\frac{n+1}{2}}}\frac{1}{\left\|\chi_{Q_{j}}\right\|_{X}}
	\\&\sim\left[M\left(\chi_{Q_{j}}\right)\left(x\right)\right]^{\frac{n+1+2\delta}{2n}}\frac{1}{\left\|\chi_{Q_{j}}\right\|_{X}}.
\end{align*}
This shows that for any $x\in\left(4Q_{j}\right)^{c}$,
\begin{align}\label{eq:th03-06}
	B_{\ast}^{\delta}\left(a_{j}\right)\left(x\right)\chi_{\left(4Q_{j}\right)^{c}}\left(x\right)\lesssim \left[M\left(\chi_{Q_{j}}\right)\left(x\right)\right]^{\frac{n+1+2\delta}{2n}}\frac{1}{\left\|\chi_{Q_{j}}\right\|_{X}}.
\end{align}
Therefore,
\begin{align*}
	I_{2}\lesssim\alpha\left\|\chi_{\left\{x\in\mathbb{R}^{n}:\sum_{j=1}^{\infty}\frac{\lambda_{j}}{\left\|\chi_{Q_{j}}\right\|_{X}}\left[M\left(\chi_{Q_{j}}\right)\left(x\right)\right]^{\frac{n+1+2\delta}{2n}}>\frac{\alpha}{2} \right\}}\right\|_{X}.
\end{align*}
As with (\ref{eq:th01-11}), by Definition \ref{de:convexity-concavity} ({\rm \romannumeral1}), (\ref{eq:weak-type Fefferman-Stein}), $\theta\in\left[\frac{2n}{n+1+2\delta},1\right)$, $0<\theta<s\le 1$ and (\ref{eq:th01-02}), we find that
\begin{align}\label{eq:th03-07}
	I_{2}\lesssim\left\|f\right\|_{H_{X}\left(\mathbb{R}^{n}\right)}.
\end{align}
Finally, combining (\ref{eq:th03-04}) and (\ref{eq:th03-07}), we conclude that (\ref{eq:th03-02}) holds true. This finishes the proof of Theorem \ref{th:MB-R Hx-WX}.
$\hfill\square$

\section{Applications}
In this section, we apply all above results to the following examples of ball quasi-Banach function spaces, namely, weighted Lebesgue spaces, Herz spaces, Lorentz spaces, variable Lebesgue spaces and Morrey spaces.

\subsection{Weighted Lebesgue spaces.}
The concept of the weighted Lebesgue space is as in Definition \ref{de:the weighted Lebesgue space}. 

Let $X$ be a ball quasi-Banach function space. Let $p\in\left(0,\infty\right)$ and $\omega\in A_{\infty}\left(\mathbb{R}^{n}\right)$. If $X:=L_{\omega}^{p}\left(\mathbb{R}^{n}\right)$, then $WX:=WL_{\omega}^{p}\left(\mathbb{R}^{n}\right)$ is the weighted weak Lebesgue space, $H_{X}\left(\mathbb{R}^{n}\right):=H_{\omega}^{p}\left(\mathbb{R}^{n}\right)$ is the weighted Hardy space, and $WH_{X}\left(\mathbb{R}^{n}\right):=WH_{\omega}^{p}\left(\mathbb{R}^{n}\right)$ is the weighted weak Hardy space. The Definitions of weighted weak Lebesgue spaces, weighted Hardy spaces and weighted weak Hardy spaces are as in Definition \ref{de:WX}, \ref{de:the Hardy type space} and \ref{de:the weak Hardy-type space} with $X$ replaced by $L_{\omega}^{p}\left(\mathbb{R}^{n}\right)$.

Moreover, the weighted Lebesgue space $L_{\omega}^{p}\left(\mathbb{R}^{n}\right)$ satisfies Assumptions \ref{as:the boundedness of M on X^1/p}, \ref{as:M is bounded on (WX)^{1/r}} and \ref{as:(X^{1/s})'}, respectively, in Lemmas \ref{le:Weighted Lebesgue-as-X^1/s}, \ref{le:Weighted Lebesgue-as-(WX)^1/s} and \ref{le:Weighted Lebesgue-as-(X^{1/r})'}. The following lemma shows that the Fefferman-Stein vector-valued maximal inequalities for $L_{\omega}^{p}\left(\mathbb{R}^{n}\right)$ hold true.

\begin{lemma}\label{le:Weighted Lebesgue-as-X^1/s}
	\textup{\citep[Corollary 4.3]{Ho2016} Let $1<p<\infty$ and $\omega\in A_{p}$. Assume that $r\in\left(1,\infty\right)$ and $s\in\left(0,p\right)$. Then there exists a positive constant $C$ such that for any $\left\{f_{j}\right\}_{j=1}^{\infty}\subset \mathscr{M}\left(\mathbb{R}^{n}\right)$,}
	\begin{align*}
		\left\|\left\{\sum_{j=1}^{\infty}\left[M\left(f_{j}\right)\right]^{r}\right\}^{1/r}\right\|_{\left[L_{\omega}^{p}\left(\mathbb{R}^{n}\right)\right]^{1/s}} \le C\left\|\left\{\sum_{j=1}^{\infty}\left|f_{j}\right|^{r}\right\}^{1/r}\right\|_{\left[L_{\omega}^{p}\left(\mathbb{R}^{n}\right)\right]^{1/s}}.
	\end{align*}
\end{lemma}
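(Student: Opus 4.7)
The plan is to reduce the target inequality to the classical weighted vector-valued Fefferman--Stein maximal inequality on a single weighted Lebesgue space, after identifying the $1/s$-convexification of $L_\omega^p$.

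First I would unwind the convexification. By Definition~\ref{de:convexity-concavity}(i) applied with $X=L_\omega^p(\mathbb R^n)$, for any $f\in\mathscr{M}(\mathbb R^n)$ one has
\[
\|f\|_{[L_\omega^p(\mathbb R^n)]^{1/s}} \;=\; \bigl\||f|^{1/s}\bigr\|_{L_\omega^p(\mathbb R^n)}^{\,s} \;=\; \Bigl(\int_{\mathbb R^n}|f(x)|^{p/s}\,\omega(x)\,dx\Bigr)^{s/p} \;=\; \|f\|_{L_\omega^{p/s}(\mathbb R^n)},
\]
so $[L_\omega^p(\mathbb R^n)]^{1/s}$ and $L_\omega^{p/s}(\mathbb R^n)$ coincide as normed spaces. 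Since $s\in(0,p)$ forces $p/s\in(1,\infty)$, the inequality in the lemma becomes the weighted vector-valued Fefferman--Stein estimate
\[
\Bigl\|\Bigl(\sum_{j\ge 1}[Mf_j]^r\Bigr)^{1/r}\Bigr\|_{L_\omega^{p/s}(\mathbb R^n)} \;\le\; C\,\Bigl\|\Bigl(\sum_{j\ge 1}|f_j|^r\Bigr)^{1/r}\Bigr\|_{L_\omega^{p/s}(\mathbb R^n)}
\]
with vector index $r\in(1,\infty)$.

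Next I would invoke the classical weighted vector-valued maximal inequality of Andersen and John: for any $q,r\in(1,\infty)$ and any $w\in A_q(\mathbb R^n)$,
\[
\Bigl\|\Bigl(\sum_{j}(Mf_j)^r\Bigr)^{1/r}\Bigr\|_{L_w^q(\mathbb R^n)}\;\lesssim\; \Bigl\|\Bigl(\sum_{j}|f_j|^r\Bigr)^{1/r}\Bigr\|_{L_w^q(\mathbb R^n)},
\]
with constant depending only on $n$, $q$, $r$ and $[w]_{A_q}$. This form is a direct consequence of Rubio de Francia's extrapolation applied to the scalar Muckenhoupt maximal theorem. Choosing $q=p/s$ and $w=\omega$ yields exactly the desired estimate, provided we can verify $\omega\in A_{p/s}(\mathbb R^n)$.

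The main obstacle is thus promoting the hypothesis $\omega\in A_p(\mathbb R^n)$ to $\omega\in A_{p/s}(\mathbb R^n)$. When $s\in(0,1]$ one has $p/s\ge p$, so the elementary monotonicity of the Muckenhoupt classes ($A_p\subset A_q$ for $p\le q$) gives $\omega\in A_{p/s}(\mathbb R^n)$ at once. When $s\in(1,p)$ one has $p/s<p$ and a naive inclusion fails; here I would combine the self-improvement (reverse H\"older) property of $A_p$ weights with a further application of Rubio de Francia's extrapolation, transferring the scalar $(p,p)$ bound for $M$ under $\omega\in A_p$ into the vector-valued bound on $L_\omega^{p/s}$ via an $A_p$-compatible weight rescaling. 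Assembling the reduction in the first step with this weighted vector-valued estimate completes the proof, with a constant $C=C(n,p,s,r,[\omega]_{A_p})$ independent of $\{f_j\}_{j=1}^\infty$.
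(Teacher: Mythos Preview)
The paper does not prove this lemma; it merely cites \cite[Corollary 4.3]{Ho2016}. So there is no paper proof to compare against, and the question is whether your argument stands on its own.

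Your reduction is correct: $[L_\omega^p]^{1/s}=L_\omega^{p/s}$, and for $s\in(0,1]$ the inclusion $A_p\subset A_{p/s}$ immediately feeds the Andersen--John weighted vector-valued maximal inequality, so that half is fine.

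The gap is in the range $s\in(1,p)$. Your proposed fix---``reverse H\"older plus extrapolation, transferring the scalar $(p,p)$ bound into the vector-valued bound on $L_\omega^{p/s}$''---does not work, and cannot be made to work for the hypothesis as written. Reverse H\"older only buys you $\omega\in A_{p-\epsilon}$ for some $\epsilon=\epsilon(\omega)>0$, not $\omega\in A_{p/s}$ for arbitrary $s<p$. And extrapolation moves you among pairs $(q,v)$ with $v\in A_q$; it does not let you keep the \emph{same} weight $\omega$ while pushing the exponent below the critical index of $\omega$. Concretely, on $\mathbb R$ take $\omega(x)=|x|^{0.9}$, which lies in $A_2$ but not in $A_{1.5}$; with $p=2$ and $s=4/3$ one has $p/s=1.5$, and already the scalar maximal function is unbounded on $L^{1.5}_\omega$ by Muckenhoupt's theorem, so the vector-valued inequality (take a single $f_j$) fails. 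Thus the lemma as literally stated is false for general $\omega\in A_p$ when $s\in(1,p)$; your argument is complete only on the sub-range $s\in(0,1]$ (or, more generally, whenever $\omega\in A_{p/s}$). For the paper's purposes this is harmless, since the applications only require $s$ small, but you should flag the restriction rather than paper over it.
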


Furthermore, the following lemma shows that the Fefferman--Stein vector-valued maximal inequalities for $WL_{\omega}^{p}\left(\mathbb{R}^{n}\right)$ hold true.

\begin{lemma}\label{le:Weighted Lebesgue-as-(WX)^1/s}
	\textup{(see \citep[Lemma 6.1.6]{LiYangHuang} or \citep[Threorem 4.4]{SunYangYuan}) Let $1<p<\infty$ and $\omega\in A_{p}$. Assume that $r\in\left(1,\infty\right)$ and $s\in\left(0,p\right)$. Then there exists a positive constant $C$ such that for any $\left\{f_{j}\right\}_{j=1}^{\infty}\subset \mathscr{M}\left(\mathbb{R}^{n}\right)$,}
	\begin{align*}
		\left\|\left\{\sum_{j=1}^{\infty}\left[M\left(f_{j}\right)\right]^{r}\right\}^{1/r}\right\|_{\left[WL_{\omega}^{p}\left(\mathbb{R}^{n}\right)\right]^{1/s}} \le C\left\|\left\{\sum_{j=1}^{\infty}\left|f_{j}\right|^{r}\right\}^{1/r}\right\|_{\left[WL_{\omega}^{p}\left(\mathbb{R}^{n}\right)\right]^{1/s}}.
	\end{align*}
\end{lemma}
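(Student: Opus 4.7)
The plan is to upgrade the strong-type weighted Fefferman--Stein vector-valued maximal inequality of Lemma \ref{le:Weighted Lebesgue-as-X^1/s} to its weak-type counterpart. A preliminary observation is that the $s^{-1}$-convexification satisfies $[WL_\omega^p]^{1/s} = WL_\omega^{p/s}$ (a short computation using the substitution $\alpha = \beta^{1/s}$ in the level-set formulation of the quasi-norm in Definition \ref{de:WX}). The target is thus the weak $(p/s, p/s)$ bound for the sublinear $\ell^r$-valued operator $T(\{f_j\}) := (\sum_{j=1}^\infty [M(f_j)]^r)^{1/r}$ under the hypothesis $\omega \in A_p$ and $s \in (0, p)$.

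My preferred approach is Rubio de Francia weak-type vector-valued extrapolation. The strong-type inequality of Lemma \ref{le:Weighted Lebesgue-as-X^1/s}, as the parameters $(p, s)$ vary, is equivalent to the strong $(q, q)$ boundedness of $T$ on $L_{\omega'}^q(\mathbb{R}^n)$ for every $q \in (1, \infty)$ and every $\omega' \in A_q$. The vector-valued weak-type extrapolation theorem then automatically delivers the weak $(q, q)$ estimate for $T$ for every $q \in (1, \infty)$ and every $\omega \in A_q$. Specialising to $q = p/s$, and using the inclusion $A_p \subseteq A_{p/s}$ which holds whenever $s \in (0, 1]$, yields the desired weak-type bound in that subrange with no further work.

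A self-contained alternative, which also handles the remaining range $s \in (1, p)$, is a Calder\'on--Zygmund decomposition of $G := (\sum_j |f_j|^r)^{1/r}$ at level $\alpha$: writing $f_j = g_j + b_j$ with $g_j := f_j \chi_{\{G \le \alpha\}}$ (so that $(\sum_j |g_j|^r)^{1/r} \le \alpha$ pointwise), the good part is controlled via the strong-type inequality of Lemma \ref{le:Weighted Lebesgue-as-X^1/s} combined with Chebyshev's inequality, while the bad part is estimated using the disjointness of the Whitney cubes, the $A_p$ doubling property $\omega(tQ) \lesssim t^{np}\omega(Q)$, and the weak $(1,1)$ bound for the scalar Hardy--Littlewood maximal operator on $L_\omega^1$ (available via the self-improvement of $A_p$). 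The main technical obstacle in either approach is the case $s \in (1, p)$: the natural inclusion $A_p \subseteq A_{p/s}$ fails, so one cannot simply read off the bound from the standard extrapolation, and instead must track the dependence of constants on $[\omega]_{A_p}$ rather than $[\omega]_{A_{p/s}}$. This is precisely the work carried out in \cite{LiYangHuang} and \cite{SunYangYuan}, from which the lemma is cited.
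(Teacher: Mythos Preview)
The paper supplies no proof of this lemma; it is stated with a bare citation to \cite[Lemma 6.1.6]{LiYangHuang} and \cite[Theorem 4.4]{SunYangYuan}. Your proposal therefore already goes further than the paper by sketching the mechanism, and the convexification identity $[WL_\omega^p]^{1/s}=WL_\omega^{p/s}$ together with the extrapolation argument for $s\in(0,1]$ (where $p/s\ge p$ so $A_p\subset A_{p/s}$) are both correct and constitute a clean proof in that subrange.

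Your identification of $s\in(1,p)$ as the delicate range is also right, but the Calder\'on--Zygmund sketch you offer there has a gap: you appeal to ``the weak $(1,1)$ bound for the scalar Hardy--Littlewood maximal operator on $L_\omega^1$ available via the self-improvement of $A_p$'', yet self-improvement only gives $\omega\in A_{p-\varepsilon}$ for some small $\varepsilon>0$, not $\omega\in A_1$, so that weak $(1,1)$ bound is not available in general. Indeed, for $\omega\in A_p\setminus\bigcup_{q<p}A_q$ and $1<s<p$ the scalar operator $M$ is already unbounded on $L_\omega^{p/s}$, which strongly suggests either that the hypotheses in the cited sources differ slightly from those recorded here, or that the intended (and, for the applications in this paper, sufficient) range is $s\in(0,1]$. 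Your instinct to defer to \cite{LiYangHuang} and \cite{SunYangYuan} for the precise formulation is therefore the right one.
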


The following lemma shows that the weighted Lebesgue space $L_{\omega}^{p}\left(\mathbb{R}^{n}\right)$ satisfies Assumptions \ref{as:(X^{1/s})'}.

\begin{lemma}\label{le:Weighted Lebesgue-as-(X^{1/r})'}
	\textup{\citep[Remark 2.7(b)]{WangYangYang} Let $0<p<\infty$ and $\omega\in A_{\infty}\left(\mathbb{R}^{n}\right)$. Then, for any $r\in\left(0,\min{\left\{1,p\right\}}\right)$, $\omega\in A_{p/r}\left(\mathbb{R}^{n}\right)$ and $s\in\left(\max{\left\{1,p\right\}},\infty\right]$ large enough such that $\omega^{1-\left(p/r\right)'}\in A_{\left(p/r\right)'/\left(s/r\right)'}\left(\mathbb{R}^{n}\right)$, there exists a positive constant $C$ such that for any $\left\{f\right\}_{j=1}^{\infty}\subset \mathscr{M}\left(\mathbb{R}^{n}\right)$,}
	\begin{align*}
		\left\|M^{\left((s/r)'\right)}(f)\right\|_{\left(\left[L_{\omega}^{p}\left(\mathbb{R}^{n}\right)\right]^{1/r}\right)'} \le C\|f\|_{\left(\left[L_{\omega}^{p}\left(\mathbb{R}^{n}\right)\right]^{1/r}\right)'},
	\end{align*}
	\textup{where $\left(\left[L_{\omega}^{p}\left(\mathbb{R}^{n}\right)\right]^{1/r}\right)'$ is as in (\ref{eq:X'}) with $X:=\left[L_{\omega}^{p}\left(\mathbb{R}^{n}\right)\right]^{1/r}$.}
\end{lemma}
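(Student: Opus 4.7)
The plan is to reduce the claim to the classical Muckenhoupt weighted boundedness of the Hardy--Littlewood maximal operator on an ordinary weighted Lebesgue space. First, by Definition \ref{de:convexity-concavity}(i), the $(1/r)$-convexification is $\bigl[L_{\omega}^{p}(\mathbb{R}^{n})\bigr]^{1/r}=L_{\omega}^{p/r}(\mathbb{R}^{n})$. Since $r\in(0,\min\{1,p\})$ forces $p/r>1$, this is a Banach function space, and I would verify the standard weighted duality identity
\[
\left(\bigl[L_{\omega}^{p}(\mathbb{R}^{n})\bigr]^{1/r}\right)'=L_{\omega^{1-(p/r)'}}^{(p/r)'}(\mathbb{R}^{n})
\]
by a direct H\"older inequality applied to $\int|fg|\,dx=\int\bigl(|f|\omega^{-1/(p/r)}\bigr)\bigl(|g|\omega^{1/(p/r)}\bigr)\,dx$ with conjugate exponents $(p/r)'$ and $p/r$, together with the identity $-(p/r)'/(p/r)=1-(p/r)'$.

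Next, I would unfold the definition $M^{((s/r)')}(f)=[M(|f|^{(s/r)'})]^{1/(s/r)'}$, so the target inequality becomes
\[
\int\bigl[M(|f|^{(s/r)'})\bigr]^{(p/r)'/(s/r)'}\omega^{1-(p/r)'}\,dx\lesssim \int|f|^{(p/r)'}\omega^{1-(p/r)'}\,dx.
\]
Setting $g:=|f|^{(s/r)'}$, $q:=(p/r)'/(s/r)'$ and $\tilde\omega:=\omega^{1-(p/r)'}$, this reduces exactly to the standard weighted $L^{q}$-boundedness $\|Mg\|_{L^{q}(\tilde\omega)}\lesssim\|g\|_{L^{q}(\tilde\omega)}$. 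Since $s>\max\{1,p\}\ge p$ gives $s/r>p/r$ and hence $(s/r)'<(p/r)'$, we have $q>1$, so the classical Muckenhoupt theorem delivers precisely this bound under the hypothesis $\tilde\omega\in A_{q}$, which is the second standing assumption of the lemma.

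The only nontrivial point is checking that this $A_{q}$-hypothesis is compatible with the first standing assumption $\omega\in A_{p/r}$ for $s$ sufficiently large. Here I would invoke the classical dual-weight relation $\omega\in A_{p/r}\Longleftrightarrow\omega^{1-(p/r)'}\in A_{(p/r)'}$ combined with the self-improvement (openness) property of the Muckenhoupt classes: $\omega^{1-(p/r)'}\in A_{(p/r)'}$ implies $\omega^{1-(p/r)'}\in A_{(p/r)'-\varepsilon}$ for some $\varepsilon>0$, hence $\omega^{1-(p/r)'}\in A_{q}$ for every $q\in[(p/r)'-\varepsilon,\,(p/r)']$. As $s\to\infty$ we have $(s/r)'\to 1^{+}$ and therefore $q=(p/r)'/(s/r)'\to(p/r)'^{-}$, so taking $s$ large enough places $q$ in this good range. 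I expect this self-improvement bookkeeping to be the only (mild) obstacle; the rest of the proof is a direct substitution followed by a classical weighted maximal inequality.
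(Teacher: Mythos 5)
Your argument is correct, and it is worth noting that the paper itself offers no proof of this lemma at all: it is quoted verbatim from \citep[Remark 2.7(b)]{WangYangYang}, so there is nothing internal to compare against. What you have written is essentially the standard argument that underlies that cited remark. The chain of reductions is sound: $\bigl[L_{\omega}^{p}\bigr]^{1/r}=L_{\omega}^{p/r}$ with $p/r>1$; the K\"othe dual with respect to the unweighted pairing is $L_{\omega^{1-(p/r)'}}^{(p/r)'}$; unfolding $M^{((s/r)')}$ and substituting $g=|f|^{(s/r)'}$ turns the claim into the $A_{q}$-weighted boundedness of $M$ on $L^{q}_{\tilde\omega}$ with $q=(p/r)'/(s/r)'>1$ (since $s>p$), which is exactly the lemma's standing hypothesis $\omega^{1-(p/r)'}\in A_{(p/r)'/(s/r)'}$; and your openness argument correctly shows that such $s$ exist, via $\omega\in A_{p/r}\Leftrightarrow\omega^{1-(p/r)'}\in A_{(p/r)'}$ and $q\to\bigl((p/r)'\bigr)^{-}$ as $s\to\infty$. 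The one place where you should be slightly more careful in a write-up is the duality identity $\left(L_{\omega}^{p/r}\right)'=L_{\omega^{1-(p/r)'}}^{(p/r)'}$: H\"older only gives the inequality $\|\cdot\|_{(L_{\omega}^{p/r})'}\le\|\cdot\|_{L_{\omega^{1-(p/r)'}}^{(p/r)'}}$, and your reduction also uses the reverse inequality, which requires the usual extremal-function (sharpness of H\"older) argument; this is routine but should be stated, since both directions are consumed in passing from the associate-space norm to the concrete weighted norm and back.
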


For the ball quasi-Banach function space $X:=L_{\omega}^{p}\left(\mathbb{R}^{n}\right)$, to apply Theorems \ref{th:B-R Hx-WHx}, \ref{th:B-R Hx-WX} and \ref{th:MB-R Hx-WX} to weighted Lebesgue spaces, we need the following weak-type Fefferman-Stein vector-valued inequality of the Hardy-Littlewood maximal operator $M$ in (\ref{eq:the Hardy-Littlewood maximal operator}) from $X^{1/s}$ to $WX^{1/s}$ with $s\in\left(0,\infty\right)$. In fact, the following proposition shows that (\ref{eq:weak-type Fefferman-Stein}) holds true for all ball quasi-Banach function spaces.

\begin{proposition}\label{pro:X}
	\textup{Let $r\in\left(1,\infty\right)$, $s\in\left(0,\infty\right)$ and $X$ be a ball quasi-Banach function space. Assume that $X^{1/s}$ is a ball Banach function space, there exists a $q_{0}\in\left[1,\infty\right)$ such that $X^{1/\left(sq_{0}\right)}$ is a Banach function space and $M$ is bounded on $\left(X^{1/\left(sq_{0}\right)}\right)'$. Then there exists a positive constant $C$ such that, for any sequence $\left\{f_{j}\right\}_{j=1}^{\infty}\subset \mathscr{M}\left(\mathbb{R}^{n}\right)$ and $\alpha\in\left(0,\infty\right)$,}
	\begin{align}\label{eq:pro-X}
		\alpha\left\|\chi_{\left\{x\in\mathbb{R}^{n}:\left\{\sum_{j=1}^{\infty}\left[M\left(f_{j}\right)\left(x\right)\right]^{r}\right\}^{1/r}>\alpha \right\}}\right\|_{X^{1/s}}\le C\left\|\left\{\sum_{j=1}^{\infty}\left|f_{j}\right|^{r}\right\}^{1/r}\right\|_{X^{1/s}}.
	\end{align}
\end{proposition}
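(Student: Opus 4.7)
The plan is to transfer the weak-type estimate to a dual pairing on the Banach function space $X^{1/(sq_0)}$ and then control that pairing by the Rubio de Francia iteration combined with the weighted Fefferman--Stein vector-valued inequality with $A_1$ weights.

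Write $F := (\sum_{j=1}^\infty [M(f_j)]^r)^{1/r}$ and $G := (\sum_{j=1}^\infty |f_j|^r)^{1/r}$. Since $X^{1/s} = (X^{1/(sq_0)})^{q_0}$ by Definition \ref{de:convexity-concavity}(i), one has $\|\chi_E\|_{X^{1/s}}^{q_0} = \|\chi_E\|_{X^{1/(sq_0)}}$ for every measurable $E$, so (\ref{eq:pro-X}) is equivalent to showing $\alpha^{q_0}\|\chi_{\{F>\alpha\}}\|_{X^{1/(sq_0)}} \lesssim \|G\|_{X^{1/s}}^{q_0}$. Since $X^{1/(sq_0)}$ is a Banach function space, duality yields
\[
\|\chi_{\{F>\alpha\}}\|_{X^{1/(sq_0)}} = \sup \left\{\int_{\{F>\alpha\}} g\,dx : g\ge 0,\ \|g\|_{(X^{1/(sq_0)})'}\le 1\right\}.
\]
For an admissible $g$, since $M$ is bounded on $(X^{1/(sq_0)})'$, I would introduce the Rubio de Francia iteration
\[
\mathcal{R}g := \sum_{k=0}^\infty \frac{M^k g}{(2\|M\|_{(X^{1/(sq_0)})'\to (X^{1/(sq_0)})'})^k},
\]
which satisfies $\mathcal{R}g \ge g$ pointwise, $\|\mathcal{R}g\|_{(X^{1/(sq_0)})'} \le 2$, and $\mathcal{R}g$ is an $A_1$-weight with $A_1$-constant uniform in $g$.

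Setting $\omega := \mathcal{R}g$, the classical weighted Fefferman--Stein vector-valued maximal inequality gives, in the form we need,
\[
\omega\bigl(\{x\in\mathbb{R}^n:F(x)>\alpha\}\bigr) \le C\alpha^{-q_0}\int_{\mathbb{R}^n} G(x)^{q_0}\,\omega(x)\,dx,
\]
which holds for $q_0=1$ by the $L^1(A_1)$ weak-type endpoint of the vector-valued maximal theorem and for $q_0>1$ by Chebyshev in $L^{q_0}(\omega)$ combined with the strong-type inequality (valid since $A_1\subset A_{q_0}$). Then, by the H\"older-type inequality on the Banach function space $X^{1/(sq_0)}$,
\[
\int_{\mathbb{R}^n} G^{q_0}\,\omega\,dx \le \|G^{q_0}\|_{X^{1/(sq_0)}}\,\|\omega\|_{(X^{1/(sq_0)})'} \le 2\,\|G\|_{X^{1/s}}^{q_0}.
\]
Combining with $\int_{\{F>\alpha\}} g\,dx \le \omega(\{F>\alpha\})$ (since $g\le\omega$) and taking the supremum over $g$, extraction of a $q_0$-th root yields (\ref{eq:pro-X}).

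The main obstacle I anticipate is the endpoint $q_0=1$: the classical weighted Fefferman--Stein inequality fails in strong form at $L^1(\omega)$ with $\omega\in A_1$, so the Chebyshev-plus-strong-type route used for $q_0>1$ is unavailable and one must invoke the weak-type endpoint directly. Since the target (\ref{eq:pro-X}) is itself a weak-type statement, this endpoint is in fact exactly adapted to the conclusion, provided all constants are tracked carefully to remain independent of $g$ throughout the Rubio de Francia iteration.
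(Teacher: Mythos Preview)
Your proof is correct and is essentially the same as the paper's: the paper packages the argument as an application of a black-box extrapolation lemma (Lemma~\ref{le:extrapolation theorem}) together with the weak-type weighted Fefferman--Stein inequality for $A_1$ weights (Lemma~\ref{le:the weak-type weighted Fefferman--Stein}), and your duality plus Rubio de Francia iteration is exactly the standard proof of that extrapolation lemma, written out by hand. The endpoint $q_0=1$ concern you raise is handled in both versions by the same Andersen--John weak-type estimate, so no additional work is needed there.
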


Before proving Proposition \ref{pro:X}, we need to recall the following two lemmas. Lemma \ref{le:extrapolation theorem} is an extrapolation theorem (see, for example, \citep[Lemma 7.34]{ZhangYangYuan} and \citep[Theorem 4.6]{CruzMartell}). Furthermore, lemma \ref{le:the weak-type weighted Fefferman--Stein} is the weak-type weighted Fefferman--Stein vector-valued inequality of the Hardy--Littlewood maximal operator $M$ from \citep[Theorem 3.1(a)]{AndersenJohn}.

\begin{lemma}\label{le:extrapolation theorem}
	\textup{Let $X$ be a ball Banach function space and $p_{0}\in\left(0,\infty\right)$. Let $\mathcal{F}$ be the set of all pairs of non-negative measurable functions $\left(F,G\right)$ such that for any given $\omega\in A_{1}\left(\mathbb{R}^{n}\right)$,}
	\begin{align*}
		\int_{\mathbb{R}^{n}}\left[F\left(x\right)\right]^{p_{0}}\omega\left(x\right)\,dx\le C_{\left(p_{0},\left[\omega\right]_{A_{1}\left(\mathbb{R}^{n}\right)}\right)} \int_{\mathbb{R}^{n}}\left[G\left(x\right)\right]^{p_{0}}\omega\left(x\right)\,dx,
	\end{align*}
	\textup{where $C_{\left(p_{0},\left[\omega\right]_{A_{1}\left(\mathbb{R}^{n}\right)}\right)}$ is a positive constant independent of $\left(F,G\right)$, but depends on $p_{0}$ and $\left[\omega\right]_{A_{1}\left(\mathbb{R}^{n}\right)}$. Assume that there exists a $q_{0}\in\left[p_{0},\infty\right)$ such that $X^{1/q_{0}}$ is a Banach function space and $M$ as in (\ref{eq:the Hardy-Littlewood maximal operator}) is bounded on $\left(X^{1/q_{0}}\right)'$. Then there exists a positive constant $C$ such that for any $\left(F,G\right)\in\mathcal{F}$,}
	\begin{align*}
		\left\|F\right\|_{X}\le C\left\|G\right\|_{X},
	\end{align*} 
	\textup{furthermore, for every $p$ with $p_{0}/q_{0}\le p<\infty$,}
	\begin{align*}
		\left\|F\right\|_{X^{p}}\le C\left\|G\right\|_{X^{p}}.
	\end{align*}
\end{lemma}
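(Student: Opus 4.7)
This is a Rubio de Francia-style extrapolation theorem for ball Banach function spaces, in the tradition of Cruz-Uribe and Martell. My plan is to combine the Rubio de Francia iteration algorithm with duality in the scaled space $X^{1/q_0}$. I first reduce the second conclusion to the first: applying the first conclusion with $X$ replaced by $Y := X^p$ and $q_0$ replaced by $pq_0 \ge p_0$ yields the second conclusion, since $Y^{1/(pq_0)} = X^{1/q_0}$ is a Banach function space and $M$ is bounded on $(Y^{1/(pq_0)})' = (X^{1/q_0})'$.

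To prove the first inequality, since $X^{1/q_0}$ is a Banach function space, the Lorentz-Luxemburg theorem gives $(X^{1/q_0})'' = X^{1/q_0}$ isometrically, so
\begin{equation*}
\|F\|_X^{q_0} = \|F^{q_0}\|_{X^{1/q_0}} = \sup \Bigl\{ \int_{\mathbb{R}^n} F(x)^{q_0}\, h(x)\,dx : h \ge 0,\ \|h\|_{(X^{1/q_0})'} \le 1 \Bigr\}.
\end{equation*}
For each admissible $h$, let $B := \|M\|_{(X^{1/q_0})' \to (X^{1/q_0})'}$ and form the Rubio de Francia iterate
\begin{equation*}
\mathcal{R}h := \sum_{k=0}^{\infty} \frac{M^k h}{(2B)^k}.
\end{equation*}
Its key properties are standard: $H := \mathcal{R}h \ge h$ pointwise, $\|H\|_{(X^{1/q_0})'} \le 2\|h\|_{(X^{1/q_0})'}$, and $M(H) \le 2BH$, so $H$ is an $A_1$-weight with $[H]_{A_1} \le 2B$.

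With this $A_1$-weight in hand, I close the argument using the weighted hypothesis. In the diagonal case $p_0 = q_0$, it is immediate: the hypothesis applied with $\omega = H$, followed by a second application of duality in $X^{1/p_0}$, gives
\begin{equation*}
\int F^{p_0} h \le \int F^{p_0} H \lesssim \int G^{p_0} H \le \|G^{p_0}\|_{X^{1/p_0}} \|H\|_{(X^{1/p_0})'} \lesssim \|G\|_X^{p_0},
\end{equation*}
and the supremum over $h$ finishes the proof. When $p_0 < q_0$, I would first upgrade the hypothesis from exponent $p_0$ to exponent $q_0$ via the classical $A_1$-upward extrapolation of Rubio de Francia, obtaining $\int F^{q_0} \omega \lesssim \int G^{q_0} \omega$ for all $\omega \in A_1$, with constants depending only on $q_0$ and $[\omega]_{A_1}$; applying the diagonal argument with $q_0$ in place of $p_0$ then concludes.

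The main obstacle is this off-diagonal upgrade when $p_0 < q_0$. Whether one invokes the $A_1$-to-$A_1$ upward extrapolation of Rubio de Francia as a black box or reproves it via a second RdF iteration together with the reverse-Hölder property of $A_1$-weights, this is where care is needed to track the dependence of constants on $[\omega]_{A_1}$ and to match the form of the hypothesis precisely. The remainder of the proof — the reduction to the first conclusion, the norm identity $\|F\|_X^{q_0} = \|F^{q_0}\|_{X^{1/q_0}}$, and the Rubio de Francia iteration itself — is formal once the boundedness of $M$ on the associate space $(X^{1/q_0})'$ is invoked.
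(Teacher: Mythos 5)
The paper does not prove this lemma at all: it is quoted as a known result, with pointers to \citep[Lemma 7.34]{ZhangYangYuan} and \citep[Theorem 4.6]{CruzMartell}, so there is no in-paper argument to compare against. Your proposal reconstructs exactly the standard proof underlying those references --- duality $\left\|F\right\|_{X}^{q_{0}}=\left\|F^{q_{0}}\right\|_{X^{1/q_{0}}}$ via the Lorentz--Luxemburg theorem, the Rubio de Francia iterate $\mathcal{R}h$ built from the boundedness of $M$ on $\left(X^{1/q_{0}}\right)'$, and the observation that $\mathcal{R}h$ is an $A_{1}$-weight with controlled characteristic --- and it is correct. The two points you single out are handled properly: the reduction of the $X^{p}$ conclusion to the $X$ conclusion is legitimate because the first part of the argument only ever uses that $X^{1/q_{0}}=\left(X^{p}\right)^{1/\left(pq_{0}\right)}$ is a Banach function space with $M$ bounded on its associate space, and the off-diagonal upgrade from exponent $p_{0}$ to $q_{0}$ is precisely the classical upward $A_{1}$-extrapolation (provable by a second Rubio de Francia iteration with $S\left(h\right):=M\left(h\omega\right)/\omega$ on $L^{r'}\left(\omega\right)$, $r=q_{0}/p_{0}$, whose operator norm depends only on $r$ and $\left[\omega\right]_{A_{1}}$), so the constant tracking you worry about does go through.
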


\begin{lemma}\label{le:the weak-type weighted Fefferman--Stein}
	\textup{Let $r\in\left(1,\infty\right)$ and $\omega\in A_{1}\left(\mathbb{R}^{n}\right)$. Then there exists a positive constant $C$, depending on $n$, $r$ and $\left[\omega\right]_{A_{1}\left(\mathbb{R}^{n}\right)}$, such that for all $\alpha\in\left(0,\infty\right)$ and $\left\{f_{j}\right\}_{j=1}^{\infty}\subset\mathscr{M}\left(\mathbb{R}^{n}\right)$,}
	\begin{align*}
		\alpha\omega\left(\left\{x\in\mathbb{R}^{n}:\left\{\sum_{j=1}^{\infty}\left[M\left(f_{j}\right)\left(x\right)\right]^{r}\right\}^{\frac{1}{r}}>\alpha\right\}\right) \le C\int_{\mathbb{R}^{n}}\left\{\sum_{j=1}^{\infty}\left|f_{j}\left(x\right)\right|^{r}\right\}^{\frac{1}{r}}\omega\left(x\right)\,dx.
	\end{align*}
\end{lemma}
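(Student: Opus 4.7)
\noindent\textit{Proof proposal for Proposition \ref{pro:X}.} The plan is to deduce \eqref{eq:pro-X} from the weighted weak-type Fefferman--Stein inequality by a single application of the Rubio de Francia type extrapolation theorem stated in Lemma \ref{le:extrapolation theorem}. Fix $\alpha\in(0,\infty)$ and set
\begin{align*}
F_{\alpha}(x):=\alpha\,\chi_{\{y\in\mathbb{R}^{n}:\{\sum_{j=1}^{\infty}[M(f_{j})(y)]^{r}\}^{1/r}>\alpha\}}(x),\qquad G(x):=\left\{\sum_{j=1}^{\infty}\left|f_{j}(x)\right|^{r}\right\}^{1/r}.
\end{align*}
Then the conclusion \eqref{eq:pro-X} is nothing but $\|F_{\alpha}\|_{X^{1/s}}\lesssim\|G\|_{X^{1/s}}$ with a constant independent of $\alpha$, so it suffices to produce such an estimate.

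First I would verify that the pair $(F_{\alpha},G)$ lies in the extrapolation family $\mathcal{F}$ of Lemma \ref{le:extrapolation theorem} with exponent $p_{0}:=1$. For any $\omega\in A_{1}(\mathbb{R}^{n})$, Lemma \ref{le:the weak-type weighted Fefferman--Stein} (with the same exponent $r\in(1,\infty)$) yields
\begin{align*}
\int_{\mathbb{R}^{n}}F_{\alpha}(x)\,\omega(x)\,dx=\alpha\,\omega\!\left(\left\{x:\left\{\sum_{j=1}^{\infty}[M(f_{j})(x)]^{r}\right\}^{1/r}>\alpha\right\}\right)\le C_{([\omega]_{A_{1}},n,r)}\int_{\mathbb{R}^{n}}G(x)\,\omega(x)\,dx,
\end{align*}
where the constant depends only on $n$, $r$ and $[\omega]_{A_{1}(\mathbb{R}^{n})}$, and \emph{crucially} is independent of $\alpha$ and the sequence $\{f_{j}\}$. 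This is exactly the scale-invariant weighted estimate required for membership in $\mathcal{F}$ with $p_{0}=1$.

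Second, I would invoke Lemma \ref{le:extrapolation theorem} with the ball Banach function space $X^{1/s}$ in place of $X$, exponent $p_{0}=1$, and with the $q_{0}\in[1,\infty)$ supplied by the hypotheses of the proposition. Indeed, the hypotheses on $X^{1/s}$ in Proposition \ref{pro:X} exactly match those of Lemma \ref{le:extrapolation theorem}: $X^{1/s}$ is a ball Banach function space, $(X^{1/s})^{1/q_{0}}=X^{1/(sq_{0})}$ is a Banach function space, and $M$ is bounded on $(X^{1/(sq_{0})})'$. Applying the extrapolation conclusion to $(F_{\alpha},G)\in\mathcal{F}$ produces
\begin{align*}
\alpha\,\left\|\chi_{\{x:\{\sum_{j}[M(f_{j})(x)]^{r}\}^{1/r}>\alpha\}}\right\|_{X^{1/s}}=\|F_{\alpha}\|_{X^{1/s}}\lesssim\|G\|_{X^{1/s}}=\left\|\left\{\sum_{j=1}^{\infty}|f_{j}|^{r}\right\}^{1/r}\right\|_{X^{1/s}},
\end{align*}
with an implicit constant independent of $\alpha$ and $\{f_{j}\}$, which is \eqref{eq:pro-X}.

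The only real content is checking that the hypotheses of the extrapolation lemma fit, and that the constants in Lemma \ref{le:the weak-type weighted Fefferman--Stein} depend on $\omega$ only through $[\omega]_{A_{1}}$ (so that no $\alpha$-dependence sneaks in). The proof has no combinatorial or geometric difficulty beyond this bookkeeping; the main obstacle, if any, is simply the correct identification of the extrapolation pair $(F_{\alpha},G)$ so that the weak-type estimate on the left-hand side of \eqref{eq:pro-X} gets recast as a \emph{strong-type} weighted estimate for $F_{\alpha}$, which is what the black-box extrapolation theorem requires.
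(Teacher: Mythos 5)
Your proposal does not prove the statement at hand. The statement to be established is Lemma \ref{le:the weak-type weighted Fefferman--Stein} itself, namely the weighted weak-type $(1,1)$ vector-valued maximal inequality
$\alpha\,\omega\bigl(\bigl\{x:\bigl\{\sum_{j}[M(f_j)(x)]^r\bigr\}^{1/r}>\alpha\bigr\}\bigr)\le C\int_{\mathbb{R}^n}\bigl\{\sum_j|f_j(x)|^r\bigr\}^{1/r}\omega(x)\,dx$ for $\omega\in A_1(\mathbb{R}^n)$. What you have written is instead a proof of Proposition \ref{pro:X}, and in its very first step you invoke Lemma \ref{le:the weak-type weighted Fefferman--Stein} as a black box in order to verify that the pair $(F_\alpha,G)$ belongs to the extrapolation family $\mathcal{F}$. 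As an argument for the lemma this is circular: you assume precisely the estimate you were asked to prove. (Incidentally, your argument for Proposition \ref{pro:X} coincides with the paper's own proof of that proposition --- the identification of $(F_\alpha,G)$, the choice $p_0=1$, and the application of Lemma \ref{le:extrapolation theorem} to $X^{1/s}$ are all exactly as in the paper --- so the extrapolation bookkeeping is sound; it is simply aimed at the wrong target.)

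For the record, the paper does not prove Lemma \ref{le:the weak-type weighted Fefferman--Stein} either: it attributes it to Theorem 3.1(a) of Andersen and John. A genuine proof cannot be extracted from anything else in this paper by extrapolation; it requires the weighted Fefferman--Stein machinery, for instance a Calder\'on--Zygmund decomposition of the scalar function $x\mapsto\{\sum_j|f_j(x)|^r\}^{1/r}$ at height $\alpha$, the boundedness of the $\ell^r$-valued maximal operator on the good part, and the $A_1$ properties (doubling of $\omega$ on dilated cubes and the comparison of $\omega$ with its averages) to control the weight of the exceptional set coming from the bad part, with the constant tracked through $[\omega]_{A_1(\mathbb{R}^n)}$ only. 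None of this, nor any reduction of the lemma to a quotable scalar or unweighted result, appears in your proposal.
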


{
	\noindent \textit{\bf Proof of Proposition \ref{pro:X}}.\quad For any $r\in\left(1,\infty\right)$, $\alpha\left(0,\infty\right)$ and any sequence $\left\{f_{j}\right\}_{j=1}^{\infty}\subset\mathscr{M}\left(\mathbb{R}^{n}\right)$, let $\mathcal{F}_{\alpha}$ be the set of all pairs $\left(F_{\alpha},G\right)$, where, for all $x\in\mathbb{R}^{n}$,} 
\begin{align*}
	F_{\alpha}\left(x\right)=\alpha\chi_{\left\{x\in\mathbb{R}^{n}:\left\{\sum_{j=1}^{\infty}\left[M\left(f_{j}\right)\left(x\right)\right]^{r}\right\}^{\frac{1}{r}}>\alpha\right\}}\left(x\right)\quad {\rm and}\quad G\left(x\right)=\left\{\sum_{j=1}^{\infty}\left|f_{j}\left(x\right)\right|^{r}\right\}^{\frac{1}{r}}.
\end{align*}
Then, by Lemma \ref{le:the weak-type weighted Fefferman--Stein}, we know that, for every $\omega\in A_{1}\left(\mathbb{R}^{n}\right)$,
\begin{align*}
	\int_{\mathbb{R}^{n}}F_{\alpha}\left(x\right)\omega\left(x\right)\,dx&=\alpha\omega\left(\left\{x\in\mathbb{R}^{n}:\left\{\sum_{j=1}^{\infty}\left[M\left(f_{j}\right)\left(x\right)\right]^{r}\right\}^{\frac{1}{r}}>\alpha\right\}\right)
	\\&\lesssim\int_{\mathbb{R}^{n}}\left\{\sum_{j=1}^{\infty}\left|f_{j}\left(x\right)\right|^{r}\right\}^{\frac{1}{r}}\omega\left(x\right)\,dx=\int_{\mathbb{R}^{n}}G\left(x\right)\omega\left(x\right)\,dx.
\end{align*}
Thus, by this and the fact that $M$ is bounded on $\left(X^{1/\left(sq_{0}\right)}\right)'$, applying Lemma \ref{le:extrapolation theorem} with $p_{0}:=1$, $\mathcal{F}=\mathcal{F}_{\alpha}$ and $X$ in Lemma \ref{le:extrapolation theorem} replaced by $X^{1/s}$ in this proposition, we conclude that for all $\alpha\in\left(0,\infty\right)$,
\begin{align*}
	\left\|F_{\alpha}\right\|_{X^{1/s}}\le C\left\|G\right\|_{X^{1/s}},
\end{align*}
namely,
\begin{align*}
	\alpha\left\|\chi_{\left\{x\in\mathbb{R}^{n}:\left\{\sum_{j=1}^{\infty}\left[M\left(f_{j}\right)\left(x\right)\right]^{r}\right\}^{1/r}>\alpha \right\}}\right\|_{X^{1/s}}\le C\left\|\left\{\sum_{j=1}^{\infty}\left|f_{j}\right|^{r}\right\}^{1/r}\right\|_{X^{1/s}},
\end{align*}
which completes the proof of Proposition \ref{pro:X}.
$\hfill\square$

Applying Lemmas \ref{le:Weighted Lebesgue-as-X^1/s}, \ref{le:Weighted Lebesgue-as-(WX)^1/s} and \ref{le:Weighted Lebesgue-as-(X^{1/r})'}, Proposition \ref{pro:X}, Theorems \ref{th:B-R Hx-WHx}, \ref{th:B-R Hx-WX} and \ref{th:MB-R Hx-WX}, we immediately obtain the following boundedness of Bochner-Riesz means and the maximal Bochner-Riesz means, respectively, as follows.

\begin{theorem}\label{th:Weighted Lebesgue spaces-B-R Hx-WHx}
	\textup{Let $0<p<\infty$, $\omega\in A_{\infty}\left(\mathbb{R}^{n}\right)$ and $\delta>\frac{n-1}{2}$. Let $B_{1/\varepsilon}^{\delta}$ be a Bochner--Riesz means with $\delta$ order on $\mathbb{R}^{n}$. If $\theta\in\left[\frac{2n}{n+1+2\delta},1\right]$, then $B_{1/\varepsilon}^{\delta}$ has a unique extension on $H_{\omega}^{p}\left(\mathbb{R}^{n}\right)$. Moreover, there exists a positive constant $C$ such that for any $f\in H_{\omega}^{p}\left(\mathbb{R}^{n}\right)$,}
	\begin{align*}
		\left\|B_{1/\varepsilon}^{\delta}\left(f\right)\right\|_{WH_{\omega}^{p}\left(\mathbb{R}^{n}\right)} \le C\left\|f\right\|_{H_{\omega}^{p}\left(\mathbb{R}^{n}\right)}.
	\end{align*}
\end{theorem}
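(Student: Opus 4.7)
\noindent\textbf{Plan of proof for Theorem \ref{th:Weighted Lebesgue spaces-B-R Hx-WHx}.}
The strategy is to apply Theorem \ref{th:B-R Hx-WHx} directly with the ball quasi-Banach function space $X := L_{\omega}^{p}(\mathbb{R}^n)$, so that $H_{X}(\mathbb{R}^n) = H_{\omega}^{p}(\mathbb{R}^n)$ and $WH_{X}(\mathbb{R}^n) = WH_{\omega}^{p}(\mathbb{R}^n)$; the resulting estimate \eqref{eq:B-R Hx-WHx} is then precisely the conclusion we want. That $L_{\omega}^{p}$ is a ball quasi-Banach function space is a routine verification of the four axioms in Definition \ref{de:ball quasi-Banach function space}.

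First I would fix the auxiliary parameters. Since $\omega \in A_{\infty}$, there exists $q_{\omega} \ge 1$ with $\omega \in A_{q_{\omega}}$, and the assumption $\delta > (n-1)/2$ gives $2n/(n+1+2\delta) < 1$. I would choose $s \in (0,\min\{1,p/q_{\omega}\}]$ and then $\theta \in [2n/(n+1+2\delta), s)$; this is possible exactly when $p/q_{\omega} > 2n/(n+1+2\delta)$, which is the standard critical-index condition for Bochner--Riesz on weighted Hardy spaces and is already compatible with the hypotheses on $\theta$. With these choices, $X^{1/s} = L_{\omega}^{p/s}$ satisfies $p/s \ge 1$ and $\omega \in A_{p/s}$, and is therefore a ball Banach function space as required.

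Next I would check the three structural hypotheses of Theorem \ref{th:B-R Hx-WHx}. For Assumption \ref{as:Fefferman-Stein}, the substitution $g_{j} := |f_{j}|^{\theta}$ and $r := s/\theta \in (1,\infty)$ converts the inequality into the classical weighted Fefferman--Stein vector-valued inequality for $M$ in $L_{\omega}^{p/\theta}$, which is exactly Lemma \ref{le:Weighted Lebesgue-as-X^1/s} applied with its ambient weighted Lebesgue exponent equal to $p/\theta$. Assumption \ref{as:(X^{1/s})'} is Lemma \ref{le:Weighted Lebesgue-as-(X^{1/r})'} with its parameter $r$ replaced by our $s$ and its $q$ taken large enough to satisfy the needed reverse-H\"older-type $A_{q}$-condition on $\omega^{1-(p/s)'}$. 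Assumption \ref{as:M is bounded on (WX)^{1/r}} is obtained from Lemma \ref{le:Weighted Lebesgue-as-(WX)^1/s}, applied with $s$ there equal to our $r$ (chosen small enough that $\omega \in A_{p/r}$ and the scalar case of the stated inequality yields the desired boundedness).

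Finally, the weak-type vector-valued Fefferman--Stein inequality \eqref{eq:weak-type Fefferman-Stein} is precisely what Proposition \ref{pro:X} supplies, once its two hypotheses are verified: $X^{1/s}$ being a ball Banach function space was handled above, and one needs $q_{0} \ge 1$ such that $X^{1/(sq_{0})} = L_{\omega}^{p/(sq_{0})}$ is a Banach function space with $M$ bounded on its associate space, which holds for $q_{0}$ chosen large enough that $p/(sq_{0}) > q_{\omega}$. With every hypothesis verified, Theorem \ref{th:B-R Hx-WHx} applies and yields both the stated weak-type estimate and the unique extension of $B_{1/\varepsilon}^{\delta}$ on $H_{\omega}^{p}(\mathbb{R}^n)$. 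The main obstacle is not any new analytic content but the joint parameter bookkeeping: making $\theta < s$, $s \le \min\{1,p/q_{\omega}\}$, $\theta \ge 2n/(n+1+2\delta)$, and $p/(sq_{0}) > q_{\omega}$ compatible simultaneously, using the freedom afforded by $\omega \in A_{\infty}$ together with the critical-index constraint implicit in the theorem's hypotheses.
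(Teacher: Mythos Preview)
Your approach is essentially identical to the paper's: apply Theorem \ref{th:B-R Hx-WHx} with $X=L_{\omega}^{p}(\mathbb{R}^{n})$, verifying Assumptions \ref{as:Fefferman-Stein}, \ref{as:(X^{1/s})'}, \ref{as:M is bounded on (WX)^{1/r}} via Lemmas \ref{le:Weighted Lebesgue-as-X^1/s}, \ref{le:Weighted Lebesgue-as-(X^{1/r})'}, \ref{le:Weighted Lebesgue-as-(WX)^1/s}, and hypothesis \eqref{eq:weak-type Fefferman-Stein} via Proposition \ref{pro:X}. One small slip in the bookkeeping: in your verification of Proposition \ref{pro:X} you write ``$q_{0}$ chosen large enough that $p/(sq_{0})>q_{\omega}$,'' but increasing $q_{0}$ \emph{decreases} $p/(sq_{0})$; what you actually need is some $q_{0}\ge 1$ with $q_{0}\le p/(sq_{\omega})$ (e.g.\ $q_{0}=1$ works once $s\le p/q_{\omega}$), which is exactly the compatibility you already built into your choice of $s$.
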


\begin{theorem}\label{th:Weighted Lebesgue spaces-B-R Hx-WX}
	\textup{Let $0<p<\infty$, $\omega\in A_{\infty}\left(\mathbb{R}^{n}\right)$ and $\delta>\frac{n-1}{2}$. Let $B_{1/\varepsilon}^{\delta}$ be a Bochner-Riesz means with $\delta$ order on $\mathbb{R}^{n}$. If $\theta\in\left[\frac{2n}{n+1+2\delta},1\right]$, then $B_{1/\varepsilon}^{\delta}$ has a unique extension on $H_{\omega}^{p}\left(\mathbb{R}^{n}\right)$. Moreover, there exists a positive constant $C$ such that for any $f\in H_{\omega}^{p}\left(\mathbb{R}^{n}\right)$,}
	\begin{align*}
		\left\|B_{1/\varepsilon}^{\delta}\left(f\right)\right\|_{WL_{\omega}^{p}\left(\mathbb{R}^{n}\right)} \le C\left\|f\right\|_{H_{\omega}^{p}\left(\mathbb{R}^{n}\right)}.
	\end{align*}
\end{theorem}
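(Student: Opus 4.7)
The plan is to derive this theorem directly from Theorem~\ref{th:B-R Hx-WX} applied with the ball quasi-Banach function space $X := L_{\omega}^{p}(\mathbb{R}^{n})$. Under this identification $WX = WL_{\omega}^{p}(\mathbb{R}^{n})$ and $H_{X} = H_{\omega}^{p}(\mathbb{R}^{n})$, so the conclusion (\ref{eq:B-R Hx-WX}) is exactly the bound claimed here. The entire task therefore reduces to exhibiting admissible parameters $\theta, s \in (0,1]$ and $q \in (1,\infty)$ for which $X = L_{\omega}^{p}$ satisfies Assumptions~\ref{as:Fefferman-Stein}, \ref{as:(X^{1/s})'} and \ref{as:M is bounded on (WX)^{1/r}}, the condition that $X^{1/s}$ be a ball Banach function space, and the weak-type Fefferman--Stein inequality (\ref{eq:weak-type Fefferman-Stein}).

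Since $\omega \in A_{\infty}(\mathbb{R}^{n})$, fix some $p_{1} \in [1,\infty)$ with $\omega \in A_{p_{1}}(\mathbb{R}^{n})$. Because $\delta > (n-1)/2$, set $\theta := \tfrac{2n}{n+1+2\delta} \in (0,1)$ and choose $s \in (\theta, 1]$ small enough that $p/s > p_{1}$; this ensures $\omega \in A_{p/s}(\mathbb{R}^{n})$ and that $X^{1/s} = L_{\omega}^{p/s}$ is a ball Banach function space. Then select $q \in (1,\infty)$ large enough for the conjugate-weight condition $\omega^{1-(p/s)'} \in A_{(p/s)'/(q/s)'}(\mathbb{R}^{n})$. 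With these choices: Assumption~\ref{as:Fefferman-Stein} follows from Lemma~\ref{le:Weighted Lebesgue-as-X^1/s} by applying it to the sequence $(|f_{j}|^{\theta})_{j\in\mathbb{N}}$ and using $[M^{(\theta)}(f_{j})]^{s} = [M(|f_{j}|^{\theta})]^{s/\theta}$ together with the $(p/\theta)$-norm; Assumption~\ref{as:(X^{1/s})'} is exactly Lemma~\ref{le:Weighted Lebesgue-as-(X^{1/r})'}; and Assumption~\ref{as:M is bounded on (WX)^{1/r}} follows from Lemma~\ref{le:Weighted Lebesgue-as-(WX)^1/s}.

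The verification of (\ref{eq:weak-type Fefferman-Stein}) proceeds via Proposition~\ref{pro:X} with its parameters $s_{0} := \tfrac{2n}{n+1+2\delta}$ and $r := \tfrac{n+1+2\delta}{2n} \in (1,\infty)$. Then $X^{1/s_{0}} = L_{\omega}^{p(n+1+2\delta)/(2n)}$, and for $q_{0} \ge 1$ chosen so that $L_{\omega}^{p(n+1+2\delta)/(2nq_{0})}$ is a genuine Banach function space whose associate admits $M$ (a standard consequence of $\omega \in A_{\infty}$ and Muckenhoupt weight theory), Proposition~\ref{pro:X} delivers precisely the weak-type Fefferman--Stein inequality (\ref{eq:weak-type Fefferman-Stein}). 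Once every hypothesis is in place, Theorem~\ref{th:B-R Hx-WX} yields the desired mapping property and the unique extension statement simultaneously.

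The main technical issue I foresee is not any single step but the simultaneous bookkeeping of three different convexification exponents $s$, $s_{0}$, and $s_{0}q_{0}$, so that $\omega$ lies in the appropriate $A_{r}$-class for every lemma invoked; this bookkeeping also implicitly forces $p > \tfrac{2n}{n+1+2\delta}$ (equivalently $\delta > n/p - (n+1)/2$), which is precisely the classical range in which Bochner--Riesz means are bounded above the critical index, so no genuine loss of generality is incurred.
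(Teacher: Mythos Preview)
Your proposal is correct and follows essentially the same route as the paper: the paper derives Theorem~\ref{th:Weighted Lebesgue spaces-B-R Hx-WX} by applying the general Theorem~\ref{th:B-R Hx-WX} with $X=L_{\omega}^{p}(\mathbb{R}^{n})$, invoking Lemmas~\ref{le:Weighted Lebesgue-as-X^1/s}, \ref{le:Weighted Lebesgue-as-(WX)^1/s}, \ref{le:Weighted Lebesgue-as-(X^{1/r})'} and Proposition~\ref{pro:X} to verify the hypotheses, exactly as you outline. Your parameter bookkeeping is in fact more explicit than the paper's one-line derivation, and your closing observation that the admissible range is implicitly $p>\tfrac{2n}{n+1+2\delta}$ (the classical above-critical-index condition, possibly sharpened by the $A_{\infty}$ index of $\omega$) correctly decodes the otherwise dangling hypothesis ``$\theta\in[\tfrac{2n}{n+1+2\delta},1]$'' in the statement.
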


\begin{theorem}\label{th:Weighted Lebesgue spaces-MB-R Hx-WX}
	\textup{Let $0<p<\infty$, $\omega\in A_{\infty}\left(\mathbb{R}^{n}\right)$ and $\delta>\frac{n-1}{2}$. Let $B_{\ast}^{\delta}$ be the maximal Bochner--Riesz means with $\delta$ order on $\mathbb{R}^{n}$. If $\theta\in\left[\frac{2n}{n+1+2\delta},1\right]$, then $B_{\ast}^{\delta}$ has a unique extension on $H_{\omega}^{p}\left(\mathbb{R}^{n}\right)$. Moreover, there exists a positive constant $C$ such that for any $f\in H_{\omega}^{p}\left(\mathbb{R}^{n}\right)$,}
	\begin{align*}
		\left\|B_{\ast}^{\delta}\left(f\right)\right\|_{WL_{\omega}^{p}\left(\mathbb{R}^{n}\right)} \le C\left\|f\right\|_{H_{\omega}^{p}\left(\mathbb{R}^{n}\right)}.
	\end{align*}
\end{theorem}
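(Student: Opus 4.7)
The plan is to apply Theorem~\ref{th:MB-R Hx-WX} with $X:=L_{\omega}^{p}(\mathbb{R}^{n})$, which is plainly a ball quasi-Banach function space. Under this choice one has $WX=WL_{\omega}^{p}(\mathbb{R}^{n})$ and $H_{X}=H_{\omega}^{p}(\mathbb{R}^{n})$, so the conclusion of Theorem~\ref{th:MB-R Hx-WX} reduces to exactly the desired weak-type estimate. The task therefore boils down to exhibiting an admissible pair $(\theta,s)$ for which $L_{\omega}^{p}$ satisfies Assumptions~\ref{as:Fefferman-Stein}, \ref{as:(X^{1/s})'} and \ref{as:M is bounded on (WX)^{1/r}}, for which $X^{1/s}$ is a ball Banach function space, and for which the weak-type Fefferman--Stein inequality~(\ref{eq:weak-type Fefferman-Stein}) holds.

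First I would fix the parameters. Since $\omega\in A_{\infty}(\mathbb{R}^{n})$, there exists $p_{0}\in[1,\infty)$ with $\omega\in A_{p_{0}}(\mathbb{R}^{n})$. I would choose $\theta\in[\frac{2n}{n+1+2\delta},1)$ close to its lower endpoint and $s\in(\theta,1]$ in such a way that both $p/s\ge 1$ and $p/\theta\ge p_{0}$; the monotonicity $A_{p_{0}}\subset A_{p/\theta}$ then yields $\omega\in A_{p/\theta}(\mathbb{R}^{n})$. The condition $p/s\ge 1$ makes $X^{1/s}=L_{\omega}^{p/s}$ a ball Banach function space.

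Next I would verify the hypotheses one by one. For Assumption~\ref{as:Fefferman-Stein}, the identity $M^{(\theta)}(f_{j})=[M(|f_{j}|^{\theta})]^{1/\theta}$ together with the substitution $g_{j}:=|f_{j}|^{\theta}$ and the scaling $\|H^{1/\theta}\|_{L_{\omega}^{p}}=\|H\|_{L_{\omega}^{p/\theta}}^{1/\theta}$ reduces the $\theta$-powered inequality on $L_{\omega}^{p}$ to the standard Fefferman--Stein inequality on $L_{\omega}^{p/\theta}$ with exponent $s/\theta>1$; this is furnished by Lemma~\ref{le:Weighted Lebesgue-as-X^1/s} since $\omega\in A_{p/\theta}$. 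Assumption~\ref{as:(X^{1/s})'} is the direct content of Lemma~\ref{le:Weighted Lebesgue-as-(X^{1/r})'}. Assumption~\ref{as:M is bounded on (WX)^{1/r}} follows from Lemma~\ref{le:Weighted Lebesgue-as-(WX)^1/s} specialised to a single summand (equivalently, from the scalar weighted weak type $(p,p)$ bound of $M$). Finally, the weak-type Fefferman--Stein inequality~(\ref{eq:weak-type Fefferman-Stein}) at the exponent $r=(n+1+2\delta)/(2n)$, which exceeds $1$ precisely because $\delta>(n-1)/2$, is delivered by Proposition~\ref{pro:X}; its auxiliary hypothesis (existence of some $q_{0}\ge 1$ making $X^{1/(sq_{0})}$ a Banach function space with $M$ bounded on its associate space) is met by choosing $q_{0}$ sufficiently large and invoking $\omega\in A_{p/\theta}$.

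The main obstacle is purely parametric: one must arrange $\theta\ge\frac{2n}{n+1+2\delta}$, $\theta<s\le 1$, $p/s\ge 1$ and $\omega\in A_{p/\theta}$ all at once, which is what dictates the interplay between $\delta$, $p$ and the $A_{p_{0}}$-class of $\omega$ implicit in the hypothesis. Once these parameters are pinned down, each assumption of Theorem~\ref{th:MB-R Hx-WX} is a direct invocation of a preceding lemma or of Proposition~\ref{pro:X}, and Theorem~\ref{th:MB-R Hx-WX} then produces a unique bounded extension of $B_{\ast}^{\delta}$ from $H_{\omega}^{p}(\mathbb{R}^{n})$ to $WL_{\omega}^{p}(\mathbb{R}^{n})$ together with the claimed inequality, completing the argument.
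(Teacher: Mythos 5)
Your proposal follows exactly the paper's route: the paper obtains this theorem by specialising Theorem \ref{th:MB-R Hx-WX} to $X:=L_{\omega}^{p}\left(\mathbb{R}^{n}\right)$ and citing Lemmas \ref{le:Weighted Lebesgue-as-X^1/s}, \ref{le:Weighted Lebesgue-as-(WX)^1/s}, \ref{le:Weighted Lebesgue-as-(X^{1/r})'} and Proposition \ref{pro:X} to verify the hypotheses, which is precisely your argument. If anything, you are more explicit than the paper about the parametric interplay among $\theta$, $s$, $p$ and the $A_{p_{0}}$-class of $\omega$ (the paper's one-line derivation leaves this entirely implicit), so the proposal is correct and essentially identical in approach.
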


%$Q\left({\vec 0}_{n},1\right)$
\subsection{Herz spaces.}
We begin with recalling the notion of Herz spaces.

\begin{definition}\label{de:Herz spaces}
	\textup{For the cube $Q\left({\vec 0}_{n},1\right)$ and $j\in\mathbb{N}$,}
	\begin{align*}
		S_{j}\left(Q\left({\vec 0}_{n},1\right)\right):=Q\left({\vec 0}_{n},2^{j+1}\right)\setminus Q\left({\vec 0}_{n},2^{j}\right),
	\end{align*}
	\textup{where, ${\vec 0}_{n}$ denotes the origin of $\mathbb{R}^{n}$. Then, for any $\alpha\in\mathbb{R}$ and $p,q\in\left(0,\infty\right]$, the Herz space $K_{p,q}^{\alpha}\left(\mathbb{R}^{n}\right)$ is defined to be the set of all measurable functions $f$ on $\mathbb{R}^{n}$ satisfying}
	\begin{align*}
		\left\|f\right\|_{\mathcal{K}_{p,q}^{\alpha}\left(\mathbb{R}^{n}\right)}:=\left\|\chi_{Q\left({\vec 0}_{n},2\right)}f\right\|_{L^{p}\left(\mathbb{R}^{n}\right)} + \left\{\sum_{j=1}^{\infty} \left[2^{j\alpha}\left\|\chi_{S_{j}\left(Q\left({\vec 0}_{n},1\right)\right)}f\right\|_{L^{p}\left(\mathbb{R}^{n}\right)} \right]^{q} \right\}^{\frac{1}{q}} <\infty.
	\end{align*}
\end{definition}

Let $X$ be a ball quasi-Banach function space. Let $\alpha\in\mathbb{R}$ and $p,q\in\left(0,\infty\right]$. If $X:=K_{p,q}^{\alpha}\left(\mathbb{R}^{n}\right)$, then $WX:=WK_{p,q}^{\alpha}\left(\mathbb{R}^{n}\right)$ is the weak Herz space, $H_{X}\left(\mathbb{R}^{n}\right):=HK_{p,q}^{\alpha}\left(\mathbb{R}^{n}\right)$ is the Herz--Hardy space, and $WH_{X}\left(\mathbb{R}^{n}\right):=WHK_{p,q}^{\alpha}\left(\mathbb{R}^{n}\right)$ is the weak Herz-Hardy space. The Definitions of weak Herz spaces, Herz--Hardy spaces and weak Herz--Hardy spaces are as in Definition \ref{de:WX}, \ref{de:the Hardy type space} and \ref{de:the weak Hardy-type space} with $X$ replaced by $K_{p,q}^{\alpha}\left(\mathbb{R}^{n}\right)$.

The Herz--Hardy space $K_{p,q}^{\alpha}\left(\mathbb{R}^{n}\right)$ was introduced by Chen and Lau \cite{ChenLau} and Garc{\'i}a-Cuerva \cite{Garcia-Cuerva} in 1989, and its theories were later further developed by Garc{\'i}a-Cuerva and Herrero \cite{Garcia-CuervaHerrero}. Furthermore, Lu and Yang \cite{LuYangHerz} studied weighted Herz--Hardy spaces.

Moreover, the Herz space $K_{p,q}^{\alpha}\left(\mathbb{R}^{n}\right)$ satisfies Assumptions \ref{as:the boundedness of M on X^1/p}, \ref{as:M is bounded on (WX)^{1/r}} and \ref{as:(X^{1/s})'}, respectively, in Lemmas \ref{le:Herz-as-X^1/s}, \ref{le:Herz-as-(WX)^1/s} and \ref{le:Herz-as-(X^{1/r})'}. The following lemma shows that the Fefferman-Stein vector-valued maximal inequalities for $K_{p,q}^{\alpha}\left(\mathbb{R}^{n}\right)$ hold true.

\begin{lemma}\label{le:Herz-as-X^1/s}
	\textup{\citep[Corollary 4.5]{Izuki} Let $p\in\left(1,\infty\right)$, $q\in\left(0,\infty\right)$ and $\alpha\in\left(-n/p,\infty\right)$. Assume that $r\in\left(1,\infty\right)$ and $s\in\left(0,\min{\left\{p,\left[\alpha/n+1/p\right]^{-1}\right\}}\right)$. Then there exists a positive constant $C$ such that for any $\left\{f_{j}\right\}_{j=1}^{\infty}\subset \mathscr{M}\left(\mathbb{R}^{n}\right)$,}
	\begin{align*}
		\left\|\left\{\sum_{j=1}^{\infty}\left[M\left(f_{j}\right)\right]^{r}\right\}^{1/r}\right\|_{\left[K_{p,q}^{\alpha}\left(\mathbb{R}^{n}\right)\right]^{1/s}} \le C\left\|\left\{\sum_{j=1}^{\infty}\left|f_{j}\right|^{r}\right\}^{1/r}\right\|_{\left[K_{p,q}^{\alpha}\left(\mathbb{R}^{n}\right)\right]^{1/s}}.
	\end{align*}
\end{lemma}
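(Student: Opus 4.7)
The plan is to reduce the claim to a Fefferman--Stein inequality on a Herz space with rescaled parameters, exploiting the fact that the $(1/s)$-convexification of a Herz space is again (equivalent to) a Herz space. First I would verify the quasi-norm equivalence $[K_{p,q}^{\alpha}(\mathbb{R}^n)]^{1/s}\sim K_{p/s,\,q/s}^{s\alpha}(\mathbb{R}^n)$. This follows from the pointwise identity $\|\chi_{S_j}|g|^{1/s}\|_{L^p}=\|\chi_{S_j}g\|_{L^{p/s}}^{1/s}$: substituting into the Herz quasi-norm of $|f|^{1/s}$ and then raising the result to the power $s$ (as dictated by the definition of $X^{1/s}$) produces, up to constants coming from the $s$-inhomogeneity, the Herz quasi-norm of $f$ with parameters $(p/s,q/s,s\alpha)$.

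Setting $P:=p/s$, $Q:=q/s$ and $A:=s\alpha$, I would then check that the standing hypotheses translate exactly into the admissible Herz range $P\in(1,\infty)$, $A>-n/P$ and $A<n(1-1/P)$: $s<p$ gives $P>1$; $\alpha>-n/p$ together with $s>0$ gives $A>-n/P$; and the upper bound $s<(\alpha/n+1/p)^{-1}$ rearranges to $n/s>\alpha+n/p$, hence $s\alpha<n(1-s/p)$, i.e.\ $A<n(1-1/P)$. These are the standard conditions under which $M$ and its vector-valued extensions are bounded on $K_{P,Q}^{A}(\mathbb{R}^n)$.

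It then suffices to establish
\begin{align*}
\Bigl\|\Bigl\{\sum_{j=1}^{\infty}[M(f_j)]^{r}\Bigr\}^{1/r}\Bigr\|_{K_{P,Q}^{A}(\mathbb{R}^n)} \le C\,\Bigl\|\Bigl\{\sum_{j=1}^{\infty}|f_j|^{r}\Bigr\}^{1/r}\Bigr\|_{K_{P,Q}^{A}(\mathbb{R}^n)}
\end{align*}
for $r\in(1,\infty)$. I would decompose each $f_j=\sum_{l}f_j\chi_{S_l}$, and for each annulus $S_k$ split $M(f_j)\chi_{S_k}$ into a local part $|k-l|\le 2$ and far parts $l<k-2$ and $l>k+2$. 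The local part is controlled by the classical $L^{P}(\mathbb{R}^n)$ vector-valued Fefferman--Stein inequality \cite{FeffermanStein1971}, valid since $P>1$ and $r>1$. The far parts are handled by the pointwise estimate $M(f_j\chi_{S_l})(x)\lesssim |S_{\max(k,l)}|^{-1}\int_{S_l}|f_j|$ for $x\in S_k$; after applying H\"older in space and Minkowski in $j$, the resulting annular coefficients form a discrete convolution in $|k-l|$ with geometric decay of rate $n/P$ (when $l>k$) and $n(1-1/P)$ (when $l<k$). Summing against the weight $2^{kQA}$ and invoking Young's inequality (or the $\ell^{Q}$ quasi-triangle inequality when $Q<1$) yields the estimate.

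The main obstacle is arranging the far-annulus sums so that the geometric decay dominates the growth/decay $2^{kA}$ of the Herz weight. For $l>k$ this demands $A>-n/P$, and for $l<k$ it demands $A<n(1-1/P)$; both requirements are sharp and match the parameter range produced in the second step with no slack, reflecting why the stated hypotheses on $s$ cannot be relaxed. Once this tightrope walk is executed, the convexification identity from the first step pulls the inequality back to the original space $[K_{p,q}^{\alpha}]^{1/s}$.
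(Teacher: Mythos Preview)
Your argument is correct: the convexification identity $[K_{p,q}^{\alpha}]^{1/s}=K_{p/s,q/s}^{s\alpha}$ holds, your translation of the hypotheses into $P>1$, $-n/P<A<n(1-1/P)$ is accurate, and the annular decomposition with the local Fefferman--Stein bound plus geometric control of far annuli is exactly the standard route to vector-valued maximal inequalities on Herz spaces.

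However, the paper does not actually prove this lemma. It is stated as a quotation from \cite[Corollary~4.5]{Izuki} and used as a black box, so there is no ``paper's own proof'' to compare against. Your sketch is essentially the classical argument (and is in the spirit of Izuki's original proof); in particular your observation that the two endpoint constraints $A>-n/P$ and $A<n(1-1/P)$ correspond precisely to the stated upper bound $s<\min\{p,(\alpha/n+1/p)^{-1}\}$ explains why that hypothesis is sharp. If anything, you have supplied more detail than the paper does.
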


Furthermore, the following lemma shows that the Fefferman--Stein vector-valued maximal inequalities for $WK_{p,q}^{\alpha}\left(\mathbb{R}^{n}\right)$ hold true.

\begin{lemma}\label{le:Herz-as-(WX)^1/s}
	\textup{(see \citep[Lemma 6.1.6]{LiYangHuang} or \citep[Threorem 4.4]{SunYangYuan}) Let $p\in\left(1,\infty\right)$, $q\in\left(0,\infty\right)$ and $\alpha\in\left(-n/p,\infty\right)$. Assume that $r\in\left(1,\infty\right)$ and $s\in\left(0,\min{\left\{p,\left[\alpha/n+1/p\right]^{-1}\right\}}\right)$. Then there exists a positive constant $C$ such that for any $\left\{f_{j}\right\}_{j=1}^{\infty}\subset \mathscr{M}\left(\mathbb{R}^{n}\right)$,}
	\begin{align*}
		\left\|\left\{\sum_{j=1}^{\infty}\left[M\left(f_{j}\right)\right]^{r}\right\}^{1/r}\right\|_{\left[WK_{p,q}^{\alpha}\left(\mathbb{R}^{n}\right)\right]^{1/s}} \le C\left\|\left\{\sum_{j=1}^{\infty}\left|f_{j}\right|^{r}\right\}^{1/r}\right\|_{\left[WK_{p,q}^{\alpha}\left(\mathbb{R}^{n}\right)\right]^{1/s}}.
	\end{align*}
\end{lemma}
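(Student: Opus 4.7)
\medskip

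\noindent\textbf{Proof plan for Lemma \ref{le:Herz-as-(WX)^1/s}.} My strategy is to adapt the extrapolation scheme already executed in Proposition \ref{pro:X} to the Herz setting, and then to push it from a weak-strong estimate (weak norm on the left, strong norm on the right) to a genuine weak-weak estimate (weak norm on both sides) by a Marcinkiewicz-type truncation. The whole argument is built on two pillars that are already available in the paper: Lemma \ref{le:Herz-as-X^1/s} giving the strong-type vector-valued Fefferman--Stein inequality on $[K_{p,q}^{\alpha}]^{1/s}$, and Lemma \ref{le:the weak-type weighted Fefferman--Stein} giving the weighted weak-type vector-valued inequality for every $A_{1}$-weight.

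The first step is bookkeeping: under the hypotheses $p\in(1,\infty)$, $\alpha\in(-n/p,\infty)$ and $s\in\bigl(0,\min\{p,(\alpha/n+1/p)^{-1}\}\bigr)$, one has $[K_{p,q}^{\alpha}]^{1/s}=K_{p/s,q/s}^{\alpha s}$, which is a ball Banach function space whose parameters satisfy the analogue of the Muckenhoupt condition $\alpha s <n/(p/s)'$. Under this condition, classical results of Izuki give boundedness of $M$ on the K\"othe dual of $[K_{p,q}^{\alpha}]^{1/(sq_{0})}$ for some sufficiently large integer $q_{0}$, so the structural hypotheses of Proposition \ref{pro:X} are met. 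Applying Proposition \ref{pro:X} with $X=K_{p,q}^{\alpha}$ then yields the intermediate estimate
\begin{align*}
\alpha \Bigl\|\chi_{\{[\sum_{j}M(f_{j})^{r}]^{1/r}>\alpha\}}\Bigr\|_{[K_{p,q}^{\alpha}]^{1/s}}
\le C\Bigl\|[\textstyle\sum_{j}|f_{j}|^{r}]^{1/r}\Bigr\|_{[K_{p,q}^{\alpha}]^{1/s}}.
\end{align*}
The second step upgrades the right-hand side to the weak norm. Fix $\lambda>0$ and split $f_{j}=f_{j}^{0}+f_{j}^{\infty}$, where $f_{j}^{\infty}:=f_{j}\chi_{\{[\sum_{k}|f_{k}|^{r}]^{1/r}>\lambda\}}$. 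Sublinearity of $M$ gives
$\{[\sum_{j}M(f_{j})^{r}]^{1/r}>2\lambda\}\subset\{[\sum_{j}M(f_{j}^{0})^{r}]^{1/r}>\lambda\}\cup\{[\sum_{j}M(f_{j}^{\infty})^{r}]^{1/r}>\lambda\}$.
On the first set, applying the previous weak-strong estimate with $\alpha=\lambda$ and using $[\sum_{j}|f_{j}^{0}|^{r}]^{1/r}\le\lambda$ bounds the contribution by $C\lambda\,\|\chi_{\{[\sum_{k}|f_{k}|^{r}]^{1/r}\le\lambda\}}\|_{[K_{p,q}^{\alpha}]^{1/s}}$, which, up to the harmless factor $\lambda$, is controlled by the weak-norm quantity we want on the right. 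On the second set, simply use monotonicity of the norm together with the weak-strong estimate applied to $f_{j}^{\infty}$ to dominate the contribution by $C\lambda\,\|\chi_{\{[\sum_{k}|f_{k}|^{r}]^{1/r}>\lambda\}}\|_{[K_{p,q}^{\alpha}]^{1/s}}$, which is exactly the weak-norm quantity at level $\lambda$. Taking the supremum over $\lambda$ yields the asserted weak-weak inequality.

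The main obstacle is the second step, because, unlike in the Lebesgue setting, the norm $\|\cdot\|_{[K_{p,q}^{\alpha}]^{1/s}}$ does not interact cleanly with pointwise truncations at an arbitrary level $\lambda$: the dyadic annular decomposition built into the Herz structure can make $\|\chi_{\{[\sum_{k}|f_{k}|^{r}]^{1/r}\le\lambda\}}\|_{[K_{p,q}^{\alpha}]^{1/s}}$ large even when the weak-norm level set is small. Overcoming this requires a refined level-by-level splitting where $\lambda$ is chosen adapted to each annulus $S_{j}(Q(\vec 0_{n},1))$, reducing to the weighted Lebesgue weak-type estimate of Lemma \ref{le:Weighted Lebesgue-as-(WX)^1/s} on each piece and then re-assembling the bound into the Herz $\ell^{q/s}$-sum. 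This reassembly step, which is effectively a vector-valued discretization argument, is where \cite[Lemma 6.1.6]{LiYangHuang} and \cite[Theorem 4.4]{SunYangYuan} do the real work, and any detailed proof would need to imitate their construction.
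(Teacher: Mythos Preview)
The paper does not prove this lemma; it simply cites \cite[Lemma 6.1.6]{LiYangHuang} and \cite[Theorem 4.4]{SunYangYuan}. Your proposal is more ambitious in that it attempts an actual argument, but Step 2 has a genuine gap that you yourself flag, and Step 1 does not help close it.

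Concretely, in Step 2 you bound the contribution of $f_{j}^{0}$ by $C\|[\sum_{j}|f_{j}^{0}|^{r}]^{1/r}\|_{[K_{p,q}^{\alpha}]^{1/s}}\le C\lambda\,\|\chi_{\{F\le\lambda\}}\|_{[K_{p,q}^{\alpha}]^{1/s}}$ with $F:=[\sum_{k}|f_{k}|^{r}]^{1/r}$, and then assert this is ``controlled by the weak-norm quantity we want on the right.'' But the weak norm $\|F\|_{W([K_{p,q}^{\alpha}]^{1/s})}=\sup_{\mu}\mu\|\chi_{\{F>\mu\}}\|$ measures \emph{super}-level sets, whereas your bound involves the \emph{sub}-level set $\{F\le\lambda\}$; these are unrelated, and indeed $\|\chi_{\{F\le\lambda\}}\|_{[K_{p,q}^{\alpha}]^{1/s}}$ is typically infinite (constants are not in Herz spaces). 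Even in the Lebesgue prototype $X^{1/s}=L^{p}$ the layer-cake integral $\int_{0}^{\lambda}t^{p-1}|\{F>t\}|\,dt\lesssim\|F\|_{WL^{p}}^{p}\int_{0}^{\lambda}t^{-1}\,dt$ diverges, so a single-endpoint weak--strong estimate cannot power this truncation. The correct mechanism is interpolation, not extrapolation-then-truncation: one uses the \emph{strong} Fefferman--Stein inequality on $[K_{p,q}^{\alpha}]^{1/t}$ for two values $t_{0}<s<t_{1}$ (available from Lemma \ref{le:Herz-as-X^1/s}, since $s$ lies in an open interval) and runs a Marcinkiewicz-type argument between them. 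Your Step 1, which manufactures only a weak--strong bound at the single exponent $s$, is therefore a detour that does not feed into Step 2. There is also a secondary obstruction in Step 1: Proposition \ref{pro:X} requires $X^{1/(sq_{0})}$ to be a Banach function space for some $q_{0}\ge 1$, but when $q<s$ (permitted by the hypothesis) the outer exponent of $K_{p/(sq_{0}),q/(sq_{0})}^{\alpha s q_{0}}$ is $q/(sq_{0})<1$ for every such $q_{0}$, so the Banach condition fails. In the end your plan, like the paper, defers the substantive argument to the cited references.
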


The following lemma shows that the Herz space $K_{p,q}^{\alpha}\left(\mathbb{R}^{n}\right)$ satisfies Assumptions \ref{as:(X^{1/s})'}.

\begin{lemma}\label{le:Herz-as-(X^{1/r})'}
	\textup{\citep[Remark 2.7(c)]{WangYangYang} Let $p,q\in\left(0,\infty\right)$ and $\alpha\in\left(-n/p,\infty\right)$. Then, for any $r\in\left(0,\min{\left\{p,q\right\}}\right)$ and $s\in\left(\max{\left\{1,p\right\}},\infty\right]$, there exists a positive constant $C$ such that for any $\left\{f\right\}_{j=1}^{\infty}\subset \mathscr{M}\left(\mathbb{R}^{n}\right)$,}
	\begin{align*}
		\left\|M^{\left((s/r)'\right)}(f)\right\|_{\left(\left[K_{p,q}^{\alpha}\left(\mathbb{R}^{n}\right)\right]^{1/r}\right)'} \le C\|f\|_{\left(\left[K_{p,q}^{\alpha}\left(\mathbb{R}^{n}\right)\right]^{1/r}\right)'},
	\end{align*}
	\textup{where $\left(\left[K_{p,q}^{\alpha}\left(\mathbb{R}^{n}\right)\right]^{1/r}\right)'$ is as in (\ref{eq:X'}) with $X:=\left[K_{p,q}^{\alpha}\left(\mathbb{R}^{n}\right)\right]^{1/r}$.}
\end{lemma}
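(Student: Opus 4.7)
The strategy is to reduce the asserted boundedness to the classical boundedness of the Hardy--Littlewood maximal operator on Herz spaces by combining two convexification steps with a K\"othe duality argument.

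First I would identify $\left[K_{p,q}^{\alpha}\right]^{1/r}$ as a Herz space. Using $\|f\|_{X^{1/r}}=\||f|^{1/r}\|_{X}^{r}$ together with the layered form of the Herz norm in Definition \ref{de:Herz spaces}, a direct computation yields
\begin{align*}
\left[K_{p,q}^{\alpha}(\mathbb{R}^{n})\right]^{1/r}\sim K_{p/r,\,q/r}^{\alpha r}(\mathbb{R}^{n})
\end{align*}
with equivalent quasi-norms. The hypothesis $r\in(0,\min\{p,q\})$ forces $p/r,q/r\in(1,\infty)$, so $[K_{p,q}^{\alpha}]^{1/r}$ is a ball Banach function space and its associate space is well defined. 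Invoking the standard K\"othe duality for Herz spaces with indices in $(1,\infty)$ (cf.\ Garc\'ia-Cuerva and Herrero \cite{Garcia-CuervaHerrero}), I then obtain
\begin{align*}
\left(\left[K_{p,q}^{\alpha}(\mathbb{R}^{n})\right]^{1/r}\right)'\sim K_{(p/r)',\,(q/r)'}^{-\alpha r}(\mathbb{R}^{n}).
\end{align*}

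To handle the powered maximal operator, set $\theta:=(s/r)'$ and $Y:=\left(\left[K_{p,q}^{\alpha}\right]^{1/r}\right)'$. Combining the identity $M^{(\theta)}f=[M(|f|^{\theta})]^{1/\theta}$ with the formula $\|h^{1/\theta}\|_{Y}=\|h\|_{Y^{1/\theta}}^{1/\theta}$ coming from Definition \ref{de:convexity-concavity}$({\rm \romannumeral1})$, I see that the desired bound is equivalent to the ordinary maximal estimate $\|Mg\|_{Y^{1/\theta}}\lesssim\|g\|_{Y^{1/\theta}}$ (applied with $g=|f|^{\theta}$). By the previous identification,
\begin{align*}
Y^{1/\theta}\sim K_{(p/r)'/\theta,\,(q/r)'/\theta}^{-\alpha r\theta}(\mathbb{R}^{n}),
\end{align*}
so it only remains to apply the classical theorem that $M$ is bounded on $K_{P,Q}^{\beta}(\mathbb{R}^{n})$ whenever $P\in(1,\infty]$, $Q\in(0,\infty]$ and $-n/P<\beta<n/P'$.

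The main obstacle is the final index bookkeeping---verifying that the indices produced by the composition (convexification, dualization, second convexification) fall in the admissible range for the maximal operator on Herz spaces. Under the standing hypotheses $r<\min\{p,q\}$ and $s>\max\{1,p\}$, a direct calculation gives $\theta=s/(s-r)<p/(p-r)=(p/r)'$, so $P=(p/r)'/\theta>1$; the intrinsic condition $\alpha>-n/p$ built into the definition of $K_{p,q}^{\alpha}$ translates, after tracking the powers through the two convexifications and the duality, exactly into the required two-sided bound $-n/P<\beta<n/P'$ with $\beta=-\alpha r\theta$. Once this algebra is verified, the three reductions combine to produce the claimed estimate.
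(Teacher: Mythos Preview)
The paper does not prove this lemma; it is quoted verbatim from \cite[Remark~2.7(c)]{WangYangYang}, so there is no in-paper argument to compare against. Your overall strategy---convexify, take the K\"othe dual, convexify again, and then invoke the classical boundedness of $M$ on Herz spaces---is exactly the standard route.

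The gap is in the last paragraph, where you assert that ``the intrinsic condition $\alpha>-n/p$ \dots\ translates \dots\ exactly into the required two-sided bound $-n/P<\beta<n/P'$''. It does not. With $P=(p/r)'/\theta$ and $\beta=-\alpha r\theta$, the inequality $\beta>-n/P$ becomes
\[
\alpha<\frac{n}{r(p/r)'}=\frac{n(p-r)}{rp}=\frac{n}{r}-\frac{n}{p},
\]
which is an \emph{upper} bound on $\alpha$ depending on $r$, not a consequence of $\alpha>-n/p$. Likewise the inequality $\beta<n/P'$ reduces (using $\theta=s/(s-r)$) to $\alpha>n/s-n/p$, which is strictly stronger than $\alpha>-n/p$. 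So for a fixed $r\in(0,\min\{p,q\})$ and $s\in(\max\{1,p\},\infty]$ your chain of reductions only reaches the maximal theorem on $K_{P,Q}^{\beta}$ when $\alpha$ lies in the window $\bigl(\tfrac{n}{s}-\tfrac{n}{p},\,\tfrac{n}{r}-\tfrac{n}{p}\bigr)$, not for all $\alpha\in(-n/p,\infty)$ as the lemma (as transcribed here) claims. Either the stated range of parameters needs to be tightened, or you need an additional argument for large $\alpha$; the algebra you wave at does not close as written.
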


Applying Lemmas \ref{le:Herz-as-X^1/s}, \ref{le:Herz-as-(WX)^1/s} and \ref{le:Herz-as-(X^{1/r})'}, Proposition \ref{pro:X}, Theorems \ref{th:B-R Hx-WHx}, \ref{th:B-R Hx-WX} and \ref{th:MB-R Hx-WX}, we immediately obtain the following boundedness of Bochner-Riesz means and the maximal Bochner--Riesz means, respectively, as follows.

\begin{theorem}\label{th:Herz spaces-B-R Hx-WHx}
	\textup{Let $p\in\left(0,1\right]$, $q\in\left(0,\infty\right)$, $\alpha\in\left(-n/p,\infty\right)$ and $\delta>\frac{n-1}{2}$. Let $B_{1/\varepsilon}^{\delta}$ be a Bochner-Riesz means with $\delta$ order on $\mathbb{R}^{n}$. If $\min{\left\{p,\left[\alpha/n+1/p\right]^{-1}\right\}}\in\left[\frac{2n}{n+1+2\delta},1\right]$, then $B_{1/\varepsilon}^{\delta}$ has a unique extension on $HK_{p,q}^{\alpha}\left(\mathbb{R}^{n}\right)$. Moreover, there exists a positive constant $C$ such that for any $f\in HK_{p,q}^{\alpha}\left(\mathbb{R}^{n}\right)$,}
	\begin{align*}
		\left\|B_{1/\varepsilon}^{\delta}\left(f\right)\right\|_{WHK_{p,q}^{\alpha}\left(\mathbb{R}^{n}\right)} \le C\left\|f\right\|_{HK_{p,q}^{\alpha}\left(\mathbb{R}^{n}\right)}.
	\end{align*}
\end{theorem}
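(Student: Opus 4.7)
The strategy is to deduce this theorem directly from Theorem \ref{th:B-R Hx-WHx} applied to $X:=K_{p,q}^{\alpha}(\mathbb{R}^{n})$. Thus the task reduces to choosing admissible parameters $\theta\in[\frac{2n}{n+1+2\delta},1)$ and $s\in(\theta,1]$ and verifying, for these choices, that (a) $X^{1/s}$ is a ball Banach function space, (b) Assumption \ref{as:Fefferman-Stein} holds, (c) Assumption \ref{as:(X^{1/s})'} holds for some auxiliary parameter (called $q$ in the main theorem but, to avoid collision with the Herz index, renamed $\widetilde{q}$ here), (d) Assumption \ref{as:M is bounded on (WX)^{1/r}} holds for some $r>0$, and (e) the weak-type Fefferman--Stein inequality \eqref{eq:weak-type Fefferman-Stein} is valid in $X$.

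I would set $\theta:=\frac{2n}{n+1+2\delta}\in(0,1)$ (using $\delta>\frac{n-1}{2}$) and, invoking the hypothesis $\min\{p,[\alpha/n+1/p]^{-1}\}\in[\theta,1]$, select $s$ in the interval $(\theta,\min\{p,[\alpha/n+1/p]^{-1},1\}]$. With such $s$, a direct change of variables shows that the convexification $X^{1/s}$ is (up to equivalent quasi-norm) a Herz space with inner Lebesgue exponent $p/s\ge 1$, so (a) follows. For (b), writing $M^{(\theta)}(f)=[M(|f|^{\theta})]^{1/\theta}$ and substituting $g_{j}:=|f_{j}|^{\theta}$, $r:=s/\theta>1$, Assumption \ref{as:Fefferman-Stein} reduces to the strong-type vector-valued Fefferman--Stein inequality for $M$ on $X^{1/\theta}$ with exponent $r$, which is exactly Lemma \ref{le:Herz-as-X^1/s} (its hypothesis is satisfied because the corresponding inner exponent $p/\theta$ then exceeds $1$). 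For (c), I apply Lemma \ref{le:Herz-as-(X^{1/r})'} with its $r$ set to our $s$ and $\widetilde{q}\in(1,\infty]$ sufficiently large; for (d), the single-summand case of Lemma \ref{le:Herz-as-(WX)^1/s} yields $M$ bounded on $(WX)^{1/r}$ for any small enough $r>0$; and for (e), Proposition \ref{pro:X} with exponent $(n+1+2\delta)/(2n)\ge 1$ gives \eqref{eq:weak-type Fefferman-Stein}, where the auxiliary $q_{0}\ge 1$ required by the proposition (so that $X^{1/(s q_{0})}$ is a Banach function space and $M$ is bounded on $(X^{1/(s q_{0})})'$) is furnished once more by Lemma \ref{le:Herz-as-(X^{1/r})'}.

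The main technical obstacle I anticipate is the joint bookkeeping of the five parameters $\theta,s,\widetilde{q},q_{0},r$, each of which must land inside the admissibility windows of Lemmas \ref{le:Herz-as-X^1/s}--\ref{le:Herz-as-(X^{1/r})'} and of Proposition \ref{pro:X}. The tightest regime is the endpoint case $\min\{p,[\alpha/n+1/p]^{-1}\}=\frac{2n}{n+1+2\delta}$, where the usable window for $s$ degenerates to a point and one must either invoke an endpoint version of Lemma \ref{le:Herz-as-X^1/s} or approximate from the strictly interior case; elsewhere the parameter choices are routine. Once a mutually consistent choice is fixed and (a)--(e) are verified, Theorem \ref{th:B-R Hx-WHx} immediately delivers the desired estimate $\|B_{1/\varepsilon}^{\delta}(f)\|_{WHK_{p,q}^{\alpha}(\mathbb{R}^{n})}\lesssim\|f\|_{HK_{p,q}^{\alpha}(\mathbb{R}^{n})}$.
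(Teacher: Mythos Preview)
Your approach is essentially identical to the paper's: the paper simply states that the result follows by ``applying Lemmas \ref{le:Herz-as-X^1/s}, \ref{le:Herz-as-(WX)^1/s} and \ref{le:Herz-as-(X^{1/r})'}, Proposition \ref{pro:X}, Theorem \ref{th:B-R Hx-WHx}'', which is precisely your plan of specializing the abstract Theorem \ref{th:B-R Hx-WHx} to $X=K_{p,q}^{\alpha}(\mathbb{R}^{n})$ and verifying its hypotheses via those auxiliary results. Your more detailed parameter bookkeeping (reducing Assumption \ref{as:Fefferman-Stein} to Lemma \ref{le:Herz-as-X^1/s} via the substitution $g_{j}=|f_{j}|^{\theta}$, invoking Proposition \ref{pro:X} for \eqref{eq:weak-type Fefferman-Stein}, etc.) is exactly what the paper leaves implicit.

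Your observation about the endpoint $\min\{p,[\alpha/n+1/p]^{-1}\}=\frac{2n}{n+1+2\delta}$ is a genuine subtlety: Lemma \ref{le:Herz-as-X^1/s} as stated requires its parameter to be \emph{strictly} below $\min\{p,[\alpha/n+1/p]^{-1}\}$, while Theorem \ref{th:B-R Hx-WHx} requires $\theta\ge\frac{2n}{n+1+2\delta}$, so at the endpoint the admissible window for $\theta$ collapses. The paper does not explicitly address this either; it is not a defect in your strategy relative to the paper's own argument, but rather a point both treatments gloss over.
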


\begin{theorem}\label{th:Herz spaces-B-R Hx-WX}
	\textup{Let $p\in\left(0,1\right]$, $q\in\left(0,\infty\right)$, $\alpha\in\left(-n/p,\infty\right)$ and $\delta>\frac{n-1}{2}$. Let $B_{1/\varepsilon}^{\delta}$ be a Bochner-Riesz means with $\delta$ order on $\mathbb{R}^{n}$. If $\min{\left\{p,\left[\alpha/n+1/p\right]^{-1}\right\}}\in\left[\frac{2n}{n+1+2\delta},1\right]$, then $B_{1/\varepsilon}^{\delta}$ has a unique extension on $HK_{p,q}^{\alpha}\left(\mathbb{R}^{n}\right)$. Moreover, there exists a positive constant $C$ such that for any $f\in HK_{p,q}^{\alpha}\left(\mathbb{R}^{n}\right)$,}
	\begin{align*}
		\left\|B_{1/\varepsilon}^{\delta}\left(f\right)\right\|_{WK_{p,q}^{\alpha}\left(\mathbb{R}^{n}\right)} \le C\left\|f\right\|_{HK_{p,q}^{\alpha}\left(\mathbb{R}^{n}\right)}.
	\end{align*}
\end{theorem}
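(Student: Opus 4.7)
The plan is to specialize the abstract Theorem \ref{th:B-R Hx-WX} by taking $X := K_{p,q}^{\alpha}(\mathbb{R}^n)$. By the definitions of $HK_{p,q}^{\alpha}(\mathbb{R}^n)$ and $WK_{p,q}^{\alpha}(\mathbb{R}^n)$ given just above (as instances of $H_X$ and $WX$), the target inequality will follow the moment all the hypotheses of Theorem \ref{th:B-R Hx-WX} have been verified for the Herz space with a suitable choice of parameters. So the entire proof is a parameter-verification exercise rather than a new calculation.

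First I would pin down the parameters $\theta, s$. Set $p_{0} := \min\{p, [\alpha/n + 1/p]^{-1}\}$; by hypothesis $p_{0} \in [2n/(n+1+2\delta), 1]$. I pick any $\theta$ with $2n/(n+1+2\delta) \le \theta < p_{0}$ (shrinking $\theta$ slightly when $p_{0}=1$ so that $\theta<1$), and then choose $s \in (\theta, p_{0}]$ with $s \le 1$. Since $s \le p_{0} \le \min\{p, [\alpha/n + 1/p]^{-1}\}$, the $s$-convexification $X^{1/s}$ is the Herz space with scaled exponents $p/s, q/s, \alpha/s$ and, with these scaled exponents all in the Banach range, is a ball Banach function space.

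Next I verify each abstract hypothesis. Assumption \ref{as:Fefferman-Stein} with this $\theta, s$ follows from Lemma \ref{le:Herz-as-X^1/s} by a standard passage from $M$ on $X^{1/s}$ with aggregation exponent $s/\theta>1$ to $M^{(\theta)}$ on $X$ via the identity $M^{(\theta)}(f) = [M(|f|^{\theta})]^{1/\theta}$. Assumption \ref{as:(X^{1/s})'} is Lemma \ref{le:Herz-as-(X^{1/r})'} applied with $r:=s$ for a sufficiently large $q \in (1,\infty]$. Assumption \ref{as:M is bounded on (WX)^{1/r}} follows from the singleton case of Lemma \ref{le:Herz-as-(WX)^1/s} upon choosing $r \in (0, p_{0})$ small enough. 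Finally, the weak-type Fefferman--Stein inequality (\ref{eq:weak-type Fefferman-Stein}) is delivered by Proposition \ref{pro:X}, taken with aggregation exponent $(n+1+2\delta)/(2n) \in (1,\infty)$, whose assumption that $M$ is bounded on $(X^{1/(s q_{0})})'$ for some $q_{0} \in [1,\infty)$ is itself a consequence of Lemma \ref{le:Herz-as-(X^{1/r})'}.

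With every abstract assumption checked, Theorem \ref{th:B-R Hx-WX} immediately yields the claimed weak-type bound together with the existence and uniqueness of the extension. The main obstacle is purely one of bookkeeping: arranging the constellation $\theta, s, r, q, q_{0}$ so that the index ranges in Lemmas \ref{le:Herz-as-X^1/s}, \ref{le:Herz-as-(WX)^1/s}, \ref{le:Herz-as-(X^{1/r})'} and Proposition \ref{pro:X} are simultaneously met. The numerical threshold $2n/(n+1+2\delta)$ in the Herz hypothesis is exactly what is needed to place $\theta$ in the abstract interval $[2n/(n+1+2\delta), 1)$ while keeping $\theta < s \le p_{0}$, so no additional analytic input is required beyond this translation.
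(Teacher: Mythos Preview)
Your proposal is correct and follows essentially the same approach as the paper: specialize the abstract Theorem~\ref{th:B-R Hx-WX} to $X=K_{p,q}^{\alpha}(\mathbb{R}^n)$ and verify its hypotheses via Lemmas~\ref{le:Herz-as-X^1/s}, \ref{le:Herz-as-(WX)^1/s}, \ref{le:Herz-as-(X^{1/r})'} and Proposition~\ref{pro:X}. In fact you give more detail than the paper (which simply invokes these lemmas without spelling out the parameter choices); just be careful that Lemma~\ref{le:Herz-as-X^1/s} requires a \emph{strict} inequality on its parameter, so you should take $s<p_0$ rather than $s\le p_0$.
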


\begin{theorem}\label{th:Herz spaces-MB-R Hx-WX}
	\textup{Let $p\in\left(0,1\right]$, $q\in\left(0,\infty\right)$, $\alpha\in\left(-n/p,\infty\right)$ and $\delta>\frac{n-1}{2}$. Let $B_{\ast}^{\delta}$ be the maximal Bochner--Riesz means with $\delta$ order on $\mathbb{R}^{n}$. If $\min{\left\{p,\left[\alpha/n+1/p\right]^{-1}\right\}}\in\left[\frac{2n}{n+1+2\delta},1\right]$, then $B_{\ast}^{\delta}$ has a unique extension on $HK_{p,q}^{\alpha}\left(\mathbb{R}^{n}\right)$. Moreover, there exists a positive constant $C$ such that for any $f\in HK_{p,q}^{\alpha}\left(\mathbb{R}^{n}\right)$,}
	\begin{align*}
		\left\|B_{\ast}^{\delta}\left(f\right)\right\|_{WK_{p,q}^{\alpha}\left(\mathbb{R}^{n}\right)} \le C\left\|f\right\|_{HK_{p,q}^{\alpha}\left(\mathbb{R}^{n}\right)}.
	\end{align*}
\end{theorem}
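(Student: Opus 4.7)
\quad The plan is to derive Theorem \ref{th:Herz spaces-MB-R Hx-WX} as an immediate corollary of the abstract Theorem \ref{th:MB-R Hx-WX}, applied to the ball quasi-Banach function space $X := K_{p,q}^{\alpha}(\mathbb{R}^n)$. Since then $WX = WK_{p,q}^{\alpha}(\mathbb{R}^n)$, $H_X(\mathbb{R}^n) = HK_{p,q}^{\alpha}(\mathbb{R}^n)$ and $WH_X(\mathbb{R}^n) = WHK_{p,q}^{\alpha}(\mathbb{R}^n)$, the conclusion of Theorem \ref{th:MB-R Hx-WX} translates verbatim into the desired weak-type estimate, provided the four hypotheses --- Assumptions \ref{as:Fefferman-Stein}, \ref{as:(X^{1/s})'}, \ref{as:M is bounded on (WX)^{1/r}}, and the weak-type vector-valued inequality \eqref{eq:weak-type Fefferman-Stein} --- are verified for the Herz space with a compatible choice of $\theta$, $s$ and $q$.

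I would first fix the scaling exponent $\theta := 2n/(n+1+2\delta) \in (0,1]$, which by the hypothesis $\min\{p, [\alpha/n + 1/p]^{-1}\} \in [2n/(n+1+2\delta), 1]$ lies at or below that minimum; after a slight downward perturbation if necessary one may choose $s \in (\theta, 1]$ with $s < \min\{p, [\alpha/n + 1/p]^{-1}\}$. With these choices, $X^{1/s}$ is a ball Banach function space. Assumption \ref{as:Fefferman-Stein} follows from Lemma \ref{le:Herz-as-X^1/s} by rewriting $[M^{(\theta)}(f_j)]^s = [M(|f_j|^\theta)]^{s/\theta}$ and applying the Herz-space Fefferman--Stein inequality on $[K_{p,q}^{\alpha}]^{1/\theta}$ with vector exponent $s/\theta > 1$. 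Assumption \ref{as:M is bounded on (WX)^{1/r}} is supplied by Lemma \ref{le:Herz-as-(WX)^1/s} for a suitable $r$, while Assumption \ref{as:(X^{1/s})'} is precisely Lemma \ref{le:Herz-as-(X^{1/r})'} with $r := s$ and a suitable $q \in (1,\infty)$.

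To establish \eqref{eq:weak-type Fefferman-Stein}, I would invoke Proposition \ref{pro:X} with parameters $s := \theta$ and $r := (n+1+2\delta)/(2n) = 1/\theta$; note that $r > 1$ because $\delta > (n-1)/2$. The hypothesis of Proposition \ref{pro:X} asks for some $q_0 \in [1,\infty)$ such that $X^{1/(\theta q_0)}$ is a (full) Banach function space and $M$ is bounded on its associate; such a $q_0$ can be chosen large enough so that the rescaled Herz indices land in the admissible range, and the $M$-boundedness on the associate then follows from Lemma \ref{le:Herz-as-(X^{1/r})'}. With all four hypotheses confirmed, Theorem \ref{th:MB-R Hx-WX} yields the claimed estimate. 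The principal obstacle is the simultaneous bookkeeping of the admissible windows for $\theta$, $s$, $q$ and $q_0$ dictated by the Herz-space lemmas and the $p$-convexification constraints; in particular the boundary case $\min\{p, [\alpha/n + 1/p]^{-1}\} = 2n/(n+1+2\delta)$ requires a careful perturbation argument that keeps the strict inequality $\theta < s$ while preserving all the other interlocking parameter constraints.
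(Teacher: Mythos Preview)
Your proposal is correct and follows exactly the paper's own route: the paper derives Theorem \ref{th:Herz spaces-MB-R Hx-WX} by applying the abstract Theorem \ref{th:MB-R Hx-WX} with $X=K_{p,q}^{\alpha}(\mathbb{R}^n)$, invoking Lemmas \ref{le:Herz-as-X^1/s}, \ref{le:Herz-as-(WX)^1/s}, \ref{le:Herz-as-(X^{1/r})'} and Proposition \ref{pro:X} to verify the four hypotheses. Your parameter bookkeeping is more explicit than the paper's one-line citation, and the boundary-case subtlety you flag (when $\min\{p,[\alpha/n+1/p]^{-1}\}=2n/(n+1+2\delta)$ leaves no room for $\theta<s<\min\{\cdots\}$) is real and is simply not addressed in the paper either.
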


\subsection{Lorentz spaces.}
We first recall the notion of Lorentz spaces.

\begin{definition}\label{de:Lorentz spaces}
	\textup{The Lorentz space $L^{p,q}\left(\mathbb{R}^{n}\right)$ is defined to be the set of all measurable functions $f$ on $\mathbb{R}^{n}$ satisfying that, when $p,q\in\left(0,\infty\right)$,}
	\begin{align*}
		\left\|f\right\|_{L^{p,q}\left(\mathbb{R}^{n}\right)}:=\left\{\int_{0}^{\infty} \left[t^{1/p}f^{\ast}\left(t\right)\right]^{q}\frac{\,dt}{t} \right\}^{\frac{1}{q}} <\infty,
	\end{align*}
	\textup{and, when $p\in\left(0,\infty\right)$ and $q=\infty$,}
	\begin{align*}
		\left\|f\right\|_{L^{p,q}\left(\mathbb{R}^{n}\right)}:=\sup\limits_{t\in\left(0,\infty\right)} \left\{t^{1/p}f^{\ast}\left(t\right)\right\}<\infty,
	\end{align*}
    \textup{where $f^{\ast}$, the decreasing rearrangement function of $f$, is defined by setting, for any $t\in\left[0,\infty\right)$,}
    \begin{align*}
    	f^{\ast}\left(t\right):=\inf\left\{s\in\left(0,\infty\right):\mu_{f}\left(s\right)\le t\right\}
    \end{align*}
    \textup{with $\mu_{f}\left(s\right):=\left|\left\{x\in\mathbb{R}^{n}:\left|f\left(x\right)\right|>s\right\}\right|$.}
\end{definition}

Let $X$ be a ball quasi-Banach function space. Let $p,q\in\left(0,\infty\right)$. If $X:=L^{p,q}\left(\mathbb{R}^{n}\right)$, then $WX:=WL^{p,q}\left(\mathbb{R}^{n}\right)$ is the weak Lorentz space, $H_{X}\left(\mathbb{R}^{n}\right):=H^{p,q}\left(\mathbb{R}^{n}\right)$ is the Hardy--Lorentz space (when $p,q\in\left(0,\infty\right)$), $H_{X}\left(\mathbb{R}^{n}\right):=H^{p,q}\left(\mathbb{R}^{n}\right)$ is the weak Hardy space (when $p\in\left(0,\infty\right)$ and $q=\infty$), and $WH_{X}\left(\mathbb{R}^{n}\right):=WH^{p,q}\left(\mathbb{R}^{n}\right)$ is the weak Hardy-Lorentz space. The Definitions of weak Lorentz spaces, Hardy--Lorentz spaces and weak Hardy-Lorentz spaces are as in Definition \ref{de:WX}, \ref{de:the Hardy type space} and \ref{de:the weak Hardy-type space} with $X$ replaced by $L^{p,q}\left(\mathbb{R}^{n}\right)$.

Moreover, the Lorentz space $L^{p,q}\left(\mathbb{R}^{n}\right)$ satisfies Assumptions \ref{as:the boundedness of M on X^1/p}, \ref{as:M is bounded on (WX)^{1/r}} and \ref{as:(X^{1/s})'}, respectively, in Lemmas \ref{le:Lorentz-as-X^1/s}, \ref{le:Lorentz-as-(WX)^1/s} and \ref{le:Lorentz-as-(X^{1/r})'}. The following lemma shows that the Fefferman-Stein vector-valued maximal inequalities for $L^{p,q}\left(\mathbb{R}^{n}\right)$ hold true.

\begin{lemma}\label{le:Lorentz-as-X^1/s}
	\textup{(see \citep[Lemma 4.5]{LiuYangYuan} or \citep[Theorem 3.4]{JiaoZuoZhouWu}) Let $p\in\left(1,\infty\right)$ and $q\in\left(0,\infty\right]$. Assume that $r\in\left(1,\infty\right)$ and $s\in\left(0,\min{\left\{p,q\right\}}\right)$. Then there exists a positive constant $C$ such that for any $\left\{f_{j}\right\}_{j=1}^{\infty}\subset \mathscr{M}\left(\mathbb{R}^{n}\right)$,}
	\begin{align*}
		\left\|\left\{\sum_{j=1}^{\infty}\left[M\left(f_{j}\right)\right]^{r}\right\}^{1/r}\right\|_{\left[L^{p,q}\left(\mathbb{R}^{n}\right)\right]^{1/s}} \le C\left\|\left\{\sum_{j=1}^{\infty}\left|f_{j}\right|^{r}\right\}^{1/r}\right\|_{\left[L^{p,q}\left(\mathbb{R}^{n}\right)\right]^{1/s}}.
	\end{align*}
\end{lemma}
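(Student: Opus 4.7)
The plan is to reduce the claim to a vector-valued Fefferman--Stein estimate on a Lorentz space whose two indices both exceed $1$, and then deduce that estimate from the classical scalar Fefferman--Stein inequality on $L^{p}$ by real interpolation.

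First, I would identify the $(1/s)$-convexification of $L^{p,q}(\mathbb{R}^{n})$ with a rescaled Lorentz space. By Definition \ref{de:convexity-concavity} $({\rm \romannumeral1})$ one has $\|f\|_{[L^{p,q}]^{1/s}}=\||f|^{1/s}\|_{L^{p,q}}^{s}$; combined with the elementary rearrangement identity $(|f|^{1/s})^{\ast}=(f^{\ast})^{1/s}$, a direct computation of the Lorentz integral gives
\begin{align*}
\|f\|_{[L^{p,q}(\mathbb{R}^{n})]^{1/s}}=\|f\|_{L^{p/s,\,q/s}(\mathbb{R}^{n})}.
\end{align*}
Since $s\in(0,\min\{p,q\})$, the rescaled indices $P:=p/s$ and $Q:=q/s$ both lie in $(1,\infty]$, so the inequality to be proved is equivalent to the vector-valued maximal estimate
\begin{align*}
\left\|\left\{\sum_{j=1}^{\infty}[M(f_{j})]^{r}\right\}^{1/r}\right\|_{L^{P,Q}(\mathbb{R}^{n})}\lesssim\left\|\left\{\sum_{j=1}^{\infty}|f_{j}|^{r}\right\}^{1/r}\right\|_{L^{P,Q}(\mathbb{R}^{n})}.
\end{align*}

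Second, I would obtain this estimate by real interpolation. Introducing the sublinear map $T:\{f_{j}\}_{j=1}^{\infty}\mapsto\{\sum_{j=1}^{\infty}[M(f_{j})]^{r}\}^{1/r}$, the classical Fefferman--Stein vector-valued maximal inequality \citep[Theorem 1]{FeffermanStein1971} yields, for every $p\in(1,\infty)$,
\begin{align*}
\bigl\|T(\{f_{j}\})\bigr\|_{L^{p}(\mathbb{R}^{n})}\lesssim \bigl\|\{\textstyle\sum_{j}|f_{j}|^{r}\}^{1/r}\bigr\|_{L^{p}(\mathbb{R}^{n})}.
\end{align*}
Fixing endpoints $1<p_{0}<P<p_{1}<\infty$, this bound says exactly that the scalar quantity $T(\{f_{j}\})$ is controlled in $L^{p_{i}}$ by the single nonnegative scalar function $g:=\{\sum_{j}|f_{j}|^{r}\}^{1/r}$ in $L^{p_{i}}$. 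Applying the Marcinkiewicz/Calder\'on real-interpolation theorem on the Lorentz scale (see, e.g., \citep[Chapter 4]{BennettSharpley}) at the exponent $\theta\in(0,1)$ determined by $1/P=(1-\theta)/p_{0}+\theta/p_{1}$ transfers this control to $L^{P,Q}(\mathbb{R}^{n})$ for every $Q\in(0,\infty]$, and in particular to the range $Q>1$ that we need.

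The principal technical point is packaging the sequence-valued input inside a scalar interpolation framework; the device just described, which dominates both the input and output by a single nonnegative scalar function and then interpolates at the scalar level, bypasses this issue cleanly without having to develop a mixed-norm Lorentz theory from scratch. Combining the resulting $L^{P,Q}$ estimate with the identification $[L^{p,q}]^{1/s}=L^{p/s,\,q/s}$ from the first step completes the proof of the lemma.
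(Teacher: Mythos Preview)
The paper does not prove this lemma at all; it simply cites \cite[Lemma 4.5]{LiuYangYuan} and \cite[Theorem 3.4]{JiaoZuoZhouWu}. So there is no argument in the paper to compare your proposal against, and your overall strategy (identify the convexification, then interpolate) is a reasonable route to an independent proof. Your first step is correct: the identification $[L^{p,q}]^{1/s}=L^{p/s,q/s}$ follows exactly as you indicate, and the hypothesis $s<\min\{p,q\}$ guarantees $P:=p/s>1$ and $Q:=q/s\in(1,\infty]$.

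The gap is in the interpolation step. Marcinkiewicz/Calder\'on interpolation is a statement about \emph{operators}, not about fixed pairs of functions. From the two scalar inequalities $\|F\|_{L^{p_i}}\le C\|G\|_{L^{p_i}}$ (with $F=T(\{f_j\})$ and $G=g$ held fixed) one cannot conclude $\|F\|_{L^{P,Q}}\le C\|G\|_{L^{P,Q}}$; two $L^{p}$-norm comparisons between fixed functions impose no control on their intermediate Lorentz norms. So the ``scalar device'' you describe is not a valid substitute for the vector-valued interpolation you explicitly say you are avoiding. To make the argument go through you must either (i) run genuine real interpolation on the sublinear operator $T:L^{p_i}(\ell^r)\to L^{p_i}$ and invoke the standard identification $(L^{p_0}(\ell^r),L^{p_1}(\ell^r))_{\theta,Q}=L^{P,Q}(\ell^r)$---precisely the mixed-norm step you hoped to bypass, but it is routine---or (ii) abandon interpolation and instead extrapolate from the weighted Fefferman--Stein inequality on $L^{p}_{\omega}$ with $\omega\in A_{p}$, in the spirit of Lemma~\ref{le:extrapolation theorem} and Proposition~\ref{pro:X} in this paper.
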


Furthermore, the following lemma shows that the Fefferman--Stein vector-valued maximal inequalities for $WL^{p,q}\left(\mathbb{R}^{n}\right)$ hold true.

\begin{lemma}\label{le:Lorentz-as-(WX)^1/s}
	\textup{(see \citep[Lemma 6.1.6]{LiYangHuang} or \citep[Threorem 4.4]{SunYangYuan}) Let $p\in\left(1,\infty\right)$ and $q\in\left(0,\infty\right]$. Assume that $r\in\left(1,\infty\right)$ and $s\in\left(0,\min{\left\{p,q\right\}}\right)$. Then there exists a positive constant $C$ such that for any $\left\{f_{j}\right\}_{j=1}^{\infty}\subset \mathscr{M}\left(\mathbb{R}^{n}\right)$,}
	\begin{align*}
		\left\|\left\{\sum_{j=1}^{\infty}\left[M\left(f_{j}\right)\right]^{r}\right\}^{1/r}\right\|_{\left[WL^{p,q}\left(\mathbb{R}^{n}\right)\right]^{1/s}} \le C\left\|\left\{\sum_{j=1}^{\infty}\left|f_{j}\right|^{r}\right\}^{1/r}\right\|_{\left[WL^{p,q}\left(\mathbb{R}^{n}\right)\right]^{1/s}}.
	\end{align*}
\end{lemma}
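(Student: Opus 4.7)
The plan is to apply a Rubio de Francia extrapolation argument, parallel to the proof of Proposition \ref{pro:X}, in order to transfer weighted vector-valued Fefferman--Stein estimates to the convexification $(WL^{p,q})^{1/s}$. The first step is to identify this convexification with a classical Marcinkiewicz space. Since $\|\chi_E\|_{L^{p,q}} \sim |E|^{1/p}$ for any measurable $E$, Definition \ref{de:WX} gives $\|f\|_{WL^{p,q}} \sim \sup_{\alpha>0} \alpha\, \mu_f(\alpha)^{1/p}$, and a change of variables $\beta = \alpha^s$ in the convexification formula yields
$$\|f\|_{(WL^{p,q})^{1/s}} = \bigl\||f|^{1/s}\bigr\|_{WL^{p,q}}^{s} \sim \sup_{\beta>0} \beta\, \mu_f(\beta)^{s/p},$$
so $(WL^{p,q})^{1/s}$ coincides, up to an equivalence constant, with $L^{p/s,\infty}(\mathbb{R}^n)$. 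The hypothesis $s \in (0,\min\{p,q\})$ ensures $p/s > 1$, which is precisely the range on which $M$ is known to act continuously on this Marcinkiewicz scale and in which the extrapolation framework applies.

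Next I would invoke Lemma \ref{le:extrapolation theorem} with $p_0 = 1$. For each $r \in (1,\infty)$ and each sequence $\{f_j\}_{j=1}^\infty$, set
$$F := \Bigl(\sum_{j=1}^\infty [M(f_j)]^r\Bigr)^{1/r}, \qquad G := \Bigl(\sum_{j=1}^\infty |f_j|^r\Bigr)^{1/r}.$$
The classical strong-type weighted vector-valued Fefferman--Stein inequality shows that for every $\omega \in A_1(\mathbb{R}^n)$ one has $\|F\|_{L^1(\omega)} \lesssim \|G\|_{L^1(\omega)}$, with a constant depending only on $n$, $r$ and $[\omega]_{A_1}$. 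Choosing $q_0 \in [1,\infty)$ large enough that $p/(sq_0) > 1$, the further convexification $(WL^{p,q})^{1/(sq_0)} \sim L^{p/(sq_0),\infty}$ is a Banach function space whose associate space is (equivalent to) a Lorentz space on which $M$ is bounded. Lemma \ref{le:extrapolation theorem} then promotes the weighted $L^1$ comparison to an estimate $\|F\|_{(WL^{p,q})^{1/s}} \lesssim \|G\|_{(WL^{p,q})^{1/s}}$, which is the asserted inequality.

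The main obstacle is verifying the extrapolation hypotheses in the weak-Lorentz setting: one must check that $(WL^{p,q})^{1/s}$ is indeed a ball quasi-Banach function space with the required Banach convexification, and that the associate space of $(WL^{p,q})^{1/(sq_0)}$ supports the boundedness of $M$. Once the convexification has been recognized as a Marcinkiewicz space, these verifications reduce to classical Lorentz-space mapping properties of $M$, but the bookkeeping between the weak-type quasi-norm and the Banach-function-space structure is delicate, particularly at the endpoint $q = \infty$ where $WL^{p,\infty}$ and $L^{p,\infty}$ coincide only up to an equivalence constant. This uniform-in-$q$ identification is precisely the content of \citep[Lemma 6.1.6]{LiYangHuang} and \citep[Theorem 4.4]{SunYangYuan}, from which the present lemma may be quoted directly.
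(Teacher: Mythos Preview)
The paper does not prove this lemma; it simply quotes it from \cite[Lemma 6.1.6]{LiYangHuang} and \cite[Theorem 4.4]{SunYangYuan}. Your proposal goes further and sketches an actual argument, and your core observation is correct: since $\|\chi_E\|_{L^{p,q}} \sim |E|^{1/p}$ regardless of $q$, the weak space $WL^{p,q}$ is equivalent to $L^{p,\infty}$, and hence $(WL^{p,q})^{1/s}\sim L^{p/s,\infty}$. Once this identification is made, the lemma reduces to the classical Fefferman--Stein vector-valued inequality on $L^{p/s,\infty}$ with $p/s>1$, which is well known, so the extrapolation machinery is in fact unnecessary.

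However, the extrapolation argument as you have written it contains a genuine error. You take $p_0=1$ and assert the \emph{strong-type} bound $\|F\|_{L^1(\omega)}\lesssim\|G\|_{L^1(\omega)}$ for every $\omega\in A_1$. This is false even for a single function: the Hardy--Littlewood maximal operator never maps $L^1$ to $L^1$. Lemma \ref{le:the weak-type weighted Fefferman--Stein} gives only a \emph{weak}-type $(1,1)$ inequality, which is why the paper, in the proof of Proposition \ref{pro:X}, applies extrapolation to the pairs $(F_\alpha,G)$ with $F_\alpha=\alpha\chi_{\{F>\alpha\}}$ and obtains only a weak-type conclusion. To salvage your extrapolation route for the strong-type inequality you want, choose instead any $p_0\in(1,p/s)$: then $A_1\subset A_{p_0}$, and the strong-type weighted vector-valued Fefferman--Stein inequality at exponent $p_0$ (see \cite{AndersenJohn}) supplies the correct input for Lemma \ref{le:extrapolation theorem}. (A minor slip: your condition $p/(sq_0)>1$ requires $q_0$ to be \emph{small}, not large.)
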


The following lemma shows that the Lorentz space $L^{p,q}\left(\mathbb{R}^{n}\right)$ satisfies Assumptions \ref{as:(X^{1/s})'}.

\begin{lemma}\label{le:Lorentz-as-(X^{1/r})'}
	\textup{\citep[Remark 2.7(d)]{WangYangYang} Let $p,q\in\left(0,\infty\right)$. Then, for any $r\in\left(0,\min{\left\{1,p,q\right\}}\right)$ and $s\in\left(\max{\left\{1,p,q\right\}},\infty\right]$, there exists a positive constant $C$ such that for any $\left\{f\right\}_{j=1}^{\infty}\subset \mathscr{M}\left(\mathbb{R}^{n}\right)$,}
	\begin{align*}
		\left\|M^{\left((s/r)'\right)}(f)\right\|_{\left(\left[L^{p,q}\left(\mathbb{R}^{n}\right)\right]^{1/r}\right)'} \le C\|f\|_{\left(\left[L^{p,q}\left(\mathbb{R}^{n}\right)\right]^{1/r}\right)'},
	\end{align*}
	\textup{where $\left(\left[L^{p,q}\left(\mathbb{R}^{n}\right)\right]^{1/r}\right)'$ is as in (\ref{eq:X'}) with $X:=\left[L^{p,q}\left(\mathbb{R}^{n}\right)\right]^{1/r}$.}
\end{lemma}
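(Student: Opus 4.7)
The plan is to reduce the claimed boundedness to the classical boundedness of the Hardy--Littlewood maximal operator on Lorentz spaces via two identifications: $1/r$-convexification and K\"othe duality.

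First, I would verify the convexification identity $[L^{p,q}(\mathbb{R}^{n})]^{1/r} = L^{p/r,\,q/r}(\mathbb{R}^{n})$ with equivalent quasi-norms. By Definition \ref{de:convexity-concavity}(i), $\|f\|_{X^{1/r}} = \||f|^{1/r}\|_{X}^{r}$, and the elementary scaling $(|f|^{\alpha})^{\ast}(t) = [f^{\ast}(t)]^{\alpha}$ of the decreasing rearrangement converts this directly into the Lorentz quasi-norm of $f$ with indices $p/r$ and $q/r$. Since $r<\min\{1,p,q\}$ forces $p/r>1$ and $q/r>1$, the space $L^{p/r,\,q/r}$ is a Banach function space, and its K\"othe dual coincides (with equivalent norms) with $L^{(p/r)',\,(q/r)'}(\mathbb{R}^{n})$ by the standard duality theory for Lorentz spaces with both indices greater than $1$.

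Second, using the identity $M^{(\theta)}(f) = [M(|f|^{\theta})]^{1/\theta}$ with $\theta := (s/r)'$, together with the same rearrangement scaling, the desired estimate reduces to
\begin{align*}
\bigl\|M(|f|^{\theta})\bigr\|_{L^{(p/r)'/\theta,\,(q/r)'/\theta}} \lesssim \bigl\||f|^{\theta}\bigr\|_{L^{(p/r)'/\theta,\,(q/r)'/\theta}},
\end{align*}
that is, to the $L^{(p/r)'/(s/r)',\,(q/r)'/(s/r)'}$-boundedness of $M$. By the classical Chung--Hunt--Kurtz theorem, this holds provided the first Lorentz index strictly exceeds $1$.

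The main obstacle is precisely verifying $(p/r)'/(s/r)' > 1$. For this, I would use that $s>\max\{1,p,q\}>r$ ensures both $s/r,\,p/r\in(1,\infty)$, and that $s>p$ gives $s/r>p/r$. Because the map $x\mapsto x/(x-1)$ is strictly decreasing on $(1,\infty)$, the inequality $(s/r)'<(p/r)'$ follows, i.e., the quotient $(p/r)'/(s/r)'$ strictly exceeds $1$. Once this first-index condition is secured, the classical boundedness result gives the estimate for any second index in $(0,\infty]$, so no further constraint on $q$ is required beyond $q/r>1$. Combining these three steps, together with the convexification and duality identifications of the first paragraph, yields the claimed inequality.
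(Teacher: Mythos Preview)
Your argument is correct. The paper does not give its own proof of this lemma; it simply cites \cite[Remark 2.7(d)]{WangYangYang} and uses the result as a black box. Your proposal supplies exactly the standard derivation that underlies that citation: identify $[L^{p,q}]^{1/r}$ with $L^{p/r,q/r}$ via rearrangement scaling, apply the Lorentz duality $(L^{p/r,q/r})'=L^{(p/r)',(q/r)'}$ (valid since $p/r,q/r>1$), and then reduce $M^{((s/r)')}$-boundedness there to ordinary $M$-boundedness on $L^{(p/r)'/(s/r)',\,(q/r)'/(s/r)'}$. The key index check $(p/r)'/(s/r)'>1$, which you deduce from $s>p$ and the strict monotonicity of $x\mapsto x'$ on $(1,\infty)$, is precisely the point; once that holds, the Lorentz maximal theorem applies for any second index in $(0,\infty]$, covering also the endpoint $s=\infty$ (where $(s/r)'=1$). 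So your route is the natural unwinding of the cited remark rather than a genuinely different approach.
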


Applying Lemmas \ref{le:Lorentz-as-X^1/s}, \ref{le:Lorentz-as-(WX)^1/s} and \ref{le:Lorentz-as-(X^{1/r})'}, Proposition \ref{pro:X}, Theorems \ref{th:B-R Hx-WHx}, \ref{th:B-R Hx-WX} and \ref{th:MB-R Hx-WX}, we immediately obtain the following boundedness of Bochner--Riesz means and the maximal Bochner--Riesz means, respectively, as follows.

\begin{theorem}\label{th:Lorentz spaces-B-R Hx-WHx}
	\textup{Let $p\in\left(0,1\right]$, $q\in\left(0,\infty\right)$ and $\delta>\frac{n-1}{2}$. Let $B_{1/\varepsilon}^{\delta}$ be a Bochner-Riesz means with $\delta$ order on $\mathbb{R}^{n}$. If $\min{\left\{p,q\right\}}\in\left[\frac{2n}{n+1+2\delta},1\right]$, then $B_{1/\varepsilon}^{\delta}$ has a unique extension on $H^{p,q}\left(\mathbb{R}^{n}\right)$. Moreover, there exists a positive constant $C$ such that for any $f\in H^{p,q}\left(\mathbb{R}^{n}\right)$,}
	\begin{align*}
		\left\|B_{1/\varepsilon}^{\delta}\left(f\right)\right\|_{WH^{p,q}\left(\mathbb{R}^{n}\right)} \le C\left\|f\right\|_{H^{p,q}\left(\mathbb{R}^{n}\right)}.
	\end{align*}
\end{theorem}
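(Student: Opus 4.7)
The plan is to derive the statement as an immediate corollary of the abstract Theorem \ref{th:B-R Hx-WHx} applied to $X:=L^{p,q}(\mathbb{R}^{n})$, following exactly the template used for the weighted Lebesgue and Herz cases earlier in this section. Since $L^{p,q}(\mathbb{R}^{n})$ is a ball quasi-Banach function space, the verification reduces to checking the three structural Assumptions \ref{as:Fefferman-Stein}, \ref{as:(X^{1/s})'} and \ref{as:M is bounded on (WX)^{1/r}}, the convexity condition that $X^{1/s}$ is a ball Banach function space, and finally the weak-type Fefferman--Stein inequality \eqref{eq:weak-type Fefferman-Stein}.

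For the first step, I would calibrate the indices. Using the hypothesis $\min\{p,q\}\in\bigl[\tfrac{2n}{n+1+2\delta},1\bigr]$, I would select $s\in(0,\min\{p,q\}]$ and $\theta\in\bigl[\tfrac{2n}{n+1+2\delta},s\bigr)$, so that $X^{1/s}=L^{p/s,q/s}(\mathbb{R}^{n})$ has both indices at least $1$ and hence is a ball Banach function space. Assumption \ref{as:Fefferman-Stein} is then obtained from Lemma \ref{le:Lorentz-as-X^1/s} by applying the vector-valued inequality to the sequence $\{|f_{j}|^{\theta}\}_{j}$ in $X^{1/\theta}$ and then taking $1/\theta$-powers, so that $M$ is converted into $M^{(\theta)}$. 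Assumption \ref{as:M is bounded on (WX)^{1/r}} follows from Lemma \ref{le:Lorentz-as-(WX)^1/s} specialized to a singleton sequence, and Assumption \ref{as:(X^{1/s})'} is exactly Lemma \ref{le:Lorentz-as-(X^{1/r})'} after choosing the auxiliary integrability exponent sufficiently large.

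For the second step, the weak-type Fefferman--Stein inequality \eqref{eq:weak-type Fefferman-Stein} will be supplied by Proposition \ref{pro:X}. Its hypotheses require that $X^{1/s}$ be a ball Banach function space (already checked) and that there exist $q_{0}\in[1,\infty)$ for which $X^{1/(sq_{0})}$ is a Banach function space with $M$ bounded on its associate space. Taking $q_{0}$ large enough that both $p/(sq_{0})$ and $q/(sq_{0})$ lie in $(0,1)$ turns the associate space into a Lorentz space with both indices in $(1,\infty)$, on which the Hardy--Littlewood maximal operator is bounded by the classical maximal theorem for Lorentz spaces.

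Combining these verifications with Theorem \ref{th:B-R Hx-WHx} immediately yields the desired bound $\|B_{1/\varepsilon}^{\delta}(f)\|_{WH^{p,q}(\mathbb{R}^{n})}\le C\|f\|_{H^{p,q}(\mathbb{R}^{n})}$. The main technical nuisance I anticipate is the critical endpoint $\min\{p,q\}=\tfrac{2n}{n+1+2\delta}$, where the strict inequality $\theta<s$ combined with $s\le\min\{p,q\}$ and $\theta\ge\tfrac{2n}{n+1+2\delta}$ leaves no room to choose admissible parameters directly. Resolving this endpoint will likely require exploiting the open-endedness of the Fefferman--Stein inequality (slightly enlarging $s$ beyond $\min\{p,q\}$ while still keeping $X^{1/s}$ ball Banach via an equivalent Banach renorming available whenever $p>1$), or else an approximation argument mirroring the one implicit in the treatment of the weighted Lebesgue and Herz analogues.
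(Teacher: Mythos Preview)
Your proposal is correct and follows essentially the same route as the paper: the paper's entire proof is the single sentence ``Applying Lemmas \ref{le:Lorentz-as-X^1/s}, \ref{le:Lorentz-as-(WX)^1/s} and \ref{le:Lorentz-as-(X^{1/r})'}, Proposition \ref{pro:X}, Theorems \ref{th:B-R Hx-WHx}, \ref{th:B-R Hx-WX} and \ref{th:MB-R Hx-WX}, we immediately obtain\ldots'', which is precisely the verification-and-invoke strategy you outline. Your observation about the critical endpoint $\min\{p,q\}=\tfrac{2n}{n+1+2\delta}$ is well taken, but the paper itself does not address it either; it is a looseness in the paper's statement rather than a gap in your argument relative to the paper's.
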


\begin{theorem}\label{th:Lorentz spaces-B-R Hx-WX}
	\textup{Let $p\in\left(0,1\right]$, $q\in\left(0,\infty\right)$ and $\delta>\frac{n-1}{2}$. Let $B_{1/\varepsilon}^{\delta}$ be a Bochner--Riesz means with $\delta$ order on $\mathbb{R}^{n}$. If $\min{\left\{p,q\right\}}\in\left[\frac{2n}{n+1+2\delta},1\right]$, then $B_{1/\varepsilon}^{\delta}$ has a unique extension on $H^{p,q}\left(\mathbb{R}^{n}\right)$. Moreover, there exists a positive constant $C$ such that for any $f\in H^{p,q}\left(\mathbb{R}^{n}\right)$,}
	\begin{align*}
		\left\|B_{1/\varepsilon}^{\delta}\left(f\right)\right\|_{WL^{p,q}\left(\mathbb{R}^{n}\right)} \le C\left\|f\right\|_{H^{p,q}\left(\mathbb{R}^{n}\right)}.
	\end{align*}
\end{theorem}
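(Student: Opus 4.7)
My plan is to derive Theorem \ref{th:Lorentz spaces-B-R Hx-WX} as an immediate consequence of the general Theorem \ref{th:B-R Hx-WX} applied to the ball quasi-Banach function space $X := L^{p,q}(\mathbb{R}^n)$. Under this identification $H_X = H^{p,q}(\mathbb{R}^n)$ and $WX = WL^{p,q}(\mathbb{R}^n)$, so (\ref{eq:B-R Hx-WX}) is exactly the desired inequality. The entire argument will therefore consist of selecting admissible parameters $\theta,s$ compatible with the hypothesis $\min\{p,q\}\in[\tfrac{2n}{n+1+2\delta},1]$ and then verifying each assumption of Theorem \ref{th:B-R Hx-WX} for $X = L^{p,q}$.

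First I would fix $s\in (0,\min\{p,q\}]$ with $s\le 1$ and pick $\theta\in [\tfrac{2n}{n+1+2\delta},s)$; this is possible exactly because the hypothesis guarantees $\tfrac{2n}{n+1+2\delta}\le \min\{p,q\}\le 1$ (at the boundary $\tfrac{2n}{n+1+2\delta}=\min\{p,q\}$ one takes $s$ a hair below and $\theta$ a hair above, which is harmless since $\delta>\tfrac{n-1}{2}$ leaves room). With these choices $X^{1/s} = L^{p/s,q/s}(\mathbb{R}^n)$ has both exponents at least $1$ and is therefore a ball Banach function space. Assumption \ref{as:Fefferman-Stein} then follows by writing $M^{(\theta)}(f_j)=[M(|f_j|^{\theta})]^{1/\theta}$, passing to the convexification $X^{1/\theta}=L^{p/\theta,q/\theta}$ with vector-valued index $r=s/\theta>1$, and invoking Lemma \ref{le:Lorentz-as-X^1/s}. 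Assumption \ref{as:(X^{1/s})'} is exactly Lemma \ref{le:Lorentz-as-(X^{1/r})'}, and Assumption \ref{as:M is bounded on (WX)^{1/r}} follows from Lemma \ref{le:Lorentz-as-(WX)^1/s} (after specializing the vector-valued inequality to a single function).

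It remains to supply the weak-type Fefferman--Stein inequality (\ref{eq:weak-type Fefferman-Stein}), which I would obtain from Proposition \ref{pro:X}: its hypotheses demand the existence of some $q_{0}\in[1,\infty)$ such that $X^{1/(sq_{0})}=L^{p/(sq_{0}),q/(sq_{0})}$ is a Banach function space and $M$ is bounded on its K\"othe dual; choosing $q_{0}$ large enough that both $p/(sq_{0})$ and $q/(sq_{0})$ exceed $1$, the dual is again a Lorentz space with indices in $(1,\infty)$, on which boundedness of $M$ is classical (and is captured by Lemma \ref{le:Lorentz-as-(X^{1/r})'} applied with a single $f$). Once every hypothesis is verified, Theorem \ref{th:B-R Hx-WX} immediately yields (\ref{eq:B-R Hx-WX}) for $X=L^{p,q}$, which is precisely the claimed estimate. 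The only real obstacle is the bookkeeping at the critical endpoint $\theta=\tfrac{2n}{n+1+2\delta}$, where one must simultaneously respect $\theta<s\le 1$ and $s\le \min\{p,q\}$; the remaining verifications are mechanical invocations of the lemmas of this subsection, which is why the excerpt rightly calls the conclusion immediate.
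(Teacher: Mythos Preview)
Your proposal follows exactly the route the paper takes: specialize Theorem \ref{th:B-R Hx-WX} to $X=L^{p,q}(\mathbb{R}^n)$ and verify its hypotheses via Lemmas \ref{le:Lorentz-as-X^1/s}, \ref{le:Lorentz-as-(WX)^1/s}, \ref{le:Lorentz-as-(X^{1/r})'} together with Proposition \ref{pro:X}. The paper states this in one sentence and omits the parameter bookkeeping you spell out, but the logic is identical.

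One point deserves correction. Your parenthetical fix for the critical endpoint $\min\{p,q\}=\tfrac{2n}{n+1+2\delta}$ is self-contradictory: taking ``$s$ a hair below and $\theta$ a hair above'' forces $\theta>s$, which violates the standing requirement $\theta<s$ in Theorem \ref{th:B-R Hx-WX}. In fact, since Lemma \ref{le:Lorentz-as-X^1/s} and Lemma \ref{le:Lorentz-as-(X^{1/r})'} both need $s<\min\{p,q\}$ strictly, while Theorem \ref{th:B-R Hx-WX} needs $\theta\ge \tfrac{2n}{n+1+2\delta}$ and $\theta<s$, the three constraints are incompatible at the exact endpoint. The paper does not address this either --- it simply declares the conclusion ``immediate'' --- so your later remark that the endpoint bookkeeping is ``the only real obstacle'' is accurate; just drop the incorrect fix and leave the strict-inequality case as the one actually covered by the cited lemmas.
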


\begin{theorem}\label{th:Lorentz spaces-MB-R Hx-WX}
	\textup{Let $p\in\left(0,1\right]$, $q\in\left(0,\infty\right)$ and $\delta>\frac{n-1}{2}$. Let $B_{\ast}^{\delta}$ be the maximal Bochner--Riesz means with $\delta$ order on $\mathbb{R}^{n}$. If $\min{\left\{p,q\right\}}\in\left[\frac{2n}{n+1+2\delta},1\right]$, then $B_{\ast}^{\delta}$ has a unique extension on $H^{p,q}\left(\mathbb{R}^{n}\right)$. Moreover, there exists a positive constant $C$ such that for any $f\in H^{p,q}\left(\mathbb{R}^{n}\right)$,}
	\begin{align*}
		\left\|B_{\ast}^{\delta}\left(f\right)\right\|_{WL^{p,q}\left(\mathbb{R}^{n}\right)} \le C\left\|f\right\|_{H^{p,q}\left(\mathbb{R}^{n}\right)}.
	\end{align*}
\end{theorem}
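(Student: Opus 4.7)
The plan is to deduce this statement as a direct corollary of the abstract Theorem \ref{th:MB-R Hx-WX} by specializing the ball quasi-Banach function space to $X := L^{p,q}(\mathbb{R}^{n})$. Under that choice one immediately has $WX = WL^{p,q}(\mathbb{R}^{n})$ and $H_X(\mathbb{R}^{n}) = H^{p,q}(\mathbb{R}^{n})$, so the conclusion of Theorem \ref{th:MB-R Hx-WX} becomes exactly the desired inequality. Everything reduces to verifying the four structural hypotheses of that abstract theorem for this particular $X$, together with the weak-type Fefferman--Stein inequality \eqref{eq:weak-type Fefferman-Stein}.

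The parameters would be selected as follows. I would set $\theta := \frac{2n}{n+1+2\delta}$ so that $\theta \in [\frac{2n}{n+1+2\delta},1)$ as required, and then pick $s$ with $\theta < s < \min\{1,p,q\}$ (interior case), which is possible as soon as $\min\{p,q\} > \frac{2n}{n+1+2\delta}$. Since $s \le \min\{p,q\}$, the space $X^{1/s} = L^{p/s,\,q/s}(\mathbb{R}^{n})$ has both indices $\ge 1$ and is therefore a ball Banach function space. Assumption \ref{as:Fefferman-Stein} then follows from Lemma \ref{le:Lorentz-as-X^1/s}; Assumption \ref{as:(X^{1/s})'} follows from Lemma \ref{le:Lorentz-as-(X^{1/r})'} by choosing $q \in (\max\{1,p,q\},\infty]$ in that lemma; Assumption \ref{as:M is bounded on (WX)^{1/r}} follows from Lemma \ref{le:Lorentz-as-(WX)^1/s} applied with any fixed $r \in (1,\infty)$ (or simply $r=1$) and an admissible exponent in place of $s$. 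Finally, the weak-type Fefferman--Stein inequality \eqref{eq:weak-type Fefferman-Stein} for $X^{\frac{n+1+2\delta}{2n}} = [L^{p,q}]^{\frac{n+1+2\delta}{2n}}$ is handed to us by Proposition \ref{pro:X}, whose hypotheses (existence of $q_{0}\in[1,\infty)$ such that $X^{1/(sq_{0})}$ is a Banach function space with $M$ bounded on its associate) are satisfied for $L^{p,q}$ by the normability property and the boundedness of $M$ on the appropriate Lorentz dual, already used in Lemmas \ref{le:Lorentz-as-X^1/s}--\ref{le:Lorentz-as-(X^{1/r})'}.

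Having verified every hypothesis of Theorem \ref{th:MB-R Hx-WX} for $X = L^{p,q}(\mathbb{R}^{n})$, the conclusion \eqref{eq:MB-R Hx-WX} translates verbatim into the inequality
\[
\left\|B_{\ast}^{\delta}(f)\right\|_{WL^{p,q}(\mathbb{R}^{n})} \le C\left\|f\right\|_{H^{p,q}(\mathbb{R}^{n})},
\]
together with the existence of a unique extension of $B_{\ast}^{\delta}$ on $H^{p,q}(\mathbb{R}^{n})$. The structure is entirely parallel to the proofs indicated for Theorems \ref{th:Lorentz spaces-B-R Hx-WHx} and \ref{th:Lorentz spaces-B-R Hx-WX}, with the only change being that Theorem \ref{th:MB-R Hx-WX} is invoked in place of Theorem \ref{th:B-R Hx-WX}.

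The main obstacle is really just bookkeeping: aligning the ranges of $\theta$, $s$ and the auxiliary exponent $q$ in the lemmas so that all five requirements are simultaneously met, and in particular handling the endpoint case $\min\{p,q\} = \frac{2n}{n+1+2\delta}$. At that endpoint one cannot choose $s$ strictly between $\theta$ and $\min\{p,q\}$, so I would cover it either by a standard limiting/monotonicity argument in $\theta$, or by observing that the hypotheses of Theorem \ref{th:MB-R Hx-WX} allow $\theta = \frac{2n}{n+1+2\delta}$ together with any $s \in (\theta,1]$ (in which case the normability of $X^{1/s}$ must be checked slightly differently, but still follows from standard Lorentz-space facts). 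Apart from this endpoint care, no genuinely new estimate is required; the analytic content of the theorem is entirely contained in Theorem \ref{th:MB-R Hx-WX} and the structural lemmas of Subsection 4.3.
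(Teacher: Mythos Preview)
Your proposal is correct and follows essentially the same route as the paper: the authors simply state that Theorems \ref{th:Lorentz spaces-B-R Hx-WHx}--\ref{th:Lorentz spaces-MB-R Hx-WX} follow immediately by applying Lemmas \ref{le:Lorentz-as-X^1/s}, \ref{le:Lorentz-as-(WX)^1/s}, \ref{le:Lorentz-as-(X^{1/r})'}, Proposition \ref{pro:X}, and the abstract Theorems \ref{th:B-R Hx-WHx}--\ref{th:MB-R Hx-WX} with $X=L^{p,q}(\mathbb{R}^n)$, which is precisely the verification you outline. Your discussion of the endpoint $\min\{p,q\}=\tfrac{2n}{n+1+2\delta}$ is in fact more careful than the paper, which does not address the gap between needing $\theta<s\le 1$ with $s<\min\{p,q\}$ and allowing equality in the stated range.
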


\subsection{Variable Lebesgue spaces.}
We begin with the notion of Variable Lebesgue spaces.

\begin{definition}\label{de:Variable Lebesgue spaces}
	\textup{Let $p\left(\cdot\right): \mathbb{R}^{n}\to \left[0,\infty\right)$ be a measurable function. Then the variable Lebesgue space $L^{p\left(\cdot\right)}\left(\mathbb{R}^{n}\right)$ is defined to be the set of all measurable functions $f$ on $\mathbb{R}^{n}$ such that}
	\begin{align*}
		\left\|f\right\|_{L^{p\left(\cdot\right)}\left(\mathbb{R}^{n}\right)}:=\inf\left\{\lambda\in\left(0,\infty\right):\int_{\mathbb{R}^{n}} \left[\frac{\left|f\left(x\right)\right|}{\lambda}\right]^{p\left(x\right)}\,dx\le 1 \right\} <\infty.
	\end{align*}
\end{definition}

Let $X$ be a ball quasi-Banach function space. Let $p\left(\cdot\right): \mathbb{R}^{n}\to \left[0,\infty\right)$ be a measurable function. If $X:=L^{p\left(\cdot\right)}\left(\mathbb{R}^{n}\right)$, then $WX:=WL^{p\left(\cdot\right)}\left(\mathbb{R}^{n}\right)$ is the weak variable Lebesgue space, $H_{X}\left(\mathbb{R}^{n}\right):=H^{p\left(\cdot\right)}\left(\mathbb{R}^{n}\right)$ is the variable Hardy space, and $WH_{X}\left(\mathbb{R}^{n}\right):=WH^{p\left(\cdot\right)}\left(\mathbb{R}^{n}\right)$ is the weak variable Hardy space. The Definitions of weak variable Lebesgue spaces, variable Hardy spaces and weak variable Hardy spaces are as in Definition \ref{de:WX}, \ref{de:the Hardy type space} and \ref{de:the weak Hardy-type space} with $X$ replaced by $L^{p\left(\cdot\right)}\left(\mathbb{R}^{n}\right)$.

Moreover, the variable Lebesgue space $L^{p\left(\cdot\right)}\left(\mathbb{R}^{n}\right)$ satisfies Assumptions \ref{as:the boundedness of M on X^1/p}, \ref{as:M is bounded on (WX)^{1/r}} and \ref{as:(X^{1/s})'}, respectively, in Lemmas \ref{le:Variable Lebesgue-as-X^1/s}, \ref{le:Variable Lebesgue-as-(WX)^1/s} and \ref{le:Variable Lebesgue-as-(X^{1/r})'}. The following lemma shows that the Fefferman--Stein vector-valued maximal inequalities for $L^{p\left(\cdot\right)}\left(\mathbb{R}^{n}\right)$ hold true.

\begin{lemma}\label{le:Variable Lebesgue-as-X^1/s}
	\textup{\citep[Corollary 4.3]{Ho2016} Let $p\left(\cdot\right)\in LH$ with $1<p_{-}\le p_{+}<\infty$. Assume that $r\in\left(1,\infty\right)$ and $s\in\left(0,p_{-}\right)$. Then there exists a positive constant $C$ such that for any $\left\{f_{j}\right\}_{j=1}^{\infty}\subset \mathscr{M}\left(\mathbb{R}^{n}\right)$,}
	\begin{align*}
		\left\|\left\{\sum_{j=1}^{\infty}\left[M\left(f_{j}\right)\right]^{r}\right\}^{1/r}\right\|_{\left[L^{p\left(\cdot\right)}\left(\mathbb{R}^{n}\right)\right]^{1/s}} \le C\left\|\left\{\sum_{j=1}^{\infty}\left|f_{j}\right|^{r}\right\}^{1/r}\right\|_{\left[L^{p\left(\cdot\right)}\left(\mathbb{R}^{n}\right)\right]^{1/s}},
	\end{align*}
    \textup{where $p\left(\cdot\right)\in LH$ denotes that $p\left(\cdot\right)$ satisfies the log-H{\"o}lder continuity condition and the log-H{\"o}lder continuity condition at infinity.}
\end{lemma}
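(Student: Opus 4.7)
The plan is to reduce the claimed inequality to the standard Fefferman--Stein vector-valued maximal inequality on a plain variable Lebesgue space $L^{q(\cdot)}(\mathbb{R}^n)$, and then to derive the latter from weighted bounds via Rubio de Francia extrapolation adapted to variable exponents. First I would record the identification $[L^{p(\cdot)}(\mathbb{R}^n)]^{1/s} = L^{p(\cdot)/s}(\mathbb{R}^n)$ with equal quasi-norms: unwinding Definition \ref{de:convexity-concavity}(i), one has $\|f\|_{[L^{p(\cdot)}]^{1/s}} = \||f|^{1/s}\|_{L^{p(\cdot)}}^{s}$, and the substitution $\lambda = \mu^{1/s}$ in the Luxemburg--Nakano infimum defining $\|\cdot\|_{L^{p(\cdot)}}$ converts this to $\|f\|_{L^{p(\cdot)/s}}$. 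Writing $q(\cdot) := p(\cdot)/s$, the hypotheses $p(\cdot) \in LH$, $1 < p_- \le p_+ < \infty$ and $s \in (0, p_-)$ give $q(\cdot) \in LH$ with $1 < q_- \le q_+ < \infty$, so the target inequality becomes the classical vector-valued maximal inequality on $L^{q(\cdot)}(\mathbb{R}^n)$.

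Next I would derive this reduced inequality from the Andersen--John weighted Fefferman--Stein inequality (the strong-type companion of Lemma \ref{le:the weak-type weighted Fefferman--Stein}): for any fixed $q_0 \in (1, q_-)$, any $r \in (1,\infty)$ and every $\omega \in A_{q_0}(\mathbb{R}^n)$,
\[
\int_{\mathbb{R}^n} \Bigl(\sum_{j=1}^{\infty} [M(f_j)(x)]^{r}\Bigr)^{q_0/r} \omega(x)\,dx \lesssim \int_{\mathbb{R}^n} \Bigl(\sum_{j=1}^{\infty} |f_j(x)|^{r}\Bigr)^{q_0/r} \omega(x)\,dx.
\]
I would then apply the Rubio de Francia extrapolation theorem for variable Lebesgue spaces (the $L^{q(\cdot)}$-analogue of Lemma \ref{le:extrapolation theorem}), which upgrades this family of weighted estimates to the $L^{q(\cdot)}$ bound provided $M$ is bounded on $L^{q(\cdot)/q_0}(\mathbb{R}^n)$ and on its associate. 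Since $q(\cdot)/q_0$ inherits the $LH$ condition and satisfies $1 < (q/q_0)_- \le (q/q_0)_+ < \infty$, the Cruz-Uribe--Fiorenza--Neugebauer / Diening theorem supplies both of these boundedness properties.

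The single non-routine ingredient is the extrapolation-plus-Rubio-de-Francia-algorithm step itself: for each non-negative $h$ in the unit ball of $(L^{q(\cdot)/q_0})'$ one must build an $A_1$-majorant $\mathcal{R}h \ge h$ by iterating the bounded operator $M$ on $(L^{q(\cdot)/q_0})'$, in such a way that $[\mathcal{R}h]_{A_1}$, and hence $[\mathcal{R}h]_{A_{q_0}}$, is controlled uniformly in $h$. Granted this, pairing the weighted Fefferman--Stein inequality for $\omega = \mathcal{R}h$ against $h$ via the H\"older inequality in $L^{q(\cdot)/q_0}$ and its associate, and taking the supremum over $h$, produces the desired $L^{q(\cdot)}$ bound. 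Verifying this duality-plus-algorithm construction with quantitative control on $[\mathcal{R}h]_{A_{q_0}}$ is the principal technical hurdle; the remainder of the argument consists of convexification bookkeeping and applications of known boundedness properties of $M$ on variable Lebesgue spaces.
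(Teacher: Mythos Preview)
The paper does not prove this lemma at all; it simply cites it as \cite[Corollary 4.3]{Ho2016} and moves on. Your proposal supplies a genuine proof where the paper offers none, so there is no ``paper's own proof'' to compare against.

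That said, your strategy is correct. The identification $[L^{p(\cdot)}]^{1/s} = L^{p(\cdot)/s}$ via the substitution in the Luxemburg infimum is standard and works exactly as you describe, and the resulting exponent $q(\cdot) = p(\cdot)/s$ does inherit the log-H\"older conditions with $1 < q_- \le q_+ < \infty$. From there the vector-valued maximal inequality on $L^{q(\cdot)}(\mathbb{R}^n)$ is a known theorem (Cruz-Uribe--Fiorenza--Martell--P\'erez), and your extrapolation sketch is one of the canonical ways to obtain it: the Rubio de Francia algorithm on $(L^{q(\cdot)/q_0})'$ is available precisely because $M$ is bounded there under your hypotheses, and the $A_1$ constant of $\mathcal{R}h$ is controlled by the operator norm of $M$ on that associate space, uniformly in $h$. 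No step is missing; the only remark is that you are reproving a literature result that the paper is content to quote.
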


Furthermore, the following lemma shows that the Fefferman--Stein vector-valued maximal inequalities for $WL^{p\left(\cdot\right)}\left(\mathbb{R}^{n}\right)$ hold true.

\begin{lemma}\label{le:Variable Lebesgue-as-(WX)^1/s}
	\textup{(see \citep[Lemma 6.1.6]{LiYangHuang} or \citep[Threorem 4.4]{SunYangYuan}) Let $p\left(\cdot\right)\in LH$ with $1<p_{-}\le p_{+}<\infty$. Assume that $r\in\left(1,\infty\right)$ and $s\in\left(0,p_{-}\right)$. Then there exists a positive constant $C$ such that for any $\left\{f_{j}\right\}_{j=1}^{\infty}\subset \mathscr{M}\left(\mathbb{R}^{n}\right)$,}
	\begin{align*}
		\left\|\left\{\sum_{j=1}^{\infty}\left[M\left(f_{j}\right)\right]^{r}\right\}^{1/r}\right\|_{\left[WL^{p\left(\cdot\right)}\left(\mathbb{R}^{n}\right)\right]^{1/s}} \le C\left\|\left\{\sum_{j=1}^{\infty}\left|f_{j}\right|^{r}\right\}^{1/r}\right\|_{\left[WL^{p\left(\cdot\right)}\left(\mathbb{R}^{n}\right)\right]^{1/s}}.
	\end{align*}
\end{lemma}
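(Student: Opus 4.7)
The plan is to first identify the convexification $[WL^{p(\cdot)}(\mathbb{R}^{n})]^{1/s}$ concretely and then derive the Fefferman--Stein inequality on it either by real interpolation from the strong-type inequality already available on variable Lebesgue spaces (Lemma \ref{le:Variable Lebesgue-as-X^1/s}) or by feeding the Andersen--John weighted weak-type seed inequality into the extrapolation machinery of Lemma \ref{le:extrapolation theorem}.

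First, a short computation from Definitions \ref{de:WX}, \ref{de:convexity-concavity} and \ref{de:Variable Lebesgue spaces} yields the identification $[WL^{p(\cdot)}]^{1/s}=WL^{p(\cdot)/s}$ with equivalent quasi-norms. The hypothesis $s\in(0,p_{-})$ forces $(p(\cdot)/s)_{-}=p_{-}/s>1$, so the target space is a weak variable Lebesgue space of Banach type whose exponent is strictly above $1$ uniformly. I would then pick two auxiliary exponents $q_{1}(\cdot),q_{2}(\cdot)\in LH$ with $1<q_{1,-}\le q_{1,+}<(p/s)_{-}$ and $(p/s)_{+}<q_{2,-}\le q_{2,+}<\infty$, together with a parameter $\theta\in(0,1)$ satisfying $1/(p(\cdot)/s)=(1-\theta)/q_{1}(\cdot)+\theta/q_{2}(\cdot)$.

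By Lemma \ref{le:Variable Lebesgue-as-X^1/s}, the sublinear operator
\[
\vec{f}=(f_{j})_{j\in\mathbb{N}}\;\longmapsto\;\Bigl(\sum_{j=1}^{\infty}[M(f_{j})]^{r}\Bigr)^{1/r}
\]
is bounded on each vector-valued space $L^{q_{i}(\cdot)}(\ell^{r})$, $i=1,2$. Invoking the Marcinkiewicz-type real-interpolation theorem for variable Lebesgue spaces developed in \citep[Section 4]{SunYangYuan} and \citep[Chapter 6]{LiYangHuang}, the $(\theta,\infty)$-intermediate space between $L^{q_{1}(\cdot)}(\ell^{r})$ and $L^{q_{2}(\cdot)}(\ell^{r})$ coincides, up to equivalent quasi-norms, with $WL^{p(\cdot)/s}(\ell^{r})$; hence the operator extends boundedly to the latter space, which, via the identification $[WL^{p(\cdot)}]^{1/s}=WL^{p(\cdot)/s}$, is precisely the desired conclusion.

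The principal obstacle is the variable-exponent real-interpolation step in the vector-valued setting: one must confirm that the Marcinkiewicz argument producing the weak-type endpoint remains valid for $\ell^{r}$-valued functions and that the log-H\"older continuity of $q_{1}(\cdot)$ and $q_{2}(\cdot)$ is preserved under the interpolation functor. An alternative that circumvents the interpolation machinery is the extrapolation approach modelled on Proposition \ref{pro:X}: applying Lemma \ref{le:extrapolation theorem} with $X:=WL^{p(\cdot)/s}$ and using Lemma \ref{le:the weak-type weighted Fefferman--Stein} as the $A_{1}$-weighted seed, the task reduces to verifying that $M$ is bounded on the associate of some convexification $[WL^{p(\cdot)/s}]^{1/q_{0}}$; because $(WX)'\neq W(X')$ in general, this is the delicate duality point that must be handled by trading weak norms for slightly stronger norms at the associate level, as resolved explicitly in \citep[Lemma 6.1.6]{LiYangHuang} and \citep[Theorem 4.4]{SunYangYuan}.
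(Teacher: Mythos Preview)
The paper supplies no proof of this lemma; it is quoted verbatim from \cite[Lemma 6.1.6]{LiYangHuang} and \cite[Theorem 4.4]{SunYangYuan}, and the identical citations are reused for the analogous weak-space Fefferman--Stein lemmas in the weighted, Herz, and Lorentz subsections. Those references prove a single abstract transfer principle---roughly, if the strong Fefferman--Stein inequality holds on $X^{1/s}$ then it holds on $(WX)^{1/s}$---and then specialise. So there is nothing in the paper itself to compare your argument against beyond that citation.

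On your two sketched routes: the identification $[WL^{p(\cdot)}]^{1/s}=WL^{p(\cdot)/s}$ is correct and is indeed the first step. Your interpolation route, however, has a genuine gap. The pointwise identity $s/p(x)=(1-\theta)/q_{1}(x)+\theta/q_{2}(x)$ with a \emph{single} scalar $\theta$ and the separation constraints $q_{1,+}<(p/s)_{-}$, $(p/s)_{+}<q_{2,-}$ force $q_{1}(\cdot)$ and $q_{2}(\cdot)$ to vary with $p(\cdot)$ in a coupled way, and the formula $(L^{q_{1}(\cdot)},L^{q_{2}(\cdot)})_{\theta,\infty}=WL^{p(\cdot)/s}$ is not available in the variable-exponent setting: real interpolation of variable Lebesgue spaces does not produce variable Lorentz/weak spaces in the classical pattern, because the $K$-functional no longer reduces to a rearrangement. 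The vector-valued extension you invoke is therefore resting on an interpolation identity that is not established.

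Your extrapolation alternative is much closer in spirit to what the cited references actually do, and you have correctly isolated the obstruction: Lemma \ref{le:extrapolation theorem} as stated requires a Banach function space whose convexification has $M$ bounded on the associate, and $WL^{p(\cdot)/s}$ does not slot in directly because $(WX)'$ is not $W(X')$. The resolution in \cite{LiYangHuang,SunYangYuan} is an extrapolation/transfer argument written specifically for weak ball quasi-Banach spaces rather than the interpolation route; since you end by deferring to exactly those references, your proposal ultimately lands where the paper does, but the interpolation paragraph should be dropped as it does not go through.
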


The following lemma shows that the variable Lebesgue space $L^{p\left(\cdot\right)}\left(\mathbb{R}^{n}\right)$ satisfies Assumptions \ref{as:(X^{1/s})'}.

\begin{lemma}\label{le:Variable Lebesgue-as-(X^{1/r})'}
	\textup{\citep[Remark 2.7(f)]{WangYangYang} Let $p\left(\cdot\right)\in LH$ with $0<p_{-}\le p_{+}<\infty$. Then, for any $r\in\left(0,\min{\left\{1,p_{-}\right\}}\right)$ and $s\in\left(\max{\left\{1,p_{+}\right\}},\infty\right]$, there exists a positive constant $C$ such that for any $\left\{f\right\}_{j=1}^{\infty}\subset \mathscr{M}\left(\mathbb{R}^{n}\right)$,}
	\begin{align*}
		\left\|M^{\left((s/r)'\right)}(f)\right\|_{\left(\left[L^{p\left(\cdot\right)}\left(\mathbb{R}^{n}\right)\right]^{1/r}\right)'} \le C\|f\|_{\left(\left[L^{p\left(\cdot\right)}\left(\mathbb{R}^{n}\right)\right]^{1/r}\right)'},
	\end{align*}
	\textup{where $\left(\left[L^{p\left(\cdot\right)}\left(\mathbb{R}^{n}\right)\right]^{1/r}\right)'$ is as in (\ref{eq:X'}) with $X:=\left[L^{p\left(\cdot\right)}\left(\mathbb{R}^{n}\right)\right]^{1/r}$.}
\end{lemma}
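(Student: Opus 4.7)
\medskip

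\noindent\textbf{Proof plan for Lemma \ref{le:Variable Lebesgue-as-(X^{1/r})'}.}\quad The plan is to reduce the claimed boundedness to the known boundedness of the standard Hardy--Littlewood maximal operator on a variable Lebesgue space with a suitable exponent. First, I will identify $[L^{p(\cdot)}(\mathbb{R}^{n})]^{1/r}$ as the variable Lebesgue space $L^{p(\cdot)/r}(\mathbb{R}^{n})$ (with equivalent quasi-norms) via a direct change of variables in the Luxemburg functional in Definition \ref{de:Variable Lebesgue spaces}. Since $r\in(0,\min\{1,p_{-}\})$, we have $p(\cdot)/r>1$ pointwise, so $p(\cdot)/r$ admits a well-defined conjugate exponent function $(p(\cdot)/r)'$, and a standard duality argument for variable Lebesgue spaces (i.e.\ a H\"older-type inequality combined with the sharpness of extremizers) yields the identification
\begin{align*}
    \left(\left[L^{p(\cdot)}(\mathbb{R}^{n})\right]^{1/r}\right)' = L^{(p(\cdot)/r)'}(\mathbb{R}^{n})
\end{align*}
up to equivalent norms.

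Next, I unravel the powered maximal operator using its definition (\ref{eq:the powered Hardy--Littlewood maximal operator}): since $M^{((s/r)')}(f) = \{M(|f|^{(s/r)'})\}^{1/(s/r)'}$, the homogeneity of the Luxemburg norm under pointwise powers (namely $\|\,|g|^{\alpha}\,\|_{L^{q(\cdot)}}=\|g\|_{L^{\alpha q(\cdot)}}^{\alpha}$) converts the target inequality into the statement that the ordinary maximal operator $M$ is bounded on $L^{q(\cdot)}(\mathbb{R}^{n})$, where
\begin{align*}
    q(\cdot):=\frac{(p(\cdot)/r)'}{(s/r)'}.
\end{align*}
It therefore suffices to check that $q(\cdot)\in LH$ and that $q_{-}>1$, after which the classical boundedness of $M$ on $L^{q(\cdot)}(\mathbb{R}^{n})$ for log-H\"older-continuous exponents with $q_{-}>1$ completes the proof.

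The verification of these two conditions on $q(\cdot)$ is the heart of the argument. The log-H\"older regularity is immediate: multiplication and division by positive constants preserves $LH$, and passing to the conjugate exponent preserves $LH$, so starting from $p(\cdot)\in LH$ we obtain $q(\cdot)\in LH$. For the quantitative bound, one computes $((p(\cdot)/r)')_{-}=(p_{+}/r)'$ because $(\cdot)'$ is strictly decreasing on $(1,\infty)$. Hence
\begin{align*}
    q_{-}=\frac{(p_{+}/r)'}{(s/r)'},
\end{align*}
and the assumption $s\in(\max\{1,p_{+}\},\infty]$ gives $p_{+}/r<s/r$, so $(p_{+}/r)'>(s/r)'$ and thus $q_{-}>1$.

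The only genuine technical obstacle lies in the first step: establishing the associate-space identification $([L^{p(\cdot)}]^{1/r})'=L^{(p(\cdot)/r)'}$ with equivalent norms when $p_{-}<1$ is possible (so that $L^{p(\cdot)}$ itself is only a quasi-Banach function space) but $p_{-}>r$ (so that the convexification is Banach). This is precisely the condition $r<\min\{1,p_{-}\}$ in the hypothesis, which ensures $[L^{p(\cdot)}]^{1/r}$ is a ball Banach function space so that the associate space in (\ref{eq:X'}) makes sense and the standard H\"older-duality identification for variable Lebesgue spaces applies. Once this identification is in place, the remainder reduces to routine, scalar-exponent book-keeping as sketched above.
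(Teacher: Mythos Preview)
Your proof plan is correct and is precisely the standard argument for this fact. Note, however, that the paper does not prove this lemma at all: it is quoted as a known result from \cite[Remark 2.7(f)]{WangYangYang}, so there is no ``paper's own proof'' to compare against. Your reduction via the identifications $[L^{p(\cdot)}]^{1/r}=L^{p(\cdot)/r}$ and $(L^{p(\cdot)/r})'=L^{(p(\cdot)/r)'}$, followed by the observation that boundedness of $M^{((s/r)')}$ on $L^{(p(\cdot)/r)'}$ is equivalent to boundedness of $M$ on $L^{q(\cdot)}$ with $q(\cdot)=(p(\cdot)/r)'/(s/r)'$, is exactly how such statements are verified in the variable-exponent literature; the computation $q_{-}=(p_{+}/r)'/(s/r)'>1$ from $s>p_{+}$ is correct, and the preservation of the log-H\"older class under conjugation and scalar multiplication is standard. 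One small addendum: you should also record that $q_{+}=(p_{-}/r)'/(s/r)'<\infty$ (which follows from $r<p_{-}$, hence $p_{-}/r>1$), since the Diening-type boundedness theorem for $M$ on $L^{q(\cdot)}$ requires $1<q_{-}\le q_{+}<\infty$.
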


Applying Lemmas \ref{le:Variable Lebesgue-as-X^1/s}, \ref{le:Variable Lebesgue-as-(WX)^1/s} and \ref{le:Variable Lebesgue-as-(X^{1/r})'}, Proposition \ref{pro:X}, Theorems \ref{th:B-R Hx-WHx}, \ref{th:B-R Hx-WX} and \ref{th:MB-R Hx-WX}, we immediately obtain the following boundedness of Bochner-Riesz means and the maximal Bochner-Riesz means, respectively, as follows.

\begin{theorem}\label{th:Variable Lebesgue spaces-B-R Hx-WHx}
	\textup{Let $\delta>\frac{n-1}{2}$ and $p\left(\cdot\right): \mathbb{R}^{n}\to \left(0,1\right]$ be globally log-H{\"o}lder continuous with $p_{-}$ and $p_{+}$. Let $B_{1/\varepsilon}^{\delta}$ be a Bochner-Riesz means with $\delta$ order on $\mathbb{R}^{n}$. If $p_{-}\in\left[\frac{2n}{n+1+2\delta},1\right]$, then $B_{1/\varepsilon}^{\delta}$ has a unique extension on $H^{p\left(\cdot\right)}\left(\mathbb{R}^{n}\right)$. Moreover, there exists a positive constant $C$ such that for any $f\in H^{p\left(\cdot\right)}\left(\mathbb{R}^{n}\right)$,}
	\begin{align*}
		\left\|B_{1/\varepsilon}^{\delta}\left(f\right)\right\|_{WH^{p\left(\cdot\right)}\left(\mathbb{R}^{n}\right)} \le C\left\|f\right\|_{H^{p\left(\cdot\right)}\left(\mathbb{R}^{n}\right)}.
	\end{align*}
\end{theorem}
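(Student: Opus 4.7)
The plan is to apply Theorem \ref{th:B-R Hx-WHx} to the ball quasi-Banach function space $X:=L^{p(\cdot)}(\mathbb{R}^{n})$, for which, by Subsection~4.4, $H_{X}(\mathbb{R}^{n})=H^{p(\cdot)}(\mathbb{R}^{n})$ and $WH_{X}(\mathbb{R}^{n})=WH^{p(\cdot)}(\mathbb{R}^{n})$. Thus (\ref{eq:B-R Hx-WHx}) specializes to the desired estimate, and the task reduces to choosing admissible parameters $\theta,s,q$ and verifying, for this concrete $X$, Assumptions \ref{as:Fefferman-Stein}, \ref{as:(X^{1/s})'}, and \ref{as:M is bounded on (WX)^{1/r}}, the fact that $X^{1/s}$ is a ball Banach function space, and the weak-type Fefferman--Stein inequality (\ref{eq:weak-type Fefferman-Stein}).

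I would first fix the parameters. Under the hypothesis $p_{-}\ge\tfrac{2n}{n+1+2\delta}$, I pick $\theta\in[\tfrac{2n}{n+1+2\delta},p_{-})$ and then $s\in(\theta,p_{-})$, together with any $q\in(1,\infty)$. Since $p_{-}\le 1$, these choices give $0<\theta<s\le 1$ and $\theta\in[\tfrac{2n}{n+1+2\delta},1)$; moreover $X^{1/s}=L^{p(\cdot)/s}$ has lower exponent $p_{-}/s>1$ and is log-H\"older continuous, hence is a ball Banach function space.

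I then verify the assumptions. For Assumption \ref{as:Fefferman-Stein}, substituting $g_{j}:=|f_{j}|^{\theta}$ and $r:=s/\theta>1$ and raising both sides of (\ref{eq:Fefferman-Stein}) to the $\theta$-th power reduces it to the standard vector-valued Fefferman--Stein inequality for $M$ on $L^{p(\cdot)/\theta}$, which is provided by Lemma \ref{le:Variable Lebesgue-as-X^1/s} because $p(\cdot)/\theta\in LH$ with $(p/\theta)_{-}=p_{-}/\theta>1$. Assumption \ref{as:(X^{1/s})'} is a direct application of Lemma \ref{le:Variable Lebesgue-as-(X^{1/r})'} with its parameter $r$ equal to my $s$ (so in $(0,p_{-})\subset(0,\min\{1,p_{-}\})$) and its parameter $s$ equal to any $q>\max\{1,p_{+}\}=1$. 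Assumption \ref{as:M is bounded on (WX)^{1/r}} follows from the scalar case of Lemma \ref{le:Variable Lebesgue-as-(WX)^1/s} applied to $(WX)^{1/r}=WL^{p(\cdot)/r}$ for $r$ small enough that $p_{-}/r>1$. Finally, the weak-type inequality (\ref{eq:weak-type Fefferman-Stein}) with vector-valued exponent $\tfrac{n+1+2\delta}{2n}$ is exactly what Proposition \ref{pro:X} delivers, provided that $X^{1/(sq_{0})}=L^{p(\cdot)/(sq_{0})}$ is a Banach function space and $M$ is bounded on its K\"othe dual for some $q_{0}\in[1,\infty)$, both of which follow from choosing $q_{0}$ large enough and from the log-H\"older continuity of $p(\cdot)$.

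The main obstacle is the exponent bookkeeping in Assumption \ref{as:Fefferman-Stein}: because it involves the powered maximal operator $M^{(\theta)}$ rather than $M$, the reduction via $g_{j}=|f_{j}|^{\theta}$ requires $p(\cdot)/\theta$ to have lower exponent strictly greater than $1$, which combined with $\theta\ge\tfrac{2n}{n+1+2\delta}$ is precisely what motivates the hypothesis $p_{-}\in[\tfrac{2n}{n+1+2\delta},1]$ in the statement. Once the parameters $\theta,s,q$ are chosen consistently, the conclusion is immediate from Theorem \ref{th:B-R Hx-WHx}.
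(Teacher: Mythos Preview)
Your proposal is correct and follows essentially the same route as the paper: the paper's proof is simply the statement that the result follows ``immediately'' from applying Theorem~\ref{th:B-R Hx-WHx} with $X=L^{p(\cdot)}(\mathbb{R}^{n})$, verifying its hypotheses via Lemmas~\ref{le:Variable Lebesgue-as-X^1/s}, \ref{le:Variable Lebesgue-as-(WX)^1/s}, \ref{le:Variable Lebesgue-as-(X^{1/r})'} and Proposition~\ref{pro:X}. Your write-up actually supplies more detail than the paper on the choice of parameters $\theta$, $s$, $q$ and on how each lemma feeds into the corresponding assumption; the only minor caveat is that your interval $\theta\in[\tfrac{2n}{n+1+2\delta},p_{-})$ degenerates at the endpoint $p_{-}=\tfrac{2n}{n+1+2\delta}$, but the paper's one-line proof is equally silent on this borderline case.
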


\begin{theorem}\label{th:Variable Lebesgue spaces-B-R Hx-WX}
	\textup{Let $\delta>\frac{n-1}{2}$ and $p\left(\cdot\right): \mathbb{R}^{n}\to \left(0,1\right]$ be globally log-H{\"o}lder continuous with $p_{-}$ and $p_{+}$. Let $B_{1/\varepsilon}^{\delta}$ be a Bochner--Riesz means with $\delta$ order on $\mathbb{R}^{n}$. If $p_{-}\in\left[\frac{2n}{n+1+2\delta},1\right]$, then $B_{1/\varepsilon}^{\delta}$ has a unique extension on $H^{p\left(\cdot\right)}\left(\mathbb{R}^{n}\right)$. Moreover, there exists a positive constant $C$ such that for any $f\in H^{p\left(\cdot\right)}\left(\mathbb{R}^{n}\right)$,}
	\begin{align*}
		\left\|B_{1/\varepsilon}^{\delta}\left(f\right)\right\|_{WL^{p\left(\cdot\right)}\left(\mathbb{R}^{n}\right)} \le C\left\|f\right\|_{H^{p\left(\cdot\right)}\left(\mathbb{R}^{n}\right)}.
	\end{align*}
\end{theorem}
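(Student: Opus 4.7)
The plan is to obtain the theorem as a direct specialization of Theorem \ref{th:B-R Hx-WX} to the ball quasi-Banach function space $X:=L^{p(\cdot)}(\mathbb{R}^{n})$. The first step is to observe that, under the log-H\"older assumption on $p(\cdot)$ with $0<p_{-}\le p_{+}\le 1$, the variable Lebesgue space $L^{p(\cdot)}(\mathbb{R}^{n})$ is a ball quasi-Banach function space, that its associated weak space $WL^{p(\cdot)}(\mathbb{R}^{n})$ coincides with $WX$ in Definition \ref{de:WX}, and that $H^{p(\cdot)}(\mathbb{R}^{n})$ in turn coincides with $H_{X}(\mathbb{R}^{n})$ of Definition \ref{de:the Hardy type space}. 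Hence the conclusion $\|B_{1/\varepsilon}^{\delta}f\|_{WL^{p(\cdot)}}\lesssim\|f\|_{H^{p(\cdot)}}$ is precisely the conclusion (\ref{eq:B-R Hx-WX}) of Theorem \ref{th:B-R Hx-WX} for this $X$, provided all the hypotheses are met.

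Next, I would select the parameters. Since $p_{-}\in[\frac{2n}{n+1+2\delta},1]$ and $p_{+}\le 1$, pick $s\in(0,p_{-}]$ with $s\le 1$ (taking $s=p_{-}$ when $p_{-}<1$ and $s=1$ when $p_{-}=1$), so that $p(\cdot)/s\ge 1$ and therefore $X^{1/s}=L^{p(\cdot)/s}(\mathbb{R}^{n})$ is a ball Banach function space. Then fix $\theta\in[\frac{2n}{n+1+2\delta},s)$, which is possible because $\frac{2n}{n+1+2\delta}\le p_{-}$; when $p_{-}=\frac{2n}{n+1+2\delta}$ one still has a strict inequality $\theta<s$ after a small perturbation of $s$ upward (or a small perturbation of $p_{-}$ downward in the log-H\"older class). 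With these choices, Lemma \ref{le:Variable Lebesgue-as-X^1/s} gives Assumption \ref{as:Fefferman-Stein} for the pair $(\theta,s)$, Lemma \ref{le:Variable Lebesgue-as-(X^{1/r})'} (with $r:=s$ and some admissible large $q$) gives Assumption \ref{as:(X^{1/s})'}, and Lemma \ref{le:Variable Lebesgue-as-(WX)^1/s} supplies Assumption \ref{as:M is bounded on (WX)^{1/r}} for a suitable $r\in(0,p_{-})$.

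It then remains to verify the weak-type vector-valued Fefferman--Stein inequality (\ref{eq:weak-type Fefferman-Stein}) with exponent $\frac{n+1+2\delta}{2n}$. For this I would invoke Proposition \ref{pro:X}: by Lemma \ref{le:Variable Lebesgue-as-(X^{1/r})'} there exists $q_{0}\in[1,\infty)$ so that $X^{1/(sq_{0})}=L^{p(\cdot)/(sq_{0})}$ is a Banach function space and $M$ is bounded on $\bigl(X^{1/(sq_{0})}\bigr)'$; the proposition, applied with $r:=\frac{n+1+2\delta}{2n}>1$ (which holds since $\delta>\frac{n-1}{2}$) and with $s$ as above, yields exactly (\ref{eq:weak-type Fefferman-Stein}). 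With every hypothesis of Theorem \ref{th:B-R Hx-WX} verified, the conclusion follows and $B_{1/\varepsilon}^{\delta}$ extends uniquely from a dense subclass to $H^{p(\cdot)}(\mathbb{R}^{n})$ with the stated weak bound.

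The main obstacle, which is bookkeeping rather than conceptual, is keeping track of the admissible ranges of $\theta$, $s$, $r$, and $q$ so that all four lemmas \ref{le:Variable Lebesgue-as-X^1/s}--\ref{le:Variable Lebesgue-as-(X^{1/r})'} and Proposition \ref{pro:X} can be applied simultaneously with a single consistent choice; the critical edge case is $p_{-}=\frac{2n}{n+1+2\delta}$, where the strict inequality $\theta<s$ required by Theorem \ref{th:B-R Hx-WX} forces one to argue carefully that $\theta$ can be taken equal to $\frac{2n}{n+1+2\delta}$ while $s$ is strictly larger. The rest of the argument is a routine translation between $L^{p(\cdot)}$ and the abstract framework.
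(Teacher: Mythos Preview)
Your proposal is correct and follows essentially the same route as the paper: the authors simply state that Theorems \ref{th:Variable Lebesgue spaces-B-R Hx-WHx}--\ref{th:Variable Lebesgue spaces-MB-R Hx-WX} follow immediately by applying Lemmas \ref{le:Variable Lebesgue-as-X^1/s}, \ref{le:Variable Lebesgue-as-(WX)^1/s}, \ref{le:Variable Lebesgue-as-(X^{1/r})'} and Proposition \ref{pro:X} to verify the hypotheses of Theorem \ref{th:B-R Hx-WX} with $X=L^{p(\cdot)}(\mathbb{R}^{n})$, which is exactly what you do. Your explicit bookkeeping of the parameters $\theta,s,r,q$ and your flagging of the endpoint case $p_{-}=\tfrac{2n}{n+1+2\delta}$ is more careful than the paper's one-line ``we immediately obtain,'' but the strategy is identical.
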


\begin{theorem}\label{th:Variable Lebesgue spaces-MB-R Hx-WX}
	\textup{Let $\delta>\frac{n-1}{2}$ and $p\left(\cdot\right): \mathbb{R}^{n}\to \left(0,1\right]$ be globally log-H{\"o}lder continuous with $p_{-}$ and $p_{+}$. Let $B_{\ast}^{\delta}$ be the maximal Bochner--Riesz means with $\delta$ order on $\mathbb{R}^{n}$. If $p_{-}\in\left[\frac{2n}{n+1+2\delta},1\right]$, then $B_{\ast}^{\delta}$ has a unique extension on $H^{p\left(\cdot\right)}\left(\mathbb{R}^{n}\right)$. Moreover, there exists a positive constant $C$ such that for any $f\in H^{p\left(\cdot\right)}\left(\mathbb{R}^{n}\right)$,}
	\begin{align*}
		\left\|B_{\ast}^{\delta}\left(f\right)\right\|_{WL^{p\left(\cdot\right)}\left(\mathbb{R}^{n}\right)} \le C\left\|f\right\|_{H^{p\left(\cdot\right)}\left(\mathbb{R}^{n}\right)}.
	\end{align*}
\end{theorem}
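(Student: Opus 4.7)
\noindent\textbf{Proof plan for Theorem \ref{th:Variable Lebesgue spaces-MB-R Hx-WX}.}\quad The overall strategy is to specialize Theorem \ref{th:MB-R Hx-WX} to the ball quasi-Banach function space $X:=L^{p(\cdot)}(\mathbb{R}^{n})$, exactly mirroring the pattern used in Subsections 4.1--4.3 for weighted, Herz, and Lorentz spaces. Since $p_{-}\in[\tfrac{2n}{n+1+2\delta},1]$, I would first fix parameters $\theta,s$ with $\tfrac{2n}{n+1+2\delta}\le\theta<s\le 1$ and $s<p_{-}$ (possible because $p_{-}>0$ and the range is non-degenerate; at the endpoint $p_{-}=\tfrac{2n}{n+1+2\delta}$ one chooses $\theta$ strictly less than $p_{-}$ by an infinitesimal perturbation). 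Under log-H\"older continuity, $p(\cdot)/s$ is again globally log-H\"older with exponents bounded below by $1$, so $X^{1/s}=L^{p(\cdot)/s}(\mathbb{R}^{n})$ is a ball Banach function space, as required by the hypotheses of Theorem \ref{th:MB-R Hx-WX}.

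Next I would verify the three structural assumptions. Assumption \ref{as:Fefferman-Stein} follows from Lemma \ref{le:Variable Lebesgue-as-X^1/s} applied to the powered exponent $p(\cdot)/\theta$ together with the identity $M^{(\theta)}(f)=[M(|f|^{\theta})]^{1/\theta}$ and a reindexing $g_{j}:=|f_{j}|^{\theta}$, provided $s/\theta<p_{-}/\theta$, which is exactly our choice. Assumption \ref{as:(X^{1/s})'} is the content of Lemma \ref{le:Variable Lebesgue-as-(X^{1/r})'}, applied with $r$ there taken to be our $s$ and a sufficiently large $q\in(1,\infty]$ so that $(q/s)'$ lies in the admissible range. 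For Assumption \ref{as:M is bounded on (WX)^{1/r}}, I would pick any $r\in(0,p_{-})$ and specialize Lemma \ref{le:Variable Lebesgue-as-(WX)^1/s} to a single-term vector-valued sum, which yields the boundedness of $M$ on $(WL^{p(\cdot)})^{1/r}=WL^{p(\cdot)/r}$.

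Finally, the weak-type Fefferman--Stein inequality (\ref{eq:weak-type Fefferman-Stein}) for $X^{1/s}=L^{p(\cdot)/s}(\mathbb{R}^{n})$ with vector index $\tfrac{n+1+2\delta}{2n}\in(1,\infty)$ is obtained from Proposition \ref{pro:X}: the space $X^{1/(sq_{0})}=L^{p(\cdot)/(sq_{0})}$ is a Banach function space once $q_{0}$ is taken large enough that $p(\cdot)/(sq_{0})\ge 1$, and the boundedness of $M$ on its associate space follows from log-H\"older regularity of $p(\cdot)$ together with the standard maximal theorem on variable Lebesgue spaces (already invoked inside the proof of Lemma \ref{le:Variable Lebesgue-as-(X^{1/r})'}). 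With every hypothesis of Theorem \ref{th:MB-R Hx-WX} verified, the conclusion $\|B_{\ast}^{\delta}(f)\|_{WL^{p(\cdot)}(\mathbb{R}^{n})}\lesssim\|f\|_{H^{p(\cdot)}(\mathbb{R}^{n})}$ is immediate, together with the unique extension of $B_{\ast}^{\delta}$ to $H^{p(\cdot)}(\mathbb{R}^{n})$. I expect the only genuine difficulty to be the simultaneous calibration of the parameter quintuple $(\theta,s,q,r,q_{0})$ so that the strict inequality $\theta<s$ and the various conjugate-exponent constraints hold together at the critical endpoint $p_{-}=\tfrac{2n}{n+1+2\delta}$; this is a purely bookkeeping issue, resolvable by choosing $\theta=\tfrac{2n}{n+1+2\delta}$ and $s$ slightly smaller than $p_{-}$ when $p_{-}>\theta$, and by an approximation argument in the remaining boundary case.
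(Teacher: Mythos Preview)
Your plan is exactly the paper's: the paper proves this theorem in one line by invoking Lemmas \ref{le:Variable Lebesgue-as-X^1/s}, \ref{le:Variable Lebesgue-as-(WX)^1/s}, \ref{le:Variable Lebesgue-as-(X^{1/r})'} and Proposition \ref{pro:X} to feed $X=L^{p(\cdot)}(\mathbb{R}^{n})$ into Theorem \ref{th:MB-R Hx-WX}, and you do the same verification. One small bookkeeping slip to fix: in your application of Proposition \ref{pro:X} you need $q_{0}$ \emph{close to $1$}, not large, since $X^{1/(sq_{0})}=L^{p(\cdot)/(sq_{0})}$ is a Banach function space precisely when $p(\cdot)/(sq_{0})\ge 1$, i.e.\ when $sq_{0}\le p_{-}$; the admissible range is therefore $q_{0}\in[1,p_{-}/s)$, which is nonempty because you already arranged $s<p_{-}$.
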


\subsection{Morrey spaces.}
We first recall the notion of Morrey spaces.
\begin{definition}\label{de:Morrey spaces}
	\textup{Let $0<q\le p\le \infty$. The Morrey space $\mathcal{M}_{q}^{p}\left(\mathbb{R}^{n}\right)$ is defined to be the set of all $f\in L_{\rm loc}^{q}\left(\mathbb{R}^{n}\right)$ such that}
	\begin{align*}
		\left\|f\right\|_{\mathcal{M}_{q}^{p}\left(\mathbb{R}^{n}\right)}:=\sup\limits_{B\in\mathbb{B}} \left|B\right|^{1/p-1/q}\left\{\int_{B}\left|f\left(y\right)\right|^{q}\,dy\right\}^{1/q}<\infty,
	\end{align*}
	\textup{where $\mathbb{B}$ is as in (\ref{eq:B(x,r)}).}
\end{definition}

Let $X$ be a ball quasi-Banach function space. Let $0<q\le p\le \infty$. If $X:=\mathcal{M}_{q}^{p}\left(\mathbb{R}^{n}\right)$, then $WX:=W\mathcal{M}_{q}^{p}\left(\mathbb{R}^{n}\right)$ is the weak Morrey space, $H_{X}\left(\mathbb{R}^{n}\right):=H\mathcal{M}_{q}^{p}\left(\mathbb{R}^{n}\right)$ is the Hardy--Morrey space, and $WH_{X}\left(\mathbb{R}^{n}\right):=WH\mathcal{M}_{q}^{p}\left(\mathbb{R}^{n}\right)$ is the weak Hardy--Morrey space. The Definitions of weak Morrey spaces, Hardy--Morrey spaces and weak Hardy--Morrey spaces are as in Definition \ref{de:WX}, \ref{de:the Hardy type space} and \ref{de:the weak Hardy-type space} with $X$ replaced by $\mathcal{M}_{q}^{p}\left(\mathbb{R}^{n}\right)$.

The space $\mathcal{M}_{q}^{p}\left(\mathbb{R}^{n}\right)$ was introduced by Morrey \cite{Morrey} in 1938. Moreover, the Morrey space $\mathcal{M}_{q}^{p}\left(\mathbb{R}^{n}\right)$ satisfies Assumptions \ref{as:the boundedness of M on X^1/p}, \ref{as:M is bounded on (WX)^{1/r}} and \ref{as:(X^{1/s})'}, respectively, in Lemmas \ref{le:Morrey-as-X^1/s}, \ref{le:Morrey-as-(WX)^1/s} and \ref{le:Morrey-as-(X^{1/r})'}. The following lemma shows that the Fefferman--Stein vector-valued maximal inequalities for $\mathcal{M}_{q}^{p}\left(\mathbb{R}^{n}\right)$ hold true.

\begin{lemma}\label{le:Morrey-as-X^1/s}
	\textup{\citep[Lemma 2.5]{TangXu} Let $0<q\le p<\infty$. Assume that $r\in\left(1,\infty\right)$ and $s\in\left(0,q\right)$. Then there exists a positive constant $C$ such that for any $\left\{f_{j}\right\}_{j=1}^{\infty}\subset \mathscr{M}\left(\mathbb{R}^{n}\right)$,}
	\begin{align*}
		\left\|\left\{\sum_{j=1}^{\infty}\left[M\left(f_{j}\right)\right]^{r}\right\}^{1/r}\right\|_{\left[\mathcal{M}_{q}^{p}\left(\mathbb{R}^{n}\right)\right]^{1/s}} \le C\left\|\left\{\sum_{j=1}^{\infty}\left|f_{j}\right|^{r}\right\}^{1/r}\right\|_{\left[\mathcal{M}_{q}^{p}\left(\mathbb{R}^{n}\right)\right]^{1/s}}.
	\end{align*}
\end{lemma}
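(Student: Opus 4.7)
The plan is to reduce the statement to the classical scalar-free Fefferman--Stein vector-valued inequality on an ordinary Morrey space, after recognizing the $(1/s)$-convexification of $\mathcal{M}_q^p(\mathbb R^n)$ as a Morrey space with rescaled parameters. The advantage of this route is that it lets one bypass any direct maximal estimates on the abstract ball quasi-Banach function space and instead quote a well-established classical result.

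First, I would establish the identification
\[
\left[\mathcal{M}_{q}^{p}\left(\mathbb{R}^{n}\right)\right]^{1/s} = \mathcal{M}_{q/s}^{p/s}\left(\mathbb{R}^{n}\right)
\]
with equality of quasi-norms. By Definition \ref{de:convexity-concavity}({\rm \romannumeral1}), for any $f\in \mathscr M(\mathbb R^n)$,
\[
\|f\|_{[\mathcal M_{q}^{p}]^{1/s}} = \left\||f|^{1/s}\right\|_{\mathcal M_{q}^{p}}^{s}
= \left[\sup_{B\in\mathbb B} |B|^{1/p-1/q}\left(\int_B |f(y)|^{q/s}\,dy\right)^{1/q}\right]^{s}
= \sup_{B\in\mathbb B} |B|^{s/p-s/q}\left(\int_B |f(y)|^{q/s}\,dy\right)^{s/q},
\]
which is exactly the $\mathcal M_{q/s}^{p/s}(\mathbb R^n)$-norm of $f$. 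This calculation is essentially a routine unwinding of definitions, and serves only to transfer the problem to a familiar setting.

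Second, under the hypotheses $s\in(0,q)$ and $q\le p<\infty$, the rescaled exponents satisfy $1 < q/s \le p/s < \infty$, which is precisely the classical range on Morrey spaces. At this point I would invoke the standard Fefferman--Stein vector-valued maximal inequality for ordinary Morrey spaces (which is the core content of \cite{TangXu}): for $r\in(1,\infty)$,
\[
\left\|\left\{\sum_{j=1}^{\infty}[M(f_j)]^{r}\right\}^{1/r}\right\|_{\mathcal M_{q/s}^{p/s}}
\le C \left\|\left\{\sum_{j=1}^{\infty}|f_j|^{r}\right\}^{1/r}\right\|_{\mathcal M_{q/s}^{p/s}}.
\]
Translating both sides back via the identification of the first step yields the desired inequality in $[\mathcal M_q^p]^{1/s}$.

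The main obstacle is of course the classical vector-valued Fefferman--Stein inequality on Morrey spaces in the range $1<q/s\le p/s<\infty$, which is the one genuine analytic input. The typical strategy to produce it is to combine the scalar boundedness of $M$ on Morrey spaces due to Chiarenza--Frasca with either (i) the $A_1$-extrapolation machinery of Lemma \ref{le:extrapolation theorem}, or (ii) a pointwise estimate of the form $M(g)(x)\lesssim [M(|g|^{\eta})(x)]^{1/\eta}$ for some small $\eta\in(0,1)$ together with the classical Fefferman--Stein inequality on Lebesgue spaces applied to the $\eta$-powers. Once this scalar--to--vector lifting is in place for ordinary Morrey spaces, the identification step reduces our claim to a direct application, so no further work specific to convexifications is required.
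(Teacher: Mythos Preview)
Your argument is correct: the identification $[\mathcal{M}_q^p]^{1/s}=\mathcal{M}_{q/s}^{p/s}$ is a straightforward unwinding of Definition \ref{de:convexity-concavity}({\rm \romannumeral1}) and Definition \ref{de:Morrey spaces}, and then the hypothesis $s\in(0,q)$ places the rescaled exponents in the range $1<q/s\le p/s<\infty$ where the classical vector-valued Fefferman--Stein inequality on Morrey spaces applies. Note, however, that the paper does not give its own proof of this lemma at all --- it is simply quoted as \cite[Lemma 2.5]{TangXu} --- so your proposal is in fact supplying the (standard) reduction that the cited reference carries out, rather than offering an alternative to anything in the present paper.
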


Furthermore, the following lemma shows that the Fefferman--Stein vector-valued maximal inequalities for $W\mathcal{M}_{q}^{p}\left(\mathbb{R}^{n}\right)$ hold true.

\begin{lemma}\label{le:Morrey-as-(WX)^1/s}
	\textup{\citep[Theorem 3.2]{Ho2017} Let $0<q\le p<\infty$. Assume that $r\in\left(1,\infty\right)$ and $s\in\left(0,q\right)$. Then there exists a positive constant $C$ such that for any $\left\{f_{j}\right\}_{j=1}^{\infty}\subset \mathscr{M}\left(\mathbb{R}^{n}\right)$,}
	\begin{align*}
		\left\|\left\{\sum_{j=1}^{\infty}\left[M\left(f_{j}\right)\right]^{r}\right\}^{1/r}\right\|_{\left[W\mathcal{M}_{q}^{p}\left(\mathbb{R}^{n}\right)\right]^{1/s}} \le C\left\|\left\{\sum_{j=1}^{\infty}\left|f_{j}\right|^{r}\right\}^{1/r}\right\|_{\left[W\mathcal{M}_{q}^{p}\left(\mathbb{R}^{n}\right)\right]^{1/s}}.
	\end{align*}
\end{lemma}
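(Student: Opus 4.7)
The plan is to reduce the claim to a classical weak-type Fefferman--Stein inequality on the non-convexified weak Morrey space $W\mathcal{M}_{q/s}^{p/s}(\mathbb{R}^{n})$, and then obtain the latter by a Rubio de Francia style extrapolation from the $A_{1}$-weighted weak-type estimate already recorded in Lemma \ref{le:the weak-type weighted Fefferman--Stein}. The first step is to verify the identity $[W\mathcal{M}_{q}^{p}]^{1/s}=W\mathcal{M}_{q/s}^{p/s}$. Since $\chi_{E}^{1/s}=\chi_{E}$ it follows straight from the definitions that $\|\chi_{E}\|_{\mathcal{M}_{q}^{p}}^{s}=\|\chi_{E}\|_{\mathcal{M}_{q/s}^{p/s}}$, and applying the substitution $\beta=\alpha^{s}$ in the supremum defining the weak norm (Definition \ref{de:WX}) yields
\begin{align*}
\|f\|_{[W\mathcal{M}_{q}^{p}]^{1/s}}=\bigl\||f|^{1/s}\bigr\|_{W\mathcal{M}_{q}^{p}}^{s}=\sup_{\beta>0}\beta\,\|\chi_{\{|f|>\beta\}}\|_{\mathcal{M}_{q/s}^{p/s}}=\|f\|_{W\mathcal{M}_{q/s}^{p/s}}.
\end{align*}
Setting $\tilde{p}:=p/s$ and $\tilde{q}:=q/s$, the hypothesis $s\in(0,q)$ gives $1<\tilde{q}\le\tilde{p}<\infty$, and the lemma reduces to showing that, for every $r\in(1,\infty)$,
\begin{align*}
\left\|\left\{\sum_{j=1}^{\infty}[M(f_{j})]^{r}\right\}^{1/r}\right\|_{W\mathcal{M}_{\tilde{q}}^{\tilde{p}}(\mathbb{R}^{n})}\lesssim\left\|\left\{\sum_{j=1}^{\infty}|f_{j}|^{r}\right\}^{1/r}\right\|_{W\mathcal{M}_{\tilde{q}}^{\tilde{p}}(\mathbb{R}^{n})}.
\end{align*}

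Next I would set up the extrapolation pair exactly as in the proof of Proposition \ref{pro:X}: for each $\alpha\in(0,\infty)$ put
\begin{align*}
F_{\alpha}(x):=\alpha\,\chi_{\{y\in\mathbb{R}^{n}:\,(\sum_{j}[M(f_{j})(y)]^{r})^{1/r}>\alpha\}}(x),\qquad G(x):=\left(\sum_{j=1}^{\infty}|f_{j}(x)|^{r}\right)^{1/r}.
\end{align*}
By Lemma \ref{le:the weak-type weighted Fefferman--Stein}, for every $\omega\in A_{1}(\mathbb{R}^{n})$ and every $\alpha>0$,
\begin{align*}
\int_{\mathbb{R}^{n}}F_{\alpha}(x)\,\omega(x)\,dx=\alpha\,\omega\!\left(\left\{x:\left(\sum_{j=1}^{\infty}[M(f_{j})(x)]^{r}\right)^{1/r}>\alpha\right\}\right)\le C_{\omega}\int_{\mathbb{R}^{n}}G(x)\,\omega(x)\,dx,
\end{align*}
with $C_{\omega}$ depending only on $n$, $r$, and $[\omega]_{A_{1}(\mathbb{R}^{n})}$.

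The final step is to apply a weak-type Rubio de Francia extrapolation for Morrey spaces in order to transfer this uniform $A_{1}$-weighted $L^{1}$-estimate on the family $\{(F_{\alpha},G)\}_{\alpha>0}$ into the weak-to-weak Morrey bound
\begin{align*}
\sup_{\alpha>0}\|F_{\alpha}\|_{\mathcal{M}_{\tilde{q}}^{\tilde{p}}(\mathbb{R}^{n})}\lesssim\|G\|_{W\mathcal{M}_{\tilde{q}}^{\tilde{p}}(\mathbb{R}^{n})},
\end{align*}
whose left-hand side equals $\|(\sum_{j}[M(f_{j})]^{r})^{1/r}\|_{W\mathcal{M}_{\tilde{q}}^{\tilde{p}}(\mathbb{R}^{n})}$ by the very definition of the weak Morrey norm. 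This extrapolation is the main obstacle: the strong-to-weak version underlying Proposition \ref{pro:X} only produces the \emph{strong} Morrey norm of $G$ on the right-hand side, whereas here the right-hand side must be the \emph{weak} Morrey norm. Carrying out the required weak-to-weak transfer appears to demand either an additional Rubio de Francia iteration adapted to the weak Morrey side, or an argument that exploits the block (predual) decomposition of $\mathcal{M}_{\tilde{q}}^{\tilde{p}}(\mathbb{R}^{n})$ to reduce the desired estimate, ball by ball, to an application of Lemma \ref{le:the weak-type weighted Fefferman--Stein} against an appropriately constructed $A_{1}$ majorant. Once this extrapolation machinery is in place, the remaining bookkeeping is routine.
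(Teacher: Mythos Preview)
The paper does not prove this lemma at all: it is simply quoted as \cite[Theorem 3.2]{Ho2017}, so there is no in-paper argument to compare your attempt against.

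As for your proposal itself, it is explicitly incomplete, and the gap you flag is genuine, not cosmetic. The reduction $[W\mathcal{M}_{q}^{p}]^{1/s}=W\mathcal{M}_{q/s}^{p/s}$ is fine, and setting up the pairs $(F_{\alpha},G)$ via Lemma \ref{le:the weak-type weighted Fefferman--Stein} is the right start. But the extrapolation machinery of Lemma \ref{le:extrapolation theorem}, used verbatim as in Proposition \ref{pro:X}, only delivers $\|F_{\alpha}\|_{\mathcal{M}_{\tilde q}^{\tilde p}}\lesssim\|G\|_{\mathcal{M}_{\tilde q}^{\tilde p}}$, i.e.\ a strong Morrey norm of $G$ on the right. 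You cannot simply replace $X$ by $W\mathcal{M}_{\tilde q}^{\tilde p}$ in Lemma \ref{le:extrapolation theorem} either: that lemma requires $X^{1/q_{0}}$ to be a Banach function space with $M$ bounded on its associate space, and weak Morrey spaces do not meet that hypothesis. Your closing paragraph (``an additional Rubio de Francia iteration adapted to the weak Morrey side'' or a block-predual reduction) describes a wish, not a proof; in particular you have not explained how to manufacture an $A_{1}$ majorant whose pairing controls the \emph{weak} Morrey norm rather than the strong one, and that is exactly the missing step.

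If you want to close the gap without simply citing \cite{Ho2017}, the route taken in that reference (and in \cite{LiYangHuang,SunYangYuan} for the analogous lemmas) is not the $p_{0}=1$ extrapolation you sketch; it proceeds instead from the strong Fefferman--Stein inequality on $\mathcal{M}_{u}^{v}$ for $1<u\le v<\infty$ (Lemma \ref{le:Morrey-as-X^1/s}) and passes to the weak space by a direct level-set/convexification argument specific to Morrey norms. You would need to supply that argument, or an equivalent one, for the proof to be complete.
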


The following lemma shows that the Morrey space $\mathcal{M}_{q}^{p}\left(\mathbb{R}^{n}\right)$ satisfies Assumptions \ref{as:(X^{1/s})'}.

\begin{lemma}\label{le:Morrey-as-(X^{1/r})'}
	\textup{\citep[Lemma 7.6]{ZhangYangYuan} Let $0<q\le p<\infty$, $r\in\left(0,q\right)$ and $s\in\left(q,\infty\right]$. Then there exists a positive constant $C$ such that for any $\left\{f\right\}_{j=1}^{\infty}\subset \mathscr{M}\left(\mathbb{R}^{n}\right)$,}
	\begin{align*}
		\left\|M^{\left((s/r)'\right)}(f)\right\|_{\left(\left[\mathcal{M}_{q}^{p}\left(\mathbb{R}^{n}\right)\right]^{1/r}\right)'} \le C\|f\|_{\left(\left[\mathcal{M}_{q}^{p}\left(\mathbb{R}^{n}\right)\right]^{1/r}\right)'},
	\end{align*}
    \textup{where $\left(\left[\mathcal{M}_{q}^{p}\left(\mathbb{R}^{n}\right)\right]^{1/r}\right)'$ is as in (\ref{eq:X'}) with $X:=\left[\mathcal{M}_{q}^{p}\left(\mathbb{R}^{n}\right)\right]^{1/r}$.}
\end{lemma}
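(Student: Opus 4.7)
The plan is to reduce the boundedness of the powered maximal operator on $\bigl([\mathcal{M}_q^p]^{1/r}\bigr)'$ to a classical weighted $L^{p_{0}}$-estimate with $A_{1}$ weights, and then to conclude by the Rubio-de-Francia-type extrapolation of Lemma \ref{le:extrapolation theorem}.

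First I would unpack the convexification. A direct calculation from Definitions \ref{de:convexity-concavity} and \ref{de:Morrey spaces} shows that, for any $f\in\mathscr{M}(\mathbb{R}^{n})$,
\[
\|f\|_{[\mathcal{M}_q^p]^{1/r}}=\bigl\||f|^{1/r}\bigr\|_{\mathcal{M}_q^p}^{r}=\sup_{B\in\mathbb{B}}|B|^{r/p-r/q}\left(\int_B|f(y)|^{q/r}\,dy\right)^{r/q},
\]
so $[\mathcal{M}_q^p]^{1/r}=\mathcal{M}_{q/r}^{p/r}$ as quasi-normed spaces. Since $r\in(0,q)$ forces $q/r>1$, the space $\mathcal{M}_{q/r}^{p/r}$ is a ball Banach function space, and therefore its associate $X:=(\mathcal{M}_{q/r}^{p/r})'$ is also a ball Banach function space by Remark \ref{re:X'}.

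Second I would record the weighted inequality that the extrapolation principle requires. Fix any $p_{0}\in((s/r)',\infty)$. Since $p_{0}/(s/r)'>1$, we have $A_{1}\subset A_{p_{0}/(s/r)'}$, and so the classical $L^{p}(\omega)$-boundedness of the Hardy--Littlewood maximal operator for $A_{p}$ weights yields, for every $\omega\in A_{1}(\mathbb{R}^{n})$,
\[
\int_{\mathbb{R}^{n}}\bigl[M^{((s/r)')}(f)(x)\bigr]^{p_{0}}\omega(x)\,dx=\int_{\mathbb{R}^{n}}\bigl[M(|f|^{(s/r)'})(x)\bigr]^{p_{0}/(s/r)'}\omega(x)\,dx\lesssim\int_{\mathbb{R}^{n}}|f(x)|^{p_{0}}\omega(x)\,dx,
\]
with implicit constant depending only on $p_{0}$ and $[\omega]_{A_{1}}$. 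Hence the pair $(M^{((s/r)')}(f),f)$ lies in the extrapolation family $\mathcal{F}$ of Lemma \ref{le:extrapolation theorem}.

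Third I would apply Lemma \ref{le:extrapolation theorem} with its ball Banach function space taken to be $X$. The only non-routine verification is to produce some $q_{0}\in[p_{0},\infty)$ for which $X^{1/q_{0}}$ is a Banach function space and $M$ is bounded on $(X^{1/q_{0}})'$. For $q_{0}$ sufficiently large, unwinding the Lozanovskii-type duality between K\"othe associates and convexifications, together with the block-space description of the predual of a Morrey space (Adams--Xiao, Rosenthal--Triebel), allows one to identify $(X^{1/q_{0}})'$ with a convexified Morrey-type space whose lower exponent exceeds $1$; on such a space the boundedness of $M$ is supplied by Lemma \ref{le:Morrey-as-X^1/s}. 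The extrapolation conclusion then produces $\|M^{((s/r)')}(f)\|_{X}\lesssim\|f\|_{X}$, which is the claim.

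\textbf{Main obstacle.} The delicate point is Step 3, namely the concrete identification of $(X^{1/q_{0}})'$. Because the K\"othe dual of $\mathcal{M}_{q/r}^{p/r}$ is not another Morrey space but a block-type space, one cannot just iterate Morrey computations; one must route the identification through this block predual and the convexification/associate commutation rules. Once that identification is in hand, everything else---the weighted $L^{p_{0}}$ bound of Step 2 and the invocation of Lemma \ref{le:extrapolation theorem}---is standard machinery.
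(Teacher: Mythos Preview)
The paper does not give its own proof of this lemma; it merely cites it from \cite[Lemma 7.6]{ZhangYangYuan}. So there is no in-paper argument to compare your proposal against.

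As for your proposal itself: the extrapolation strategy is natural, but Step~3 --- which you yourself flag as the ``main obstacle'' --- is not actually carried out, and this is where the entire difficulty sits. You assert that for large $q_{0}$ one can identify $\bigl((\mathcal{M}_{q/r}^{p/r})'\bigr)^{1/q_{0}}$ and its associate concretely enough to feed into Lemma~\ref{le:Morrey-as-X^1/s}, but no such identification is produced. The K\"othe dual of a Morrey space is a block-type space, and the interaction of K\"othe duality with $p$-convexification on block spaces is genuinely non-trivial; there is no off-the-shelf formula that converts $\bigl((\mathcal{M}_{q/r}^{p/r})'\bigr)^{1/q_{0}}$ or its associate back into a Morrey space. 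Moreover, Lemma~\ref{le:extrapolation theorem} requires that $X^{1/q_{0}}$ be a Banach function space in the strict sense of Definition~\ref{de:Banach function space} (conditions (iv) and (v) for \emph{all} finite-measure sets), not merely a ball Banach function space, and you have not verified this either. Until both of these points are established, the proposal is a plan rather than a proof: Steps~1 and~2 are routine, and Step~3 is precisely the substance of the lemma.
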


To apply Theorems \ref{th:B-R Hx-WHx}, \ref{th:B-R Hx-WX} and \ref{th:MB-R Hx-WX} to Morrey spaces, we need the following weak-type Fefferman--Stein vector-valued inequality of the Hardy-Littlewood maximal operator $M$ in (\ref{eq:the Hardy-Littlewood maximal operator}) from $\mathcal{M}_{1}^{p}\left(\mathbb{R}^{n}\right)$ to $W\mathcal{M}_{1}^{p}\left(\mathbb{R}^{n}\right)$ \citep[Proposition 7.16]{ZhangYangYuan}.

\begin{proposition}\label{pro:Morrey spaces}
	\textup{Let $p\in\left[1,\infty\right)$ and $r\in\left(1,\infty\right)$. Then there exists a positive constant $C$ such that for any $\left\{f_{j}\right\}_{j\in\mathbb{N}}\subset \mathcal{M}_{1}^{p}\left(\mathbb{R}^{n}\right)$,}
	\begin{align*}
		\left\|\left\{\sum_{j=1}^{\infty}\left[M\left(f_{j}\right)\right]^{r}\right\}^{\frac{1}{r}}\right\|_{W\mathcal{M}_{1}^{p}\left(\mathbb{R}^{n}\right)}\le C\left\|\left\{\sum_{j=1}^{\infty}\left|f_{j}\right|^{r}\right\}^{\frac{1}{r}}\right\|_{\mathcal{M}_{1}^{p}\left(\mathbb{R}^{n}\right)}.
	\end{align*}
\end{proposition}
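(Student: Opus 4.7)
The plan is to control $\alpha \|\chi_{E_\alpha}\|_{\mathcal{M}_1^p(\mathbb{R}^n)}$ uniformly in $\alpha \in (0, \infty)$, where $E_\alpha := \{x \in \mathbb{R}^n : \{\sum_{j=1}^\infty [M(f_j)(x)]^r\}^{1/r} > \alpha\}$ and $F := \{\sum_{j=1}^\infty |f_j|^r\}^{1/r}$. I fix an arbitrary ball $B = B(x_0, R) \in \mathbb{B}$ and run the standard local/non-local decomposition $f_j = f_j \chi_{2B} + f_j \chi_{(2B)^c} =: f_j^{(1)} + f_j^{(2)}$, so that sublinearity of $M$ gives $\{\sum_j [Mf_j]^r\}^{1/r} \le \{\sum_j [Mf_j^{(1)}]^r\}^{1/r} + \{\sum_j [Mf_j^{(2)}]^r\}^{1/r}$, and I then handle the two pieces separately.

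For the local piece, the classical weak-type $(1,1)$ Fefferman--Stein vector-valued maximal inequality from \cite{FeffermanStein1971} yields $|\{x \in \mathbb{R}^n : \{\sum_j [M f_j^{(1)}(x)]^r\}^{1/r} > \alpha/2\}| \lesssim \alpha^{-1} \int_{2B} F$. Multiplying by $\alpha |B|^{1/p - 1}$ and invoking the Morrey norm of $F$ at the ball $2B$, I obtain
\begin{align*}
\alpha |B|^{1/p - 1} \big|\{x \in B : \{\sum_j [Mf_j^{(1)}(x)]^r\}^{1/r} > \alpha/2\}\big| \lesssim \|F\|_{\mathcal{M}_1^p(\mathbb{R}^n)},
\end{align*}
uniformly in $B$ and $\alpha$.

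For the non-local piece, I aim for the pointwise bound $\{\sum_j [Mf_j^{(2)}(x)]^r\}^{1/r} \lesssim \|F\|_{\mathcal{M}_1^p(\mathbb{R}^n)} |B|^{-1/p}$ valid for every $x \in B$. The key observation is that, for $x \in B$, any ball $B(x, \rho)$ meeting $(2B)^c$ satisfies $\rho > R$ and is contained in $B(x_0, 2\rho)$, so a dyadic discretization $2^{k-1} R \le \rho < 2^k R$ gives $Mf_j^{(2)}(x) \lesssim \sup_{k \ge 1} A_{j,k}$ with $A_{j,k} := |B(x_0, 2^k R)|^{-1} \int_{B(x_0, 2^k R)} |f_j|$. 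Applying the trivial bound $\sup_k A_{j,k}^r \le \sum_k A_{j,k}^r$, swapping the sums in $j$ and $k$ via Tonelli, and then using Minkowski's integral inequality at each fixed scale, I expect to obtain
\begin{align*}
\left(\sum_{j=1}^\infty [Mf_j^{(2)}(x)]^r\right)^{1/r} \lesssim \left(\sum_{k \ge 1} \left[\frac{1}{|B(x_0, 2^k R)|} \int_{B(x_0, 2^k R)} F\right]^r\right)^{1/r},
\end{align*}
and the $k$-th factor on the right is at most $C \|F\|_{\mathcal{M}_1^p(\mathbb{R}^n)} (2^k R)^{-n/p}$ by the definition of the Morrey norm, so the resulting geometric series sums to $C \|F\|_{\mathcal{M}_1^p(\mathbb{R}^n)} R^{-n/p} \sim \|F\|_{\mathcal{M}_1^p(\mathbb{R}^n)} |B|^{-1/p}$.

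To finish, I split on the size of $\alpha$. If $\alpha$ exceeds twice the non-local pointwise bound above, then $E_\alpha \cap B$ is entirely captured by the local term and the second paragraph closes the argument; otherwise $\alpha \lesssim \|F\|_{\mathcal{M}_1^p(\mathbb{R}^n)} |B|^{-1/p}$ and $\alpha |B|^{1/p - 1} |E_\alpha \cap B| \le \alpha |B|^{1/p} \lesssim \|F\|_{\mathcal{M}_1^p(\mathbb{R}^n)}$ trivially. Taking the supremum first over $\alpha$ and then over $B$ yields the stated weak Morrey bound. I expect the main obstacle to be precisely the non-local pointwise estimate: the scale supremum defining $Mf_j^{(2)}(x)$ sits inside the $\ell^r$ norm over $j$, while Minkowski's integral inequality only moves that $\ell^r$ norm past an average at a fixed scale. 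The pointwise bound $\sup_k a_k^r \le \sum_k a_k^r$ followed by Tonelli in $(j, k)$ is the cleanest way to untangle them, after which the Morrey norm at each dyadic scale tames the resulting geometric series.
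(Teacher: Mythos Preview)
Your argument is correct. The local piece is handled exactly as you describe via the classical weak-type $(1,1)$ vector-valued inequality of Fefferman--Stein, and for the non-local piece the chain
\[
\Bigl(\sum_j\bigl[Mf_j^{(2)}(x)\bigr]^r\Bigr)^{1/r}
\le \Bigl(\sum_{k\ge1}\sum_j A_{j,k}^r\Bigr)^{1/r}
\le \Bigl(\sum_{k\ge1}\Bigl[\frac{1}{|B_k|}\int_{B_k}F\Bigr]^r\Bigr)^{1/r}
\lesssim \|F\|_{\mathcal{M}_1^p}|B|^{-1/p}
\]
is valid: the first step is $\sup_k\le\ell^r$-sum plus Tonelli, the second is Minkowski's integral inequality (legitimate since $r\ge1$), and the third is the Morrey condition at each dyadic scale producing a convergent geometric series. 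The final dichotomy on $\alpha$ cleanly combines the two pieces.

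The paper itself does not prove this proposition; it simply quotes \cite[Proposition~7.16]{ZhangYangYuan}. The closest thing to a proof in the paper is that of the companion Proposition~\ref{pro:X}, which proceeds by \emph{extrapolation}: one starts from the weighted weak-type vector-valued inequality of Andersen--John (Lemma~\ref{le:the weak-type weighted Fefferman--Stein}) for $A_1$ weights and feeds it into the Rubio de Francia--type extrapolation lemma (Lemma~\ref{le:extrapolation theorem}). That route is more abstract and requires verifying that $M$ is bounded on an appropriate associate space, which for Morrey spaces is nontrivial (Morrey spaces are not Banach function spaces in the sense of Definition~\ref{de:Banach function space}). Your direct local/non-local decomposition is more elementary and self-contained, avoids extrapolation machinery entirely, and exploits the Morrey structure in the most transparent way, at the cost of being specific to this particular target space rather than yielding a general statement.
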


Applying Lemmas \ref{le:Morrey-as-X^1/s}, \ref{le:Morrey-as-(WX)^1/s} and \ref{le:Morrey-as-(X^{1/r})'}, Proposition \ref{pro:Morrey spaces}, Theorems \ref{th:B-R Hx-WHx}, \ref{th:B-R Hx-WX} and \ref{th:MB-R Hx-WX}, we immediately obtain the following boundedness of Bochner-Riesz means and the maximal Bochner-Riesz means, respectively, as follows.

\begin{theorem}\label{th:Morrey spaces-B-R Hx-WHx}
	\textup{Let $q\in\left(0,1\right]$, $p\in\left(0,\infty\right)$ with $q\le p$, and $\delta>\frac{n-1}{2}$. Let $B_{1/\varepsilon}^{\delta}$ be a Bochner-Riesz means with $\delta$ order on $\mathbb{R}^{n}$. If $q\in\left[\frac{2n}{n+1+2\delta},1\right]$, then $B_{1/\varepsilon}^{\delta}$ has a unique extension on $H\mathcal{M}_{q}^{p}\left(\mathbb{R}^{n}\right)$. Moreover, there exists a positive constant $C$ such that for any $f\in H\mathcal{M}_{q}^{p}\left(\mathbb{R}^{n}\right)$,}
	\begin{align*}
		\left\|B_{1/\varepsilon}^{\delta}\left(f\right)\right\|_{WH\mathcal{M}_{q}^{p}\left(\mathbb{R}^{n}\right)} \le C\left\|f\right\|_{H\mathcal{M}_{q}^{p}\left(\mathbb{R}^{n}\right)}.
	\end{align*}
\end{theorem}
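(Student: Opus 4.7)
My plan is to apply Theorem \ref{th:B-R Hx-WHx} directly to the ball quasi-Banach function space $X := \mathcal{M}_q^p(\mathbb{R}^n)$. The task therefore reduces to checking, for a judicious choice of the parameters $\theta$ and $s$, that $\mathcal{M}_q^p(\mathbb{R}^n)$ satisfies all of the structural hypotheses of that theorem. Because $q \in [2n/(n+1+2\delta),1]$ and $\delta > (n-1)/2$, I would fix $\theta := 2n/(n+1+2\delta) \in (0,1)$ and then pick $s \in (\theta,1]$ with $s \le q$; this guarantees that $X^{1/s} = \mathcal{M}_{q/s}^{p/s}$ has first index $q/s \ge 1$ and is therefore a ball Banach function space, which is the additional assumption required in Theorem \ref{th:B-R Hx-WHx}.

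Next I would verify the three structural assumptions in turn. For Assumption \ref{as:Fefferman-Stein}, the substitution $g_j := |f_j|^\theta$ reduces the required inequality to a strong Fefferman--Stein inequality on $X^{1/\theta} = \mathcal{M}_{q/\theta}^{p/\theta}$ with vector-valued exponent $s/\theta > 1$; since $q/\theta \ge 1$, this is supplied by Lemma \ref{le:Morrey-as-X^1/s}. Assumption \ref{as:(X^{1/s})'} is precisely the statement of Lemma \ref{le:Morrey-as-(X^{1/r})'} with $r := s$ and the parameter $s$ there chosen large enough. Assumption \ref{as:M is bounded on (WX)^{1/r}} is a special case of Lemma \ref{le:Morrey-as-(WX)^1/s} applied with any convexification index in $(0,q)$.

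The main obstacle is the verification of the weak-type vector-valued maximal inequality \eqref{eq:weak-type Fefferman-Stein}. Setting $r_0 := (n+1+2\delta)/(2n) > 1$, this amounts to a weak-type Fefferman--Stein inequality with exponent $r_0$ on $X^{r_0} = \mathcal{M}_{qr_0}^{pr_0}$. The hypothesis $q \ge 2n/(n+1+2\delta)$ ensures $qr_0 \ge 1$, so two cases arise. When $qr_0 > 1$, Lemma \ref{le:Morrey-as-X^1/s} delivers the strong-type Fefferman--Stein inequality on $\mathcal{M}_{qr_0}^{pr_0}$, and the desired weak-type bound then follows at once from the embedding $X^{r_0} \subset WX^{r_0}$ recorded in Remark \ref{re:X-subset-WX}. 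In the critical case $qr_0 = 1$, for which $X^{r_0} = \mathcal{M}_1^{pr_0}$, the strong-type argument is unavailable, and I would instead invoke Proposition \ref{pro:Morrey spaces} to obtain the weak-type Fefferman--Stein inequality directly on $\mathcal{M}_1^{pr_0}$. Once all the hypotheses of Theorem \ref{th:B-R Hx-WHx} have been confirmed in this way, that theorem immediately yields the boundedness of $B_{1/\varepsilon}^\delta$ from $H\mathcal{M}_q^p(\mathbb{R}^n)$ to $WH\mathcal{M}_q^p(\mathbb{R}^n)$, which is the assertion of Theorem \ref{th:Morrey spaces-B-R Hx-WHx}.
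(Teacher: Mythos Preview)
Your proposal is correct and follows precisely the route the paper takes: the paper proves Theorem \ref{th:Morrey spaces-B-R Hx-WHx} by citing Lemmas \ref{le:Morrey-as-X^1/s}, \ref{le:Morrey-as-(WX)^1/s}, \ref{le:Morrey-as-(X^{1/r})'} and Proposition \ref{pro:Morrey spaces} to verify the hypotheses of Theorem \ref{th:B-R Hx-WHx} for $X=\mathcal{M}_q^p(\mathbb{R}^n)$, exactly as you do. In fact you supply more detail than the paper, particularly in separating the critical case $q\cdot\frac{n+1+2\delta}{2n}=1$ (handled by Proposition \ref{pro:Morrey spaces}) from the supercritical case (handled by the strong-type Lemma \ref{le:Morrey-as-X^1/s} together with Remark \ref{re:X-subset-WX}).
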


\begin{theorem}\label{th:Morrey spaces-B-R Hx-WX}
	\textup{Let $q\in\left(0,1\right]$, $p\in\left(0,\infty\right)$ with $q\le p$, and $\delta>\frac{n-1}{2}$. Let $B_{1/\varepsilon}^{\delta}$ be a Bochner-Riesz means with $\delta$ order on $\mathbb{R}^{n}$. If $q\in\left[\frac{2n}{n+1+2\delta},1\right]$, then $B_{1/\varepsilon}^{\delta}$ has a unique extension on $H\mathcal{M}_{q}^{p}\left(\mathbb{R}^{n}\right)$. Moreover, there exists a positive constant $C$ such that for any $f\in H\mathcal{M}_{q}^{p}\left(\mathbb{R}^{n}\right)$,}
	\begin{align*}
		\left\|B_{1/\varepsilon}^{\delta}\left(f\right)\right\|_{W\mathcal{M}_{q}^{p}\left(\mathbb{R}^{n}\right)} \le C\left\|f\right\|_{H\mathcal{M}_{q}^{p}\left(\mathbb{R}^{n}\right)}.
	\end{align*}
\end{theorem}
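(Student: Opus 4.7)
The plan is to deduce this theorem directly from Theorem \ref{th:B-R Hx-WX} by specializing $X := \mathcal{M}_q^p(\mathbb{R}^n)$ and verifying each structural hypothesis on $X$ required there. First I would fix parameters $\theta, s$ with $\frac{2n}{n+1+2\delta}\le \theta < s \le 1$ and $s<q$; in the strict-interior case $q>\frac{2n}{n+1+2\delta}$ this choice is clear, and in the endpoint case $q=\frac{2n}{n+1+2\delta}$ we take $\theta$ at the left endpoint and $s$ just slightly smaller than $q$ (shrinking $s$ costs nothing since Lemma \ref{le:Morrey-as-X^1/s} applies for any $s\in(0,q)$). With this choice $(\mathcal{M}_q^p)^{1/s}=\mathcal{M}_{q/s}^{p/s}$ is a ball Banach function space because $q/s\ge 1$, and $\mathcal{M}_q^p$ itself is a ball quasi-Banach function space.

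Next I would tick off each assumption of Theorem \ref{th:B-R Hx-WX} one at a time. Assumption \ref{as:Fefferman-Stein} (the strong-type Fefferman--Stein inequality with the $M^{(\theta)}$-powered operator) is exactly Lemma \ref{le:Morrey-as-X^1/s}, since $M^{(\theta)}f=[M(|f|^\theta)]^{1/\theta}$ reduces to that statement after substituting $g_j:=|f_j|^\theta$ and using $s/\theta>1$. Assumption \ref{as:(X^{1/s})'} follows from Lemma \ref{le:Morrey-as-(X^{1/r})'} applied with $r:=s$ and an appropriately large $s$-parameter so that the conjugate $(q/s)'$ exceeds the required threshold. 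Assumption \ref{as:M is bounded on (WX)^{1/r}} is the scalar case of Lemma \ref{le:Morrey-as-(WX)^1/s}.

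The central step, and the only nontrivial one, is to verify the weak-type Fefferman--Stein inequality (\ref{eq:weak-type Fefferman-Stein}) with exponent $r_0:=\frac{n+1+2\delta}{2n}>1$ in the space $X^{1/r_0}=\mathcal{M}_{qr_0}^{pr_0}$. The hypothesis $q\ge\frac{2n}{n+1+2\delta}$ is precisely what ensures $qr_0\ge 1$, so the target space is a genuine Morrey space. In the non-endpoint case $qr_0>1$ one invokes the strong-type Fefferman--Stein inequality from Lemma \ref{le:Morrey-as-X^1/s} together with the trivial embedding $\mathcal{M}_{qr_0}^{pr_0}\subset W\mathcal{M}_{qr_0}^{pr_0}$ of Remark \ref{re:X-subset-WX}(i). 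In the critical case $qr_0=1$, the strong-type inequality on $\mathcal{M}_1^{pr_0}$ fails, and this is exactly the role played by Proposition \ref{pro:Morrey spaces}, which supplies precisely the weak-type Fefferman--Stein inequality for $\mathcal{M}_1^{pr_0}$.

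The main obstacle is thus the handling of the critical endpoint $q=\frac{2n}{n+1+2\delta}$, where one cannot reduce to a strong-type vector-valued maximal inequality and must use Proposition \ref{pro:Morrey spaces} in an essential way; this is also the reason the hypothesis of Theorem \ref{th:B-R Hx-WX} is written in the weak form (\ref{eq:weak-type Fefferman-Stein}) rather than as a strong-type condition. Once all hypotheses are verified, the conclusion of Theorem \ref{th:B-R Hx-WX} immediately yields the desired bound $\|B_{1/\varepsilon}^{\delta}(f)\|_{W\mathcal{M}_q^p(\mathbb{R}^n)}\lesssim\|f\|_{H\mathcal{M}_q^p(\mathbb{R}^n)}$, and uniqueness of the extension on $H\mathcal{M}_q^p(\mathbb{R}^n)$ follows by the density of the atomic subspace guaranteed via Lemma \ref{le:density}.
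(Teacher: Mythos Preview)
Your overall strategy matches the paper's: specialize Theorem~\ref{th:B-R Hx-WX} to $X=\mathcal{M}_q^p(\mathbb{R}^n)$ and verify the hypotheses via Lemmas~\ref{le:Morrey-as-X^1/s}, \ref{le:Morrey-as-(WX)^1/s}, \ref{le:Morrey-as-(X^{1/r})'} and Proposition~\ref{pro:Morrey spaces}. The paper's own argument is literally the single sentence ``Applying Lemmas~\ref{le:Morrey-as-X^1/s}, \ref{le:Morrey-as-(WX)^1/s} and \ref{le:Morrey-as-(X^{1/r})'}, Proposition~\ref{pro:Morrey spaces}, Theorems~\ref{th:B-R Hx-WHx}, \ref{th:B-R Hx-WX} and \ref{th:MB-R Hx-WX}, we immediately obtain\ldots'', so your writeup is in fact more explicit than the paper while following the same route.

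There is, however, a concrete inconsistency in your parameter choice at the endpoint $q=\frac{2n}{n+1+2\delta}$. You propose taking $\theta$ at the left endpoint (so $\theta=\frac{2n}{n+1+2\delta}=q$) and $s$ ``just slightly smaller than $q$''. But Theorem~\ref{th:B-R Hx-WX} requires $0<\theta<s\le 1$, and your choice gives $\theta=q>s$, a direct contradiction. More structurally, Assumption~\ref{as:Fefferman-Stein} with parameter $\theta$ is equivalent (after the substitution you indicate) to the strong vector-valued maximal inequality on $X^{1/\theta}=\mathcal{M}_{q/\theta}^{p/\theta}$, and Lemma~\ref{le:Morrey-as-X^1/s} supplies this only when $q/\theta>1$, i.e.\ $\theta<q$. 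At the endpoint this is incompatible with the requirement $\theta\ge\frac{2n}{n+1+2\delta}=q$ coming from Theorem~\ref{th:B-R Hx-WX}. So your endpoint argument, as written, does not close; either you need to relax the role of $\theta$ in Assumption~\ref{as:Fefferman-Stein} (arguing that atoms with moment order $d\ge\lfloor n(1/\theta-1)\rfloor$ for any $\theta<q$ already suffice for the off-diagonal estimate in the proof of Theorem~\ref{th:B-R Hx-WX}, since one only needs $d\ge N=[n(1/p_0-1)]$ there), or you should restrict the statement to the open range $q>\frac{2n}{n+1+2\delta}$. The paper does not address this point explicitly either.
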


\begin{theorem}\label{th:Morrey spaces-MB-R Hx-WX}
	\textup{Let $q\in\left(0,1\right]$, $p\in\left(0,\infty\right)$ with $q\le p$, and $\delta>\frac{n-1}{2}$. Let $B_{\ast}^{\delta}$ be the maximal Bochner-Riesz means with $\delta$ order on $\mathbb{R}^{n}$. If $q\in\left[\frac{2n}{n+1+2\delta},1\right]$, then $B_{\ast}^{\delta}$ has a unique extension on $H\mathcal{M}_{q}^{p}\left(\mathbb{R}^{n}\right)$. Moreover, there exists a positive constant $C$ such that for any $f\in H\mathcal{M}_{q}^{p}\left(\mathbb{R}^{n}\right)$,}
	\begin{align*}
		\left\|B_{\ast}^{\delta}\left(f\right)\right\|_{W\mathcal{M}_{q}^{p}\left(\mathbb{R}^{n}\right)} \le C\left\|f\right\|_{H\mathcal{M}_{q}^{p}\left(\mathbb{R}^{n}\right)}.
	\end{align*}
\end{theorem}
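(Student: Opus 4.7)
The plan is to deduce Theorem \ref{th:Morrey spaces-MB-R Hx-WX} as a direct application of the abstract Theorem \ref{th:MB-R Hx-WX} with $X:=\mathcal{M}_{q}^{p}(\mathbb{R}^{n})$. Since $\mathcal{M}_{q}^{p}(\mathbb{R}^{n})$ is known to be a ball quasi-Banach function space, the task reduces to choosing parameters $\theta$ and $s$ so that all four structural hypotheses of Theorem \ref{th:MB-R Hx-WX} are in force, together with the weak-type Fefferman--Stein inequality (\ref{eq:weak-type Fefferman-Stein}).

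First, I would fix $\theta:=\frac{2n}{n+1+2\delta}$; the hypothesis $\delta>\frac{n-1}{2}$ guarantees $\theta\in(0,1)$ and the assumption $q\in[\theta,1]$ gives $\theta\le q$. Choose $s\in(\theta,1]$ with $s\le q$ (when $q>\theta$, take $s\in(\theta,q)$; when $q=\theta$, a limiting refinement based on a slight shrinking of $\theta$ is used). With this choice, $X^{1/s}=\mathcal{M}_{q/s}^{p/s}$ satisfies $q/s\ge 1$, so it is a ball Banach function space. Next, Assumption \ref{as:Fefferman-Stein} follows from Lemma \ref{le:Morrey-as-X^1/s} applied to $\mathcal{M}_{q}^{p}$ with exponent $s$ (using that $s<q$), combined with the trivial inequality $M^{(\theta)}\le M$ after rescaling $|f_j|\mapsto |f_j|^{\theta}$. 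Assumption \ref{as:(X^{1/s})'} is exactly the content of Lemma \ref{le:Morrey-as-(X^{1/r})'} applied with $r:=s$ and a sufficiently large exponent in place of the abstract parameter $q$. Assumption \ref{as:M is bounded on (WX)^{1/r}} is the scalar case of Lemma \ref{le:Morrey-as-(WX)^1/s} with $r:=s$.

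The main obstacle is the verification of the weak-type Fefferman--Stein inequality (\ref{eq:weak-type Fefferman-Stein}), which constitutes the critical-index ingredient. Set $r_0:=\frac{n+1+2\delta}{2n}\in(1,\infty)$, so that $X^{r_0}=\mathcal{M}_{qr_0}^{pr_0}$. In the endpoint case $q=\frac{2n}{n+1+2\delta}$ we have $qr_0=1$, and Proposition \ref{pro:Morrey spaces} applied to $\mathcal{M}_{1}^{pr_0}$ (noting $pr_0\ge 1$ since $p\ge q$, and $r_0>1$) delivers exactly (\ref{eq:weak-type Fefferman-Stein}). In the strictly supercritical case $q>\frac{2n}{n+1+2\delta}$ we have $qr_0>1$, so $\mathcal{M}_{qr_0}^{pr_0}$ is even a ball Banach function space and Lemma \ref{le:Morrey-as-X^1/s} already yields the \emph{strong}-type vector-valued maximal inequality, which trivially implies its weak-type version.

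Finally, having verified all the hypotheses, Theorem \ref{th:MB-R Hx-WX} with $X:=\mathcal{M}_{q}^{p}(\mathbb{R}^{n})$ gives the boundedness
\begin{align*}
\bigl\|B_{\ast}^{\delta}(f)\bigr\|_{W\mathcal{M}_{q}^{p}(\mathbb{R}^{n})}\le C\,\|f\|_{H\mathcal{M}_{q}^{p}(\mathbb{R}^{n})},
\end{align*}
together with the unique extension of $B_{\ast}^{\delta}$ to $H\mathcal{M}_{q}^{p}(\mathbb{R}^{n})$, which is the desired conclusion. The only delicate point is the endpoint $q=\frac{2n}{n+1+2\delta}$, where the Morrey--Fefferman--Stein inequality must be invoked in its weak form; everywhere else in the range, strong-type vector-valued bounds suffice.
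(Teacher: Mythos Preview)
Your proposal follows exactly the paper's approach: the paper simply writes ``Applying Lemmas \ref{le:Morrey-as-X^1/s}, \ref{le:Morrey-as-(WX)^1/s} and \ref{le:Morrey-as-(X^{1/r})'}, Proposition \ref{pro:Morrey spaces}, Theorems \ref{th:B-R Hx-WHx}, \ref{th:B-R Hx-WX} and \ref{th:MB-R Hx-WX}, we immediately obtain\ldots'' and you have spelled out the parameter choices needed to make that invocation work.

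Two small points to tighten. First, the claim ``$M^{(\theta)}\le M$'' is false for $\theta<1$ (in fact $M^{(\theta)}\ge M$ by Jensen); what you really need is that Assumption~\ref{as:Fefferman-Stein} on $X$ with parameters $(\theta,s)$ is equivalent, after the substitution $g_j=|f_j|^\theta$, to the ordinary Fefferman--Stein inequality on $X^{1/\theta}=\mathcal{M}_{q/\theta}^{p/\theta}$ with exponent $s/\theta>1$, and Lemma~\ref{le:Morrey-as-X^1/s} gives this as soon as $\theta<q$. Second, your ``limiting refinement based on a slight shrinking of $\theta$'' at the endpoint $q=\frac{2n}{n+1+2\delta}$ is not an argument: in Theorem~\ref{th:MB-R Hx-WX} the same $\theta$ must simultaneously satisfy $\theta\ge\frac{2n}{n+1+2\delta}$ and $\theta<s<q$, which is impossible when $q=\frac{2n}{n+1+2\delta}$. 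The paper does not address this gap either; a clean fix is to note that in the proof of Theorem~\ref{th:MB-R Hx-WX} the lower bound on $\theta$ is only used to guarantee that the atoms carry enough vanishing moments (order $d\ge\lfloor n(1/p_0-1)\rfloor$ with $p_0=\frac{2n}{n+1+2\delta}$), and since $d$ can be chosen arbitrarily large in Lemma~\ref{le:the atomic characterization for the Hardy type space}, one may take $\theta<q$ strictly and simply pick $d$ large enough independently.
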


\subsection*{Acknowledgments} %Project funded by 
This project is supported by the China Postdoctoral Science Foundation (Grant No. 2023T160296), National Natural Science Foundation of China (Grant No. 11901309), 
and Natural Science Foundation of Nanjing University of Posts and Telecommunications (Grant No. NY222168).

\subsection*{Data Availability} No data available for this publication.

\section*{\bf Declarations}

\subsection*{Conflict of interest} The authors declare that they have no conflict of interest.

\bigskip
\noindent Jian Tan(Corresponding author)\\
\noindent School of Science,
Nanjing University of Posts and Telecommunications,
Nanjing 210023, People's Republic of China\\
Department of Mathematics, Nanjing University,
Nanjing 210093, People's Republic of China   

\noindent {\it E-mail address}: \texttt{tj@njupt.edu.cn}(J. Tan)\\

\noindent Linjing Zhang\\
\noindent School of Science,
Nanjing University of Posts and Telecommunications,
Nanjing 210023, People's Republic of China\\
\noindent {\it E-mail address}: \texttt{shangqingbizhi@163.com}(L. Zhang)

\end{document}